\DeclareMathOperator{\PSL}{\mathrm{PSL}}
\begin{document}

\newtheorem{theorem}{Theorem}[subsection]
\newtheorem{lemma}[theorem]{Lemma}
\newtheorem{corollary}[theorem]{Corollary}
\newtheorem{conjecture}[theorem]{Conjecture}
\newtheorem{proposition}[theorem]{Proposition}
\newtheorem{question}[theorem]{Question}
\newtheorem{problem}[theorem]{Problem}
\newtheorem*{main_thm}{Main Theorem~\ref{thm:main_theorem}}
\newtheorem*{claim}{Claim}
\newtheorem*{criterion}{Criterion}
\theoremstyle{definition}
\newtheorem{definition}[theorem]{Definition}
\newtheorem{construction}[theorem]{Construction}
\newtheorem{notation}[theorem]{Notation}
\newtheorem{convention}[theorem]{Convention}
\newtheorem*{warning}{Warning}

\newtheorem*{disconnected_is_schottky}{Theorem~\ref{theorem:disconnected_is_Schottky} (Disconnected is Schottky)}
\newtheorem*{interior_is_dense}{Theorem~\ref{theorem:interior_dense} (Interior is almost dense)}
\newtheorem*{limit_of_holes}{Theorem~\ref{theorem:hole_limit} (Limit of holes)}
\newtheorem*{renormalizable_traps}{Theorem~\ref{theorem:similarity} (Renormalizable traps)}
\newtheorem*{hole_in_M_0}{Theorem~\ref{theorem:hole_in_SetB} (Hole in $\SetB$)}

\theoremstyle{remark}
\newtheorem{remark}[theorem]{Remark}
\newtheorem{example}[theorem]{Example}
\newtheorem*{case}{Case}

\def\Z{{\mathbb Z}}
\def\R{{\mathbb R}}
\def\C{{\mathbb C}}
\def\D{{\mathbb D}}
\def\DD{{\mathcal D}}
\def\H{{\mathbb H}}
\def\L{{\mathcal L}}
\def\M{{\mathcal M}}
\def\T{{\mathcal T}}
\def\E{{\mathcal E}}
\def\SetA{{\mathcal M}}
\def\SetB{{\mathcal M_0}}
\def\SetC{{\mathcal M_1}}
\def\SetAA{{\overline{\mathcal M}}}
\def\SetBB{{\overline{\mathcal M}_0}}
\def\SetCC{{\overline{\mathcal M}_1}}
\def\fz{f_z}
\def\gz{g_z}
\def\Lz{\Lambda_z}
\def\Gz{G_z}
\def\P{{\mathcal P}}

\def\tdLz{L_z}
\def\tdL{L}

\newcommand\numberthis{\addtocounter{equation}{1}\tag{\theequation}}
\newcommand{\marginal}[1]{\marginpar{\tiny #1}}

\title{Roots, Schottky semigroups, and a proof of Bandt's Conjecture}
\author{Danny Calegari}
\address{Department of Mathematics \\ University of Chicago \\
Chicago, IL, 60637}
\email{dannyc@math.uchicago.edu}
\author{Sarah Koch}
\address{Department of Mathematics \\ University of Michigan \\
Ann Arbor, MI, 48109}
\email{kochsc@umich.edu}
\author{Alden Walker}
\address{Department of Mathematics \\ University of Chicago \\
Chicago, IL, 60637}
\email{akwalker@math.uchicago.edu}
\date{\today}

\begin{abstract}
In 1985, Barnsley and Harrington defined a ``Mandelbrot Set'' $\SetA$ for pairs of similarities 
--- this is the set of complex numbers $z$ with $0<|z|<1$ for which the limit set of the 
semigroup generated by the similarities $$x \mapsto zx \text{ and } x \mapsto z(x-1)+1$$ is connected. 
Equivalently, $\SetA$ is the closure of the set of roots of polynomials with coefficients in 
$\lbrace -1,0,1 \rbrace$. Barnsley and Harrington already noted the (numerically apparent) 
existence of infinitely many small ``holes'' in $\SetA$, and conjectured that these holes 
were genuine. These holes are very interesting, since they are ``exotic'' components of 
the space of (2 generator) Schottky semigroups. The existence of at least one hole was 
rigorously confirmed by Bandt in 2002, and he conjectured that the interior points are dense away from the real axis.  We introduce the 
technique of {\em traps} to construct and certify interior points of $\SetA$, and use them 
to prove Bandt's Conjecture. Furthermore, our techniques let us certify the existence 
of infinitely many holes in $\SetA$.
\end{abstract}

\maketitle

\setcounter{tocdepth}{1}
\tableofcontents

\section{Introduction}

Consider the similarity transformations $f,g:\C\to\C$ given by 
\[
f:x\mapsto zx\quad\text{and}\quad g:x\mapsto z(x-1)+1,
\]
where $z\in \D^*:=\{z\in\C\;|\;0<|z|<1\}$. Because these maps are contractions, there is a nonempty compact attractor $\Lz\subseteq\C$ associated with the {\em iterated function system} (or IFS) given by the pair $\{f,g\}$. The attractor $\Lz$ coincides with the set of accumulation points of the forward orbit of any $x\in\C$ under the semigroup $G_z:=\langle f,g\rangle$. 

In this article, we study the topology of certain subsets of the parameter space $\D^*$ for $G_z$. The first set we consider is the {\em connectedness locus}, denoted by $\SetA$; that is, the set of parameters $z$ for which $\Lz$ is connected. Standard IFS arguments prove that the limit set $\Lz$ is either connected, or it is a Cantor set (for details, see Lemma \ref{lemma:disconnected_Cantor}). 

The second subset of the parameter space we examine is related to the geometry of $\Lz$. For all values of the parameter $z\in\D^*$, the map $f$ fixes $0$, and the map $g$ fixes $1$. As both of these maps are contracting by the same factor (in fact, by a factor of $z$) around their respective fixed points, the limit set $\Lz$ has a center of symmetry about the point $1/2$ in the dynamical plane. The set $\SetB$ is defined to be the set of parameters $z$ for which $\Lz$ contains the point $1/2$. 


The sets $\SetA$ and $\SetB$ have been studied by various mathematicians over the past 30 years:  Barnsley-Harrington \cite{Barnsley_Harrington}, Bousch \cite{Bousch1, Bousch2}, Bandt \cite{Bandt}, Solomyak \cite{Solomyak_local,Solomyak}, Shmerkin-Solomyak \cite{Shmerkin_Solomyak}, and Solomyak-Xu \cite{Solomyak_Xu}, to name a few. 

There is a profound and unexpected connection between the sets $\SetA$ and $\SetB$ and the set of roots of power series with prescribed coefficients (see Section \ref{section:regular}). In particular, $\SetA$ can be identified with the closure of the set of roots of polynomials with coefficients in $\{-1,0,1\}$ (which are in $\D^*)$, and $\SetB$ can be identified with the closure of the set of roots of polynomials with coefficients in $\{-1,1\}$ (which are in $\D^*$). Via this formulation, the set $\SetB$ is related to roots of the minimal polynomials associated to the {\em core entropy} of real quadratic polynomials as defined by Thurston \cite{Thurston_entropy}, and established by Tiozzo \cite{Tiozzo}. 
We further elaborate on the history of $\SetA$ and $\SetB$ in Section \ref{subsection:history}. 

In \cite{Bousch1} and \cite{Bousch2}, Bousch proved that the sets $\SetA$ and $\SetB$ are connected and locally connected. However, the complement of $\SetA$  and the complement of $\SetB$ are {\em disconnected}. The complement of $\SetA$ and the complement of $\SetB$ both contain a prominent central component (see Figure \ref{mandelbrot} and Figure \ref{Set_B}). In 1985, Barnsley and Harrington numerically observed other connected components of the complement, or ``holes'' in $\SetA$, and they conjectured that these holes are genuine. In 2002, Bandt rigorously established the existence of one hole in $\SetA$. In Theorem \ref{theorem:hole_limit}, we prove that there are {\em infinitely many} holes in $\SetA$.  

These ``exotic holes'' in $\SetA$ are quite interesting and somewhat mysterious; they appear to be very well-organized in parameter space, suggesting that there may be a combinatorial classification of them. We currently have found no such classification. 

\subsection{Statement of results} 

We prove that all of the connected components of $\D^*\setminus \SetA$ are {\em Schottky}, in the sense 
that if $z$ in $\D^*\setminus \SetA$, there is a topological disk $D$ containing $\Lz$, so that $f(D)\cap g(D) =\varnothing$, and $f(D)$ and $g(D)$ are contained in the interior of $D$. 
\begin{disconnected_is_schottky}
The semigroup $G_z$ has disconnected $\Lambda_z$ if and only if $G_z$ is Schottky.
\end{disconnected_is_schottky}

To prove that these exotic components in the complement of $\SetA$ exist, we introduce the method of {\em traps} (see Section \ref{subsection:traps_holes}), which allows us to numerically certify that a parameter $z\in \SetA$. This technique is different from Bandt's proof of the existence of these exotic holes. In fact, the existence of a trap is an open condition, so if there is a trap for the parameter $z\in\D^*$, then necessarily $z\in \mathrm{int}(\SetA)$. Traps therefore allow us to  access the interior points of $\SetA$. In \cite{Bandt}, Bandt conjectured that the interior of $\SetA$ is dense away from $\SetA\cap \R$ (see Conjecture \ref{conjecture:Bandt}). In Theorem \ref{theorem:interior_dense}, we prove Bandt's conjecture using traps. 
\begin{interior_is_dense}
The interior of $\SetA$ is dense away from the real axis; that is, 
\[
\SetA=\overline{\mathrm{int}(\SetA)}\cup (\SetA\cap \R).
\]
\end{interior_is_dense}

Interestingly, the proof of Theorem \ref{theorem:interior_dense} requires a complete characterization of the set of $z\in\SetA$ for which the limit set $\Lz$ is convex. This is established in Lemma \ref{lemma:convex_zonohedra}. 

In Section~\ref{section:renormalization}, we examine families of exotic holes in $\SetA$ which appear to spiral down and limit on a distinguished point $z\in \partial \SetA$ (see Figure \ref{figure:hexahole_spiral}). 
\begin{limit_of_holes}
Let $\omega \sim 0.371859+0.519411i$ be the root of the
polynomial $1-2z+2z^2-2z^5+2z^8$ with the given approximate value. Then
\begin{enumerate}
\item{$\omega$ is in $\SetA$, and $\SetB$; in fact, the intersection of $f\Lambda_\omega$
and $g\Lambda_\omega$ is exactly the point $1/2$;}
\item{there are points in the complement of $\SetA$ arbitrarily close to $\omega$; and}
\item{there are infinitely many rings of concentric loops in the interior of $\SetA$ which
nest down to the point $\omega$.}
\end{enumerate}
Thus, $\SetA$ contains infinitely many holes which accumulate at the point $\omega$.
\end{limit_of_holes}

We continue Section \ref{section:renormalization} by generalizing the methods of 
Theorem~\ref{theorem:hole_limit}.  We define the notion of {\em renormalization} and {\em limiting traps} to show that at certain renormalization points $z\in \SetA$, the set $\SetA$ is asymptotically similar to $\Gamma_z$, where $\Gamma_z$ is the limit set of the 3 generator IFS
\[
x\mapsto z(x+1)-1\quad x\mapsto zx\quad x\mapsto z(x-1)+1. 
\]
Previous results of Solomyak established this asymptotic similarity at certain `landmark points' in $\partial \SetA$. We reprove his results with a more algorithmic approach using traps, and as a consequence, we obtain ``asymptotic interior.''

\begin{renormalizable_traps}
Suppose that $\omega$ is a renormalization point.
There are constants $A$ and $B$, depending only on $\omega$, such that 
\begin{enumerate}
\item If $C \in (A+B\Gamma_\omega)$, then for all $\epsilon>0$, there is a 
$C'$ such that $|C-C'|<\epsilon$ and for all sufficiently 
large $n$, there is a trap for $\omega+C'\omega^{bn}$.
\item If $f\Lz\cap g\Lz$ is a single point, then there is $\delta>0$ such that for all $C \notin (A+B\Gamma_\omega)$ 
with $|C| <\delta$, the limit set for the parameter 
$\omega + C\omega^{bn}$ is disconnected for all sufficiently large $n$.
\end{enumerate}
\end{renormalizable_traps}
In Section \ref{section:holes_in_m0}, we prove that the complement of $\SetB$ is also disconnected by numerically certifying a loop in $\SetB$ which bounds a component of the complement. 
\begin{hole_in_M_0}
There is a hole in $\SetB$.
\end{hole_in_M_0}

\subsection{Outline} In Section \ref{Semigroups of similarities}, we establish key definitions and survey some previous results about $\SetA$ and $\SetB$. In Section \ref{elementary_estimate}, we collect a few elementary estimates about the geometry of $\Lz$. In Section \ref{section:regular}, we explore the connection the sets $\SetA$ and $\SetB$ have with roots of power series with prescribed coefficients in a more general context involving regular languages. In Section \ref{section:topology_of_lambda}, we establish some important results about the topology and geometry of the limit set, and we prove Theorem \ref{theorem:disconnected_is_Schottky}. We also present an algorithm (similar to an algorithm used by Bandt in \cite{Bandt}) to certify that the limit set $\Lz$ is disconnected. 

In Section \ref{section:differences}, we examine the set of differences between points in $\Lz$. This set of differences is actually the limit set $\Gamma_z$ of the 3 generator IFS 
\[
x\mapsto z(x+1)-1\quad x\mapsto zx\quad x\mapsto z(x-1)+1.
\]
In Section \ref{section:interior}, we introduce the notion of traps, and characterize the set of $z\in\SetA$ for which $\Lz$ is convex in Lemma \ref{lemma:convex_zonohedra}. In Theorem \ref{theorem:interior_dense}, we prove that the interior of $\SetA$ is dense away from the real axis, establishing Bandt's Conjecture \ref{conjecture:Bandt}. 

In Section \ref{section:holes}, we describe our trap-finding algorithm and prove the estimates required to certify that $\SetA$ has holes. In Section \ref{section:renormalization}, we introduce the notions of renormalization and limiting traps, and we prove Theorem \ref{theorem:hole_limit} and Theorem \ref{theorem:similarity}. In Section \ref{section:whiskers}, we discuss the ``real whiskers'' of $\SetA$, and we use a 2-dimensional real IFS for this analysis. And lastly, in Section \ref{section:holes_in_m0}, we prove that there is a hole in $\SetB$; that is, we prove that the complement of $\SetB$ is disconnected. 

\subsection{Acknowledgements}

We would like to thank Christoph Bandt, Emmanuel Breuillard, Giulio Tiozzo
and especially Boris Solomyak for comments, corrections, pointers to references, and
enthusiasm and interest in this project.
Danny Calegari was supported by NSF grant DMS 1405466. Sarah Koch was supported by NSF grant DMS 1300315 and a Sloan research fellowship. 
Alden Walker was supported by NSF grant DMS 1203888.

\section{Semigroups of similarities}\label{Semigroups of similarities}

\subsection{Definitions}

\begin{definition}
A {\em contracting similarity} (or just a {\em similarity}) with {\em center} $c \in \C$ and
{\em dilation} $z \in \C$ with $0<|z|<1$ is the complex affine map $\C\to\C$ given by 
$$x \mapsto z(x-c) + c.$$
\end{definition}

The composition of any positive number of similarities is again a similarity. The set of all
similarities is topologized as $\C \times \D^*$.
We are concerned in the sequel with semigroups generated by finitely many similarities. 

\begin{definition}
Let $G$ be a finitely generated semigroup of contracting similarities. The {\em limit set} $\Lambda$
(also called the {\em attractor}) is the closure of the set of fixed points of elements of $G$.
\end{definition}

The limit set of $G$ is the unique compact, nonempty invariant subset of $\C$ for the action of $G$.
In particular the action of $G$ on $\Lambda$ is minimal (every orbit is dense).

\begin{example}[Middle third Cantor set]\label{example:middle_third}
The semigroup $f:x \mapsto \frac 1 3 x$, $g:x \mapsto \frac 1 3 (x-1) + 1$ has the middle third Cantor set
as limit set. 
\end{example}

\begin{example}[Sierpinski carpet]\label{example:Sierpinski_carpet}
The semigroup $f:x \mapsto \frac 1 2 x$, $g:x \mapsto \frac 1 2 (x-1) + 1$, $h:x \mapsto \frac 1 2 (x-\omega) + \omega$
for $\omega = e^{i\pi/3}$ has the Sierpinski triangle as limit set.
\end{example}

\begin{definition}[Schottky semigroup]\label{definition:schottky}
Let $S$ be a finite set of contracting similarities, and let $G$ be the semigroup they
generate. We say that $G$ is a {\em Schottky semigroup} if there is an embedded loop 
$\gamma \subseteq \C$ bounding a closed (topological) disk $D$, so that the elements of $S$ take $D$
to disjoint disks contained in the interior of $D$.

A loop $\gamma$ with this property, and the disk $D$ it bounds is said to be {\em good} for $G$.
\end{definition}

\begin{lemma}
The Schottky semigroup $G$ is free (on $S$) and discrete as a subset of $\C \times \D^*$.
\end{lemma}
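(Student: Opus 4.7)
The plan is to prove freeness by a ping-pong argument based on the disk $D$, and to prove discreteness by controlling the word length of an element via its dilation factor.

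For freeness, I would take two distinct words $w_1, w_2$ in the alphabet $S$ and show they induce distinct similarities of $\C$. Let $v$ be their longest common prefix (possibly empty). If neither word is a prefix of the other, write $w_1 = v \cdot s_i \cdot u_1$ and $w_2 = v \cdot s_j \cdot u_2$ with $s_i \neq s_j$. By the Schottky condition, $s_i(D)$ and $s_j(D)$ are disjoint, and since $v$ is a similarity, hence injective, $v(s_i(D))$ and $v(s_j(D))$ are disjoint as well. As these disks respectively contain $w_1(D)$ and $w_2(D)$, we get $w_1(D) \cap w_2(D) = \varnothing$, so $w_1 \neq w_2$ as transformations. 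If instead one word is a prefix of the other, say $w_2 = w_1 \cdot u$ with $u$ nonempty, then inside the ambient group of invertible complex affine maps (in which $G$ sits), the equation $w_1 = w_2$ would force $u = \mathrm{id}$; but $u$ is a composition of strict contractions and so is itself a strict contraction, hence not the identity.

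For discreteness, I would show that no point of $G$ is an accumulation point of $G$ in $\C \times \D^*$. Let $\rho := \min_{s \in S} |\mathrm{dil}(s)| > 0$ and $r := \max_{s \in S} |\mathrm{dil}(s)| < 1$, where $\mathrm{dil}$ denotes the dilation. A word in $S$ of length $\ell$ has dilation of modulus in $[\rho^\ell, r^\ell]$. Suppose $g_n \in G$ are pairwise distinct and $g_n \to g$ in $\C \times \D^*$. Then the dilations of $g_n$ converge to the nonzero dilation of $g$, so the word lengths $\ell_n$ of the $g_n$ are eventually bounded. Since $S$ is finite, there are only finitely many words of bounded length; by freeness, these determine only finitely many elements of $G$. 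Hence $\{g_n\}$ takes only finitely many values eventually, contradicting the distinctness of the $g_n$ together with their convergence to $g$.

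The argument is essentially routine, so no step is a serious obstacle; the one point requiring slight care is the cancellation in the prefix case of freeness, which I would carry out inside the ambient affine group rather than inside the semigroup itself.
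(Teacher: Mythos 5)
Your proposal is correct and takes essentially the same route as the paper, which disposes of discreteness via the observation that dilations of words of length $\ell$ have modulus at most $r^{\ell}$ (so the set of dilations accumulates only at $0$), and proves freeness by Klein's ping-pong on the good disk $D$ and its translates. You have simply filled in the details the paper leaves implicit; the only superfluous step is the appeal to freeness inside the discreteness argument, since finitely many words of bounded length already yield finitely many semigroup elements regardless of relations.
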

\begin{proof}
Actually, {\em every} finitely generated semigroup which is strictly contracting is
discrete, since the set of dilations accumulates only at $0$; so the point is to
prove freeness. This follows from Klein's ping-pong argument applied to a 
good disk $D$ and its translates.
\end{proof}

Note that if $S$ generates a Schottky semigroup, the centers of generators are distinct. Indeed, a good disk $D$ must contain all of the centers, and since the generators map $D$ to disjoint disks, the centers must be distinct. 
Thus for a Schottky semigroup $G$, the limit set is a Cantor set, which is the intersection of the
images of a good disk $D$ under elements of $G$, and which can be identified (topologically)
with the set of right-infinite words in the generators. Thus, any two Schottky semigroups
with the same number of generators have topologically conjugate actions on their limit sets.
In fact, we can say more:

\begin{lemma}\label{lemma:conjugacy}
Any two isomorphic Schottky semigroups $G$, $G'$ are topologically conjugate on their
restriction to good disks $D$, $D'$.
\end{lemma}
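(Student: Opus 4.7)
The plan is to construct a conjugating homeomorphism $\phi : D \to D'$ in three stages: first build it on a ``fundamental domain,'' then extend equivariantly to the complement of the limit set, and finally close up by continuity on $\Lambda$. By the previous lemma both semigroups are free on their generators, so the isomorphism $G \to G'$ is induced by a bijection of the generating sets; I will write $s \leftrightarrow s'$ for corresponding elements of $S$ and $S'$.

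Set $D_0 := D \setminus \bigcup_{s \in S} \mathrm{int}(s(D))$ and define $D_0'$ analogously. Because $G$ is Schottky, $D_0$ is a compact planar surface with one outer boundary circle $\partial D$ and $|S|$ disjoint inner boundary circles $\partial s(D)$, one per generator; $D_0'$ has the same combinatorial type. I want a homeomorphism $\phi_0 : D_0 \to D_0'$ whose boundary values intertwine the action, in the sense that
\[
\phi_0|_{\partial s(D)} = s' \circ (\phi_0|_{\partial D}) \circ s^{-1}
\]
for every $s \in S$. To arrange this I would pick any homeomorphism $\psi : \partial D \to \partial D'$, define the boundary values of $\phi_0$ to be $\psi$ on $\partial D$ and $s' \circ \psi \circ s^{-1}$ on each inner circle $\partial s(D)$, and then invoke the classical fact that a boundary homeomorphism between two homeomorphic compact planar surfaces extends across the interior.

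For the extension step, given a finite word $w = s_{i_1} \cdots s_{i_n}$ in the generators with corresponding word $w' = s'_{i_1} \cdots s'_{i_n}$ in $G'$, set
\[
\phi|_{w(D_0)} := w' \circ \phi_0 \circ w^{-1}.
\]
The translates $w(D_0)$ tile $D \setminus \Lambda$, and any two of them meet only along common boundary circles of the form $\partial(ws(D))$; the boundary condition built into $\phi_0$ was precisely chosen so that the two local formulas agree on these overlaps. This produces a well-defined homeomorphism $\phi : D \setminus \Lambda \to D' \setminus \Lambda'$ that is manifestly equivariant: $\phi \circ s = s' \circ \phi$ on its domain.

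Finally, I would extend $\phi$ to $\Lambda$ using the address map. Every point of $\Lambda$ is $\bigcap_n w_n(D)$ for a unique right-infinite sequence of nested words, and sending it to $\bigcap_n w_n'(D')$ gives the extension. Continuity at points of $\Lambda$ is automatic because both semigroups are strictly contracting, so $\mathrm{diam}(w_n(D))$ and $\mathrm{diam}(w_n'(D'))$ both tend to $0$; building $\phi^{-1}$ symmetrically exhibits $\phi$ as a homeomorphism. The step I expect to be the main obstacle is the first one: verifying that the prescribed boundary values on all $|S|+1$ boundary circles genuinely extend to a homeomorphism $D_0 \to D_0'$. This is where the Schottky hypothesis does the real work, ensuring that $D_0$ and $D_0'$ are honest planar surfaces of the same topological type with matching boundary labels, after which the extension reduces to a standard planar-topology argument.
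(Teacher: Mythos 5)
Your proof is correct and follows essentially the same route as the paper: build a homeomorphism on the fundamental domain $D - S(D)$ whose boundary values intertwine the generators, push it around by equivariance to cover $D - \Lambda$, and close up on $\Lambda$ via the address map, using strict contraction for continuity. You make explicit the compatibility check on overlapping translates and the reduction of the fundamental-domain step to planar-surface topology, which the paper's proof leaves implicit, but the underlying argument is identical.
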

\begin{proof}
If $S$ and $S'$ are the generators of $G$ and $G'$, then choose any homeomorphism
$h:D-S(D) \to D' - S'(D')$ which extends a conjugacy on their boundaries, and extend
to $h:D-\Lambda  \to D' - \Lambda'$ using $D-\Lambda=G(D-S(D))$ and $D'-\Lambda' = G'(D'-S'(D'))$.
Then extend to $h:D \to D'$ by the canonical (abstract) isomorphism $h:\Lambda \to \Lambda'$
coming from the identification of these limit sets with the right-infinite words in the generators.
\end{proof}

\begin{remark}
Note that Schottky semigroups $G$, $G'$ are very rarely topologically conjugate on all of $\C$;
for, they are invertible on $\C$, and therefore any conjugacy would extend to a conjugacy 
between the {\em groups} they generate. But these are indiscrete, and indiscrete subgroups 
of $\PSL(2,\C)$ are rarely topologically conjugate.
\end{remark}

\subsection{Pairs of similarities}

For the remainder of the paper we focus almost entirely on semigroups generated by a pair of
similarities with the same dilation $z$. After conjugation by a similarity of $\C$ we may assume that
the two centers of the generators are at $0$ and $1$ respectively. Thus the space of conjugacy
classes of such semigroups is parameterized by $z \in \D^*$.

\begin{notation}
For $z \in \D^*$, let $\Gz$ denote the semigroup with generators
$$f:x \mapsto zx, \quad g:x \mapsto z(x-1) + 1,$$
and let $\Lz$ denote the limit set of $\Gz$. We omit the subscript $z$ from $f$ and $g$ to lighten notation. 
\end{notation}

\begin{figure}[htb]
\includegraphics[scale=0.2]{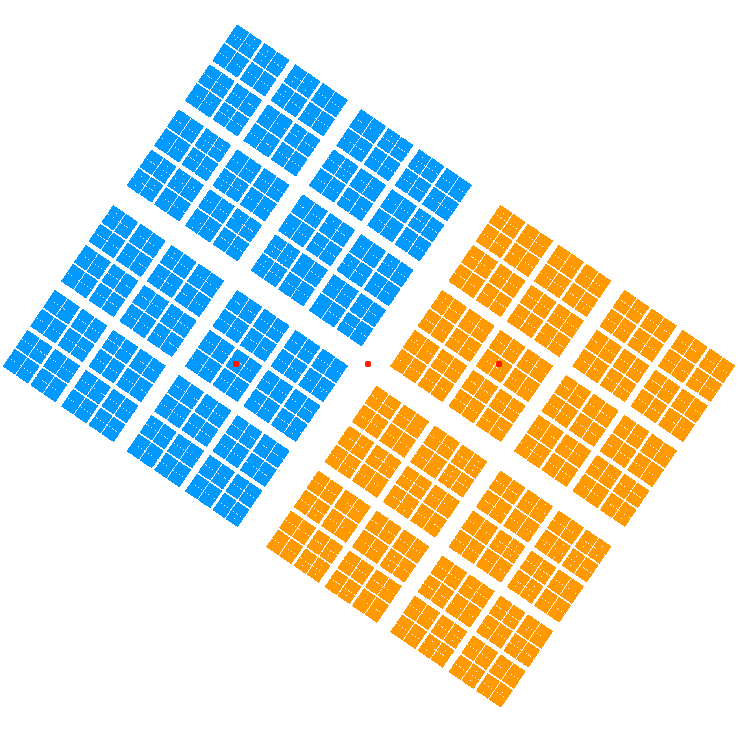}\hspace{1mm}
\includegraphics[scale=0.227]{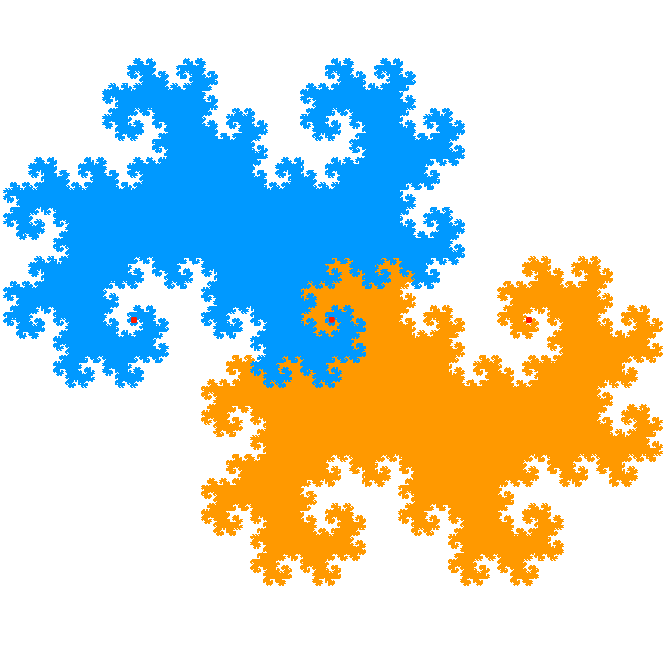}\hspace{1mm}
\includegraphics[scale=0.28]{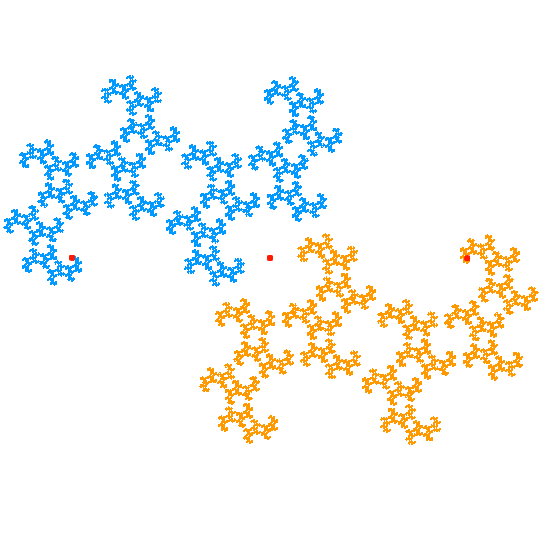}

\includegraphics[scale=0.22]{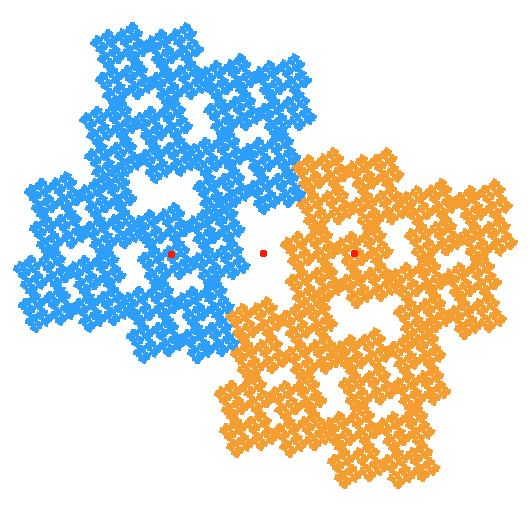}\hspace{2mm}
\includegraphics[scale=0.24]{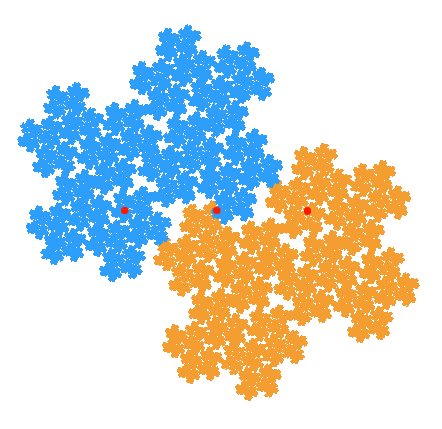}\hspace{1mm}
\includegraphics[scale=0.26]{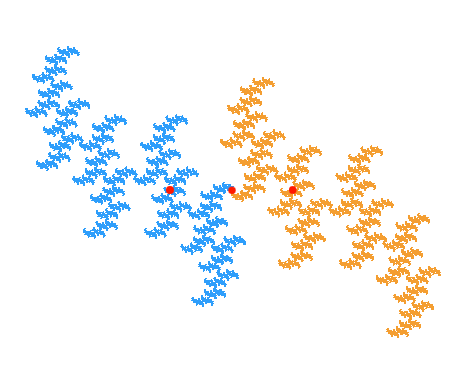}
\caption{Some limit sets $\Lz$ for various parameters.  
In each case, we show the decomposition of $\Lz$ as the 
union of $f\Lz$ (blue) and $g\Lz$ (orange).
The points $0$, $1/2$ and $1$ are marked in red.
Along the bottom from left to right, the parameters 
lie in $\SetA - \SetB$, $\SetB - \SetC$, and 
$\SetC$, respectively.}
\label{figure:limit_set_example}
\end{figure}

Other normalizations have some nice features. 
Barnsley and Harrington \cite{Barnsley_Harrington},
Bousch \cite{Bousch1} and others use the normalization
$$f: x \mapsto zx +1, \quad g: x \mapsto zx -1,$$
and Solomyak \cite{Solomyak} uses
$$f: x \mapsto zx, \quad g: x \mapsto zx + 1.$$
Our normalization has the convenient property that $0$ and $1$ are always in $\Lambda$ as the
centers of the two generators, independent of $z$.

\subsection{Basic symmetries}

Complex conjugation ``conjugates'' $\Gz$ to $G_{\overline{z}}$. Thus $\Lz$ and 
$\Lambda_{\overline{z}}$ are mirror images of each other. 
In particular, they are homeomorphic, and are therefore connected,
simply connected etc. for the same values of $z$.

The semigroup $\Gz$ has another basic symmetry: rotation through $\pi$ about the point $1/2$ 
interchanges the two generators $f$ and $g$. Thus the limit set $\Lz$ is invariant under
this symmetry: $\Lz = 1-\Lz$. On the other hand, by definition
$$\Lz = (z\Lz) \cup \left(z\Lz + (1-z)\right).$$
Using the relation $\Lz = 1-\Lz$ we obtain the identity
$$\Lz = (z\Lz) \cup \left(-z\Lz + 1\right)$$
which is the limit set of the semigroup $H_z$ with generators
$$f: x \mapsto zx, \quad g: x \mapsto 1-zx.$$
Thus, although $\Gz$ and $H_z$ are {\em not conjugate} (not even topologically, and in general
not even when restricted to $\Lz$), they have the same limit set. Now, from the definition,
the limit sets of $H_z$ and $H_{-z}$ are similar. It follows that the same is true for $\Gz$ and
$G_{-z}$.

We record this observation as a lemma:

\begin{lemma}[Similar limit sets]\label{lemma:similar_limit_sets}
The limit sets $\Lz,\Lambda_{-z},\Lambda_{\overline{z}}$ and $\Lambda_{-\overline{z}}$ are similar or mirror images of each other. 
\end{lemma}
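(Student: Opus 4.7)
The plan is to derive the four-fold symmetry claim by stitching together three elementary symmetries: (i) complex conjugation gives the mirror relation between $\Lz$ and $\Lambda_{\overline z}$, (ii) the auxiliary semigroup $H_z$ (already introduced in the preceding paragraphs) has the same limit set as $G_z$, and (iii) the limit sets of $H_z$ and $H_{-z}$ are related by an explicit similarity. Once these pieces are in place, the mirror relation between $\Lambda_{-z}$ and $\Lambda_{-\overline z}$ is automatic by the same argument as (i), and the four claimed sets are connected by a diagram of two similarities and two reflections.

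For (i), one just notes that the pointwise conjugate of the generators of $G_z$ are the generators of $G_{\overline z}$, hence $\Lambda_{\overline z} = \overline{\Lz}$. For (ii), the excerpt has already established $\Lz = (z\Lz) \cup (-z\Lz + 1)$ using the involution $\Lz = 1-\Lz$, and this identifies $\Lz$ with the attractor of $H_z$; the identical argument applied to $G_{-z}$ and $H_{-z}$ yields $\Lambda_{-z} = \Lambda_{H_{-z}}$. So the crux is step (iii): find an affine map $\psi(x) = ax+b$ with $\psi(\Lambda_{H_z}) = \Lambda_{H_{-z}}$.

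The natural way to find $\psi$ is to impose the correct self-similarity equation. Writing $\Lambda' := a\Lz + b$ and demanding that $\Lambda' = (-z\Lambda') \cup (z\Lambda' + 1)$, one substitutes the equation $\Lz = (z\Lz) \cup (-z\Lz+1)$ and matches the two pieces; this produces the linear system $b = 1+bz$, $a+b = -bz$, with the unique solution
\[
a = -\frac{1+z}{1-z}, \qquad b = \frac{1}{1-z}.
\]
By uniqueness of the attractor of a contracting IFS, this verifies $\Lambda_{H_{-z}} = a\Lz + b$, so $\Lz$ and $\Lambda_{-z}$ are similar (geometrically, one can see the same map by conjugating both $H_z$ and $H_{-z}$ by the translation $x\mapsto x-1/2$, which renders both semigroups symmetric about $0$ and related by a pure dilation). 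I do not expect any real obstacle here; the only mild subtlety is that $z=1$ is excluded from $\D^*$, so $1-z$ never vanishes and $\psi$ is always a genuine similarity.

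Assembling everything: $\Lz \sim \Lambda_{-z}$ by step (iii), $\Lambda_{\overline z} \sim \Lambda_{-\overline z}$ by the same argument applied at $\overline z$, and $\Lz$, $\Lambda_{\overline z}$ (respectively $\Lambda_{-z}$, $\Lambda_{-\overline z}$) are mirror images by step (i). Composing, any two of the four sets are related either by a similarity or by a similarity followed by a reflection, which is the statement of the lemma.
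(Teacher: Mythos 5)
Your proof is correct and follows the paper's own approach: the paper also establishes $\Lz = \Lambda_{H_z}$ via the order-two symmetry $\Lz = 1-\Lz$ and then asserts (without spelling out the map) that $\Lambda_{H_z}$ and $\Lambda_{H_{-z}}$ are similar. You simply make the similarity $\psi(x) = -\tfrac{1+z}{1-z}x + \tfrac{1}{1-z}$ explicit and verify it via the attractor's defining self-similarity equation, which is a legitimate fleshing-out of the paper's terse "from the definition."
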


\subsection{Three sets}

We now define three subsets in parameter space $\D^*$ of our semigroups $\Gz$. These sets
are the basic objects of interest in this paper. 
\begin{enumerate}
\item{$\SetA$ is the set of $z$ such that $\Lz$ is connected;}
\item{$\SetB$ is the set of $z$ such that $\Lz$ contains $1/2$; and}
\item{$\SetC$ is the set of $z$ such that $\Lz$ is connected and full.}
\end{enumerate}
Recall that a set is {\em full}  if its complement is connected. These sets are all closed.

As far as we know, the set $\SetA$ was first introduced by Barnsley-Harrington \cite{Barnsley_Harrington}, 
and the set $\SetB$ was first introduced by Bousch \cite{Bousch1}. 
We are not aware of any previous explicit mention of $\SetC$, although
Bandt \cite{Bandt}, Solomyak \cite{Solomyak} and others have studied the 
(closely related) set of $z$ for which $\Lz$ is a dendrite. Figure~\ref{mandelbrot} 
is a picture of $\SetA$, and Figure~\ref{Set_B} is a picture
of $\SetB$. The set $\SetC$ is much less substantial, and it is harder to draw a good picture.


\begin{figure}[htpb]
\centering
\includegraphics[scale=0.4]{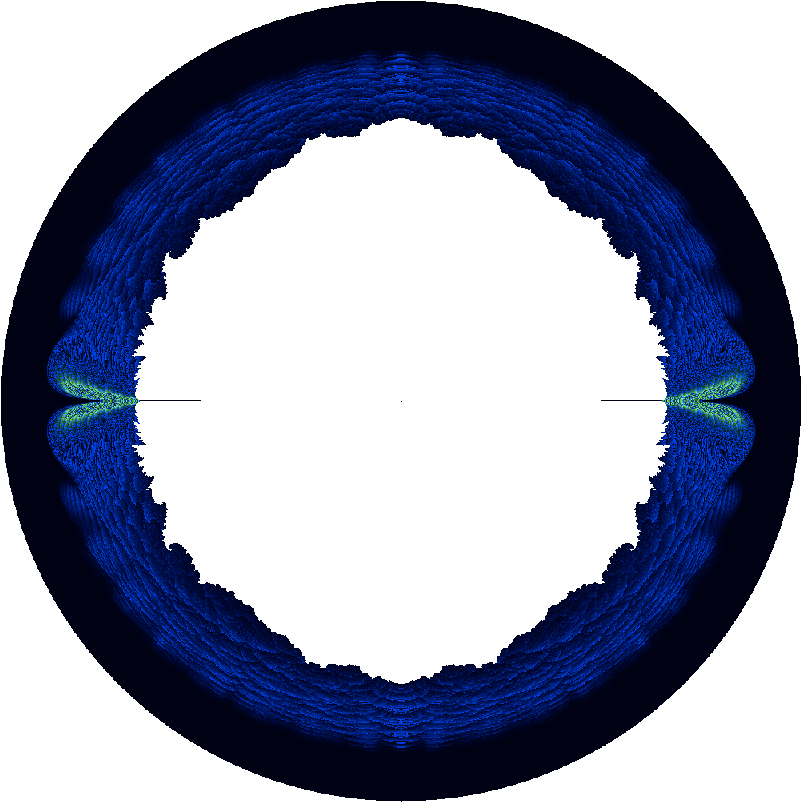}
\caption{$\SetA$ drawn in $\D^*$.}
\label{mandelbrot}
\end{figure}

\begin{figure}[htpb]
\centering
\includegraphics[scale=0.33]{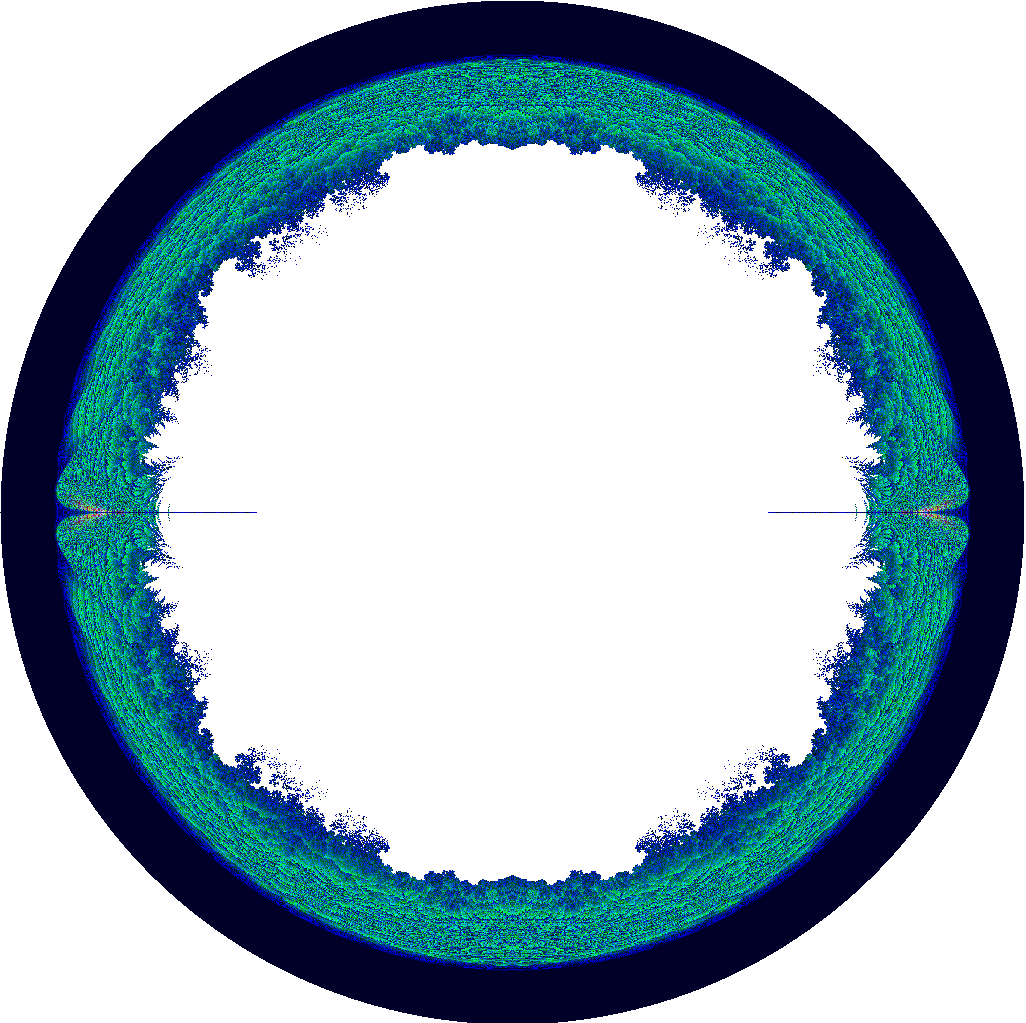}
\caption{$\SetB$ drawn in $\D^*$.}
\label{Set_B}
\end{figure}

\begin{proposition}\label{proposition:C_in_B_in_A}
We have $\SetC\subsetneq\SetB\subsetneq\SetA$.
\end{proposition}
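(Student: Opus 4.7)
The inclusion $\SetC \subseteq \SetA$ is immediate from the definitions; the plan is to prove the two nontrivial inclusions $\SetB \subseteq \SetA$ and $\SetC \subseteq \SetB$, and then exhibit explicit parameters witnessing the strictness of each containment. For $\SetB \subseteq \SetA$, I plan to exploit the involution $\sigma\colon x \mapsto 1-x$, which fixes $1/2$ and, as already noted, interchanges $f\Lz$ and $g\Lz$. If $1/2 \in \Lz = f\Lz \cup g\Lz$, then $1/2$ lies in at least one of $f\Lz, g\Lz$; applying $\sigma$ shows it in fact lies in both, so $f\Lz \cap g\Lz$ is nonempty. By Lemma~\ref{lemma:disconnected_Cantor}, $\Lz$ is either connected or a Cantor set, and in the Cantor case $G_z$ is Schottky by Theorem~\ref{theorem:disconnected_is_Schottky}, which forces $f\Lz \cap g\Lz = \varnothing$. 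Thus $\Lz$ must be connected.

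For $\SetC \subseteq \SetB$, I would argue by Riemann uniformization. Suppose $\Lz$ is connected and full, so that $U := \hat{\C} \setminus \Lz$ is connected (by fullness) and simply connected (Alexander duality, applied to the connected continuum $\Lz$ in the sphere), and contains $\infty$. Since $0,1 \in \Lz$, $U$ is a proper subset of $\hat{\C}$, and the Riemann mapping theorem provides a conformal isomorphism $\psi\colon \D \to U$ with $\psi(0) = \infty$. The involution $\sigma$ extends to a Möbius transformation $\hat\sigma$ of $\hat{\C}$ preserving $\Lz$ and hence preserving $U$; the conjugate $\psi^{-1} \hat\sigma \psi$ is a nontrivial holomorphic involution of $\D$ fixing $0$, and so must equal $w \mapsto -w$, whose only fixed point in $\D$ is $0$. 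Pushing forward, the unique fixed point of $\hat\sigma$ in $U$ is $\infty$. Since $1/2$ is also a fixed point of $\hat\sigma$ but is distinct from $\infty$, it cannot lie in $U$, and hence $1/2 \in \Lz$.

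The strictness of each inclusion is witnessed by explicit parameters, for example those in the bottom row of Figure~\ref{figure:limit_set_example}, where a parameter in $\SetA \setminus \SetB$ has connected $\Lz$ missing $1/2$ and a parameter in $\SetB \setminus \SetC$ has $\Lz$ passing through $1/2$ while still bounding a ``hole.'' The main technical obstacle I anticipate is in the $\SetC \subseteq \SetB$ step: one must justify the simple connectedness of $U$, and be careful that the Riemann uniformization argument correctly identifies $\hat\sigma|_U$ as a nontrivial involution. A purely topological alternative that avoids Riemann uniformization is to invoke Hata's theorem that a connected IFS attractor is locally connected, hence path-connected, pick a path from $0$ to $1$ in $\Lz$, concatenate with its $\sigma$-image to form a loop $\eta$ satisfying $\eta(t + 1/2) = 1 - \eta(t)$, and compute directly that the winding number of $\eta$ about $1/2$ is odd and hence nonzero, so that $1/2$ lies in a bounded component of $\C \setminus \Lz$, contradicting fullness.
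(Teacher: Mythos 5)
Your proposal is correct, and at the level of the key idea --- exploiting the rotational symmetry of $\Lz$ about $1/2$ --- it matches the paper's proof. The difference is in how much detail is supplied for the inclusion $\SetC \subseteq \SetB$. The paper dispatches this in a single clause (``if $\Lz$ is connected and simply-connected, then because it is rotationally symmetric about $1/2$, it follows that $\Lz$ contains $1/2$''), whereas you give two rigorous expansions. The Riemann-uniformization argument is clean: you correctly observe that $U = \hat\C \setminus \Lz$ is a simply connected domain omitting at least two points (since $0,1 \in \Lz$ always), that the affine involution extends to a M\"obius map preserving $U$ and fixing $\infty$, and that a nontrivial holomorphic involution of $\D$ fixing $0$ must be $w \mapsto -w$ with a unique fixed point --- so $\infty$ is the only fixed point of $\sigma$ in $U$, forcing $1/2 \in \Lz$. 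The winding-number alternative is closer in spirit to what the paper gestures at, and it needs the path-connectivity of $\Lz$, which you could cite from Proposition~\ref{proposition:path_in_lambda} of this paper rather than reaching for Hata's theorem. One minor inefficiency in the $\SetB\subseteq\SetA$ step: once you have $1/2 \in f\Lz \cap g\Lz$, Lemma~\ref{lemma:disconnected_Cantor} already gives $\Lz$ connected directly, so the detour through Theorem~\ref{theorem:disconnected_is_Schottky} is unnecessary (though harmless). For strictness, both you and the paper appeal to the explicit parameters in Figure~\ref{figure:limit_set_example}.
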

\begin{proof}
It is straightforward to show (see Lemma~\ref{lemma:disconnected_Cantor}) 
that $z\in \SetA$ --- i.e.\/ the limit set $\Lz$ is connected ---
if and only if $f\Lz:=f(\Lz)$ intersects $g\Lz:=g(\Lz)$. Since $\Lz$ is rotationally
symmetric about the
point $1/2$, it follows that $\SetB$ is contained in $\SetA$. Likewise, if $\Lz$ is connected
and simply-connected, then because it is rotationally symmetric about $1/2$, it follows that
$\Lz$ contains $1/2$. No two of these sets are equal; see Figure \ref{figure:limit_set_example}. 
\end{proof} 

We will focus on the sets $\SetA$ and $\SetB$ for the remainder of the paper.

\subsection{Holes}

We will show (see Theorem~\ref{theorem:disconnected_is_Schottky}) 
that $z$ is in the complement of $\SetA$ if and only if
$\Gz$ is Schottky. We have already observed that all Schottky semigroups are topologically
conjugate when restricted to good disks. The set of $z$ for which $\Gz$ is Schottky is
evidently open. However, an examination of Figure~\ref{mandelbrot} with a microscope reveals the
apparent existence of tiny ``holes'' in $\SetA$, corresponding to ``exotic'' components of
Schottky space. 

One hole in $\SetA$ is clearly visible in Figure~\ref{mandelbrot}; it is shaped
approximately like a round disk except for two ``whiskers'' of $\SetA$ along the real axis.
But it turns out that there are also much smaller holes in $\SetA$, 
which can be thought of as exotic components of Schottky space. 
This is in stark contrast to the situation of Kleinian
{\em groups}, where the (Teichm\"uller) spaces of (quasifuchsian) representations of
a surface of fixed topological type are connected, as can be proved by means of
the measurable Riemann mapping theorem. 

Figure~\ref{hole} shows a collection of holes in $\SetA$ centered near the point
$0.372368+0.517839i$, which we refer to colloquially as {\em hexaholes}. 
The diameter of the picture is approximately $0.0005$, so these
holes are much too small to see in Figure~\ref{mandelbrot}. It is one of the main goals
of this paper to prove rigorously that infinitely many holes such as these really do
exist in $\SetA$.

\begin{figure}[htpb]
\centering
\includegraphics[scale=0.3]{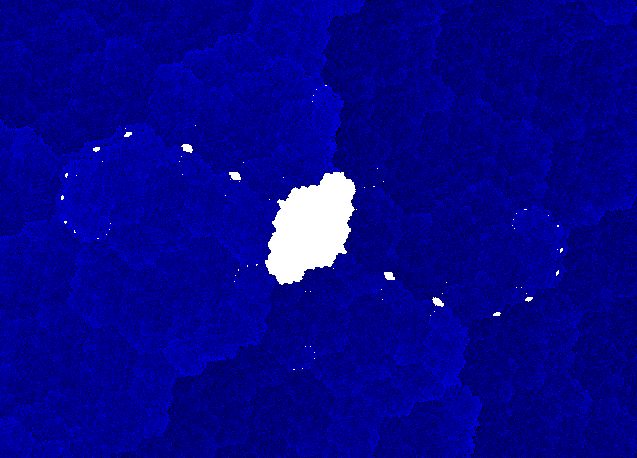}
\caption{Apparent holes in $\SetA$ near the point $z=0.372368+0.517839i$. The width of 
the figure is about $0.0005$.}
\label{hole}
\end{figure}

\subsection{Some history}\label{subsection:history}

The sets $\SetA$ and $\SetB$ have a long history, and these sets (and some close relatives)
were discovered independently several times by people working in
quite different areas of mathematics. In fact, we ourselves did not learn of the work of
Bandt and Solomyak until an advanced stage of our investigations. Therefore we believe it
would be useful to briefly mention some of the important papers on this subject that have
appeared over the last 30 years, and say something about their contents.

\begin{itemize}[leftmargin=*]
\item{
In 1985, Barnsley and Harrington \cite{Barnsley_Harrington} initiated a (mainly numerical)
study of $\SetA$. They discovered much structure evident in this set, most significantly the
presence of apparent holes, whose rigorous existence they conjectured.
Another phenomenon they discovered was the real whiskers in $\SetA$, and they proved rigorously that $\SetA$ is entirely
real in some definite neighborhood of the endpoints $\pm 0.5$ of these whiskers:

\begin{theorem}[Barnsley-Harrington, whiskers]\label{theorem:Barnsley_Harrington_whiskers}
There is a neighborhood of the points $\pm 0.5$ in which $\SetA$ is contained in $\R$.
\end{theorem}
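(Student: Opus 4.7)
The plan is to show that for $z$ in a sufficiently small complex neighborhood of $1/2$, membership $z \in \SetA$ forces $z \in \R$; the statement at $-1/2$ then follows from the $z \leftrightarrow -z$ symmetry of Lemma~\ref{lemma:similar_limit_sets}. The key equivalence, provided by Lemma~\ref{lemma:disconnected_Cantor}, is that $z \in \SetA$ if and only if $f\Lz \cap g\Lz \neq \varnothing$, which unwinds to the condition $(1-z)/z \in \Gamma_z$, where $\Gamma_z := \Lz - \Lz$ is the $3$-generator limit set to be introduced in Section~\ref{section:differences}. Writing $z = 1/2 + w$, the target point satisfies $(1-z)/z - 1 = -2w/z$, so $y := (1-z)/z$ sits within distance $O(|w|)$ of the right endpoint $1 \in \Gamma_z$.

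Next I would describe the local structure of $\Gamma_z$ near $1$. Since $1$ is the attracting fixed point of $h_+(x) := zx + (1-z)$, with linearization $h_+(1 + u) = 1 + zu$, iterating backward shows that any point of $\Gamma_z$ in a small neighborhood of $1$ admits a unique expression $1 + z^n(y_0 - 1)$, where $n \geq 0$ is the number of $h_+$ preimages taken before exiting the neighborhood and $y_0 \in h_0\Gamma_z \cup h_-\Gamma_z$ is bounded away from $1$ by a uniform constant $c > 0$. A contraction estimate on the three IFS generators also gives that the imaginary width $M(z) := \sup_{x \in \Gamma_z} |\mathrm{Im}(x)|$ satisfies $M(z) = O(|\mathrm{Im}(z)|)$ uniformly in a small neighborhood of $1/2$.

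Applying this decomposition, $y \in \Gamma_z$ forces $y_0 - 1 = -2w/z^{n+1}$ for some integer $n \geq 0$, with $c \leq |y_0 - 1| \leq \mathrm{diam}(\Gamma_z)$ confining $n$ to an interval of bounded length centered around $n_\star \approx \log_2(1/|w|)$. For each admissible $n$, writing $z = |z| e^{i\theta}$, the imaginary part of the forced $y_0$ is
\[
\mathrm{Im}(y_0) = -2 |w| \,|z|^{-(n+1)} \sin\bigl(\arg w - (n+1)\theta\bigr),
\]
which, because $|w|\,|z|^{-(n+1)}$ is of order unity, is of order $|\sin(\arg w - (n+1)\theta)|$. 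Comparing with $|\mathrm{Im}(y_0)| \leq M(z) = O(|w|)$ forces the Diophantine constraint $(n+1)\theta \equiv \arg w \pmod\pi$ within error $O(|w|)$. However, $\theta = 2\mathrm{Im}(w) + O(|w|^2) = 2|w|\sin(\arg w) + O(|w|^2)$, so the smallest $n$ satisfying this congruence is of order $1/|w|$, which lies far outside the admissible range $n \sim \log(1/|w|)$. This exponential separation yields the required contradiction, ruling out $y \in \Gamma_z$ whenever $\arg w \notin \{0, \pi\}$ and $|w|$ is smaller than a universal threshold.

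The main obstacle is establishing the local structure theorem for $\Gamma_z$ near $1$ with uniform constants, in particular the lower bound $|y_0 - 1| \geq c > 0$ for all $y_0 \in h_0\Gamma_z \cup h_-\Gamma_z$ and the imaginary-width bound $M(z) \leq C|\mathrm{Im}(z)|$ with universal $C$, both uniform over a neighborhood of $z = 1/2$. These are fixed-point estimates on the $3$-generator IFS, but they must be tracked carefully so that the final comparison $\log(1/|w|) \ll 1/|w|$ genuinely closes the argument.
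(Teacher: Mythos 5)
Your route is genuinely different from the paper's, so let me first compare the two. The paper proves the stronger Lemma~\ref{lemma:whiskers_isolated} directly from Proposition~\ref{proposition:power_series}: it observes that the words $e$ landing in the small overlap $f\Lz \cap g\Lz$ when $z = 1/2+\epsilon$ is real must begin with $fg^n$ or $gf^n$ for large $n$, so $\pi(e,z)$ is an alternating-coefficient power series $z - z^{n+1} + \cdots$ or $1 - z^{n+1} + z^m - \cdots$; a direct estimate shows $d\pi(e,z)/dz$ has a definite sign (positive for the $f$-words, negative for the $g$-words), so perturbing $z \mapsto z + i\delta$ drives the two halves of the overlap into opposite half-planes, separating $f\Lz$ from $g\Lz$. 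You instead recast $z \in \SetA$ as $(1-z)/z \in \Gamma_z$ and analyze the self-similar structure of $\Gamma_z$ near its fixed point $1$, extracting a rotation-number constraint. Both are valid in spirit, but the paper's sign-of-derivative argument is substantially shorter and avoids having to establish uniform structure constants for $\Gamma_z$.

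However, your endgame has a genuine gap in exactly the regime that matters. After stating (correctly) that $M(z) = O(|\mathrm{Im}(z)|)$, you weaken this to $M(z) = O(|w|)$ when applying it, and then assert that the resulting congruence $(n+1)\theta \equiv \arg w \pmod \pi$ within error $O(|w|)$ forces $n \gtrsim 1/|w|$. This fails when $\arg w = O(|w|)$, i.e.\ when $w$ is nearly real---which is precisely the delicate case, since for $w$ real the theorem's conclusion must break down. In that regime $\theta = O(|w|^2)$, so $(n+1)\theta = O(|w|^2 \log(1/|w|)) = o(|w|)$ throughout the admissible window $n \sim \log(1/|w|)$, and $|\sin(\arg w - (n+1)\theta)| = O(|w|)$ is then automatic: the constraint gives no contradiction and no lower bound on $n$ at all. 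The fix is to keep the sharper bound you already wrote down, $M(z) = O(|\mathrm{Im}(z)|) = O(|w|\,|\sin(\arg w)|)$. With this, the constraint becomes $|\sin(\arg w - (n+1)\theta)| \le K|w|\,|\sin(\arg w)|$, and since $(n+1)\theta/\arg w = O(|w|\log(1/|w|)) \to 0$ in the admissible window, the left side is $\ge (1-o(1))|\sin(\arg w)|$ (use concavity of $\sin$ on $(0,\pi/2]$ and monotonicity on $(\pi/2,\pi)$), which yields $1-o(1) \le K|w|$, a contradiction for any nonzero $\arg w$ once $|w|$ is small. With that repair---and with the structural claims about $\Gamma_z$ near $1$ (the lower bound $|y_0-1|\ge c$ and the well-definedness of $n$) spelled out as you indicate---the argument closes.
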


Let $\alpha$ be the supremum of the real numbers $t$ for which $\SetA$ intersects
some neighborhood of $[0.5,t]$ only in real points. Barnsley-Harrington obtained a
rigorous estimate $\alpha > 0.53$ but observed that this estimate is far from sharp.}

\item{
In 1988 Thierry Bousch began a systematic study of 
$\SetA$ and $\SetB$ in his unpublished papers \cite{Bousch1} and \cite{Bousch2}. Bousch proved
many remarkable theorems about $\SetA$ and $\SetB$, including the following:

\begin{theorem}[Bousch, connectivity]\label{theorem:bousch_connectivity}
$\SetA$ and $\SetB$ are both connected and locally connected.
\end{theorem}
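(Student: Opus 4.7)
The plan is to use the characterization (noted in the excerpt) that $\SetB$ is the closure in $\D^*$ of the zero set of $\{-1,1\}$-polynomials, and $\SetA$ the closure of the zero set of $\{-1,0,1\}$-polynomials. I describe the argument for $\SetB$; the case of $\SetA$ is structurally the same, with the alphabet $\{-1,1\}$ replaced by $\{-1,0,1\}$ (both having the same convex hull $[-1,1]$).

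For connectedness, given two $\{-1,1\}$-polynomial roots $z_0, z_1 \in \D^*$ of polynomials $P_0, P_1$ (padded to a common degree), I would construct a path from $z_0$ to $z_1$ in $\SetB$. The main idea is the linear homotopy $P_t = (1-t) P_0 + t P_1$, whose coefficients lie in $[-1,1]$; the implicit function theorem then yields a continuous arc $z(t) \in \D^*$ with $z(0) = z_0$ and $z(1) = z_1$ (away from multiple roots). To prevent $z(t)$ from exiting $\D^*$ during the homotopy, I would \emph{raise the degree} by appending additional free coefficients and tuning them continuously to redirect the tracked root back whenever it threatens to cross the unit circle or collide with the origin; at the endpoints these extra coefficients are restored to $\{-1,1\}$, exploiting the infinite-dimensional slack of the coefficient space.

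The crucial additional input is that every $z(t)$ on the path lies not merely in the enlarged set of $[-1,1]$-polynomial roots but in $\SetB$ itself. For this I would approximate $P_t$ in coefficient norm by a $\{-1,1\}$-polynomial $Q$ of much higher degree: the new high-degree coefficients are chosen by a pigeonhole argument in the zonogon generated by $\{z(t)^k\}$ to make $|Q(z(t))|$ arbitrarily small, after which Rouch\'e places a genuine root of $Q$ within $o(1)$ of $z(t)$. Hence $z(t) \in \overline{\SetB} = \SetB$, producing a path in $\SetB$. Local connectivity then follows by running the same construction inside a small disk around each $z_0 \in \SetB$ using only short coefficient flips of a witness polynomial; the resulting short root arcs and their high-degree $\{-1,1\}$-approximants furnish a neighborhood base of connected sets.

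The main obstacle is the pigeonhole step: when $z(t)$ lies on or near the real axis, the subset sums of $\{z(t)^k\}$ collapse to a Cantor-like fractal of volume zero, so the density estimate underlying Rouch\'e's bound breaks down. In that regime one must handle the real axis separately, using the explicit structure of $\SetB \cap \R$: e.g.\ $\Lz = [0,1]$ for $z \in [1/2,1)$ directly places that interval in $\SetB$, and the real whiskers of $\SetB$ are analyzed via the one-dimensional structure of $\Lz \cap \R$. Gluing the complex argument to this real-axis analysis is where the proof is most delicate.
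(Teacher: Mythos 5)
The paper does not prove this theorem; it is cited as a result of Bousch, whose proofs appear in the (unpublished) papers \cite{Bousch1}, \cite{Bousch2}. The only information the present paper gives about Bousch's strategy is in Section~\ref{section:holes_in_m0}: he shows that every point of $\SetB$ can be joined by a path to a parameter with $|z|$ close to $1$, and then uses the fact that $\SetB$ contains the whole annulus $2^{-1/4}\le |z| < 1$ (Proposition~\ref{proposition:square_Set_B}) to conclude path-connectedness. This is a genuinely different route from yours: Bousch runs radially outward to an annulus known \emph{a priori} to lie in $\SetB$, rather than homotoping directly between two arbitrary parameters.

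Your proposal has real gaps, so it should not be read as a proof. First, the step that ``raises the degree'' and ``tunes coefficients to redirect the tracked root'' whenever it threatens to exit $\D^*$ is entirely unsubstantiated. Perturbing a high-degree coefficient moves a root by an amount $\sim z(t)^N/P_t'(z(t))$, and nothing in the argument controls the direction of this motion or rules out the possibility that every admissible perturbation pushes the root the wrong way, or that the tracked root spirals or escapes before a corrective perturbation is possible. Second, the pigeonhole/Rouch\'e step requires the subset sums of $\lbrace \pm z^k\rbrace$ to fill out a two-dimensional region near $0$, so that some $\lbrace -1,1\rbrace$-polynomial $Q$ has $|Q(z(t))|$ small. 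This is plausible when $|z(t)|$ is close to $1$, but fails throughout the regime $|z|<2^{-1/2}$ (not merely ``near the real axis''), where the attractor is a Cantor set of Hausdorff dimension $<2$ (cf.\ Lemma~\ref{lemma:inner_outer} and the Moran formula). Your homotoped root $z(t)$ can and typically will wander into this regime, and you have not shown how to prevent it --- which is exactly the obstruction Bousch avoids by always moving outward toward $|z|\approx 1$. Third, and most seriously, \emph{local} connectivity is not addressed by any of this: path-connectedness of a plane compactum does not imply local connectivity, and the single sentence you devote to it (``short coefficient flips of a witness polynomial furnish a neighborhood base of connected sets'') is an assertion, not an argument. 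Bousch's proof of local connectivity is the content of an entire paper (\cite{Bousch2}) and rests on establishing H\"older continuity of suitable parametrizations; there is no visible analogue of that estimate in your sketch.
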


Bousch interpreted both sets as the zeros of power series with coefficients of a particular form; we will
return to this perspective in Section~\ref{section:regular}.
}

\item{
In 1993, Odlyzko and Poonen \cite{Odlyzko_Poonen} studied zeroes of polynomials with $0,1$
coefficients (a set closely related to $\SetB$) and showed the closure of this set is path connected;
their techniques are similar to those of Bousch. They also noted the presence of apparent 
holes, and conjectured that they really existed.
}

\item{
In 2002  Bandt \cite{Bandt} developed some fast algorithms to draw accurate pictures
of $\SetA$, and managed to rigorously prove the existence of a hole in $\SetA$, 
thus positively answering the conjecture of Barnsely-Harrington. Bandt first realized the importance
of understanding the set of interior points in $\SetA$, and made the following conjecture:
\begin{conjecture}[Bandt, interior almost dense]\label{conjecture:Bandt}
The interior of $\SetA$ is dense away from the real axis.
\end{conjecture}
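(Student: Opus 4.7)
The plan is to use the technique of \emph{traps} introduced in Section~\ref{section:interior}. Recall that a trap at $z$ is a finite, explicit, open-condition certificate that $z \in \SetA$; in particular, the existence of a trap at $z$ automatically places $z$ in $\mathrm{int}(\SetA)$. The goal is to show that every $z_0 \in \SetA$ with $\mathrm{Im}(z_0) \neq 0$ is a limit of parameters admitting traps. The underlying reformulation (forthcoming in Section~\ref{section:differences}) is $z \in \SetA \iff (1-z)/z \in \Gamma_z$, where $\Gamma_z = \Lz - \Lz$ is the attractor of the three-generator IFS. Concretely, a trap amounts to a pair of finite words $u,v \in \{f,g\}^*$ with $u$ beginning in $f$ and $v$ beginning in $g$, such that $u(\overline{D}) \cap v(\overline{D})$ has nonempty interior for some enclosing disk $\overline{D} \supset \Lz$.

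The central input is Lemma~\ref{lemma:convex_zonohedra}, which characterizes the exceptional ``convex locus'' $E \subset \SetA$ of parameters where $\Lz$ is a convex zonohedron (equivalently, where the Minkowski-sum structure of $\Lz$ collapses to a lower-dimensional convex region). This lemma will pin down $E$ as a thin real-analytic set, so that $E \setminus \R$ has real dimension at most one. For $z_1 \in \SetA \setminus (E \cup \R)$, the failure of $\Lambda_{z_1}$ to be a convex zonohedron forces some deep iterate in the three-letter IFS for $\Gamma_{z_1}$ to open out into a two-dimensional cell around the point $(1-z_1)/z_1$. Translating this cylinder back from differences to words in $\{f,g\}^*$ produces the overlapping pair $(u,v)$ needed for a trap at $z_1$. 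Given $z_0 \in \SetA \setminus \R$ and $\epsilon > 0$, I then choose $z_1 \in \SetA \setminus (E \cup \R)$ within $\epsilon$ of $z_0$; this is permissible because $E$ is one-dimensional while the ambient $\SetA$ has two-dimensional structure away from its real whiskers, and the convex-zonohedron parameters $z_0 \in E \setminus \R$ themselves turn out to be accessible by perturbation to nearby non-convex parameters, again by Lemma~\ref{lemma:convex_zonohedra}.

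The main obstacle is the geometric/combinatorial step sketched in the second paragraph: extracting an explicit open two-dimensional overlap of two iterates from the qualitative hypothesis that $\Lambda_{z_1}$ is not a convex zonohedron. The convex zonohedra lemma is what does the heavy lifting, because it gives an algebraic list of exactly the obstructions to such an overlap, so that every parameter off this list must admit a trap after finitely many iterations. Once the overlap is located, continuity of $z \mapsto \Lz$ in the Hausdorff metric upgrades it to an open condition on the parameter, producing the trap and hence $z_1 \in \mathrm{int}(\SetA)$.
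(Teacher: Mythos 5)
Your proposal correctly identifies the two main ingredients the paper uses — traps as open certificates of membership in $\SetA$, and Lemma~\ref{lemma:convex_zonohedra} characterizing the convex locus — but there are genuine gaps in how you put them together.

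First, your characterization of a trap is wrong in a way that matters. You say a trap is a pair of finite words $u,v$ such that $u(\overline{D}) \cap v(\overline{D})$ has nonempty interior. But two enclosing disks can overlap purely in the ``slack'' between $\Lz$ and $D$ even when $G_z$ is Schottky, so this proves nothing. The actual Definition~\ref{definition:trap} is a topological linking condition: one needs points $p^\pm \in u\Lz - vD$ and $q^\pm \in v\Lz - uD$ whose connecting paths $\alpha \subseteq uD$, $\beta \subseteq vD$ have nonzero algebraic intersection number. This homological criterion (Lemma~\ref{lemma:any_paths}) is what forces the short-hop paths to cross and hence forces $f\Lz \cap g\Lz \neq \varnothing$. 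A raw overlap of disks does not.

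Second, and more seriously, you are missing the step that actually makes the proof work: the surjective-perturbation argument (Lemma~\ref{lemma:surjective_perturb} and its Corollary~\ref{corollary:surjective_perturb}). Given $z_0 \in \SetA$ with $\Lambda_{z_0}$ nonconvex, Lemma~\ref{lemma:nonconvex_cell_trap} produces a trap-like \emph{vector} $w$ for the filled-in set $X_{z_0}$. But this $w$ is generically not realized as a rescaled displacement $z_0^{-m}(u_m(z_0,1/2) - v_m(z_0,1/2))$ at the parameter $z_0$ itself. The paper uses the fact that this displacement function is a nonconstant \emph{holomorphic} function of $z$ (whose limit is $\pi(u,z) - \pi(v,z)$, which vanishes at $z_0$) to perturb $z_0$ to a nearby $z_1$ at which the displacement equals $w$. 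The trap is then constructed at $z_1$, not at $z_0$. Your sketch — that nonconvexity ``forces some deep iterate \dots to open out into a two-dimensional cell'' at the parameter $z_1$ itself — is not substantiated and does not replace this holomorphic-surjectivity step.

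Third, your reduction is structurally off in a way that risks circularity. You propose, given $z_0 \in \SetA \setminus \R$, to first find $z_1 \in \SetA$ near $z_0$ with $\Lambda_{z_1}$ nonconvex, and then show $z_1$ admits a trap. To know such $z_1 \in \SetA$ exists you invoke that ``$\SetA$ has two-dimensional structure away from its real whiskers'' — but this is essentially the statement being proved. The paper avoids this: it perturbs from $z_0$ (known to be in $\SetA$) directly to a $z_1$ that need not be assumed in $\SetA$ in advance; the trap at $z_1$ \emph{certifies} $z_1 \in \operatorname{int}(\SetA)$. The convex case is then handled cleanly by Lemma~\ref{lemma:convex_zonohedra}: if $\Lambda_{z_0}$ is convex and $|z_0| < 2^{-1/2}$, then $z_0$ is real; if $|z_0| \geq 2^{-1/2}$, a neighborhood already lies in (or meets the interior of) the annulus $|z| \geq 2^{-1/2} \subseteq \SetA$. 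No perturbation off the convex locus is needed.
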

\noindent which has been at the center of much subsequent work. Note that the necessity 
to exclude the real axis from this conjecture is already implied by
Theorem~\ref{theorem:Barnsley_Harrington_whiskers}.

Bandt's algorithm explicitly related $z \in \SetA$ to the dynamics
of a $3$-generator semigroup $f:x \mapsto zx-1$, $g:x \mapsto zx$, $h:x \mapsto zx+1$ which we denote $H_z$,
and remarked on the apparent similarity of $\SetA$ and the limit set $\Gamma_z$ of $H_z$
at certain algebraic points on $\partial \SetA$ that he called {\em landmark points}.}

\item{In 2003 Solomyak \cite{Solomyak_local} and 
Solomyak-Xu \cite{Solomyak_Xu} made partial progress on Bandt's conjecture, finding some interior points
in $\SetA$ with $|z|<2^{-1/2}$, and showing
that interior points are dense in $\SetA$ in some definite neighborhood of the imaginary axis. They
also obtained strong results on the structure of the natural invariant measures on $\Lz$, relating
this to the classical study of Bernoulli convolutions, and were able to compute the Hausdorff dimension
and measure of the limit set for almost all $z$.}

\item{In 2005 Solomyak \cite{Solomyak} proved the asymptotic similarity of $\SetA$ and $\Gamma_z$
at certain points $z$ which satisfy the condition that $z$ is a root of a rational function of a particular form. 
Following Solomyak, we refer to these points as {\em landmark points}. Then Solomyak shows

\begin{theorem}[Solomyak \cite{Solomyak}]\label{theorem:Solomyak}
If $z \in \SetA-\R$ is a {landmark point} then $\SetA$ is asymptotically similar at $z$ to
the set $\Gamma_z$ at a certain specific point, and both of these sets are asympotically
self-similar at these points.
\end{theorem}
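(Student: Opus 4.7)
The plan is to prove this as a consequence of the paper's own Renormalizable Traps theorem (Theorem~\ref{theorem:similarity}), rather than follow Solomyak's original argument directly. The first task is translation: I would show that a landmark point in Solomyak's sense is a renormalization point in the sense developed in Section~\ref{section:renormalization}. A landmark point $\omega$ is characterized by a polynomial identity (a root of a rational function of a prescribed form) that forces a coincidence between two words in the generators $f,g$ of $G_\omega$; in the dynamics this coincidence manifests as $f\Lambda_\omega \cap g\Lambda_\omega$ being a single point, and it is precisely this combinatorial coincidence that makes a renormalization operator on a neighborhood of $\omega$ in parameter space well-defined, with scaling factor $\omega^b$ for the appropriate integer $b$ read off from the polynomial.

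Once landmark points are identified as renormalization points, Theorem~\ref{theorem:similarity} supplies both halves of the asymptotic similarity. Part (1) provides constants $A,B$ (depending only on $\omega$) such that every $C \in A + B\Gamma_\omega$ can be approximated by a $C'$ with $\omega + C'\omega^{bn}$ lying in $\mathrm{int}(\SetA)$ for all sufficiently large $n$; therefore the rescaled, translated sets $\omega^{-bn}(\SetA - \omega)$, intersected with a fixed small disk, contain points arbitrarily near each point of $A + B\Gamma_\omega$. Part (2), which applies precisely because $f\Lambda_\omega \cap g\Lambda_\omega$ is a single point at a landmark $\omega$, guarantees that any $C$ outside $A + B\Gamma_\omega$ with $|C|<\delta$ yields a parameter $\omega + C\omega^{bn}$ in the complement of $\SetA$ for large $n$. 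Together these two statements show that, in the Hausdorff topology on compact subsets of a small disk, the rescaled sets $\omega^{-bn}(\SetA - \omega)$ converge to $A + B\Gamma_\omega$. This is exactly the asymptotic similarity of $\SetA$ at $\omega$ to $\Gamma_\omega$ at the ``certain specific point'' $-A/B$.

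For the second assertion, asymptotic self-similarity of $\Gamma_\omega$ is automatic: $\Gamma_\omega$ is the attractor of the three-map contracting IFS $x \mapsto \omega(x+1)-1,\; x \mapsto \omega x,\; x \mapsto \omega(x-1)+1$, hence is literally self-similar with factor $\omega$, and in particular is asymptotically self-similar at each fixed point of a generator. Transporting this through the affine correspondence $C \mapsto \omega + (A+BC)\omega^{bn}$ endows $\SetA$ with asymptotic self-similarity at $\omega$ with scaling factor $\omega^b$; in effect, one renormalization step exhibits a small neighborhood of $\omega$ in $\SetA$ as (asymptotically) an $\omega^b$-rescaled copy of a smaller neighborhood.

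The main obstacle is the translation step. One must verify that Solomyak's notion of landmark point and the renormalization points of Section~\ref{section:renormalization} genuinely coincide, and in particular that for any landmark $\omega \in \SetA - \R$ the intersection $f\Lambda_\omega \cap g\Lambda_\omega$ really does reduce to a single point, so that both parts of Theorem~\ref{theorem:similarity} are available. The combinatorial bookkeeping involved in passing from the defining rational identity to the data $(A,B,b)$ appearing in Theorem~\ref{theorem:similarity}, and in matching these constants to Solomyak's original formulation, is where the real work lies.
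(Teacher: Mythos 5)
This theorem is not actually proved anywhere in the paper: it appears in the history survey (Section~\ref{subsection:history}) as a result attributed to Solomyak~\cite{Solomyak}, and the paper relies on it as known. What the paper \emph{does} claim, in the remark following Theorem~\ref{theorem:similarity}, is that Theorem~\ref{theorem:similarity} ``implies (but is stronger than) one of the main consequences of Theorem~2.3 from Solomyak.'' Your proposal is therefore not a reconstruction of a proof the paper gives, but rather an attempt to flesh out an implication the authors assert without carrying out. That is a legitimate and interesting thing to try, and your overall architecture --- translate ``landmark point'' to ``renormalization point,'' then invoke parts~(1) and (2) of Theorem~\ref{theorem:similarity} for lower and upper bounds on the Hausdorff limit --- is the route the authors are gesturing at.

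That said, there are two substantive gaps beyond the one you flag. First, the translation. You assert that at a landmark point $\omega$ the intersection $f\Lambda_\omega \cap g\Lambda_\omega$ is a single point, which is exactly the hypothesis of Theorem~\ref{theorem:similarity}(2). Solomyak's landmark points are defined by a rational-function root condition, and nothing in the paper (or in your argument) establishes that this forces the intersection to be a \emph{unique} coincidence $\pi(u,\omega)=\pi(v,\omega)$. If there are several such coincidences --- even finitely many, all eventually periodic --- the remark after Theorem~\ref{theorem:similarity} says the similarity target must be replaced by a \emph{union} of scaled translated copies of $\Gamma_\omega$, which would change (or at least complicate) the statement you want to prove. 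You correctly identify ``the translation step'' as where the real work lies, but you understate it: the issue is not merely bookkeeping of constants $(A,B,b)$ but whether part~(2) applies at all, and in which form.

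Second, the Hausdorff convergence. Theorem~\ref{theorem:similarity}(1) is a pointwise statement: for each $C \in T_\omega$ and each $\epsilon > 0$ there is a $C'$ with $|C-C'|<\epsilon$ and an $N$ (depending on $C'$ and $\epsilon$) such that $\omega + C'\omega^{bn}$ has a trap for $n > N$. To conclude that $\omega^{-bn}(\SetA - \omega)$ converges to $T_\omega$ in the Hausdorff metric on a fixed disk, you need a single $N$ that works uniformly over a dense set of $C'$ covering a compact piece of $T_\omega$. The specific Lemma~\ref{lemma:limit_trap} (for the hexahole) does include a compactness clause that delivers exactly this uniformity, but Lemma~\ref{lemma:limit_traps_general} and Theorem~\ref{theorem:similarity} as stated do not. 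Your argument needs to reopen the proof of Theorem~\ref{theorem:similarity} and trace that the convergence in Lemma~\ref{lemma:sum_computation} is uniform on compacts in $C$ --- plausible given how the error terms are estimated there, but not something you can treat as a black box.
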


Here asymptotic similarity of two sets $X$ and $Y$ at $0$ (for simplicity) means that the Hausdorff
distance between $t^{-1}(X)$ and $t^{-1}(Y)$ restricted to balls of fixed radius (and ignoring the
boundary) goes to zero as $t \to 0$; and asymptotic self-similarity means that there is a complex $z$ 
with $|z|<1$ so that the sets $z^nX$ converge on compact subsets in the Hausdorff topology to a limit.}

\item{In 2011, Thurston \cite{Thurston_entropy} studied the set of Galois conjugates of
algebraic numbers $e^\lambda$ where $\lambda$ is the core entropy of a postcritically finite interval 
map $x\mapsto x^2+c$, for which the parameter $c$ is taken from the main ``limb'' of the Mandelbrot set (the intersection of the Mandelbrot set with $\R$). He asserted that the closure of this set of roots (in $\C$) 
is connected and path connected. In $\D^*$ the closure of this set agrees with $\SetB$, and therefore the assertion
generalizes Theorem~\ref{theorem:bousch_connectivity}. For $|z|\ge 1$ this assertion was
verified in an elegant paper by Giulio Tiozzo \cite{Tiozzo}, who also went on to plot Galois conjugates associated
with core entropies of postcritically finite maps $x\mapsto x^2+c$, where $c$ comes from other limbs of the Mandelbrot set; these sets display a ``family resemblance'' to
$\SetB$.}
\end{itemize}

These papers describe some remarkable connections related to the theory of postcritically finite 
interval maps, Perron numbers, Galois theory and so on. 
The richness and mathematical depth of these various 
sets has barely begun to be plumbed.
We emphasize that the survey above is not exhaustive, and the papers cited contain a substantial 
amount beyond the part we summarize here.

\section{Elementary estimates}\label{elementary_estimate}

In this section we collect a few elementary estimates about the geometry of $\Lz$.

\subsection{Geometry of \texorpdfstring{$\Lz$}{Lambda(z)}}

Recall our notation $\Gz$ for the semigroup generated by 
 $f:x \mapsto zx$ and $g:x \mapsto z(x-1)+1$. The map $f$ fixes $0$ and the map
$g$ fixes $1$. Any element $e \in \Gz$ of length $n$ acts as a similarity on $\C$ with dilation
$z^n$ and center some point of $\Lz$. We make some {\it a priori} estimates
on the geometry of $\Lz$.

\begin{lemma}[Diameter bound]\label{lemma:diameter_bound}
The limit set $\Lz$ is contained in the ball of radius $|z-1|/2(1-|z|)$ centered at
$1/2$.
\end{lemma}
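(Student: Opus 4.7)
The plan is to exploit the self-similarity $\Lz = f\Lz \cup g\Lz$ and get a recursive bound on the distance to the symmetry center $1/2$. Since $\Lz$ is compact, the quantity $R := \sup_{p \in \Lz} |p - 1/2|$ is finite, so the whole argument reduces to showing that $R$ satisfies the desired inequality.

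The key computation I would perform is to rewrite $f$ and $g$ as similarities centered at $1/2$ and track the translation term explicitly. Writing $f(p') = z(p'-1/2) + z/2$ gives $f(p') - 1/2 = z(p' - 1/2) - (1-z)/2$, and the symmetric computation for $g$ gives $g(p') - 1/2 = z(p' - 1/2) + (1-z)/2$. In either case
\[
|h(p') - 1/2| \;\leq\; |z|\,|p' - 1/2| + \tfrac{|1-z|}{2}
\qquad \text{for } h \in \{f,g\}.
\]
Since every $p \in \Lz$ can be written as $h(p')$ for some $h \in \{f,g\}$ and some $p' \in \Lz$ (by the invariance $\Lz = f\Lz \cup g\Lz$), taking the supremum over $p$ yields $R \leq |z|R + |1-z|/2$, and solving gives $R \leq |1-z|/(2(1-|z|))$.

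There is essentially no obstacle here: the estimate is just the standard ``comparison with the geometric series'' argument for attractors of contracting IFS, made concrete by the symmetry about $1/2$. The only point worth a sentence is why $R < \infty$ to begin with, which follows from the fact that $\Lz$ is a nonempty compact subset of $\C$ (guaranteed by the general theory of IFS with strictly contracting generators, already invoked in the introduction). Alternatively, one could avoid mentioning compactness by iterating the estimate $n$ times on a long word in $f,g$ applied to any starting point (say $1/2$ itself), obtaining $|w(1/2) - 1/2| \leq (|1-z|/2)\sum_{i=0}^{n-1}|z|^i$, and passing to the limit on the dense set of fixed points of elements of $\Gz$; either presentation is essentially the same.
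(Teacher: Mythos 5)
Your proof is correct and uses the same estimate as the paper's: the inequality $|h(p') - 1/2| \le |z|\,|p' - 1/2| + |1-z|/2$ for $h \in \{f,g\}$. The paper phrases it as showing that the specific disk $D$ of radius $|z-1|/2(1-|z|)$ about $1/2$ satisfies $fD, gD \subseteq D$, so $\Lz \subseteq D$, whereas you take the supremum $R$ of $|p-1/2|$ over $\Lz$ and use $\Lz = f\Lz \cup g\Lz$ to get $R \le |z|R + |1-z|/2$; these are two ways of packaging the same one-line computation.
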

\begin{proof}
Let $D$ denote the ball of radius $R$ centered at $1/2$. Then $fD:=f(D)$ and $gD:=g(D)$
are the balls of radius $|z|R$ centered at $z/2$ and $1-z/2$ respectively. So providing
$R\ge |z-1|/2(1-|z|)$ we have $fD,gD \subseteq D$. But this means $\Lz \subseteq D$.
\end{proof}

\begin{lemma}\label{lemma:distance_estimate}
Let $e,e'$ be words with a common prefix of length $n$. Let $x$ be contained in $D$,
the ball of radius $|z-1|/2(1-|z|)$ centered at $1/2$. Then
$$d(ex,e'x)\le \frac {|z|^n|z-1|}{1-|z|}.$$
\end{lemma}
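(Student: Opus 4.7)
The plan is to factor out the common prefix and reduce to a diameter bound. Write $e = w e_1$ and $e' = w e_2$ where $w$ is the common prefix of length $n$ and $e_1, e_2$ are the remaining (possibly empty) suffixes. Then $ex = w(e_1 x)$ and $e' x = w(e_2 x)$.

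Next, I would observe that since each generator $f, g \in \Gz$ is a similarity with dilation $z$, any composition of $n$ such generators is again a similarity with dilation $z^n$. In particular, $w$ is a similarity with dilation $z^n$, and therefore it scales Euclidean distance uniformly by the factor $|z|^n$. Hence
\[
d(ex, e'x) \;=\; d\bigl(w(e_1 x), w(e_2 x)\bigr) \;=\; |z|^n\, d(e_1 x, e_2 x).
\]

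Finally, I would invoke Lemma~\ref{lemma:diameter_bound} (the diameter bound), together with the fact that the disk $D$ is forward-invariant under both generators (this is exactly what is shown inside the proof of that lemma: $fD, gD \subseteq D$). Since $x \in D$, both $e_1 x$ and $e_2 x$ lie in $D$. The diameter of $D$ is twice its radius, namely $|z-1|/(1-|z|)$, so $d(e_1 x, e_2 x) \le |z-1|/(1-|z|)$. Combining this with the scaling identity above yields the desired estimate
\[
d(ex, e'x) \;\le\; \frac{|z|^n\,|z-1|}{1-|z|}.
\]

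There is no real obstacle here; the only point to be careful about is the edge case where $e_1$ or $e_2$ is the empty word (so that $e_i x = x$), which is handled trivially since $x \in D$ by hypothesis, so the diameter bound still applies.
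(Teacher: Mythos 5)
Your proof is correct and follows essentially the same route as the paper: factor out the common prefix of length $n$, note that it is a similarity with dilation $z^n$, use forward-invariance of $D$ to place the suffix images in $D$, and apply the diameter bound from Lemma~\ref{lemma:diameter_bound}. The paper's version is just a terser statement of the identical argument.
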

\begin{proof}
Write $e=uv$ and $e'=uv'$. Then $vx,v'x\in D$ so $d(vx,v'x)\le |z-1|/(1-|z|)$
by Lemma~\ref{lemma:diameter_bound}. But the dilation of $u$ is $|z|^n$,
so the estimate follows.
\end{proof}


\begin{definition}[Compactification]\label{definition:compactification}
Let $\Sigma$ denote the set of finite words in the alphabet $\{f,g\}$, and let $\overline{\Sigma}$ denote all right-infinite words in this alphabet, such that if a word contains $*$, all successive letters are also $*$. Metrize $\overline{\Sigma}$
with the metric $d(e,e')=2^{-n}$ where $n$ is the length of the biggest common prefix of $e$ and
$e'$.

The set $\overline \Sigma$ decomposes naturally into the subset $\partial \Sigma$ of words
not containing the symbol $*$, and words that {do} contain the symbol $*$ which are
in natural bijection with $\Sigma$, under the map that takes a finite word in $f,g$ to the infinite
word obtained by padding with infinitely many $*$ symbols.
\end{definition}

\begin{lemma}\label{lemma:compactification}
The space $\Sigma$ is compact. The subspace $\partial \Sigma$ is homeomorphic to a Cantor
set, and $\Sigma$ is homeomorphic to a discrete set, whose accumulation points are precisely
$\partial \Sigma$.
\end{lemma}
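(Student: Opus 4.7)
The plan is to embed $\overline{\Sigma}$ into the product space $\{f,g,*\}^{\mathbb{N}}$ with the discrete topology on each factor, verify that the Cantor-style metric $d(e,e')=2^{-n}$ induces the product (equivalently subspace) topology, and then deduce everything from standard facts about products of finite discrete spaces. The total space $\{f,g,*\}^{\mathbb{N}}$ is compact by Tychonoff, and $\overline{\Sigma}$ is the subset cut out by the condition ``if the $k$th coordinate is $*$, then the $(k+1)$st coordinate is $*$.'' This condition is closed (indeed, it only depends on finitely many coordinates at a time), so $\overline{\Sigma}$ is a closed subspace of a compact space, hence compact.

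For $\partial \Sigma$, I would observe that it is exactly the preimage of the closed subspace $\{f,g\}^{\mathbb{N}} \subseteq \{f,g,*\}^{\mathbb{N}}$ under the inclusion, so it is closed in $\overline{\Sigma}$ and naturally identified with $\{f,g\}^{\mathbb{N}}$. Then I would invoke Brouwer's characterization of the Cantor set: $\{f,g\}^{\mathbb{N}}$ is a nonempty, compact, metrizable, totally disconnected space with no isolated points (every finite prefix can be extended in at least two ways), and hence is homeomorphic to the Cantor set.

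For the discreteness of $\Sigma$ inside $\overline{\Sigma}$, take any finite word $w$ of length $n$; its image in $\overline{\Sigma}$ is $w*^\infty$. The open ball of radius $2^{-n-1}$ around $w$ consists of all $e \in \overline{\Sigma}$ which agree with $w*^\infty$ in the first $n+1$ positions, i.e.\ begin with $w$ followed by $*$. By the defining constraint on $\overline{\Sigma}$, any such $e$ must continue with $*$'s forever, so $e = w$. Thus each finite word is isolated, which shows $\Sigma$ is a discrete subset of $\overline{\Sigma}$.

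Finally I would identify the accumulation points of $\Sigma$. On the one hand, every $e \in \partial \Sigma$ is a limit of its length-$n$ truncations $e|_n \in \Sigma$, since $d(e, e|_n) \le 2^{-n}$, and these truncations are all distinct from $e$. On the other hand, if a sequence of pairwise distinct points in $\Sigma$ converges to a limit $e \in \overline{\Sigma}$, then by isolation the limit cannot lie in $\Sigma$, so $e \in \partial \Sigma$. This gives both inclusions and identifies the accumulation set as exactly $\partial \Sigma$. The only mildly subtle point throughout is the asymmetric role played by $*$ in the metric (it acts as a ``padding'' letter that nonetheless counts toward the length of a common prefix), but once one fixes the convention that finite words are encoded as $w*^\infty$, the above verifications are all routine.
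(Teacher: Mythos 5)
Your proof is correct and fills in the details that the paper dismisses as ``immediate from the definition''; the route you take (embed $\overline{\Sigma}$ in the product $\{f,g,*\}^{\mathbb{N}}$, use Tychonoff, and verify Brouwer's characterization of the Cantor set) is the standard one and is surely what the authors have in mind. Two small remarks: the paper's statement ``The space $\Sigma$ is compact'' is a typo for $\overline{\Sigma}$, as you correctly inferred (otherwise it would contradict the discreteness claim in the same sentence); and in the isolation argument the open ball of radius $2^{-n-1}$ forces agreement on the first $n+2$ coordinates rather than $n+1$ under the given metric, but since agreement on $n+1$ coordinates (which a closed ball of that radius, or an open ball of radius $2^{-n}$, would give) already suffices to force $e=w*^{\infty}$, the conclusion stands unchanged.
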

\begin{proof}
This is immediate from the definition.
\end{proof}

There is a natural symmetry of $\overline\Sigma$ interchanging the symbols $f$ and $g$ and fixing the symbol $*$. 

Note that $\overline\Sigma$ is formally distinct from $G_z$, which is the semi-group generated by compositions of the affine maps $f$ and $g$. 
It's important to make the distinction between $\Sigma$ and $G_z$ as we are interested in how the semigroup changes as the parameter $z$ varies. 
\begin{definition} There is an obvious map 
\[
\sigma_z:\Sigma\to \Gz
\]
such that $\sigma_z(u)\in \Gz$ is the appropriate composition of the maps 
\[
f:x\mapsto zx, \quad \text{and}\quad g:x\mapsto z(x-1)+1.
\] 
\end{definition}


\begin{definition}\label{definition:wordaction}
Let $u\in \Sigma$ be a word of length $n$, and (by abusing notation), define the map $u:\D^*\times \C\to \C$ given by 
\[
u:(z,x)\mapsto \sigma_z(u)(x).
\]
We will also use the notation $u(z)(x):=u(z,x)$, and we often consider the map $\D^*\to \C$ given by $z\mapsto u(z,x)$. The map $u$ is continuous in both $z$ and $x$, which is evident in Section \ref{section:regular}. 
\end{definition}

\begin{definition} The map $\pi:\partial\Sigma\times \D^*\to \C$ is defined by 
\[
\pi(u,z)=\lim_{n\to\infty} u_n(z,x)
\]
where $u_n$ is the prefix of $u$ of length $n$, and $x\in\C$ is any point. By Lemma \ref{lemma:distance_estimate}, this limit is well-defined, independent of the point $x\in \C$. 
\end{definition}
%
%
%
%

\begin{lemma}\label{lemma:word_composition}
For $u,v \in \Sigma$ and $x \in \C$, we have $uv(z,x) = u(z,v(z,x))$.  That is, 
$uv(z) = u(z) \circ v(z)$.
For $u \in \Sigma$ and $v \in \partial \Sigma$, we have 
$\pi(uv,z) = u(z,\pi(v,z))$.
\end{lemma}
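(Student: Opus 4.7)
My plan is to prove both statements by exploiting the fact that the map $\sigma_z$ is, essentially by construction, a semigroup homomorphism from the free semigroup $\Sigma$ (on the symbols $f,g$) to the composition semigroup $G_z$, together with the continuity of the affine maps making up each $\sigma_z(u)$.

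For the first statement, I would argue by induction on the length of $u \in \Sigma$. When $u$ has length one, $u$ is a single letter $f$ or $g$, and the definition of $\sigma_z$ says that $\sigma_z(uv)$ is the composition of $\sigma_z(v)$ with the similarity $x\mapsto zx$ or $x\mapsto z(x-1)+1$ respectively; that is, $\sigma_z(uv) = \sigma_z(u)\circ \sigma_z(v)$. For the inductive step, write $u = u_1 u'$ with $u_1$ a single letter; then $\sigma_z(uv) = \sigma_z(u_1)\circ \sigma_z(u'v) = \sigma_z(u_1)\circ \sigma_z(u') \circ \sigma_z(v) = \sigma_z(u)\circ \sigma_z(v)$ by the base case and the inductive hypothesis applied to $u'$. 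Unwinding the notation in Definition~\ref{definition:wordaction}, this is exactly the equality $uv(z,x) = u(z,v(z,x))$.

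For the second statement, let $u \in \Sigma$ have length $k$, let $v \in \partial \Sigma$, and let $v_m$ denote the prefix of $v$ of length $m$. Fix any $x \in \C$. For every $n \ge k$, the prefix $(uv)_n$ of $uv$ of length $n$ factors as the concatenation $u \cdot v_{n-k}$, so by the first part of the lemma we have $(uv)_n(z,x) = u(z, v_{n-k}(z,x))$. By the definition of $\pi$, $v_{n-k}(z,x) \to \pi(v,z)$ as $n \to \infty$. Since $\sigma_z(u) = u(z,\cdot)$ is a complex affine map, it is continuous on $\C$, so we may pass the limit inside to obtain
\[
\pi(uv,z) = \lim_{n\to\infty} (uv)_n(z,x) = u\!\left(z,\lim_{n\to\infty} v_{n-k}(z,x)\right) = u(z,\pi(v,z)),
\]
which is the desired identity.

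There is no genuine obstacle here: the first part is a formal consequence of the definition of $\sigma_z$, and the second part only uses this together with continuity of an affine map and the definition of $\pi$ as a limit guaranteed by Lemma~\ref{lemma:distance_estimate}. The one point to be careful about is keeping the notational distinction between the symbolic concatenation of words in $\Sigma$ and the composition of the corresponding similarities in $G_z$, but this is handled cleanly by the single induction in the first step.
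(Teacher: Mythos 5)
Your proof is correct and is simply a careful expansion of what the paper dismisses as ``Obvious from the definitions'': the first identity is the homomorphism property of $\sigma_z$ (true essentially by construction, as you note), and the second follows by combining it with the continuity of the affine map $u(z,\cdot)$ and the definition of $\pi$ as a limit. There is no divergence in approach, just a difference in the level of detail written down.
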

\begin{proof}
Obvious from the definitions.
\end{proof}

\begin{lemma}[H\"older continuous]\label{lemma:Holder_estimate}
The map $\pi(\cdot,z):\partial\Sigma\to\C$ is H\"older continuous with exponent
$\log{|z|}/\log(0.5)$, and the image is $\Lz$.
\end{lemma}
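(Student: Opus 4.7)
The plan is to extract the Hölder bound directly from Lemma~\ref{lemma:distance_estimate}, and to identify the image as $\Lambda_z$ via the uniqueness of the compact invariant set for $G_z$.

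For the Hölder estimate, I would start with two words $u, u' \in \partial\Sigma$ with $d(u,u') = 2^{-n}$; by definition they share a common prefix of length $n$. Pick any basepoint $x \in D$ (the ball of Lemma~\ref{lemma:diameter_bound}); since $D$ is invariant under both $f$ and $g$, every truncation $u_k(z,x)$ and $u'_k(z,x)$ lies in $D$. Applying Lemma~\ref{lemma:distance_estimate} to each pair of truncations of length $\ge n$, and passing to the limit, gives
\[
|\pi(u,z) - \pi(u',z)| \;\le\; \frac{|z|^n\,|z-1|}{1-|z|}.
\]
Now I rewrite $|z|^n = 2^{n\log_2|z|} = (2^{-n})^{-\log_2|z|} = d(u,u')^{\log|z|/\log(1/2)}$, which gives the stated Hölder exponent $\log|z|/\log(0.5)$ with constant $|z-1|/(1-|z|)$. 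As a byproduct, $\pi(\cdot,z)$ is continuous, so in particular its image is compact.

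For the image, I would use the characterization of $\Lambda_z$ as the unique nonempty compact $G_z$-invariant subset of $\C$ (noted immediately after Definition of the limit set). Let $K := \pi(\partial\Sigma, z)$; it is nonempty and compact by the previous paragraph. By Lemma~\ref{lemma:word_composition} applied to the prefix decomposition $\partial\Sigma = f\cdot\partial\Sigma \,\sqcup\, g\cdot\partial\Sigma$, we have
\[
K \;=\; \pi(f\cdot\partial\Sigma,z) \,\cup\, \pi(g\cdot\partial\Sigma,z) \;=\; f(K)\cup g(K),
\]
so $K$ is $G_z$-invariant. By uniqueness, $K = \Lambda_z$.

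The only non-routine step is choosing the right basepoint so that Lemma~\ref{lemma:distance_estimate} actually applies to the truncations — using any $x\in D$ (e.g.\ $x=0$ or $x=1/2$) handles this cleanly, since $D$ is forward-invariant, so I do not expect a genuine obstacle. The Hölder exponent computation is purely algebraic, and the identification of the image reduces to an invariance check plus the uniqueness statement already recorded earlier in the paper.
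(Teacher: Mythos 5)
Your proof is correct, and the H\"older estimate is obtained exactly as in the paper: both arguments reduce to Lemma~\ref{lemma:distance_estimate} and the algebraic rewriting $|z|^n = (2^{-n})^{\log|z|/\log(0.5)}$, with you being slightly more careful to apply the lemma to finite truncations and pass to the limit (the lemma as stated is about finite words). Where you diverge is in identifying the image as $\Lz$. The paper invokes the \emph{definition} of $\Lz$ as the closure of the set of fixed points: periodic words $vvv\cdots$ map to the fixed points of $v$, so the image contains a dense subset of $\Lz$, and since $\partial\Sigma$ is compact and $\pi(\cdot,z)$ continuous the image is closed, hence contains $\Lz$ (the reverse inclusion is implicit). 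You instead use the \emph{characterization} of $\Lz$ as the unique nonempty compact invariant set, showing $K = f(K)\cup g(K)$ via the prefix decomposition $\partial\Sigma = f\partial\Sigma\sqcup g\partial\Sigma$ and Lemma~\ref{lemma:word_composition}, then appealing to uniqueness. Your route is arguably cleaner since it dispenses entirely with the discussion of periodic words and one inclusion being left implicit; the paper's route is more directly tied to the definition in the text and makes explicit that every point of $\Lz$ is approximated by centers of group elements, a fact used elsewhere. Both are standard IFS arguments and either is acceptable.
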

\begin{proof}
Evidently if $e$ is a periodic word $e:=vvvv\cdots$ then $\pi(e,z)$ is the center (i.e.\/
the fixed point) of $v$; since $\partial \Sigma$ is compact, if $\pi$ is continuous, then the image is closed
and is therefore equal to $\Lz$. So it suffices to show $\pi$ is H\"older, and
estimate the exponent.

From the definition, if $e,e'$ have a common maximal prefix of length $n$ then 
$d_{\overline{G}}(e,e')=2^{-n}$. On the other hand, by Lemma~\ref{lemma:distance_estimate} we
obtain 
$$d(\pi(e,z),\pi(e',z))\le \frac {|z|^n|z-1|}{(1-|z|)} = \frac {(0.5^n)^\alpha|z-1|}{(1-|z|)}$$
for $\alpha=\log{|z|}/\log(0.5)$. 
\end{proof}


%

\subsection{Geometry of $\SetA$}


The following result is proved in \cite{Bousch1}; we include a proof for completeness. 
\begin{lemma}[inner and outer annuli]\label{lemma:inner_outer}
$\SetA$ (the set of $z$ for which the semigroup $\Gz$ has connected $\Lz$) 
contains the region
$|z|\ge 1/\sqrt{2} = 0.7071067\cdots$ and is contained in the region
$|z|\ge 1/2$. 
\end{lemma}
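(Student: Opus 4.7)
I would prove the two inclusions separately, in both cases working with the bounding disk $B$ of radius $R = |z-1|/(2(1-|z|))$ centered at $1/2$ provided by Lemma~\ref{lemma:diameter_bound}, so that $\Lz\subseteq B$ and therefore $f\Lz\subseteq fB$, $g\Lz\subseteq gB$. The outer inclusion is an easy disk-geometry argument, while the inner inclusion requires an inverse-limit construction and a careful combinatorial analysis.

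\textbf{Outer inclusion $\SetA\subseteq\{|z|\ge 1/2\}$.} Suppose $|z|<1/2$. The images $fB$ and $gB$ are round disks of radius $|z|R$ centered at $z/2$ and $1-z/2$, whose centers lie at distance $|1-z|$ apart. These two disks are disjoint precisely when $|1-z|>2|z|R=|z|\,|1-z|/(1-|z|)$, which simplifies to $1-|z|>|z|$, i.e.\ $|z|<1/2$. In that regime the compact sets $f\Lz\subseteq fB$ and $g\Lz\subseteq gB$ are disjoint, so $\Lz=f\Lz\cup g\Lz$ is disconnected (cf.\ Proposition~\ref{proposition:C_in_B_in_A}), and $z\notin\SetA$.

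\textbf{Inner inclusion $\{|z|\ge 1/\sqrt{2}\}\subseteq\SetA$.} Assume $|z|\ge 1/\sqrt{2}$. Set $K_0=B$ and $K_{n+1}=fK_n\cup gK_n$, so that $K_n=\bigcup_{|w|=n}w(B)$ is a decreasing sequence of compact sets with $\Lz=\bigcap_n K_n$. The intersection of a decreasing sequence of nonempty compact connected sets is connected, so it suffices to exhibit, for each $n$, a connected compact subset $C_n\subseteq K_n$ with $\Lz\subseteq C_n$. Inductively, once $K_{n-1}$ (or such a $C_{n-1}$) is connected, the next stage is connected as soon as the two halves $fC_{n-1}$ and $gC_{n-1}$ meet. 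The base case $K_1=fB\cup gB$ is connected because $fB\cap gB\ne\varnothing$ whenever $|z|\ge 1/2$. The crucial step is level two: the sets $fgB$ and $gfB$ are round disks of radius $|z|^2R$ centered at $fg(1/2)=z-z^2/2$ and $gf(1/2)=1-z+z^2/2$, separated by $|1-z|^2$. The intersection $fgB\cap gfB$ is non-empty iff $|1-z|^2\le 2|z|^2 R=|z|^2|1-z|/(1-|z|)$, and using the elementary bound $|1-z|\le 1+|z|$ this is implied by $(1-|z|)(1+|z|)=1-|z|^2\le |z|^2$, i.e.\ $|z|^2\ge 1/2$. This is exactly the hypothesis $|z|\ge 1/\sqrt{2}$, and gives $K_2$ connected.

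\textbf{Propagating to higher levels.} To continue the induction I would exploit two mechanisms for producing edges in the ``intersection graph'' on length-$n$ words (with $w\sim w'$ when $w(B)\cap w'(B)\ne\varnothing$): \emph{local edges} $vf\sim vg$ for any prefix $v$, coming from $fB\cap gB\ne\varnothing$; and \emph{cross edges} produced by choosing words $w_f,w_g$ of length $n$ starting with $f,g$ respectively such that $|w_f(1/2)-w_g(1/2)|\le 2|z|^nR$. Writing $w(1/2)=z^n/2+(1-z)\sum_{i=1}^n\epsilon_i z^{i-1}$ where $\epsilon_i\in\{0,1\}$ records the $i$-th letter, the distance between any two words is $|1-z|$ times the modulus of a polynomial with $\{-1,0,1\}$ coefficients of degree less than $n$, whose constant term is $\pm 1$ when the words differ in first position. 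The hypothesis $|z|\ge 1/\sqrt{2}$ ensures that for each $n$ one can choose the higher-order coefficients so that this modulus becomes small enough to satisfy the intersection inequality. Combining these two families of edges connects the $f$-subtree to the $g$-subtree at every level.

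\textbf{Main obstacle.} The routine part is the level-$2$ computation, which pins down the threshold $1/\sqrt{2}$. The main obstacle is the uniform production of a cross edge at every level $n\ge 3$: while the level-$n$ condition $1-|z|\le |z|^n$ for a ``naive'' cross pair like $fg^{n-1}\sim g^{n}$ fails for large $n$ at $|z|=1/\sqrt{2}$, sharper pairs with highly cancelling polynomial differences do work. I would carry out this selection carefully (following Bousch), which is the technical heart of the inner inclusion.
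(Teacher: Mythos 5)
Your outer inclusion argument is correct and essentially the same as the paper's, if anything a bit cleaner: you use the canonical invariant disk $B$ of radius $|z-1|/2(1-|z|)$ from Lemma~\ref{lemma:diameter_bound} rather than the disk of radius $1$, and the computation $|1-z| > 2|z|R \iff |z| < 1/2$ is exactly right, after which Lemma~\ref{lemma:disconnected_Cantor} gives $z\notin\SetA$.

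The inner inclusion is where your proposal diverges, and where it has a genuine gap. The paper's proof is a two-line argument via Hausdorff dimension: if $\Lz$ were disconnected it would be a Cantor set (Lemma~\ref{lemma:disconnected_Cantor}), and Moran's theorem then forces $\dim_H\Lz = d$ with $2|z|^d=1$; for $|z|>1/\sqrt 2$ this gives $d>2$, which is impossible for a subset of $\C$. Your approach instead tries to prove connectivity directly by showing each approximant $K_n=\Sigma_n(z,B)$ is connected, and your level-$2$ computation $fgB\cap gfB\ne\varnothing \iff |1-z|(1-|z|)\le|z|^2$, which holds under $|z|\ge 1/\sqrt 2$ via $|1-z|\le 1+|z|$, correctly locates the threshold. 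But this does not propagate: the two pieces you need to intersect at level $n$ are $fK_{n-1}$ and $gK_{n-1}$, which are copies of the increasingly thin fractal $K_{n-1}$ scaled by $|z|$ and offset by $1-z$; the level-$2$ fact says nothing about whether these meet, and (as you note) naive cross pairs like $fg^{n-1}$ versus $g^n$ fail at $|z|=1/\sqrt 2$ for large $n$. Your proposal explicitly defers this, labelling it ``the technical heart,'' which means the inner inclusion is not actually established — only levels $1$ and $2$ are checked, and neither $K_n$ connected for $n\ge 3$ nor $f\Lz\cap g\Lz\ne\varnothing$ is deduced. If you want to pursue the combinatorial route you would need to carry out the power-series selection argument (essentially Bousch's), but the Moran-dimension argument in the paper sidesteps all of it.
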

\begin{proof}
We shall see (Lemma~\ref{lemma:disconnected_Cantor}) that the limit set $\Lz$ 
of the semigroup $\Gz$ is disconnected
if and only if $f\Lambda_z \cap g\Lambda_z$ is empty, in which case $\Lz$ is
a Cantor set. In this case, the Hausdorff dimension of $\Lz$ can be computed
from Moran's Theorem (see \cite{Climenhaga_Pesin}, Ch.~2), as the unique $d$ for which
$$2|z|^d = 1.$$
In fact, this is easy to see directly: for a subset of Euclidean space,
the $d$-dimensional Hausdorff measure transforms by
$\lambda^d$ when the set is scaled linearly by the factor $\lambda$. When $\Lz$ is
disconnected, it is the disjoint union of $f\Lambda_z$ and $g\Lambda_z$, which are obtained 
(up to translation) by scaling $\Lambda_z$ by $z$; the formula follows.

If $|z|>1/\sqrt{2}$ then $d>2$ which is absurd, since
$\Lz$ is a subset of $\C$. Thus $|z|>1/\sqrt{2}$ is in $\SetA$, and since this set
is closed, so is $|z|\ge 1/\sqrt{2}$.

Conversely, if $|z|<1/2$ then the round disk $D$ of radius 1 centered at
$1/2$ is a good disk, so $\Gz$ is Schottky, and $\Lz$ is disconnected.
\end{proof}

\begin{example}
The estimates in Lemma~\ref{lemma:inner_outer} are sharp. Taking
$z=1/2$, we see that $f(1)=g(0)=1/2$, so $1/2\in f\Lambda_{\frac{1}{2}}\cap g\Lambda_{\frac{1}{2}}$; in fact, 
$\Lz= [0,1]$ in this case (and for all $z\in [1/2,1)$).

Likewise, taking $z=i/\sqrt{2}$ the rectangle $R$ with corners 
$\lbrace -1,i/\sqrt{2},2,2-i/\sqrt{2}\rbrace$
satisfies $R=fR\cup gR$, so that $R=\Lambda_{{i}/{\sqrt{2}}}$, whereas for $z=it$ with $t<1/\sqrt{2}$ 
the rectangle with corners $\lbrace -1,it,2,2-it\rbrace$ is good and $\Gz$ is Schottky; see
Figure~\ref{figure:limit_set_example}, left.
\end{example}


Solomyak--Xu \cite{Solomyak_Xu} Thm.~2.8 show that the set of $z$ with $|z|<2^{-1/2}$ for which
the Hausdorff dimension of $\Lz$ is different from $d:=-\log{2}/\log{|z|}$ itself has
Hausdorff dimension less than $2$. Since (as we shall show) interior points are dense in $\M$ away from $\R$, 
this implies that the simple formula for the Hausdorff dimension of $\Lz$ is valid on a dense
subset of $\M$. Finer results about the ``exceptions'' are known, but we do not pursue that here.

\section{Roots, polynomials, and power series with regular coefficients}\label{section:regular}

The most interesting mathematical objects are those that can be defined in many different ---
and apparently unrelated --- ways.
The sets $\SetA$ and $\SetB$ can be defined in a way which is (at first glance) entirely
unconnected to dynamics, namely as the closures of the set of {\em roots} of certain classes 
of polynomials. This connection is quite sensitive to the choice of 
normalization for our semigroups, and in fact the freedom to choose 
several different normalizations is itself of some theoretical interest.

\subsection{The Barnsley-Harrington and Bousch normalization} 

Recall the normalization 
$f:x \mapsto zx+1$, $g:x \mapsto zx-1$. If $w$ is a word of length $n$ in $f$ and $g$, we can express $wx$ as a
polynomial of a particularly simple form, namely
$$w(z,x) = \sum_{j=0}^{n-1} a_jz^j + xz^n$$
where the $a_j \in \lbrace -1,1\rbrace$ are equal to $1$ or $-1$ according to whether each successive
letter of $w$ is equal to $f$ or $g$.
In particular, the limit set $\Lz$ is precisely equal to the set of values 
of power series in $z$ with coefficients in $\lbrace -1,1\rbrace$. In this normalization, the center of symmetry is
$0$ (rather than $1/2$ as in our normalization), so we obtain the following characterization of $\SetB$:

\begin{proposition}[Power series with $\lbrace -1,1\rbrace$ coefficients]\label{proposition:SetB_power_series}
The set $\SetB$ is the set of $z\in\D^*$ which are zeros of power series with coefficients in
$\lbrace -1,1\rbrace$.
\end{proposition}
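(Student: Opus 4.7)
The plan is to reduce the statement to two facts: (i) that in the Barnsley--Harrington normalization, the limit set is \emph{exactly} the set of values at $z$ of power series with $\{-1,1\}$ coefficients, and (ii) that this normalization is affinely conjugate to our normalization via a map sending $0$ to $1/2$. Granted these, membership of $z$ in $\SetB$ (defined as $1/2 \in \Lz$) translates into the existence of a $\{-1,1\}$-power series vanishing at $z$.

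First I would make precise the identification in (i). For a word $w$ of length $n$ in $f^{BH}, g^{BH}$, iterating the displayed formula $w(z,x) = \sum_{j=0}^{n-1} a_j z^j + xz^n$ (with $a_j = \pm 1$ recording the successive letters of $w$), and then letting $n \to \infty$ along any sequence $(a_j)_{j \ge 0} \in \{-1,1\}^{\mathbb{N}}$, the tail $xz^n$ vanishes because $|z|<1$. By Lemma~\ref{lemma:Holder_estimate} applied to the BH normalization, the uniform limit defines the compactification map $\pi^{BH}(\cdot, z)$, whose image is $\Lambda_z^{BH}$. Thus
\[
\Lambda_z^{BH} \;=\; \Big\{\, \sum_{j=0}^\infty a_j z^j \,:\, a_j \in \{-1,1\} \,\Big\}.
\]

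Second I would pass from the BH normalization to ours by the explicit affine map $T_z(x) := \tfrac{(1-z)x + 1}{2}$. A direct check shows $T_z \circ f^{BH} \circ T_z^{-1} = g$ and $T_z \circ g^{BH} \circ T_z^{-1} = f$, so $T_z$ conjugates the BH semigroup to $G_z$ (merely swapping the two generators) and therefore carries $\Lambda_z^{BH}$ onto $\Lz$. Since $T_z(0) = 1/2$, we get the key equivalence $1/2 \in \Lz \Leftrightarrow 0 \in \Lambda_z^{BH}$.

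Combining, $z \in \SetB$ iff $1/2 \in \Lz$ iff $0 \in \Lambda_z^{BH}$ iff there exist $a_j \in \{-1,1\}$ with $\sum_{j \ge 0} a_j z^j = 0$, i.e.\ $z$ is a root of a $\{-1,1\}$-coefficient power series. The only real content is step (i); the only ``obstacle'' is a small bookkeeping check matching letters of $w$ to coefficient indices, which is inconsequential because $\{-1,1\}^{\mathbb{N}}$ is stable under the indexing convention. No closure argument is needed in the statement because power series (not merely polynomials) are allowed; closure would only enter if one restricted to polynomial zeros, which this proposition does not.
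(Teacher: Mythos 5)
Your proposal is correct and follows essentially the same route as the paper: identify $\Lambda_z^{BH}$ with the values of $\{-1,1\}$ power series via the recursive coefficient formula, then observe that the BH normalization is affinely conjugate to ours with the center of symmetry $0$ mapping to $1/2$. The paper leaves the conjugation implicit (just noting the centers of symmetry), whereas you exhibit it explicitly via $T_z(x) = ((1-z)x+1)/2$ and verify it swaps the generators — a harmless elaboration of the same argument.
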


Similarly, the subsets $f\Lambda_z$ and $g\Lambda_z$ are the sets of {\em values} of power series with
$\lbrace -1,1\rbrace$ coefficients which start with $1$ and $-1$ respectively. 
Thus $z \in \SetA$ if and only if $f\Lambda_z \cap g\Lambda_z$ is nonempty,
which happens if and only if $z$ is a root of
a power series with coefficients in $\lbrace -2,0,2\rbrace$ starting with $\pm 2$.
Equivalently, after dividing such a
power series by $2$, we see that $z \in \SetA$ if and only if $z$ is a root of
a power series with coefficients in $\lbrace -1,0,1\rbrace$ starting with $\pm 1$:

\begin{proposition}[Power series with $\lbrace -1,0,1\rbrace$ coefficients]\label{proposition:SetA_power_series}
The set $\SetA$ is the set of $z\in\D^*$ which are zeros of power series with coefficients in
$\lbrace -1,0,1\rbrace$ with constant term $=\pm 1$.
\end{proposition}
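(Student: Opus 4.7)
The plan is to exploit the Barnsley--Harrington normalization $f: x \mapsto zx+1$, $g: x \mapsto zx-1$ (which is related to ours by an affine change of coordinates and therefore has the same parameter set $\SetA$), combine it with the preceding observation that $\Lz$ in this normalization equals the set of values at $z$ of power series with $\{-1,1\}$ coefficients, and then convert the condition $f\Lambda_z \cap g\Lambda_z \neq \emptyset$ into a statement about roots.

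First I would recall from Lemma~\ref{lemma:disconnected_Cantor} (invoked in the proof of Proposition~\ref{proposition:C_in_B_in_A}) that $z \in \SetA$ if and only if $f\Lambda_z \cap g\Lambda_z \neq \emptyset$. Second, using the identification already established in the BH normalization, $f\Lambda_z$ is precisely the set of values $\sum_{j\ge 0} a_j z^j$ with $a_0 = 1$ and $a_j \in \{-1,1\}$ for $j \ge 1$, and analogously $g\Lambda_z$ with $a_0 = -1$. Thus $f\Lambda_z \cap g\Lambda_z \neq \emptyset$ is equivalent to the existence of sequences $(a_j), (b_j) \in \{-1,1\}^{\mathbb{N}}$ with $a_0 = 1$, $b_0 = -1$ such that
\[
\sum_{j\ge 0} (a_j - b_j) z^j = 0.
\]

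Third, setting $c_j := (a_j - b_j)/2$, one has $c_0 = 1$ and $c_j \in \{-1,0,1\}$ for $j \ge 1$, exhibiting $z$ as a root of a power series with coefficients in $\{-1,0,1\}$ and constant term $1$. Conversely, given any such power series $\sum_{j\ge 0} c_j z^j$ with $c_0 = 1$ and $c_j \in \{-1,0,1\}$, I would reconstruct $(a_j), (b_j)$ by setting $a_0 = 1$, $b_0 = -1$, and for $j \ge 1$ choosing $(a_j, b_j) = (1,-1), (-1,1), (1,1)$ according to whether $c_j = 1, -1, 0$; then $a_j - b_j = 2c_j$ for all $j$, so the corresponding pair of $\{-1,1\}$ power series yields a common value, showing $z \in \SetA$. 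The constant-term $-1$ case follows by negating the series (or by the symmetry interchanging the roles of $f$ and $g$).

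There is no real obstacle here: the statement is essentially a bookkeeping consequence of Propositions~\ref{proposition:SetB_power_series} and the criterion for connectedness of $\Lz$. The only minor point to be careful about is the $c_j = 0$ case in the reverse direction, where one must exhibit an explicit (non-unique) lift of $c_j$ to a pair $(a_j, b_j)$, and to remark that convergence of the power series on $\D^*$ is automatic because the coefficients are bounded.
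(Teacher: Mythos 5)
Your proposal is correct and follows essentially the same route as the paper: working in the Barnsley--Harrington normalization, identifying $f\Lambda_z$ and $g\Lambda_z$ with values of $\lbrace -1,1\rbrace$ power series with prescribed leading coefficient, forming the difference to obtain a $\lbrace -2,0,2\rbrace$ series with constant term $\pm 2$, and dividing by $2$. The only difference is that you spell out the (obvious but worth noting) lifting step for the converse direction and the remark about the $c_j = 0$ case, which the paper leaves implicit.
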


In either case, zeros of power series with prescribed coefficients can be approximated by zeros of
{\em polynomials} with the same constraints on the coefficients. This suggests defining an
``extended'' $\SetA$ (resp. $\SetB$) to be the closures of the set of {\em all} roots $z$ 
(not just those in $\D^*$) of polynomials
with coefficients in $\lbrace -1,0,1\rbrace$ (resp. $\lbrace -1,1\rbrace$). 
Reversing the order of the coefficients replaces a root by its reciprocal, 
so these extended sets are exactly the sets obtained by taking the union of 
$\SetA$ (resp. $\SetB$) together with its image under inversion in the unit circle. 

Using this interpretation of $\Lambda_z$ as the set of values of power series with $\lbrace -1,1\rbrace$
coefficients, Bousch noted an interesting relationship between $\SetA$ and $\SetB$. We
give the proof here for several reasons. Firstly, Bousch's paper is unpublished, and this argument
is not easy to extract from the paper. Secondly, it is short and illuminating. Thirdly, it depends
on a geometric fact which we use later in the proof of Proposition~\ref{proposition:iterated_SetA_SetB}.

\begin{proposition}[Bousch \cite{Bousch1}, Prop.~2]\label{proposition:square_Set_B}
If $z^2 \in \SetA$ then $z \in \SetB$. Consequently $\SetB$ contains the annulus
$2^{-1/4}\le |z| < 1$.
\end{proposition}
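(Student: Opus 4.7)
The plan is to pass to the Barnsley--Harrington normalization, in which (by Proposition~\ref{proposition:SetB_power_series}) $\Lambda_z$ is precisely the set of values of power series $\sum_{j\ge 0}a_j z^j$ with $a_j\in\{\pm 1\}$, and $\SetB=\{z\in\D^*:0\in\Lambda_z\}$. The first step is to observe the Minkowski-sum identity
\[
\Lambda_z \;=\; \Lambda_{z^2}+z\,\Lambda_{z^2},
\]
obtained by splitting a $\pm 1$ power series into its even and odd parts, each of which is a $\pm 1$ power series in $z^2$. Combined with the reflection symmetry $\Lambda_{z^2}=-\Lambda_{z^2}$, this gives
\[
z\in\SetB\;\Longleftrightarrow\; 0\in\Lambda_z\;\Longleftrightarrow\;\Lambda_{z^2}\cap z\,\Lambda_{z^2}\ne\varnothing,
\]
so the proof reduces to the purely geometric claim: if $\Lambda_{z^2}$ is connected then $\Lambda_{z^2}\cap z\,\Lambda_{z^2}\ne\varnothing$.

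To prove the geometric claim I would use the following observation about the attractor. Any compact, connected, $0$--symmetric subset $K\subset\C$ containing two nonzero antipodal points $\pm\alpha$ must enclose the origin: a continuous path in $K$ from $\alpha$ to $-\alpha$, concatenated with its $0$--reflection, forms a closed loop in $K$ of odd winding number around $0$. Applied to $K=\Lambda_{z^2}$ (with $\alpha=1/(1-z^2)$) and to $K'=z\Lambda_{z^2}$, this produces loops $\eta\subset\Lambda_{z^2}$ and $\eta'\subset z\Lambda_{z^2}$ each enclosing $0$. This is the geometric fact that ought to be reusable, e.g.\ in the later iterated version of the proposition.

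The main obstacle is the final step: deducing $\eta\cap\eta'\ne\varnothing$ from the existence of two $0$--enclosing loops, since in general two nested loops can wind around the origin without meeting (think of concentric circles). The resolution must exploit the IFS structure of $\Lambda_{z^2}$, not just abstract connectivity. Given a common point $\ell\in f\Lambda_{z^2}\cap g\Lambda_{z^2}$ witnessing connectedness, $\ell$ admits two distinct $\pm 1$ expansions $\sum a_jz^{2j}=\sum b_jz^{2j}$ with $a_0=1$, $b_0=-1$; and one can iterate this same connectedness assertion inside each piece of the self-similar decomposition $\Lambda_{z^2}=f\Lambda_{z^2}\cup g\Lambda_{z^2}$ to build nested level-$n$ witnesses, whose limit (by compactness) is an honest point of $\Lambda_{z^2}\cap z\Lambda_{z^2}$, equivalently a $\pm 1$ power series in $z$ vanishing at $z$. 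I expect this interlacing/nested step to be the technical heart of the argument.

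Finally, the ``consequently'' clause is an immediate corollary of Lemma~\ref{lemma:inner_outer}: for $|z|\ge 2^{-1/4}$ one has $|z^2|\ge 2^{-1/2}$, so $z^2\in\SetA$, and the main claim then yields $z\in\SetB$.
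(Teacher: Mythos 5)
Your setup is correct and matches the paper: you pass to the Barnsley--Harrington normalization, split a $\pm 1$ power series into even and odd parts to get $\Lambda_z = \Lambda_{z^2} + z\Lambda_{z^2}$, and use the symmetry $\Lambda_{z^2}=-\Lambda_{z^2}$ to reduce the claim to: if $\Lambda_{z^2}$ is connected then $\Lambda_{z^2}\cap z\Lambda_{z^2}\ne\varnothing$. You also correctly observe that the winding-number/enclosure fact (both $\Lambda_{z^2}$ and $z\Lambda_{z^2}$ enclose $0$) is not by itself enough.

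Where the proposal breaks down is in the ``nested level-$n$ witnesses'' step, which is flagged as the technical heart but not actually carried out. As written it does not work: iterating the intersection $f\Lambda_{z^2}\cap g\Lambda_{z^2}\ne\varnothing$ produces nested witnesses \emph{inside} $\Lambda_{z^2}$, but no mechanism is given to force the resulting limit point into $z\Lambda_{z^2}$, which is an entirely different copy; the even/odd decomposition of a $\pm 1$ series in $z$ would require two power series $p_e, p_o$ in $z^2$ with $p_e(z^2)=-zp_o(z^2)$, and the nesting you describe never couples the two sides of this equation. The argument Bousch actually uses (and the paper reproduces) is of a different flavor: assuming $\Lambda_{z^2}\cap z\Lambda_{z^2}=\varnothing$, one first shows by a nesting-of-innermost-loops argument that $z\Lambda_{z^2}$, and hence $z^2\Lambda_{z^2}$, sits strictly inside the hole of $\Lambda_{z^2}$ and is therefore disjoint from it. Then, writing $L:=z^2\Lambda_{z^2}$ and using the IFS relation $\Lambda_{z^2}=(L+1)\cup(L-1)$, one shows $L$ cannot be disjoint from $(L+1)\cup(L-1)$: the union of $L$ with vertical rays from its top-most and bottom-most points to $\pm i\infty$ separates $L-1$ from $L+1$, contradicting the connectedness of $\Lambda_{z^2}$. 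This separation argument via vertical rays is the concrete topological input the proposal is missing; the ``consequently'' clause is an immediate corollary of Lemma~\ref{lemma:inner_outer} as you say.
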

\begin{proof}
Let $\P$ denote the set of power series with coefficients in
$\lbrace -1,1\rbrace$. Then for any $p \in \P$ we can write $p(z) = p_e(z^2)+zp_o(z^2)$ for 
unique $p_e,p_o \in \P$. But this means that $\Lz = \Lambda_{z^2} + z\Lambda_{z^2}$.

Now, in this normalization, limit sets all have rotational symmetry around $0$. So if $\Lambda_{z^2}$ is
connected but doesn't contain $0$, there is some symmetric innermost loop $\gamma$ around $0$. 
If $\Lz$ doesn't contain $0$, then (since $\Lambda_{z^2} = -\Lambda_{z^2}$) it must be that
$\Lambda_{z^2}$ and $z\Lambda_{z^2}$ are disjoint, so that $z\Lambda_{z^2}$ is contained in the disk
bounded by $\gamma$, and similarly $z^2\Lambda_{z^2}$ is contained in the disk bounded by
$z\gamma$, and therefore $\Lambda_{z^2}$ is disjoint from $z^2\Lambda_{z^2}$. 
But Bousch shows this is absurd in the following way.

Write $L:=z^2\Lambda_{z^2}$ so that $\Lambda_{z^2} = (L+1) \cup (L-1)$. By hypothesis, both $L$ and
$(L+1) \cup (L-1)$ are compact and connected, so that $(L-1)$ intersects $(L+1)$. But then
$L$ must intersect $(L+1) \cup (L-1)$, since if it is disjoint from them both, the union of $L$ with
vertical rays from its top-most and bottom-most point to infinity separates $(L-1)$ from $(L+1)$.
\end{proof}

\subsection{Our normalization}\label{subsection:coefficients}

Now let's return to our normalization $f:x \mapsto zx$, $g:x \mapsto z(x-1)+1 = zx + (1-z)$.
If we fix $e\in \partial \Sigma$ and vary $z$, note that $z\mapsto \pi(e,z)$ is a function of $z$.
For any fixed $e$, we can express $\pi(e,z)$ as a very simple power series in $z$.
In fact, the set of power series that can be obtained are precisely those whose coefficients,
listed in order, are (right-infinite) words in an explicit regular language (for an introduction
to the theory of regular languages, see e.g.\/ \cite{ECHLPT}).

\begin{proposition}[Power series]\label{proposition:power_series}
For any fixed $e\in \Sigma$ of length $m$ there is a formula
$$e(z,x) = xz^m+\sum_{j=0}^m a_j z^j$$
where each $a_j \in \lbrace -1,0,1\rbrace$, and $a_m=0$ if $e$ ends with $f$ and $a_m=-1$ if $e$ ends with $g$. 

Furthermore, the string of digits $a_j$ for $j<m$ can be recursively obtained as follows.
Read the letters of $e$ from left to right, and express this as a walk on the edges of
the directed labeled graph in Figure~\ref{coefficient_graph}, starting at the vertex labeled $*$.

\begin{figure}[htpb]
\centering
\labellist
\small\hair 2pt
\pinlabel $*$ at 50 150
\pinlabel $1$ at 150 125
\pinlabel $-1$ at 250 125
\pinlabel $0$ at 200 200
\pinlabel $0$ at 200 50
\pinlabel $f$ at 125 200
\pinlabel $g$ at 105 150
\pinlabel $g$ at 155 177
\pinlabel $g$ at 155 73
\pinlabel $f$ at 245 177
\pinlabel $f$ at 245 73
\pinlabel $f$ at 200 244
\pinlabel $f$ at 200 155
\pinlabel $g$ at 200 95
\pinlabel $g$ at 200 6
\endlabellist
\includegraphics[scale=0.75]{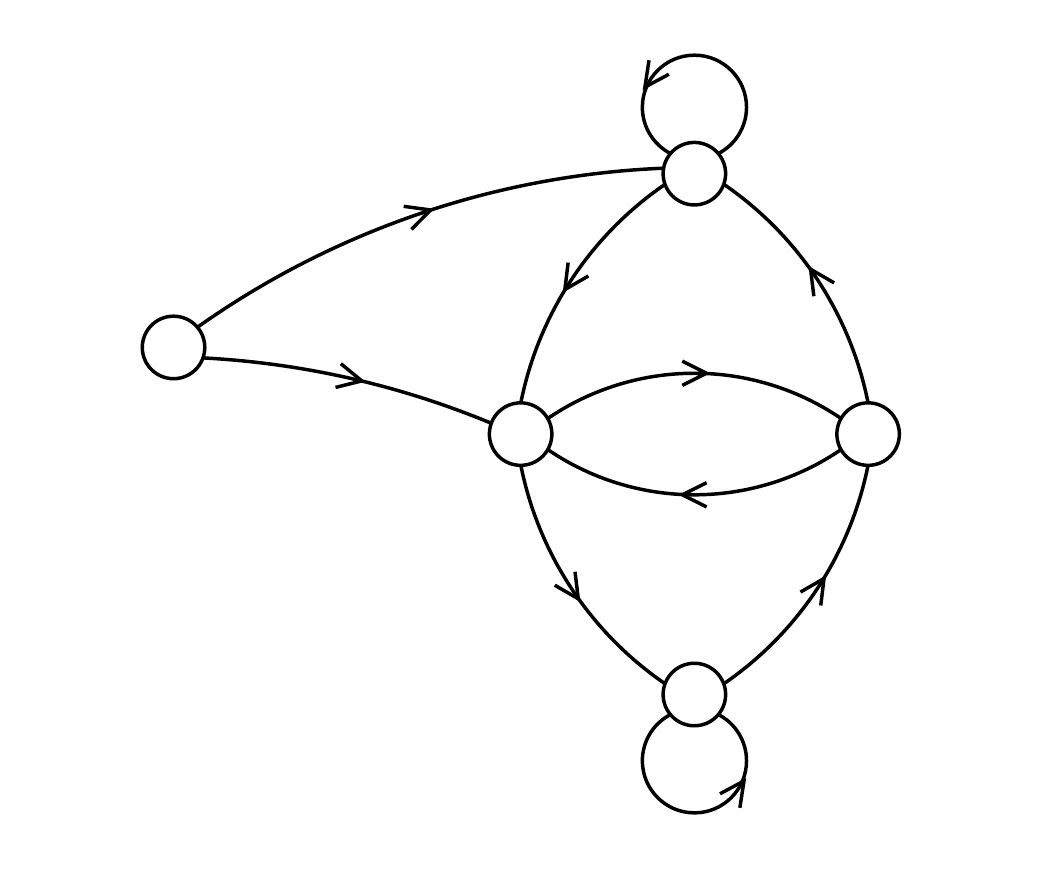}
\caption{The coefficients $a_j$ are the vertex labels visited in order on the walk associated to a word.}
\label{coefficient_graph}
\end{figure}

The coefficients $a_j$ for $j<m$ (in order) are the labels on the vertices visited in this path,
after the initial vertex. Thus, the sequences that occur are precisely the sequences in which
the nonzero coefficients alternate between $1$ and $-1$, starting with $1$.

Similarly, for any fixed $e\in \partial \Sigma$ we can write
$$\pi(e,z)  = \sum_{j=0}^\infty a_j z^j$$
where each $a_j \in \lbrace -1,0,1\rbrace$, and the
$a_j$ are obtained as the labels on the vertices associated to the right-infinite walk on
the graph as above.
\end{proposition}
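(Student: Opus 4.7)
The plan is to derive a closed form for $e(z,x)$ by induction on the word length, read the coefficients off algebraically, and then verify that the state machine in Figure~\ref{coefficient_graph} is precisely the right bookkeeping device. Define $b_k\in\{0,1\}$ by $b_k=1$ iff the $k$-th letter of $e$ is $g$, so that uniformly $c_k(z,y)=zy+(1-z)b_k$. By induction on $m$, using Lemma~\ref{lemma:word_composition} in the form $e(z,x) = c_1(z, (c_2\cdots c_m)(z,x)) = z\cdot (c_2\cdots c_m)(z,x) + (1-z)b_1$, I would establish
\[
e(z,x)\;=\;z^{m}x \;+\; (1-z)\sum_{k=1}^{m} b_k\, z^{k-1}.
\]

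Expanding the telescoping product $(1-z)(b_1+b_2z+\cdots+b_m z^{m-1})$ gives
\[
b_1 \;+\; \sum_{j=1}^{m-1}(b_{j+1}-b_j)z^{\,j} \;-\; b_m z^{m},
\]
so $a_0=b_1$, $a_j=b_{j+1}-b_j$ for $1\le j\le m-1$, and $a_m=-b_m$. Each $a_j\in\{-1,0,1\}$, and $a_m$ is $0$ or $-1$ according as $e$ ends with $f$ or $g$, as claimed. Moreover $(a_0,a_1,\ldots,a_m)$ is the sequence of consecutive differences of the $0/1$-string $0,b_1,b_2,\ldots,b_m,0$; since this string starts and ends at $0$, its nonzero consecutive differences alternate in sign and necessarily start with $+1$ (the first move is $0\to 1$).

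For the automaton I would observe that $a_j$ (for $j\ge 1$) depends only on the ordered pair $(b_j,b_{j+1})$, whose four values correspond to the four non-initial vertices of Figure~\ref{coefficient_graph}. Declaring the state after reading $c_1\cdots c_j$ to be $(b_{j-1},b_j)$ with convention $b_0:=0$, the vertex label $b_j-b_{j-1}$ equals $a_{j-1}$, so the sequence of labels visited after $*$ is $a_0,a_1,\ldots,a_{m-1}$ exactly. The transition rule $(a,b)\xrightarrow{c}(b,c)$ is forced by the definition of the state, and a brief case check shows the two edges out of $*$ and the eight remaining edges match the figure, with $(0,1)$ being the label-$1$ vertex, $(1,0)$ the label-$(-1)$ vertex, and $(0,0)$, $(1,1)$ the two label-$0$ vertices (distinguished by whether the last letter read was $f$ or $g$).

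For $e\in\partial\Sigma$, I would apply the finite formula to the length-$n$ prefix and let $n\to\infty$: by Lemma~\ref{lemma:Holder_estimate}, $e_n(z,0)\to\pi(e,z)$, and this limit is the absolutely convergent power series $\sum_{j\ge 0} a_j z^j$ with $a_j=b_{j+1}-b_j$ for $j\ge 1$ and $a_0=b_1$, the coefficients produced by the right-infinite walk on the graph. There is no substantive mathematical obstacle here; the only care required is bookkeeping, since ``reading $e$ from left to right'' processes $c_1$ first, whereas in the composition $c_1(z,c_2(z,\ldots c_m(z,x)\ldots))$ the letter $c_m$ is applied \emph{first} to $x$. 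Keeping this convention straight is exactly what makes $c_1$ govern the constant term $a_0$ and $c_m$ govern the top coefficient $a_m$.
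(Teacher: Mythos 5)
Your proof is correct, and since the paper's proof is simply the one-line ``This is immediate by induction,'' you have supplied exactly the elaboration the authors had in mind: induct on the word length to get the closed form $e(z,x)=z^m x+(1-z)\sum_{k=1}^m b_k z^{k-1}$, read the coefficients off as consecutive differences of the padded $\{0,1\}$-string, and identify the automaton states with the ordered pairs $(b_{j-1},b_j)$. This is the same approach the paper gestures at, carried out in full detail.

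One small remark: the sentence ``the sequences that occur are \emph{precisely} the sequences in which the nonzero coefficients alternate between $1$ and $-1$, starting with $1$'' is a biconditional. You argue cleanly that every occurring sequence has this form; for completeness one should also note the converse, which is immediate: given any such alternating sequence $a_0,\dots,a_{m-1}$, the partial sums $b_j:=\sum_{i<j}a_i$ stay in $\{0,1\}$ precisely because of the alternation-starting-with-$1$ property, and the word $e$ determined by these $b_j$ realizes the sequence. Also, for the infinite case, the convergence $e_n(z,x)\to\pi(e,z)$ is the \emph{definition} of $\pi$ (and is justified by Lemma~\ref{lemma:distance_estimate}); invoking Lemma~\ref{lemma:Holder_estimate} is not wrong, but it is a consequence rather than the thing being used. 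Neither point affects the correctness of the argument.
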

\begin{proof}
This is immediate by induction.
\end{proof}

\begin{example}
From Proposition~\ref{proposition:power_series} we can quickly generate the formula for $e(z,x)$ for any finite
word $e$. For example, taking $e=gfgfffgg$, and writing $-1$ as $\bar{1}$,
we compute the sequence as follows:
$$\emptyset \xrightarrow{g} 1 \xrightarrow{f} 1\bar{1} \xrightarrow{g} 1\bar{1}1 \xrightarrow{f} 1\bar{1}1\bar{1}
\xrightarrow{f} 1\bar{1}1\bar{1}0 \xrightarrow{f} 1\bar{1}1\bar{1}00 \xrightarrow{g} 1\bar{1}1\bar{1}001 \xrightarrow{g} 1\bar{1}1\bar{1}0010$$
so that we obtain the formula
$$e(z,x) = 1-z+z^2-z^3+z^6+(x-1)z^8$$
\end{example}

\subsection{Regular coefficients}

We now show that in general families of semigroups of similarities with centers
depending polynomially on the (common) dilation give rise to limit sets
which are the values of power series with ``regular'' coefficients. Very similar, but
somewhat complementary observations were made by Mercat \cite{Mercat}.

\begin{definition}
Fix some finite alphabet $S$ of complex numbers, and fix a prefix-closed 
regular language $L \subseteq S^*$. Let $\overline{L}$ denote the set of right-infinite words
in $S$ whose finite prefixes are in $L$. Call a power series 
$$e(z):=a_0+a_1z+a_2z^2+ \cdots$$
{\em $L$-regular} if the sequence $(a_0,a_1,\cdots) \in \overline{L}$.
\end{definition}

\begin{proposition}[Coefficient language]\label{proposition:coefficient_language}
Let $p_i$ for $1\le i\le m$ be a finite set of complex polynomials, and define $K_z$ to be the
semigroup generated by contractions $$f_i:x \mapsto zx + p_i(z)$$
Then there is a regular language $L$ in a finite alphabet of complex numbers
so that a power series of the form
$$e(z):=a_0+a_1z+a_2z^2+\cdots$$
is $L$-regular if and only if $e\in \partial K_z$; that is $e$ is an infinite composition of the generators $f_i$, thought of as a function in $z$. 
\end{proposition}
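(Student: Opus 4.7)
The plan is to unfold an infinite composition of the $f_i$ into an explicit power series, observe that each coefficient is determined by a bounded window of consecutive generator letters, and package this into a finite directed labeled graph whose walks produce precisely the coefficient sequences.

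First, write $p_i(z) = \sum_{j=0}^{D} c_{i,j} z^j$, where $D$ is the maximum of the $\deg p_i$ (padding with zero coefficients where necessary). For an infinite word $u = i_1 i_2 \cdots$ in the alphabet of generators, one checks by induction that
\[
f_{i_1} \circ f_{i_2} \circ \cdots \circ f_{i_n}(0) = \sum_{k=0}^{n-1} z^k\, p_{i_{k+1}}(z),
\]
and, exactly as in Lemma~\ref{lemma:distance_estimate} and Lemma~\ref{lemma:Holder_estimate}, this converges as $n \to \infty$ to the function $z \mapsto \pi(u,z)$, independently of the chosen basepoint. Expanding gives
\[
\pi(u,z) = \sum_{n=0}^\infty a_n z^n, \qquad a_n = \sum_{k=\max(0,\,n-D)}^{n} c_{i_{k+1},\, n-k}.
\]
The crucial point is that $a_n$ depends only on the block $(i_{n-D+1}, \ldots, i_{n+1})$ of at most $D+1$ consecutive letters of $u$ (treating any index $\le 0$ as a padding symbol $\bot$ with $c_{\bot, j} := 0$).

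Now construct a finite directed graph $\Gamma$: the vertices are the tuples $(j_1,\ldots,j_D) \in (\{1,\ldots,m\} \cup \{\bot\})^D$, with initial vertex $v_0 := (\bot, \ldots, \bot)$. From each vertex $(j_1,\ldots,j_D)$ and each letter $i \in \{1,\ldots,m\}$, draw one directed edge to $(j_2, \ldots, j_D, i)$ labeled by $\sum_{k=0}^{D} c_{j_{k+1},\, D-k}$ (with $j_{D+1}:=i$). Let $S$ be the finite set of labels that appear, and let $L \subseteq S^*$ be the prefix-closed language of label sequences of finite walks starting at $v_0$. Then $L$ is regular, as it is the language accepted by the labeled graph $\Gamma$ with start state $v_0$ and all states accepting.

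By construction, reading the letters of any infinite word $u = i_1 i_2 \cdots$ along $\Gamma$ traces a unique infinite walk from $v_0$ whose emitted label sequence is exactly the coefficient sequence $(a_0, a_1, \ldots)$ of $\pi(u,\cdot)$, so every infinite composition gives an $L$-regular series. Conversely, since from every state there is exactly one outgoing edge for each input letter, every infinite walk from $v_0$ is realized by reading some infinite input word $u$, so every $L$-regular power series arises as $\pi(u,\cdot)$ for some $u \in \partial K_z$. This identifies $\overline{L}$ with the set of coefficient sequences of infinite compositions, completing the proof.

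I do not expect a serious obstacle: the content is simply recognizing that a sliding-window coefficient rule is implemented by a finite automaton, and that the image of a (trivially regular) full shift under a letter-to-letter finite-state function is again regular. The only mildly delicate bookkeeping is the uniform handling of the first $D$ coefficients; introducing the padding symbol $\bot$ with $c_{\bot,j} = 0$ lets the same window formula cover the initial segment without casework.
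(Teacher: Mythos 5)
Your proof is correct and takes essentially the same approach as the paper: both observe that the $n$th coefficient depends only on a sliding window of at most $D+1$ consecutive generator letters, then build a finite labeled graph whose vertices record that window and whose emitted labels are the coefficients, so the coefficient language is recognized by a finite automaton. The only differences are cosmetic bookkeeping — you use $D$-tuples with a padding symbol $\bot$ and place labels on edges, while the paper uses words of length at most $d$ as vertices with labels at vertices and explicitly invokes Kleene--Rabin--Scott for determinization.
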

\begin{proof}
The effect of $f_i$ on some element of $\partial K_z$ is to shift the sequence by one (the $x \mapsto zx$ part)
and to add the coefficients of $p_i$ to the first $d_i+1$ coefficients, where $d_i$ is the degree of $p_i$.
Introduce the notation
$$p_i(z) = b_{i,0} + b_{i,1}z + \cdots + b_{i,d_i}z^{d_i}$$
and pad coefficients up to $b_{i,d}$ where $d=\max_i d_i$ by defining $b_{i,j}=0$ if $d_i<j\le d$.
Then if we let $e=f_{s_1}f_{s_2}f_{s_3}\cdots$ be an arbitrary element of $\partial K_z$, the
$n$th coefficient $a_n$ of the power series expansion of $e(z)$ is given by the formula
$$a_n = b_{s_n,0} + b_{s_{n-1},1} + \cdots + b_{s_{n-d},d}$$
provided $n\ge d$, and for $n<d$ we simply omit the terms $b_{s_{n-i},d}$ for $n-i<0$ and $i\le d$.
This coefficient depends only on the last $d+1$ letters visited in order, and a finite state
automaton can store this information as a vertex. 

Explicitly, we build a finite graph with $(m^{d+1}-1)/(m-1)$ vertices in bijection with words
of length at most $d$ in the $f_i$, and with an edge from each vertex
corresponding to the word $u$ to the vertex corresponding to $v$, with the edge
labeled $f_j$ if $uf_j$ has $v$ as a suffix of length $\min(d,|uf_j|)$. 

Now at each vertex associated to a word $u$ of length $d'\le d$ of the form
$u=s_1s_2\cdots s_d'$, put the coefficient 
$$a(u):=b_{s_{d'},0} + b_{s_{d'-1},1} + \cdots + b_{s_{d'-d},d}$$

We now relabel the edges in such a way that the new label on each edge is equal to the
coefficient at the vertex it points to. The resulting directed graph is a {\em nondeterministic}
finite state automaton in a finite alphabet (the alphabet of possible coefficients), and the
set of possible edge paths is some language $L$. It
is a standard theorem in the theory of automata due to Kleene--Rabin--Scott (see
\cite{ECHLPT}, Thm.~1.2.7) that there is a {\em deterministic} finite state
automaton in the same alphabet recognizing $L$; this means (by definition) that $L$ is regular. 
Moreover by construction, $\overline{L}$ is precisely the language of coefficient sequences.
\end{proof}

We refer to the language of coefficient sequences as the {\em coefficient language} of the
parameterized family $K_z$, and denote it $L(K_z)$. In the special case that $K_z$ is generated by two elements $p_1,p_2$,
then at least on the subset where $p_1(z) \ne p_2(z)$, the semigroup $K_z$ is conjugate to
the semigroup generated by $f:x \mapsto zz$, $g:x \mapsto z(x-1)+1$ that we have been studying up to now. 

\begin{question}
Which regular languages in a finite alphabet  arise as
$L(K)$ for some $K$?
\end{question}

\begin{example}[Differences]\label{example:differences}
Let $K_z$ be a holomorphic family of semigroups of similarities, parameterized by $z$, whose
IFS has coefficient language $L(K_z)$. The set of differences $DL(K_z):=\lbrace a - b \text{ such that } a,b \in L(K_z)\rbrace$
is of the form $L(DK_z)$ for a suitable holomorphic family $DK_z$.
\end{example}

In Section~\ref{section:differences} we illustrate this difference operation 
in the context of our 2-generator
IFSs, obtaining a sequence of ``iterated Mandelbrot sets'', of which $\SetB$ and $\SetA$ are the
first two terms.


\section{Topology and geometry of the limit set}
\label{section:topology_of_lambda}

In this section we establish basic facts about the geometry and topology of
$\Lambda_z$, establishing quantitative versions of the fundamental dichotomy that
either $\Lambda_z$ is (path) connected, or $\Lambda_z$ is a Cantor set and $G_z$ is Schottky.
These facts lead to an explicit algorithm (essentially due to Bandt) to (numerically)
certify that a particular $G_z$ is Schottky. It is important to describe this algorithm and
its justification in some detail for several reasons. Firstly, this algorithm powers our
program {\tt schottky}, which provided numerical certificates for many of the assertions
we make in this paper (and produced most of the pictures!). Secondly, understanding the
{\em theoretical} behaviour of this algorithm, were it run on an ideal computer for infinite time,
leads to some of the key theoretical insights that underpin our main theorems.

\subsection{Constructing $\Lambda_z$}

Recall that $\Sigma$ is the set of all finite words in $f$ and $g$. For each 
$n\in\mathbb{N}$ define $\Sigma_n$ to be the set of words of length $n$. 

Because $\Lambda_z$ is minimal, for any $p\in\Lambda_z$,
\[
\Lambda_z=\overline{\bigcup_{n} \Sigma_n(z,p)}.
\]
Furthermore, the limit set is well-approximated by $\Sigma_n(z,p)$ for any 
$p$ which is close to $\Lambda_z$:
\begin{lemma}\label{lemma:orbits_are_uniformly_dense}
Let $p\in \C$.  Then $\Lambda_z \subseteq N_\delta(\Sigma_n(z,p))$ where
\[
\delta=|z|^n\left(\frac{|z-1|}{1-|z|} + d(p,\Lz)\right)
\]
\end{lemma}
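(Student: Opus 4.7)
The plan is to exploit the self-similarity $\Lambda_z = \bigcup_{u\in\Sigma_n} u(z,\Lambda_z)$ combined with Lemma \ref{lemma:diameter_bound} to control how far a point of $\Lambda_z$ can be from its image under a length-$n$ word applied to $p$.

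First, I would take an arbitrary $q \in \Lambda_z$ and, using Lemma \ref{lemma:Holder_estimate}, write $q = \pi(e,z)$ for some $e \in \partial \Sigma$. Split $e = e_n e'$ where $e_n \in \Sigma_n$ is the length-$n$ prefix and $e' \in \partial \Sigma$. By Lemma \ref{lemma:word_composition}, $q = e_n(z, \pi(e',z))$. Writing $q' := \pi(e',z)$, we have $q' \in \Lambda_z$ and $q = e_n(z,q')$, while $e_n(z,p) \in \Sigma_n(z,p)$ is the candidate nearby element of our orbit.

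Next, because $e_n$ is a similarity with dilation $z^n$ (Definition \ref{definition:wordaction} together with the observation at the start of the section), we have the exact identity
\[
|q - e_n(z,p)| = |e_n(z,q') - e_n(z,p)| = |z|^n\,|q' - p|.
\]
To bound $|q' - p|$, pick a point $p_0 \in \Lambda_z$ realizing $d(p,\Lambda_z)$ and use the triangle inequality $|q' - p| \le |q' - p_0| + |p_0 - p|$. The second term is exactly $d(p,\Lambda_z)$. For the first term, both $q'$ and $p_0$ lie in $\Lambda_z$, which by Lemma \ref{lemma:diameter_bound} is contained in a disk of radius $|z-1|/2(1-|z|)$; hence $|q'-p_0|\le |z-1|/(1-|z|)$. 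Combining these bounds yields
\[
|q - e_n(z,p)| \le |z|^n\left(\frac{|z-1|}{1-|z|} + d(p,\Lambda_z)\right) = \delta,
\]
so $q \in N_\delta(\Sigma_n(z,p))$, as required.

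There is no real obstacle here: the only mildly subtle step is keeping track of the fact that the relevant ``worst case'' distance $|q'-p|$ is bounded by the diameter of $\Lambda_z$ \emph{plus} the offset $d(p,\Lambda_z)$ of $p$ from the limit set, which is why both terms appear in $\delta$. In the special case $p \in \Lambda_z$ the second term vanishes, recovering the cleaner uniform estimate one expects from minimality.
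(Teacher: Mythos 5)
Your proof is correct and takes essentially the same approach as the paper: both decompose a point of $\Lz$ via its length-$n$ prefix, pull back to compare with $p$, and combine the diameter bound with $d(p,\Lz)$ via the triangle inequality. The only cosmetic difference is that you apply the triangle inequality before scaling by $|z|^n$ (comparing $q'$ with $p$ directly), whereas the paper scales first and invokes Lemma~\ref{lemma:distance_estimate}; the estimates are identical.
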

\begin{proof}
Let $x \in \Lz$ be such that $d(p,x) = d(p,\Lz)$.  We can write $x = \pi(u,z)$ for $u \in \partial\Sigma$.  
Now let $y \in \Lz$ be given, and write $y = \pi(v,z)$ for $v \in \partial\Sigma$.  Let $v_n \in \Sigma_n$ 
be the prefix of $y$ of length $n$.  Consider $w = \pi(v_nu,z) \in \Lz$.  Note $w = v_n(z,x)$, so 
$d(v_n(z,p),w) = d(v_n(z,p),v_n(z,x)) = |z|^nd(p,x)$.  
By Lemma \ref{lemma:distance_estimate},
$d(w,y) \le |z|^n |z-1|/(1-|z|)$, so by the triangle inequality,
\[
d(v_n(z,p),y) \le d(v_n(z,p),w) + d(w,y) \le \delta
\]
\end{proof}

Let $D_z$ be any compact set containing $\Lz$ 
(for example a disk of radius $|z-1|/2(1-|z|)$ centered at $1/2$).   Let 
$D_n = \Sigma_n(z,D_z)$; this is a union of $2^n$ copies of $D_z$ scaled by the 
factor $|z|^n$.
We can construct $\Lz$ as a descending intersection:
\begin{lemma}\label{lemma:intersection_of_disks} We have
\[
\Lz = \bigcap_n D_n.
\]
\end{lemma}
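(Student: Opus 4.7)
The plan is to prove both containments $\Lz \subseteq \bigcap_n D_n$ and $\bigcap_n D_n \subseteq \Lz$ using the invariance of $\Lz$ together with the uniform scaling property of words in $\Sigma_n$.

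First I would prove $\Lz \subseteq \bigcap_n D_n$ by induction on $n$. By hypothesis $\Lz \subseteq D_z$. Using the invariance identity $\Lz = \bigcup_{u \in \Sigma_n} u(z, \Lz)$ together with the monotonicity $u(z, \Lz) \subseteq u(z, D_z)$, we obtain $\Lz \subseteq \Sigma_n(z, D_z) = D_n$ for every $n$, so $\Lz$ sits in the intersection.

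For the reverse containment I would argue as follows. Let $x \in \bigcap_n D_n$. For each $n$, pick $u_n \in \Sigma_n$ and $y_n \in D_z$ with $x = u_n(z, y_n)$. Fix any point $p \in \Lz$ (e.g.\ $p = 0$). Since $u_n$ is a similarity with dilation $z^n$, we have
\[
d(x, u_n(z, p)) = |z|^n\, d(y_n, p) \le |z|^n \operatorname{diam}(D_z),
\]
using that $p \in \Lz \subseteq D_z$ and that $D_z$ is compact (hence of finite diameter). On the other hand $u_n(z, p) \in \Lz$ by invariance, so $x$ lies within distance $|z|^n \operatorname{diam}(D_z)$ of $\Lz$. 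Letting $n \to \infty$ and using that $\Lz$ is closed, we conclude $x \in \Lz$.

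No step is really an obstacle here; the only thing to be slightly careful about is that the argument relies on $D_z$ being compact (so that $\operatorname{diam}(D_z)<\infty$) and on the uniform contraction factor $|z|^n$ of every word in $\Sigma_n$, both of which are built into the setup. The proof is essentially the standard one for attractors of contracting IFSs, applied to the two-generator semigroup $G_z$.
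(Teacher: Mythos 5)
Your proof is correct, but it takes a different (and more explicit) route than the paper. The paper's proof is a one-line appeal to uniqueness: it observes that $\bigcap_n D_n$ is a compact, nonempty, invariant set, and since $\Lz$ is characterized as the unique such set, the two must coincide. You instead prove the two containments directly: $\Lz \subseteq D_n$ by iterating the invariance identity $\Lz = f\Lz \cup g\Lz$ and monotonicity of images, and $\bigcap_n D_n \subseteq \Lz$ by the contraction estimate $d(x,\Lz) \le |z|^n \operatorname{diam}(D_z)$, which goes to zero. The trade-off is that the paper's argument is shorter but leaves the reader to verify that $\bigcap_n D_n$ is genuinely invariant (in the Hutchinson sense $X = fX \cup gX$), which requires a small pigeonhole argument; your version avoids invoking uniqueness entirely, at the cost of a slightly longer and more hands-on argument. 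Both are sound.
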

\begin{proof}
Observe that $\bigcap_n D_n$ is a compact, nonempty invariant set. Since $\Lz$ is the
unique such set, they must be equal.
\end{proof}


\begin{figure}[htp]
\begin{center}
\includegraphics[scale=0.23]{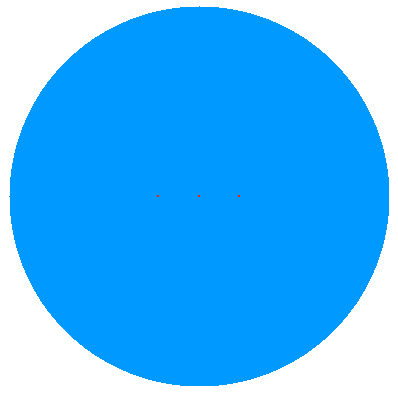}
\includegraphics[scale=0.3]{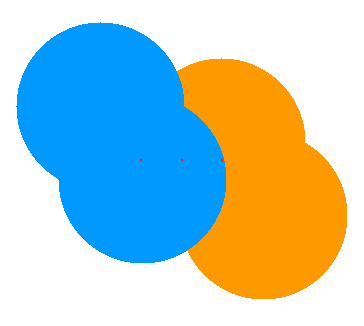}
\includegraphics[scale=0.3]{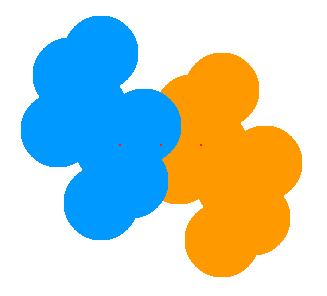}

\includegraphics[scale=0.3]{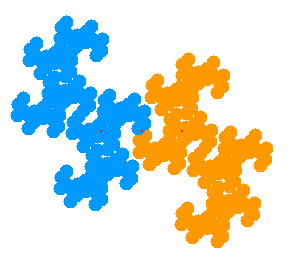}
\includegraphics[scale=0.3]{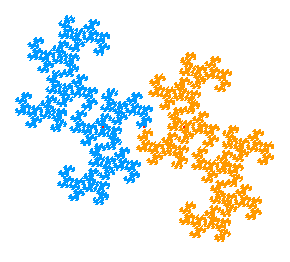}
\includegraphics[scale=0.3]{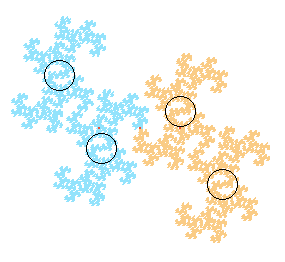}
\end{center}
\caption{Constructing $\Lz$ by intersecting the 
unions of disks $D_n$.  The bottom 
right picture indicates how $\Lz$ decomposes 
as a union of $4$ copies of itself centered at the 
indicated circles.}
\label{figure:intersection_of_disks}
\end{figure}


\subsection{Connectivity}

\begin{lemma}\label{lemma:disconnected_Cantor}
The following are equivalent:
\begin{enumerate}
\item{$\Lz$ is disconnected;}
\item{$\Lz$ is a Cantor set; or}
\item{$f\Lambda_z \cap g\Lambda_z$ is empty.}
\end{enumerate}
Moreover, any of these conditions is implied by $G_z$ Schottky.
\end{lemma}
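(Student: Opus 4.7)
I would prove the equivalences in a cycle $(3)\Rightarrow(2)\Rightarrow(1)\Rightarrow(3)$, and then dispatch the Schottky implication separately. The key self-similarity identity $\Lambda_z = f\Lambda_z \cup g\Lambda_z$ and, more generally, $\Lambda_z = \bigcup_{u\in\Sigma_n} u(z,\Lambda_z)$ is the workhorse throughout.

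For $(3)\Rightarrow(2)$, assume $f\Lambda_z\cap g\Lambda_z=\varnothing$. I would first check that the $2^n$ pieces $\{u(z,\Lambda_z):u\in\Sigma_n\}$ are pairwise disjoint: for distinct $u,v\in\Sigma_n$, factor out the longest common prefix $w$ so $u=wu'$, $v=wv'$ with $u',v'$ starting with different letters. Then $u'(z,\Lambda_z)\subseteq f\Lambda_z$ and $v'(z,\Lambda_z)\subseteq g\Lambda_z$ (or vice versa) are disjoint by hypothesis, and applying the injective map $w(z,\cdot)$ preserves disjointness. Lemma~\ref{lemma:distance_estimate} (or Lemma~\ref{lemma:diameter_bound}) gives that the diameters of these pieces shrink like $|z|^n$, so $\Lambda_z$ is totally disconnected. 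Perfectness follows from the self-similarity: any $x=\pi(u,z)$ is approximated by the points $\pi(u|_n f^\infty,z)$ (or $\pi(u|_n g^\infty,z)$ if $u$ ends in $f^\infty$), which lie in $\Lambda_z$ and differ from $x$ for all large $n$. A perfect, totally disconnected, compact metric space is a Cantor set.

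$(2)\Rightarrow(1)$ is immediate since a Cantor set is disconnected. For $(1)\Rightarrow(3)$ I would argue the contrapositive: assuming $f\Lambda_z\cap g\Lambda_z\ne\varnothing$, show $\Lambda_z$ is connected. Introduce the intersection graph $Y_n$ whose vertices are words of length $n$, with an edge between $u$ and $v$ whenever $u(z,\Lambda_z)\cap v(z,\Lambda_z)\ne\varnothing$. I claim $Y_n$ is connected for every $n$, by induction. For $n=1$ this is the hypothesis. For the inductive step, split the level-$(n+1)$ pieces into the $f$-branch and the $g$-branch; each branch, being the image of the level-$n$ configuration under $f$ or under $g$, inherits connectedness from $Y_n$. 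To bridge the two branches, pick $x\in f\Lambda_z\cap g\Lambda_z$, write $x=f(z,y)=g(z,w)$ with $y,w\in\Lambda_z$, and use $\Lambda_z=\bigcup_{|u'|=n}u'(z,\Lambda_z)$ to express $y=u'(z,y')$ and $w=v'(z,w')$; then $x\in fu'(z,\Lambda_z)\cap gv'(z,\Lambda_z)$, giving the required edge. Now if $\Lambda_z=A\sqcup B$ were a disconnection with $\varepsilon:=d(A,B)>0$, choose $n$ so large that each piece has diameter less than $\varepsilon$ (by Lemma~\ref{lemma:distance_estimate}); each piece then lies entirely in $A$ or in $B$, and intersecting pieces must lie in the same side, so connectedness of $Y_n$ forces one side to be empty, contradiction.

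Finally, if $G_z$ is Schottky with good disk $D$, then Lemma~\ref{lemma:intersection_of_disks} shows $\Lambda_z\subseteq D$, and therefore $f\Lambda_z\subseteq fD$ and $g\Lambda_z\subseteq gD$ are contained in disjoint sets, giving $(3)$. The step I expect to require the most care is $(1)\Rightarrow(3)$, specifically the inductive argument for connectedness of $Y_n$ and the diameter-based contradiction — everything else is a direct application of self-similarity and the diameter estimates already established.
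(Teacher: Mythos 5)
Your proof is correct, and it is more self-contained than the paper's, which treats the cycle $(3)\Rightarrow(2)\Rightarrow(1)$ as ``obvious'' and for $(1)\Rightarrow(3)$ simply cites Bousch's preprint or remarks that the implication ``follows from the estimates in Lemma~\ref{lemma:short_hop}.'' The Short Hop Lemma route to $(1)\Rightarrow(3)$ goes through the contrapositive as well, but differently: if $f\Lambda_z\cap g\Lambda_z\ne\varnothing$ then $f\Lambda_z$ and $g\Lambda_z$ contain points at distance $\delta$ for every $\delta>0$, so the closed $\delta/2$-neighborhoods of $\Lambda_z$ are all path connected, and their nested intersection $\Lambda_z$ is therefore connected. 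Your intersection-graph argument (the graph $Y_n$ on length-$n$ cylinders, connected by induction) is the other standard IFS argument for this, sometimes called the Hata graph method; it is slightly more combinatorial but does not need the Short Hop Lemma as a prerequisite, which makes it a clean alternative here since this lemma is stated before Lemma~\ref{lemma:short_hop} in the text. Two small points: your $(3)\Rightarrow(2)$ perfectness step implicitly uses that $\pi(\cdot,z)$ is injective under hypothesis $(3)$ — this follows from your own disjointness-of-cylinders observation and is worth stating explicitly; and in $(1)\Rightarrow(3)$ you should note that $A$ and $B$ are closed (hence compact) so that $d(A,B)>0$, which you tacitly use when choosing $n$. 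Neither affects correctness.
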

\begin{proof}
The implications (3) $\to$ (2) $\to$ (1) are obvious, and (1) $\to$ (3)
is a standard result in the theory of IFS. For a proof (in exactly this
context) see \cite{Bousch1}, p.2 (alternately, it follows from the estimates in
Lemma~\ref{lemma:short_hop}).

$G_z$ Schottky immediately proves (3), since if $D$ is a good disk for $G_z$, then
$D$ contains $\Lambda$, but then $fD$ and $gD$ contain $f\Lambda_z$ and
$g\Lambda_z$ and are disjoint by the definition of a good disk.
\end{proof}

Lemma~\ref{lemma:disconnected_Cantor} implies that Schottky semigroups have disconnected
limit sets. The next Lemma, although elementary, is the key to proving the converse.
See Figure~\ref{figure:short_hop} for an illustration of the paths produced 
by Lemma~\ref{lemma:short_hop}.

\begin{lemma}[Short Hop Lemma]\label{lemma:short_hop}
Suppose that $f\Lambda_z$ and $g\Lambda_z$ contain points at distance $\delta$ apart.
Then the $\delta/2$ neighborhood of $\Lambda_z$ is path connected.
\end{lemma}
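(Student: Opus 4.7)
My plan is to exploit the self-similarity of $\Lz$ and construct paths out of ``bridges'' at all scales. The hypothesis gives $p \in f\Lz$ and $q \in g\Lz$ with $|p-q| = \delta$, so the straight segment $[p,q]$ lies in the $\delta/2$-neighborhood of $\Lz$: every point on it is within $\delta/2$ of one of the endpoints $p, q$, both of which belong to $\Lz$. Applying any finite word $w \in \Sigma$ to this segment yields a \emph{level-$|w|$ bridge} $B_w := \sigma_z(w)([p,q])$ of length $|z|^{|w|}\delta \le \delta$, still inside $N_{\delta/2}(\Lz)$, with one endpoint in $\sigma_z(wf)(\Lz)$ and the other in $\sigma_z(wg)(\Lz)$.

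Given $x, y \in \Lz$ (it is enough to handle this case, since any point of $N_{\delta/2}(\Lz)$ is joined to a point of $\Lz$ by a short straight segment inside the neighborhood), I would write $x = \pi(u,z)$ and $y = \pi(v,z)$, let $w_0$ be the longest common prefix of $u$ and $v$, and, interchanging $f$ and $g$ if necessary, arrange $x \in \sigma_z(w_0 f)(\Lz)$ and $y \in \sigma_z(w_0 g)(\Lz)$. The path would be constructed as a concatenation
\[
x \leadsto \sigma_z(w_0)(p) \longrightarrow \sigma_z(w_0)(q) \leadsto y,
\]
where the middle arrow is the bridge $B_{w_0}$, and the squiggles denote sub-paths confined to the scaled pieces $\sigma_z(w_0 f)(\Lz)$ and $\sigma_z(w_0 g)(\Lz)$, built by recursively applying the same algorithm.

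To make this rigorous, I would truncate the recursion at depth $n$ by joining any remaining pair of points inside a common level-$n$ piece by a straight segment. Such a segment has length at most $|z|^n \cdot \mathrm{diam}(\Lz)$, so as soon as $n$ is large enough that $|z|^n \cdot \mathrm{diam}(\Lz) \le \delta$, the terminal segments also lie in $N_{\delta/2}(\Lz)$, and so does the full polygonal path $\gamma_n$. Parametrize each $\gamma_n \colon [0,1] \to \C$ by allotting sub-intervals of $[0,1]$ to the recursive calls in a fixed scheme (for instance, equal thirds at each split into sub-path/bridge/sub-path). Then $\gamma_n$ and $\gamma_{n+1}$ agree outside the sub-intervals corresponding to depth $n$, and on those sub-intervals both paths remain inside a single level-$n$ piece, so that $\|\gamma_n - \gamma_{n+1}\|_\infty \le |z|^n \cdot \mathrm{diam}(\Lz)$. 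Since this estimate sums to a convergent geometric series, $(\gamma_n)$ is uniformly Cauchy, and its continuous uniform limit is the desired path from $x$ to $y$ in (the closure of) $N_{\delta/2}(\Lz)$.

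The only genuine obstacle is the bookkeeping of the parametrization: one must allot sub-intervals of $[0,1]$ to the recursive calls consistently, so that consecutive polygonal approximations are pointwise close on corresponding parameters. The geometric substance---that every bridge sits in $N_{\delta/2}(\Lz)$, and that scales decay geometrically---is routine once the bridge construction is in hand.
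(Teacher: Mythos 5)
Your proof is correct, but you take a genuinely different route from the paper's. The paper establishes a purely combinatorial fact by induction on word length: defining $v\approx_i v'$ when some right-infinite continuations of $v,v'$ land at distance $<\delta$, it shows the generated equivalence relation on $\Sigma_i$ has a single class for every $i$ (base case $f\approx_1 g$ from the hypothesis; inductive step by prefixing with $f$ or $g$ and stitching across the pair $fu\approx_{i+1}gu'$). It then fixes $n$ large enough that a common length-$n$ prefix forces distance $<\delta$, and a chain of $\approx_n$-related length-$n$ words immediately yields a chain of points in $\Lz$ with consecutive gaps $<\delta$, joined by short straight segments. Your argument instead works top-down: you build explicit ``bridges'' $\sigma_z(w)([p,q])$ at every scale and recursively splice sub-paths at the longest common prefix, then pass to a uniform limit of polygonal approximants. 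Both hinge on the same geometric fact (close pairs at all scales from pulling the original close pair through the semigroup), but your construction is closer in spirit to what the paper does later in Section~\ref{subsection:Lz_paths}, where it builds genuine paths \emph{inside} $\Lz$ and the limit really is needed. For the present lemma the limit is overkill: as you yourself observe mid-proof, once $n$ is large enough that $|z|^n\,\mathrm{diam}(\Lz)\le\delta$ the truncated polygonal path $\gamma_n$ already lies in $\overline{N}_{\delta/2}(\Lz)$ and finishes the argument, so the Cauchy/parametrization bookkeeping can be dropped entirely. One small care point if you do keep the limit: the straight segments of $\gamma_n$ are not contained in the (nonconvex) level-$n$ piece itself, but in its convex hull; the diameter bound is unchanged, so the estimate $\|\gamma_n-\gamma_{n+1}\|_\infty\lesssim |z|^n\,\mathrm{diam}(\Lz)$ survives, but the phrasing ``remain inside a single level-$n$ piece'' should say ``remain within distance $|z|^n\,\mathrm{diam}(\Lz)$ of it.''
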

\begin{proof}
Since $|z|<1$ there is some $n$ so that for any two $e,e' \in \partial \Sigma$ with
a common prefix of length at least $n$ we have $d(\pi(e,z),\pi(e',z))<\delta$.

Suppose $v,v'$ are words of length $i$. Write $v\approx_i v'$ if there are right-infinite
words $u,u'$ with prefixes $v,v'$ such that $d(\pi(u,z),\pi(u',z))<\delta$. Then define
$\sim_i$ to be the equivalence relation generated by $\approx_i$.

We claim that for all $i$ the equivalence relation $\sim_i$ has a single equivalence
class; i.e.\/ that {\em any} two words of length $i$ can be joined by a sequence of
words of length $i$ related by $\approx_i$. Evidently $f \approx_1 g$ since we can
choose right-infinite words $fu$ and $gu'$ such that $\pi(fu,z)$ and $\pi(gu',z)$ are
points in $f\Lambda_z$ and $g\Lambda_z$ respectively realizing $d(\pi(fu,z),\pi(gu',z))\le \delta$.

If $v\sim_i v'$ for all words $v,v'$ of length $i$, then $fv\sim_{i+1} fv'$ and
$gv\sim_{i+1} gv'$ for all words $v,v'$ of length $i$, since $v\approx_i v'$ implies
$fv \approx_{i+1} fv'$ and $gv\approx_{i+1} gv'$. But if $fu$ and $gu'$ are as above, and
$w,w'$ are the initial words of $fu$ and $gu'$ of length $i+1$ then $w\approx_{i+1} w'$.
So the claim is proved for all $i$, by induction.

Taking $i=n$ and using the defining property of $n$ as above proves the lemma.
\end{proof}

\begin{figure}[htb]
\begin{center}
\includegraphics[scale=0.2]{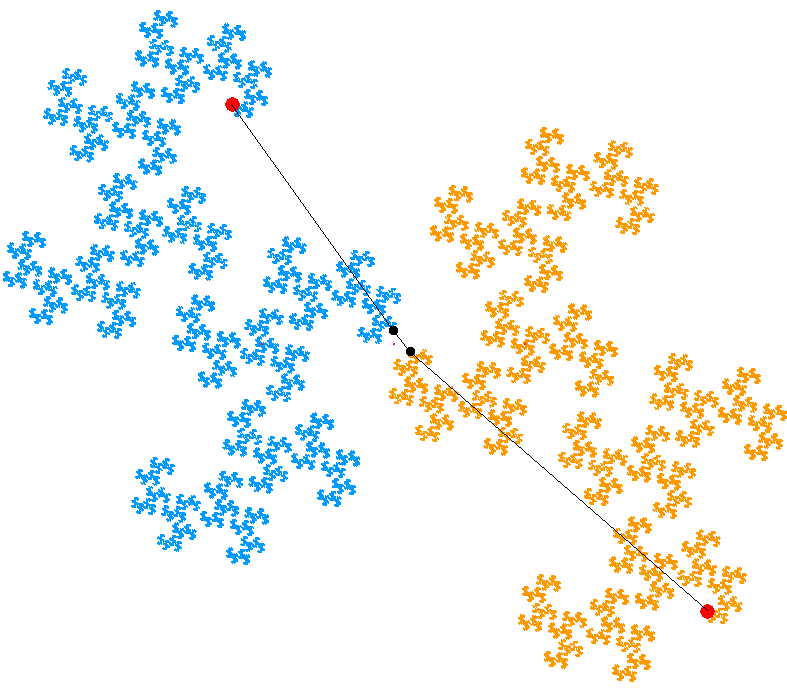}~
\includegraphics[scale=0.2]{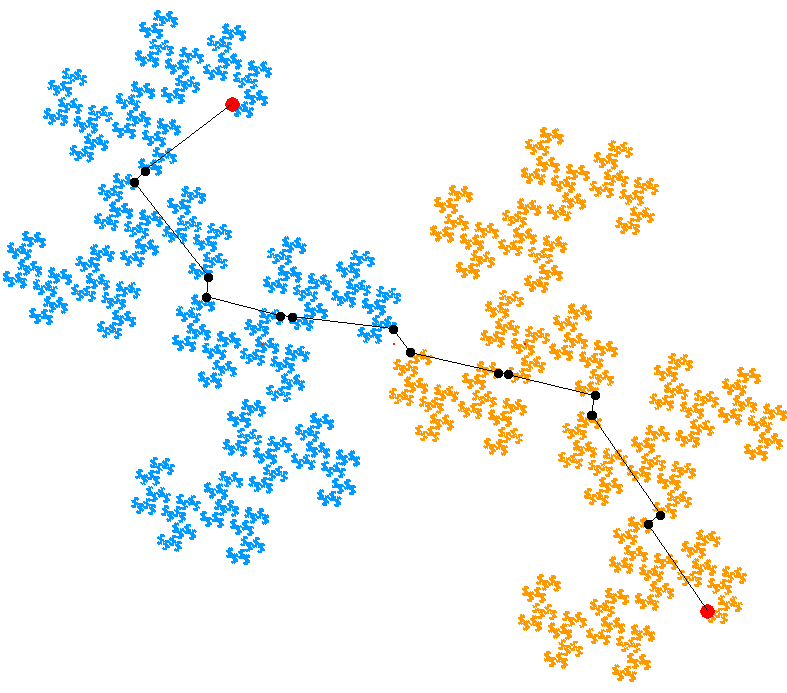}~
\includegraphics[scale=0.2]{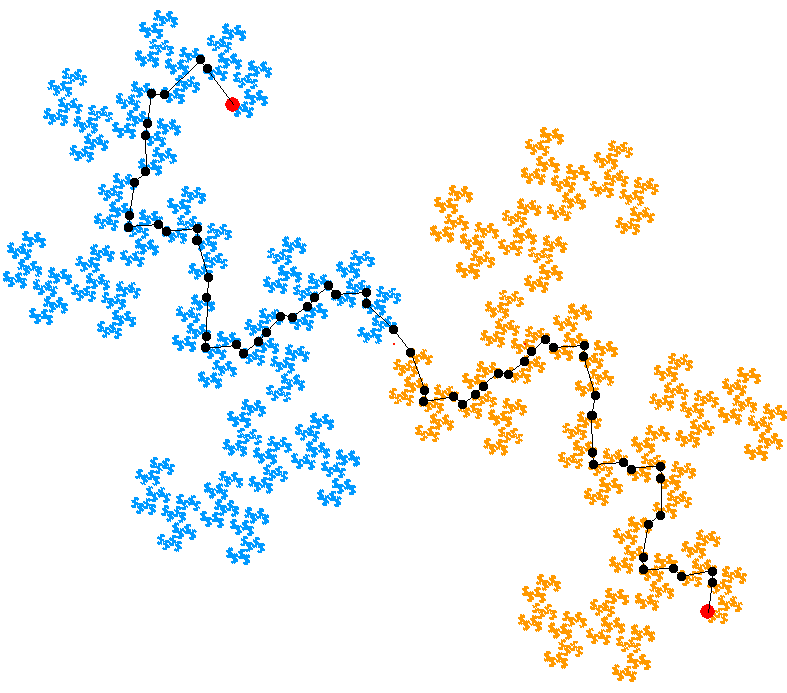}
\end{center}
\caption{Creating a path between the red points which lies entirely 
within the $\delta/2$ neighborhood of $\Lz$ by recursively ``jumping'' 
across the pair of points in $f\Lz$ and $g\Lz$ which are closest, as 
explained in the Short Hop Lemma (Lemma~\ref{lemma:short_hop}).}
\label{figure:short_hop}
\end{figure}


\begin{theorem}[Disconnected is Schottky]\label{theorem:disconnected_is_Schottky}
The semigroup $G_z$ has disconnected $\Lambda_z$ if and only if $G_z$ is Schottky.
\end{theorem}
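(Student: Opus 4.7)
The ($\Leftarrow$) direction is already contained in Lemma~\ref{lemma:disconnected_Cantor}, so I focus on showing that a disconnected limit set forces $G_z$ to be Schottky. Assume $\Lambda_z$ is disconnected. Then by Lemma~\ref{lemma:disconnected_Cantor} the compact sets $f\Lambda_z$ and $g\Lambda_z$ are disjoint, and by compactness $\delta := d(f\Lambda_z, g\Lambda_z) > 0$. My plan is to construct the good disk $D$ as a suitably thickened neighborhood of $\Lambda_z$, first producing a compact set with the desired geometric properties and then promoting it to a topological disk.

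Pick any $\epsilon$ in the open interval $(\delta/2, \delta/(2|z|))$, which is non-empty because $|z|<1$, and set $K := \overline{N_\epsilon(\Lambda_z)}$. Because $f$ and $g$ are similarities with contraction factor $|z|$, we have $f(K) = \overline{N_{|z|\epsilon}(f\Lambda_z)}$ and $g(K) = \overline{N_{|z|\epsilon}(g\Lambda_z)}$. From $|z|\epsilon < \epsilon$ one obtains $f(K), g(K) \subset \mathrm{int}(K)$; from $2|z|\epsilon < \delta$ one obtains $f(K) \cap g(K) = \varnothing$; and trivially $\Lambda_z \subset \mathrm{int}(K)$. Applying the Short Hop Lemma (Lemma~\ref{lemma:short_hop}) at scale $\epsilon > \delta/2$ ensures that $\mathrm{int}(K) = N_\epsilon(\Lambda_z)$ is path connected, so $K$ itself is connected. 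Thus $K$ satisfies every condition in the Schottky definition except, possibly, the requirement of being a topological disk.

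To finish, I promote $K$ to a closed topological disk $D$. The key observation is that any closed topological disk $D$ satisfying $f(K) \cup g(K) \subset \mathrm{int}(D) \subset D \subset K$ automatically inherits the Schottky conditions: $f(D) \subset f(K) \subset \mathrm{int}(D)$ and likewise for $g$; $f(D) \cap g(D) \subset f(K) \cap g(K) = \varnothing$; and $\Lambda_z \subset f(K) \cup g(K) \subset \mathrm{int}(D)$. To produce such $D$, I use path connectedness of $\mathrm{int}(K)$ to pick a simple arc $\gamma \subset \mathrm{int}(K)$ joining the two components $f(K)$ and $g(K)$, and then take $D$ to be a small closed topological-disk neighborhood of $f(K) \cup g(K) \cup \gamma$ sitting inside $\mathrm{int}(K)$.

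The main obstacle is this last construction: the sets $f(K)$ and $g(K)$ are thickenings of Cantor sets and may fail to be simply connected, so naive ``filling in of holes'' could push $D$ outside $K$. The resolution is the topological tameness of Cantor sets in the plane: because $\Lambda_z$ is totally disconnected, $\C \setminus \Lambda_z$ is connected and $\Lambda_z$ therefore admits arbitrarily small topological open-disk neighborhoods; combining such a neighborhood with a thin tubular neighborhood of $\gamma$ and a standard smoothing of the boundary yields the required $D$ inside $\mathrm{int}(K)$.
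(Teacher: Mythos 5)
The direction ``Schottky implies disconnected'' is correctly disposed of via Lemma~\ref{lemma:disconnected_Cantor}, and your setup (the choice of $\epsilon\in(\delta/2,\delta/(2|z|))$, the verifications that $fK,gK\subset\mathrm{int}(K)$, $fK\cap gK=\varnothing$, and the path-connectedness of $N_\epsilon(\Lambda_z)$ from the Short Hop Lemma) is all sound. Your ``key observation'' is also correct as a conditional: \emph{if} there is a closed topological disk $D$ with $fK\cup gK\subset\mathrm{int}(D)\subset D\subset K$, then $D$ is a good disk. The problem is that you never establish that such a $D$ exists, and in fact the constraint $D\subset K$ is the wrong thing to aim for.

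Here is the gap concretely. A closed topological disk $D$ is simply connected, so if $fK\subset D$ then every bounded complementary component (``hole'') of $fK$ whose boundary lies in $\mathrm{int}(D)$ must also lie in $D$. But $fK=\overline{N_{|z|\epsilon}(f\Lambda_z)}$ can have holes, and nothing in your argument shows these holes are contained in $K=\overline{N_\epsilon(\Lambda_z)}$; equally, $K$ itself can fail to be simply connected, in which case a disk containing $\Lambda_z$ \emph{cannot} be contained in $K$ at all, and your sandwich $\mathrm{int}(D)\subset D\subset K$ is unachievable. Your attempted patch --- that $\Lambda_z$, being a tame planar Cantor set, has arbitrarily small topological disk neighborhoods --- is true but does not help: you need a disk containing the fixed-size thickenings $fK,gK$, not a small neighborhood of the Cantor set $\Lambda_z$, and the argument ``small disk around $\Lambda_z$ plus a tube around $\gamma$'' produces a set that need not contain $fK\cup gK$ at all. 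The holes of $fK$, $gK$, and $K$ are exactly the difficulty, and they are not addressed.

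The paper's proof takes a genuinely different route precisely to control this. Rather than working with $K=\overline{N_\epsilon(\Lambda_z)}$, it works with $E=\overline{N_\epsilon(L_n)}$ for a \emph{finite} orbit $L_n=\Sigma_n(z,0)$. Because $E$ is a finite union of round disks, it is a disk with \emph{finitely many} subdisks removed, so there is a hole of least diameter. One then fills the holes one at a time, from smallest to largest, and the key point --- which your argument has no analogue of --- is that if $\gamma$ bounds the smallest hole, then $f\gamma$ and $g\gamma$ are strictly smaller and hence cannot bound a hole, so they bound subdisks of $E$; this lets one check that filling preserves the Schottky inclusions even as $D$ grows \emph{outside} $K$. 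In short: the good disk must in general be allowed to extend beyond any metric neighborhood of $\Lambda_z$, and the finite-approximation plus smallest-hole-first filling is what makes the inductive control possible. Your argument would need either a proof that $K$ is simply connected (which you have not given and which is not obvious), or a replacement of the $D\subset K$ constraint by a direct hole-filling argument in the style of the paper.
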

\begin{proof}
It remains to prove that if $\Lambda_z$ is disconnected, then $G_z$ admits a good disk.
Since $\Lambda_z$ is disconnected, by Lemma~\ref{lemma:disconnected_Cantor} the
distance from $f\Lambda_z$ to $g\Lambda_z$ is some positive number $\delta$. By
Lemma~\ref{lemma:short_hop} it follows that the closed $\delta/2$ neighborhood 
$\overline{N}_{\delta/2}(\Lz)$ of $\Lambda_z$
is path connected. So $\overline{N}_{|z|\delta/2}(f\Lz)$ and $\overline{N}_{|z|\delta/2}(g\Lz)$
are path connected.

Choose some $\epsilon$ with $|z|\epsilon < \delta/2 < \epsilon$.
Let $L_n = \Sigma_n(z,0)$.
By Lemma~\ref{lemma:orbits_are_uniformly_dense}, there is an $n$ such that 
$\overline{N}_{\delta/2}(\Lambda_z) \subseteq \overline{N}_{\epsilon}(L_n)$.
Let $E = \overline{N}_{\epsilon}(L_n)$.  By definition, each connected 
component of $E$ contains a point in $\Lz$, but 
$\Lz \subseteq \overline{N}_{\delta/2}(\Lambda_z)$, which is path connected, so 
there can only be a single connected component of $E$ containing $\Lz$, so $E$ 
is path connected.  
Since $E$ is a finite union of closed disks which is path connected, 
it is homeomorphic to a disk with finitely many subdisks removed.

Furthermore, $fE$ and $gE$ are unions of round disks of radius $|z|\epsilon$ 
around points of $f\Lz$ and $g\Lz$, and
therefore are contained in $\overline{N}_{\delta/2}(\Lz)\subseteq E$. Because $|z|\epsilon < \delta/2$, 
$fE$ and $gE$ are disjoint. 

We now show that we can fill in the ``holes'' in $E$ (if any) and obtain a good disk.
By construction $E$ has finitely many holes, so there is some hole of least diameter
with boundary component $\gamma$. But then $f\gamma$ and $g\gamma$ have diameter
strictly less than $\gamma$, and are contained in the interior of $E$,
so that they must bound subdisks of $E$. So it follows that we can add to $E$ the subdisk
bounded by $\gamma$ to obtain a new closed set $E'$ with $fE'$ and $gE'$ disjoint and
contained in the interior of $E$. Add the bounded complementary components in this way
one by one until we obtain a closed topological disk $D$ with $fD$ and $gD$ disjoint
and contained in the interior of $D$. In other words, $D$ is good for $\Gz$, so that $\Gz$
is Schottky.
\end{proof}

\subsection{An algorithm to certify that \texorpdfstring{$\Lambda_z$}{Lambda(z)} is disconnected}
\label{section:numerical_schottky}

In this section, we describe a fast and practical algorithm to certify 
that the limit set $\Lz$ is disconnected for a given parameter $z$ (equivalently, to certify
that $\Gz$ is Schottky). Since this condition is open in
$z$, a careful analysis of this algorithm certifies that $\Lz$ is
disconnected on a definite open subset of parameter space. Giving a rigorous numerical
certificate that $\Lz$ is {\em connected}, especially one valid in a definite open
subset of parameter space, is more difficult, and is addressed in Section~\ref{section:interior}.
However practically speaking, the algorithm described in this section can be used to draw
fast and accurate pictures of $\SetA$. The algorithm we describe differs only in inessential
ways from that first discussed by Bandt \cite{Bandt}.

We give some notation.  Let $D_z$ be a round disk centered at $1/2$ with the property that
$fD_z$ and $gD_z$ are both contained in $D_z$; for example, we could take $D_z$ to be a disk of radius
$|z-1|/2(1-|z|)$. Let $D_n=:\Sigma_n(z,D_z)$; i.e.\/ $D_n$ is the union of the images of $D_z$ under
the set of words in $\Sigma$ of length $n$. Inductively, $D_n = fD_{n-1} \cup gD_{n-1}$.
By Lemma~\ref{lemma:intersection_of_disks}, $\Lambda_z = \cap_n D_n$, 
so $\Lz$ is disconnected if and only if $D_n$ is disconnected 
for some $n$.  

\begin{lemma}
$D_n$ is disconnected if and only if $fD_{n-1} \cap gD_{n-1} = \varnothing$.
\end{lemma}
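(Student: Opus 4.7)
The plan is to prove the two directions separately, with the nontrivial content concentrated on the ``only if'' direction. The ``if'' direction is essentially free: if $fD_{n-1} \cap gD_{n-1} = \varnothing$, then $D_n = fD_{n-1} \cup gD_{n-1}$ is a disjoint union of two non-empty closed sets, hence disconnected.

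For the converse, I would prove the contrapositive: if $fD_{n-1} \cap gD_{n-1} \neq \varnothing$, then $D_n$ is connected. The key observation is that the sets $D_k$ form a nested decreasing sequence. Indeed, $D_z$ was chosen so that $fD_z, gD_z \subseteq D_z$, so $D_1 \subseteq D_0$; and inductively, $D_{k-1} \subseteq D_{k-2}$ implies $fD_{k-1} \cup gD_{k-1} \subseteq fD_{k-2} \cup gD_{k-2}$, i.e.\ $D_k \subseteq D_{k-1}$. Consequently $fD_k \cap gD_k \subseteq fD_{k-1} \cap gD_{k-1}$, so once the intersection becomes empty at some level it remains empty at all subsequent levels. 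In particular, the hypothesis $fD_{n-1} \cap gD_{n-1} \neq \varnothing$ forces $fD_{k-1} \cap gD_{k-1} \neq \varnothing$ for every $k \le n-1$ as well.

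Having secured this, I would show by induction on $k$ that each $D_k$ with $k \le n-1$ is connected. The base case $D_0 = D_z$ is a round disk, hence connected. For the inductive step, $fD_{k-1}$ and $gD_{k-1}$ are continuous (even similarity) images of the connected set $D_{k-1}$, hence each is connected; and they meet by the previous paragraph, so $D_k = fD_{k-1} \cup gD_{k-1}$ is connected. Applying this at $k = n-1$, both $fD_{n-1}$ and $gD_{n-1}$ are connected continuous images of the connected set $D_{n-1}$, and they intersect by hypothesis, so $D_n$ is connected.

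There is no substantive obstacle: the only small subtlety is recognizing that the lemma is really a statement about the nested sequence $D_0 \supseteq D_1 \supseteq \cdots$, so that non-empty intersection at step $n-1$ propagates backwards to all earlier steps and lets induction deliver the connectivity of $D_{n-1}$. This propagation is the linchpin that upgrades the obvious local statement into the claimed equivalence.
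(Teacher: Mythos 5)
Your proof is correct and uses essentially the same ideas as the paper's: the nesting $D_k \subseteq D_{k-1}$, the consequent monotonicity of $fD_k \cap gD_k$, and an inductive propagation of connectedness from $D_0$. The paper phrases this as a minimal-counterexample argument rather than a direct forward induction, but the two are logically the same.
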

\begin{proof}
Obviously if $fD_{n-1} \cap gD_{n-1} = \varnothing$, then $D_n$ is 
disconnected, so we must only show the converse.
Suppose $D_n$ is disconnected and 
$fD_{n-1} \cap gD_{n-1} \ne \varnothing$.  We can take $n$ to be minimal 
such that $D_n$ is disconnected and 
$fD_{n-1} \cap gD_{n-1} \ne \varnothing$.  Since $n$ is minimal, 
for $n-1$ we have either $D_{n-1}$ is connected or 
$fD_{n-2} \cap gD_{n-2} = \varnothing$.
The latter is impossible, though, because $D_{n-1} \subseteq D_{n-2}$, 
so $fD_{n-1} \cap gD_{n-1} \subseteq fD_{n-2} \cap gD_{n-2}$.  
We conclude that $D_{n-1}$ is connected.
But then $fD_{n-1}$ and $gD_{n-1}$ 
are connected, and $fD_{n-1} \cap gD_{n-1} \ne \varnothing$, 
so $D_n$ is connected, a contradiction.
\end{proof}


Naively, to
check that $fD_{n-1}$ is disjoint from $gD_{n-1}$ would take exponential time, since we need
to check the pairwise distances between elements of two sets, each with $2^{n-1}$ points.
However, there is a great deal of redundacy: if $u$ and $v$ are words of length $n$ starting
with $f$ and $g$ respectively, then if $u(z,D_z)$ is disjoint from $v(z,D_z)$, then $ux(z,D_z)$ is disjoint from
$vy(z,D_z)$ for all words $x,y$. In fact, for any fixed $u$ and $v$ words of length $n$, the
images $u(z,D_n)$ and $v(z,D_n)$ are copies of $D_n$, scaled by $z^n$ and translated relative to each
other by $u(z,1/2) - v(z,1/2)$. Thus the relevant data to keep track of is the set of numbers
$z^{-n}(u(z,1/2) - v(z,1/2))$ ranging over $u,v$ of length $n$ where $u$ starts with $f$ and $v$
starts with $g$, for which $u(z,D_z)$ and $v(z,D_z)$ intersect --- equivalently, for which there is an
inequality $|z^{-n}(u(z,1/2) - v(z,1/2))|<R:=2\,\text{radius}(D_z)$.

This discussion justifies Algorithm~\ref{algorithm:disconnectedness} to test for 
connectedness of $\Lz$.  We briefly explain the recursion in the 
context of the above observations.  
First, the algorithm initializes the set 
$V$ to contain the single number 
\[
z^{-1}(f(z,1/2)-g(z,1/2)) = z^{-1}(z/2 - (z(1/2-1)+1)) = 1-z^{-1}.
\]
Next, recall that 
for any word $u$ of length $n$, we can write $u(z,x) = z^nx + p_u(z)$, where 
$p_u(z)$ is a polynomial in $z$.  Therefore, 
$z^{-n}(u(z,1/2) - v(z,1/2)) = z^{-n}(p_u(z) - p_v(z))$.
So if we are given $\alpha = z^{-n}(u(z,1/2) - v(z,1/2)) = z^{-n}(p_u(z) - p_v(z))$, 
we can compute (for clarity, we write $u1/2$ in place of $u(z,1/2)$):
\footnotesize
\begin{align*}
z^{-(n+1)}(uf1/2 - vf1/2) &= z^{-1}z^{-n}(z^nz/2 + p_u(z) - z^nz/2 - p_v(z)) = z^{-1}\alpha \\
z^{-(n+1)}(ug1/2 - vg1/2) &= z^{-1}z^{-n}(z^n(1-z/2) + p_u(z) - z^n(1-z/2) - p_v(z)) = z^{-1}\alpha \\
z^{-(n+1)}(uf1/2 - vg1/2) &= z^{-1}z^{-n}(z^nz/2 + p_u(z) - z^n(1-z/2) - p_v(z)) = z^{-1}(\alpha +z-1) \\
z^{-(n+1)}(ug1/2 - vf1/2) &= z^{-1}z^{-n}(z^n(1-z/2) + p_u(z) - z^nz/2 - p_v(z)) = z^{-1}(\alpha -z+1)
\end{align*}
\normalsize
So given the set of differences of the form $z^{-n}(u(z,1/2)-v(z,1/2))$ which are less than $R$, 
where $u$ and $v$ may range over all words of length $n$, we can compute the set of differences 
of words of length $n+1$, discarding those which are larger than $R$.

\begin{algorithm}[htpb]
\caption{Disconnected$(z,\text{depth})$}\label{algorithm:disconnectedness}
\begin{algorithmic}
\State $V\gets \lbrace 1-z^{-1} \rbrace$
\State $d\gets 0$

\While{$V\ne \emptyset$ or $d<\text{depth}$}
	\State $W \gets \emptyset$
	\ForAll{$\alpha \in V$}
		\If{$|z^{-1}\alpha|<R$}
			$W \gets W \cup z^{-1}\alpha$
		\EndIf
		\If{$|z^{-1}(\alpha + z-1)|<R$}
			$W \gets W \cup z^{-1}(\alpha+z-1)$
		\EndIf
		\If{$|z^{-1}(\alpha -z +1)|<R$}
			$W \gets W \cup z^{-1}(\alpha-z+1)$
		\EndIf
	\EndFor
	\State	$V \gets W$
	\State $d \gets d+1$

\EndWhile

\If{$V=\emptyset$} 
\State \Return true
\Else
\State \Return false
\EndIf
\end{algorithmic}
\end{algorithm}

If this algorithm returns true, then $\Lz$ is disconnected. If it returns false,
then $\Lz$ might still be disconnected, but this would not be discovered without
increasing the ``depth'' parameter.

Algorithm~\ref{algorithm:disconnectedness} is very fast, and has been implemented in our
program {\tt schottky}, available from \cite{schottky}. 
In practice, we can check connectedness to depths exceeding $60$.  
The algorithm is faster in certain regions than others; in particular, it is quite slow
near the real axis. We follow this point up in Section~\ref{section:whiskers}.

\subsection{Paths in $\Lz$}\label{subsection:Lz_paths}

In this section, we show how to construct paths inside the 
limit set $\Lz$ and show that it is connected if and only 
if it is path connected.  We will not explicitly need 
the results in this section; however, it serves to further 
introduce the structure of $\Lz$, and we will use 
very similar ideas in Section~\ref{section:holes_in_m0}.  
These are not new results and can be derived from the general 
theory of IFSs, but this direct approach is illuminating.
Note that this is essentially a continuous 
version of the Short Hop Lemma~\ref{lemma:short_hop}.

The following construction essentially appears in \cite{Bandt}.
Suppose that $f\Lz \cap g\Lz \ne \varnothing$, 
so there are $u,v \in \partial \Sigma$ with $u_1 = f$ and $v_1 = g$ 
and $\pi(u,z) = \pi(v,z)$.  Then for any $a,b \in \partial \Sigma$, 
we will construct a continuous path within $\Lz$ between $\pi(a,z)$ and $\pi(b,z)$.

\begin{figure}[ht]
\begin{center}
\includegraphics[scale=0.5]{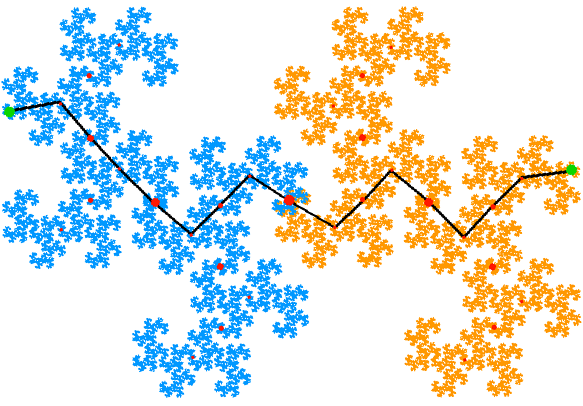}
\end{center}
\caption{Given the words $x,y \in \partial G$ such that 
$\pi(z)(x) = \pi(z)(y)$, we show an approximation 
of a path in $\Lz$ between $\pi(z)(a)$ and $\pi(z)(b)$ 
for a given $a,b \in \partial G$.  The large red disk in the middle indicates a 
point in the intersection $f\Lz \cap g \Lz$, and the other red
disks show the image of this point under words of length less than or equal to $4$.
This method is completely analogous to Figure~\ref{figure:short_hop}.}
\label{figure:path_in_lambda}
\end{figure}

First, let us narrate Figure~\ref{figure:path_in_lambda} to explain 
the construction graphically.  Suppose that the IFS 
takes the disk of radius $R$ inside itself.  We will use this fact to 
bound distances between points.
Suppose that $a_1 = f$ and $b_1 = g$.  Note $\pi(a,z)$ and 
$\pi(b,z)$ are distance at most $2R$. 
Consider the pair of words $(u,v)$.  By assumption 
Since $x_1 = f = a_1$ and $y_1 = g = b_1$, note that both $\pi(a,z)$ 
and $\pi(u,z)=\pi(v,z)$ lie in $f\Lz$, so they are distance 
at most $2|z|R$ apart.  Similarly, $\pi(b,z)$ and $\pi(u,z)=\pi(v,z)$ 
lie in $g \Lz$, so they are also distance at most $2|z|R$ apart.
That is, the point $\pi(u,z)=\pi(v,z)$ coarsely interpolates 
between $\pi(a,z)$ and $\pi(b,z)$.  Next consider the words 
$a$ and $u$.  For illustrative purposes, suppose 
$a_2 = f$ and $u_2 = g$.  Then the pair $(fu,fv)$ interpolates 
between $a$ and $u$: because $fu$ agrees with $a$ to depth $2$, and $fv$ agrees with $v$ to depth $2$, 
we have $\pi(fu,z)=\pi(fv,z)$, and 
\[
|\pi(a,z) - \pi(fu,z)| < 2|z|^2R\quad \text{and}\quad |\pi(fv,z) - \pi(u,z)| < 2|z|^2R
\]
We can continue inductively producing 
points in $\Lz$ coarsely between points in our path.  
Figure~\ref{figure:path_in_lambda} shows the coarse path which is the 
result of stopping the construction after $4$ steps.
In the limit, we will have  
a continuous path from the dyadic rationals into $\Lz$, and because 
$\Lz$ is closed, this path extends continuously to $[0,1]$.

We now describe the construction precisely.
Given $u,v$ as above, we define an interpolation function on infinite words 
$\Phi_{(u,v)}:\partial \Sigma \times \partial \Sigma \to \partial \Sigma \times \partial \Sigma$, as follows.
Given two right-infinite words $s,t$, we may rewrite them as $s=ws'$ and $t=wt'$, 
where $w$ is the maximal common prefix of $s$ and $t$.  Then
\[
\Phi_{(u,v)} (s,t) = \left\{ \begin{array}{ll} (wu,wv) & \textnormal{if $s'_1 = f$ (and thus $t'_1 = g$)} \\
                                        (wv,wu) & \textnormal{if $s'_1 = g$ (and thus $t'_1 = f$)} 
             \end{array} \right.
\]
Note that $\pi(wu,z)=\pi(wv,z)$ by Lemma \ref{lemma:word_composition}. Furthermore if the maximal common prefix of $s,t$ has length $n$, i.e. $|w|=n$, then 
the maximal common prefix of $s$ and $(\Phi_{(u,v)}(s,t))_1$ (denoting the first coordinate) is $n+1$; similarly, the maximal common prefix of $t$ and $(\Phi_{(u,v)}(s,t))_2$ is also $n+1$. 

Now we define a set $W\subseteq\partial \Sigma\times\partial\Sigma$.  The set $W$ will be indexed by 
the dyadic rational numbers, with $W_r$ denoting the element at $r$; that is $W_r=(W_{r,1},W_{r,2})$. 
First set $W_0 = (a,a)$ and $W_1 = (b,b)$.  Then recursively define
\[
W_{k2^{-i-1}+(k+1)2^{-i-1}}=\Phi_{(u,v)}(W_{k2^{-i},2}, W_{(k+1)2^{-i},1})
\]
In other words, to get the pair between $k2^{-i}$ and $(k+1)2^{-i}$, 
apply the interpolation function $\Phi_{(u,v)}$ to the second word at $k2^{-i}$ 
and the first word at $(k+1)2^{-i}$. Observe that $\pi(W_r,z)$ is well-defined
because $\pi(\Phi_{(u,v)}(\cdot)_1,z)=\pi(\Phi_{(u,v)}(\cdot)_2,z)$.

\begin{lemma}\label{lemma:interpolate_lambda_props}
Suppose that $$k2^{-i} \le r_1,r_2 \le (k+1)2^{-i}$$
Then for the construction above, we have the estimate
$$|\pi(W_{r_1},z) - \pi(W_{r_2},z)| < 2\frac{|z|^{i+1}}{|1-z|}$$
\end{lemma}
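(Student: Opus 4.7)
The plan is to show that for every dyadic $r \in [k2^{-i},(k+1)2^{-i}]$, the image $\pi(W_r,z)$ lies in a single translate $w\Lz$ for a word $w$ of length at least $i$, and then to estimate the diameter of this translate.

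First I would establish, by induction on $i$, that the maximal common prefix of $W_{k2^{-i},2}$ and $W_{(k+1)2^{-i},1}$ has length at least $i$. The base case $i=0$ is vacuous. For the inductive step, let $w$ denote this common prefix; since $w$ is maximal, we may assume without loss of generality that $W_{k2^{-i},2} = wf\cdots$ and $W_{(k+1)2^{-i},1} = wg\cdots$. Then by definition of $\Phi_{(u,v)}$, the midpoint construction produces $W_{(2k+1)2^{-(i+1)}} = (wu,wv)$. Because $u_1 = f$ and $v_1 = g$ match the letters appearing immediately after $w$ on the appropriate sides, the two new consecutive pairs $(W_{k2^{-i},2}, wu)$ and $(wv, W_{(k+1)2^{-i},1})$ have common prefixes of length at least $|w|+1 \ge i+1$.

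Next I would verify that for every dyadic $r \in [k2^{-i},(k+1)2^{-i}]$, the point $\pi(W_r,z)$ lies in $w\Lz$. For interior $r$ this is immediate, since the recursive subdivisions within this interval only produce words extending $w$, by applying the above observation at progressively finer levels. For the endpoints $r = k2^{-i}$ and $r = (k+1)2^{-i}$, it requires invoking the well-definedness of $\pi(W_r,z)$ recorded just before the lemma, which allows us to use the components $W_{k2^{-i},2}$ and $W_{(k+1)2^{-i},1}$ respectively, each of which starts with $w$ by construction, rather than the other component which in general need not.

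Finally, to promote the coarse bound $|z|^i\,\mathrm{diam}(\Lz)$ into the claimed $2|z|^{i+1}/|1-z|$, I would split $r_1,r_2$ through the midpoint $m = (2k+1)2^{-(i+1)}$ and apply the prefix bound at level $i+1$ on each half, combined with the triangle inequality; this supplies the factor of $2$ and promotes $|z|^i$ to $|z|^{i+1}$. The main obstacle will be matching the target denominator $|1-z|$ exactly: the naive diameter bound of Lemma~\ref{lemma:diameter_bound} supplies $(1-|z|)^{-1}$ instead, which is in general incomparable to $|1-z|^{-1}$. Passing from one to the other presumably requires using the power-series description of Proposition~\ref{proposition:power_series} to exploit the complex geometric identity $\sum z^j = 1/(1-z)$ directly, rather than applying the triangle inequality termwise on absolute values.
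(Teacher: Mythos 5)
Your structural argument is sound and closely parallels the paper's: you establish by induction that the common prefix at level $i$ has length at least $i$, you observe (more carefully than the paper, in fact) that the well-definedness of $\pi(W_r,z)$ lets you use the appropriate component at the endpoints so that $\pi(W_r,z) \in w\Lz$ throughout, and your midpoint split to pick up the factor $2$ and promote $|z|^i$ to $|z|^{i+1}$ is a fine way to handle the off-by-one. Where you diverge from the paper is in deriving the numerical constant. The paper reads off the bound directly from Proposition~\ref{proposition:power_series}: all words comprising $W_{r_1}$ and $W_{r_2}$ share the prefix $w$, so the difference $\pi(W_{r_1},z)-\pi(W_{r_2},z)$ is a power series whose coefficients (each being a difference of two coefficients in $\{-1,0,1\}$) are bounded in magnitude by $2$ and vanish below degree $i+1$, giving via termwise absolute values the bound $2\sum_{j\ge i+1}|z|^j = 2|z|^{i+1}/(1-|z|)$. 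Your route through Lemma~\ref{lemma:diameter_bound} instead yields $2|z|^{i+1}|z-1|/(1-|z|)$, which carries an extra factor of $|z-1|$.

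The genuine gap is in your final paragraph. You correctly sense that the stated denominator $|1-z|$ is not what either route produces, but your proposed fix --- appealing to the identity $\sum z^j = 1/(1-z)$ to replace a termwise estimate by an exact evaluation --- cannot work: the coefficients of the difference series have arbitrary signs in $\{-2,\dots,2\}$, and with adversarial signs (e.g.\ $z$ negative real and coefficients chosen so that every summand is positive) the modulus $\bigl|\sum c_j z^j\bigr|$ genuinely exceeds $2|z|^{i+1}/|1-z|$. Since $1-|z|\le|1-z|$, the stated bound with $|1-z|$ in the denominator is strictly stronger than what the triangle inequality gives, and the paper's own proof (summing a $\{-2,0,2\}$-coefficient series) only produces $1-|z|$ as well; so the $|1-z|$ in the lemma should be read as $1-|z|$ (the application in Proposition~\ref{proposition:path_in_lambda} only needs some bound tending to $0$ as $i\to\infty$). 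Concretely, to close your proof: replace the geometric diameter estimate by the coefficient estimate from Proposition~\ref{proposition:power_series}. This removes the extraneous $|z-1|$, gives the same $1-|z|$ denominator as the paper, and is the cleaner argument.
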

\begin{proof}
By construction, $W_{k2^{-i},2}$ and $W_{(k+1)2^{-i},1}$ have a common 
prefix $w$ of length $i$, and thus for all $r$ with $k2^{-i} \le r \le (k+1)2^{-i}$, 
the pair of words comprising $W_r$ also has the prefix $w$.  Therefore, the difference 
$\pi(W_{r_1},z) - \pi(W_{r_2},z)$ is given by a power series in $z$ with coefficients 
in $\{-2,0,2\}$ whose first nonzero coefficient has degree at least $i+1$. The estimate
follows.
%
\end{proof}


\begin{proposition}\label{proposition:path_in_lambda}
 Suppose $u,v \in \partial \Sigma$ with $u_1 = f$ and $v_1 = g$ 
and $z$ is such that $\pi(u,z) = \pi(v,z)$.  Let $a,b \in \partial \Sigma$, and let $\mathrm{Dy}$ be the set of dyadic rational numbers. 
Using $u,v,a,b$ as input, construct $W$ as above, and define the map 
$w:\mathrm{Dy}\to \C$ given by $w:r\mapsto \pi(W_r,z)$. 
Then $w$ extends continuously to $[0,1]$, 
and satisfies
\[w(0) = a,\quad  w(1) = b,\quad\text{and}\quad w([0,1]) \subseteq \Lz.
\]  
Hence, $\Lz$ is path connected iff it is connected 
iff $f\Lz \cap g\Lz \ne \varnothing$.
\end{proposition}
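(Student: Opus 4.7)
The plan is to show that $w:\mathrm{Dy}\to\C$ is uniformly continuous with respect to the usual metric on $[0,1]$, and then extend continuously by density. Once this is established, the equivalences in the final sentence follow by combining the construction with Lemma~\ref{lemma:disconnected_Cantor}.

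First I would check the boundary values and the fact that $w$ lands in $\Lz$. Since $W_0=(a,a)$ and $W_1=(b,b)$ have equal coordinates, $\pi(W_0,z)=\pi(a,z)$ and $\pi(W_1,z)=\pi(b,z)$, so the endpoints are as claimed. For each dyadic $r$, the construction recursively writes $W_r=(W_{r,1},W_{r,2})$ with $W_{r,1}$ and $W_{r,2}$ sharing a long common prefix and $\pi(W_{r,1},z)=\pi(W_{r,2},z)$; the key point, already recorded in the statement before Lemma~\ref{lemma:interpolate_lambda_props}, is that the interpolation function $\Phi_{(u,v)}$ produces pairs both of whose coordinates project to the same point of $\Lz$. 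Hence $w(r)\in\Lz$ for every $r\in\mathrm{Dy}$.

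Next I would use the estimate of Lemma~\ref{lemma:interpolate_lambda_props} to produce the continuous extension. Given $r_1,r_2\in\mathrm{Dy}$ with $|r_1-r_2|<2^{-i}$, both points lie in a common dyadic interval $[k2^{-i},(k+1)2^{-i}]$ (or in two adjacent ones, in which case we apply the estimate twice and use the triangle inequality), so
\[
|w(r_1)-w(r_2)|<\frac{4|z|^{i+1}}{|1-z|}.
\]
Because $|z|<1$, the right-hand side tends to $0$ as $i\to\infty$, so $w$ is uniformly continuous on the dense subset $\mathrm{Dy}\subseteq[0,1]$. Since $\C$ is complete, $w$ extends uniquely to a continuous map $[0,1]\to\C$, and because $\Lz$ is closed, the extension still takes values in $\Lz$. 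This gives a path from $\pi(a,z)$ to $\pi(b,z)$ inside $\Lz$.

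Finally, for the three-way equivalence: path connected implies connected is trivial; connected implies $f\Lz\cap g\Lz\ne\varnothing$ is the content of Lemma~\ref{lemma:disconnected_Cantor}; and the construction just built gives the reverse implication, since any two points $\pi(a,z),\pi(b,z)$ of $\Lz$ are joined by a path inside $\Lz$ whenever $f\Lz\cap g\Lz$ contains a common value $\pi(u,z)=\pi(v,z)$ with $u_1=f$, $v_1=g$. The main obstacle I anticipate is purely bookkeeping: verifying that the recursive definition really yields consistent values at dyadic rationals of different denominators (i.e. that the two sides of each dyadic subdivision agree on the common endpoint) and that adjacent dyadic intervals share an endpoint in a way compatible with the uniform estimate; once that is handled, the analytic extension and the topological conclusions follow immediately.
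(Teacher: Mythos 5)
Your proposal is correct and follows essentially the same route as the paper: invoke Lemma~\ref{lemma:interpolate_lambda_props} to get a uniform modulus of continuity for $w$ on $\mathrm{Dy}$, extend by density to $[0,1]$ using that $\Lz$ is closed, and then observe that the construction supplies the implication $f\Lz\cap g\Lz\neq\varnothing\Rightarrow\Lz$ path connected, while the other two implications are immediate. The ``consistency at dyadic rationals'' worry you flag at the end is a non-issue, since the recursion defines $W_r$ exactly once for each new dyadic level, so there are no competing values to reconcile.
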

\begin{proof}
It follows from Lemma~\ref{lemma:interpolate_lambda_props} that $w:\mathrm{Dy}\to \C$ is continuous, and its image is contained in $\Lz$, so $w$ extends continuously as a map $[0,1]\to \Lz$. 

To get the last assertion, observe that we have shown (3) $\Rightarrow$ (1); 
the other two implications (1) $\Rightarrow$ (2) and 
(2) $\Rightarrow$ (3) are obvious.
\end{proof}

\section{Limit sets of differences}\label{section:differences}

\subsection{Differences in $\Lz$}

As we saw in Section~\ref{section:topology_of_lambda}, the topology and 
geometry of $\Lz$ is controlled by the intersection $f\Lz \cap g\Lz$.
In particular, $f\Lz \cap g\Lz \ne \varnothing$ if and only if 
$\Lz$ is connected.  Said another way, the limit set is 
path connected if and only if $0$ lies in the set of differences
between points in $\Lz$.  It turns out that 
the set of differences between points in $\Lz$ is itself the limit 
set of an IFS.  This limit set features prominently in \cite{Solomyak} and in 
Section~\ref{section:renormalization}.

Define $\Gamma_z$ to be the limit set of the IFS generated 
by the three functions
\[
x\mapsto z(x+1)-1 \qquad x\mapsto zx \qquad x\mapsto z(x-1)+1
\]
That is, the IFS generated by dilations by $z$ centered at the 
points $-1,0,1$. 

\begin{lemma}\label{lemma:ifs_of_diffs}
We have $\Gamma_z = \{a-b\, | \, a,b\in\Lambda_z\}$.
\end{lemma}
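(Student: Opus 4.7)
The plan is to show that $D := \{a - b : a, b \in \Lambda_z\}$ is the unique nonempty compact invariant set for the IFS generating $\Gamma_z$, and then conclude by uniqueness of the IFS attractor. Write $h_{-1}(x) = z(x+1)-1$, $h_0(x) = zx$, and $h_1(x) = z(x-1)+1$ for the three generators. Standard IFS theory tells us that $\Gamma_z$ is the unique nonempty compact subset of $\mathbb{C}$ satisfying
\[
\Gamma_z = h_{-1}(\Gamma_z) \cup h_0(\Gamma_z) \cup h_1(\Gamma_z),
\]
so it suffices to verify the analogous three properties for $D$.

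Nonemptiness and compactness of $D$ are automatic: $0 = a - a \in D$, and $D$ is the image of the compact set $\Lambda_z \times \Lambda_z$ under the continuous difference map $(a,b) \mapsto a - b$. For the self-similarity relation, I would use the decomposition $\Lambda_z = f\Lambda_z \cup g\Lambda_z$, which partitions pairs $(a,b) \in \Lambda_z \times \Lambda_z$ into four types. A direct computation using $f(x) = zx$ and $g(x) = z(x-1)+1$ yields
\[
fx - fy = z(x-y) = h_0(x-y), \qquad gx - gy = z(x-y) = h_0(x-y),
\]
\[
fx - gy = z(x-y) + (z-1) = h_{-1}(x-y), \qquad gx - fy = z(x-y) + (1-z) = h_1(x-y).
\]
Since $x - y$ ranges over all of $D$ as $(x,y)$ ranges over $\Lambda_z \times \Lambda_z$, these four identities establish simultaneously that $D \subseteq h_{-1}(D) \cup h_0(D) \cup h_1(D)$ and the reverse inclusion; the reverse holds by reading each identity backwards, using the fact that $fx, gx, fy, gy$ all lie in $\Lambda_z$, so any element of $h_i(D)$ is realized as an explicit difference of two points of $\Lambda_z$.

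There is no substantial obstacle here; the proof is essentially a bookkeeping exercise matching the four combinatorial cases of pairs in $(f\Lambda_z \cup g\Lambda_z)^2$ to the three generators of the IFS for $\Gamma_z$, with $h_0$ absorbing two of the four cases and the affine constants $z-1$ and $1-z$ distinguishing $h_{-1}$ from $h_1$. Invoking uniqueness of the IFS attractor then gives $D = \Gamma_z$.
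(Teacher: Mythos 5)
Your proof is correct. The central computation --- checking that the four types of pairs $(a,b)$ with $a \in f\Lambda_z$ or $g\Lambda_z$ and $b \in f\Lambda_z$ or $g\Lambda_z$ give the maps $h_0, h_0, h_{-1}, h_1$ on the difference $a-b$ --- is exactly the paper's computation. Where you differ is in how you close the argument: the paper works with the coding of $\Lambda_z$ by infinite words in $\{f,g\}$ and observes that pairs of such codings induce precisely the codings of $\Gamma_z$ by infinite words in $\{-1,0,1\}$, concluding via the identification of the limit set with the image of the coding map; you instead verify directly that the difference set $D$ is nonempty, compact, and invariant under the three-generator IFS and invoke Hutchinson's uniqueness of the attractor. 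Both routes are standard and close cleanly; yours has the modest advantage of being entirely set-theoretic, avoiding explicit symbolic dynamics. One minor imprecision: $\Lambda_z = f\Lambda_z \cup g\Lambda_z$ need not be a disjoint union (the two images overlap precisely when $z \in \SetA$), so ``partitions'' should be ``covers'' --- but since you only need every pair $(a,b)$ to fall into at least one of the four cases, this does not affect the argument.
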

\begin{proof}
Each point in $\Lambda_z$ is given by a power 
series in $z$ and associated with an infinite word in 
$\partial \Sigma$.  Thus, the set of differences of 
points in $\Lambda_z$ is associated with pairs of 
infinite words.  Given two words $x,y \in \partial \Sigma$, 
it is straightforward to compute the power series giving the 
value $\pi(x,z) - \pi(y,z)$ recursively, as follows.
Suppose that $x$ begins with $f$, so $x = fx'$ and $y$ begins with $g$, 
so $y = gy'$.  Then 
\[
\pi(x,z) - \pi(y,z) = f(\pi(x',z)) - g(\pi(y',z)) = 
z(\pi(x',z) - \pi(y',z) +1) - 1
\]
In other words, the difference associated to the pair of words $fx'$ and
$gy'$ is obtained from the difference associated to the pair of words
$x'$ and $y'$ by the transformation
$d\mapsto z(d+1)-1$. Similarly, prefixing the pair of words $(x',y')$ with the
pair of letters $(f,f)$, $(g,g)$, $(g,f)$ 
transforms the differences by
\[
d \mapsto zd, \quad d \mapsto zd, \quad z \mapsto z(d-1)+1,
\]
respectively. Note that two of these transformations are the same.
Therefore, the limit set of the semigroup 
generated by these three transformations
is precisely the set of differences in $\Lz$.  
But that limit set is $\Gamma_z$.
\end{proof}

Notice that the proof of Lemma~\ref{lemma:ifs_of_diffs} effectively 
shows that the set of differences of any IFS generated from a 
regular language as in Section~\ref{section:regular} is itself an IFS 
generated from a regular language.

\subsubsection{Iterated Mandelbrot sets}

The set of differences between points in $\Lz$ is $\Gamma_z$.  
We can iterate this procedure by taking the set of differences in $\Gamma_z$, 
and so on.
Let $\Gamma^k_z$ be the 
limit set of the IFS generated by $\{f_{-k},\ldots,f_k\}$, 
where $f_i$ is a dilation centered at $i$.
The set of differences 
of points in $\Lz$ is $\Gamma^1_z$.  
\begin{lemma}\label{lemma:diffs_in_lambda}
The set of differences between points in $\Gamma^{2^k}_z$ is 
$\Gamma^{2^{k+1}}_z$.
\end{lemma}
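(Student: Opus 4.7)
The plan is to directly generalize the recursive computation in the proof of Lemma~\ref{lemma:ifs_of_diffs}. In fact, the cleanest approach is to prove the stronger statement that for every positive integer $k$, the set of differences $D_k := \{a-b : a,b \in \Gamma^k_z\}$ equals $\Gamma^{2k}_z$; the lemma then follows by specializing $k = 2^m$.

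Recall that $\Gamma^k_z$ is the attractor of the IFS generated by the similarities $f_i : x \mapsto z(x-i)+i$ for $i \in \{-k,\dots,k\}$. Writing $a = f_i(a')$ and $b = f_j(b')$ with $a',b' \in \Gamma^k_z$, a direct computation yields
$$a - b \;=\; z\bigl((a'-b') - (i-j)\bigr) + (i-j),$$
which is the image of $a'-b'$ under the similarity of $\C$ with dilation $z$ centered at the integer $i-j$. As $(i,j)$ ranges over $\{-k,\dots,k\}^2$, the index $i-j$ takes every value in $\{-2k,\dots,2k\}$; indeed any such $m$ can be written as $i-j$ with $i,j \in \{-k,\dots,k\}$ by elementary casework (e.g. $(i,j) = (m,0)$ if $|m|\le k$, else $(\operatorname{sgn}(m)\cdot k,\operatorname{sgn}(m)\cdot k - m)$). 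Hence $D_k$ is a nonempty compact subset of $\C$ invariant under the IFS defining $\Gamma^{2k}_z$, and by uniqueness of the attractor, $D_k = \Gamma^{2k}_z$.

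The one point that needs modest care is that invariance must go in \emph{both} directions: not only must every element of $D_k$ arise as an image under one of the $\Gamma^{2k}_z$ generators, but every such generator must map $D_k$ into itself. For the dilation centered at $m$, pick any $(i,j) \in \{-k,\dots,k\}^2$ with $i-j = m$; then
$$z\bigl((a-b) - m\bigr) + m \;=\; f_i(a) - f_j(b),$$
which lies in $D_k$ whenever $a,b \in \Gamma^k_z$. This closes the argument, and setting $k = 2^m$ yields the lemma.
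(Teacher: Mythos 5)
Your proof is correct and rests on the same key computation as the paper's: $(z(x-m)+m)-(z(y-n)+n) = z((x-y)-(m-n)) + (m-n)$, so differences of images are images of differences under the dilation centered at the difference of centers. You spell out the bidirectional invariance and the uniqueness-of-attractor appeal that the paper leaves implicit, and you observe that the argument works for any $k$ (not just powers of $2$), but the approach is the same.
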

\begin{proof}
This is just a computation in the generators analogous to the proof of 
Lemma~\ref{lemma:ifs_of_diffs}.  We note
$(z(x-m)+m)-(z(y-n)+n) = z((x-y)-(m-n)) + (m-n)$, so 
acting by the dilations at $m$ and $n$ on two 
points acts on their difference by the dilation at $m-n$.
\end{proof}

If we define
\[
\SetA^k := \{z \in \D^*\, |\, \Gamma^k_z \textnormal{ is connected}\}
\]
and 
\[
\SetA^k_0 := \left\{z \in \D^* \, | \, 0 \in f_1\Gamma^k_z \right\},
\]
then
\begin{proposition}\label{proposition:iterated_SetA_SetB}
$\SetA^{2^k} = \SetA^{2^{k+1}}_0$.
\end{proposition}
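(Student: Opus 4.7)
The plan is to show both conditions defining $\SetA^{2^k}$ and $\SetA^{2^{k+1}}_0$ reduce to the single statement that adjacent pieces of the $(2\cdot 2^k+1)$-generator IFS generating $\Gamma^{2^k}_z$ intersect non-trivially.

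First I unpack $z\in \SetA^{2^{k+1}}_0$. By definition this says $0 \in f_1\Gamma^{2^{k+1}}_z$, where $f_1(x)=z(x-1)+1$, so $1-1/z\in \Gamma^{2^{k+1}}_z$. The IFS centers $\{-2^{k+1},\ldots,2^{k+1}\}$ are symmetric about the origin, hence $\Gamma^{2^{k+1}}_z = -\Gamma^{2^{k+1}}_z$ and equivalently $1/z-1 \in \Gamma^{2^{k+1}}_z$. Applying Lemma~\ref{lemma:diffs_in_lambda} to identify $\Gamma^{2^{k+1}}_z = \Gamma^{2^k}_z - \Gamma^{2^k}_z$ gives the existence of $a,b\in \Gamma^{2^k}_z$ with $a-b = 1/z-1$, and rearranging yields $za = z(b-1)+1 = f_1(b)$ (now interpreting $f_1$ as the corresponding generator of the $\Gamma^{2^k}_z$-IFS), i.e.\ $f_0\Gamma^{2^k}_z\cap f_1\Gamma^{2^k}_z\neq \varnothing$. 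By the translation-invariance of the centers $\{-2^k,\ldots,2^k\}$, this condition is equivalent to $f_i\Gamma^{2^k}_z\cap f_{i+1}\Gamma^{2^k}_z\neq \varnothing$ for every adjacent pair.

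Second I verify this ``adjacent intersection'' condition is equivalent to connectedness of $\Gamma^{2^k}_z$. The inclusion $\SetA^{2^{k+1}}_0\subseteq \SetA^{2^k}$ is immediate: each $f_i\Gamma^{2^k}_z$ is the image of $\Gamma^{2^k}_z$ under a similarity and is thus connected, and overlapping consecutive pieces form a chain whose union $\bigcup_i f_i\Gamma^{2^k}_z = \Gamma^{2^k}_z$ is connected. For the reverse inclusion, assume $\Gamma^{2^k}_z$ is connected and, for contradiction, that $K := f_0\Gamma^{2^k}_z$ is disjoint from $K+(1-z)$ (and hence, using $K=-K$, also from $K-(1-z)$). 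I then run the planar separation argument from the proof of Proposition~\ref{proposition:square_Set_B}: extend the connected compact $K$ by two rays $R_\pm$ issuing to infinity from its extremal points in the direction perpendicular to $(1-z)$, obtaining a closed set $W = K\cup R_+\cup R_-$ whose complement has two components $E_\pm$, with $K+(1-z)\subset E_+$ and $K-(1-z)\subset E_-$. Using the translation-invariant structure of the IFS, each $K+j(1-z)$ with $j\ge 1$ lies in $\overline{E_+}$ (symmetrically for $j\le -1$), meeting $W$ only inside $K$; consequently $\Gamma^{2^k}_z$ decomposes into a positive part contained in $\overline{E_+}$ and a negative part contained in $\overline{E_-}$ meeting only at $K$. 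Since neither $K+(1-z)$ nor $K-(1-z)$ touches $K$, this exhibits $\Gamma^{2^k}_z$ as disconnected, the desired contradiction.

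The main obstacle is the final step, namely controlling the placement of the non-adjacent translates $K+j(1-z)$ with $|j|\ge 2$, which might in principle intersect $W$ along $K$ and so ``bridge'' the two halves $\overline{E_\pm}$. Bousch's original two-piece argument in Proposition~\ref{proposition:square_Set_B} contains no analogue of this difficulty, since no intermediate pieces exist. To close the gap I would exploit the extremal choice of $R_\pm$ at the topmost and bottommost coordinates of $K$ perpendicular to $(1-z)$: translation by $j(1-z)$ preserves these perpendicular coordinates, so $R_\pm$ miss every translate of $K$ away from its basepoint, and a projection argument along $(1-z)$ then confines each translate cleanly to a single closed half-plane $\overline{E_{\mathrm{sign}(j)}}$, preventing any such bridging.
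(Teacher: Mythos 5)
Your reduction of $z\in \SetA^{2^{k+1}}_0$ to the ``adjacent pieces intersect'' condition is correct and matches the paper, and the use of Bousch's planar separation argument from Proposition~\ref{proposition:square_Set_B} for the reverse inclusion is the right idea (the paper's own proof explicitly invokes it). You also choose the central piece $K=f_0\Gamma^{2^k}_z$ where the paper uses the extremal piece $f_{2^k}\Gamma^{2^k}_z$; that choice is immaterial. Two issues remain, one minor and one a genuine gap.

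The minor one: in the inclusion $\SetA^{2^{k+1}}_0\subseteq \SetA^{2^k}$ you write that ``each $f_i\Gamma^{2^k}_z$ is the image of $\Gamma^{2^k}_z$ under a similarity and is thus connected.'' That is circular --- whether $\Gamma^{2^k}_z$ is connected is precisely what is to be shown. The true statement (that if adjacent pieces of an IFS intersect then the attractor is connected) holds, but it has to be argued via a nested-intersection or Short-Hop-type argument as in Lemma~\ref{lemma:short_hop}, not by asserting connectedness of each piece.

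The substantive gap is in the last paragraph. You correctly identify the obstacle (non-adjacent translates $K+j(1-z)$ with $|j|\ge 2$ could in principle touch $K$), but your proposed fix does not close it, and in fact the final deduction is invalid as stated. You conclude that $\Gamma^{2^k}_z = P\cup N$ with $P\subseteq \overline{E_+}$, $N\subseteq \overline{E_-}$, $P\cap N = K$, and then infer disconnectedness from the observation that $K\pm(1-z)$ do not meet $K$. But a union of two closed sets whose intersection is the nonempty connected set $K$ is not a disconnection; nothing here rules out, say, $K+2(1-z)$ touching $K$, which would keep $P$ (hence $\Gamma^{2^k}_z$) connected. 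What is actually needed is the stronger claim that $K$ is disjoint from \emph{every} $K+j(1-z)$ with $j\ne 0$, from which $K$ is a proper nonempty clopen subset of $\Gamma^{2^k}_z$ and the contradiction with connectedness is immediate. To prove that stronger claim one cannot appeal only to the single barrier $W$ around $K$; one should iterate the barrier construction around each intermediate translate $K + i(1-z)$ --- showing by induction on $i\ge 1$ that $K$ lies entirely in the ``far'' component of $\C\setminus\bigl((K+i(1-z))\cup R_\pm + i(1-z)\bigr)$ on the opposite side from $K+(i+1)(1-z)$, hence $K\cap \bigl(K+(i+1)(1-z)\bigr)=\varnothing$ --- and similarly for $i\le -1$. (Choosing extreme points with a tie-break, e.g.\ topmost-then-rightmost, ensures the translated ray misses the translated set away from its basepoint.) This is the content behind the paper's terse assertion that $f_{2^k}\Gamma^{2^k}_z$ ``must intersect the translate which is closest'', and it is precisely the place where the ``projection argument'' you gesture at needs to become an honest induction.
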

\begin{proof}
To see that 
$\SetA^{2^{k+1}}_0 \subseteq \SetA^{2^k}$, suppose that 
$0 \in f_1\Gamma_z^{2^{k+1}}$, so
there is a pair of generators $f_n$, $f_{n+1}$ 
of $\Gamma_z^{2^{k}}$ such that 
$f_n\Gamma_z^{2^k} \cap f_{n+1}\Gamma_z^{2^k} \ne \varnothing$, 
and thus this holds for all $n$, so the limit set $\Gamma_z^{2^k}$ 
is connected.  Conversely, if 
$\Gamma_z^{2^k}$ is connected, then 
$f_{2^k}\Gamma_z^{2^k}$ intersects  $f_j\Gamma_z^{2^k}$ for some $j$.  
But the images $f_j\Gamma_z^{2^k}$ are translates of the same path 
connected set by multiples of the same vector, 
so it must be that $f_{2^k}\Gamma_z^{2^k}$ intersects 
the translate which is closest, i.e. 
$f_{2^k}\Gamma_z^{2^k}\cap f_{2^k-1}\Gamma_z^{2^k} \ne \varnothing$ .  Hence 
$0 \in f_1\Gamma_z^{2^{k+1}}$.  
\end{proof}
\begin{remark}
Note that the last step in the proof of Proposition~\ref{proposition:iterated_SetA_SetB} is
essentially the same as Bousch's proof of Proposition~\ref{proposition:square_Set_B}.
\end{remark}

In general, if our IFS is a set of dilations by $z$ 
at points $\{c_1,\ldots, c_k\}$, then the IFS which 
generates the differences in our IFS is the set of dilations 
by $z$ with centers at all differences of the $c_i$.  The fact that $f_1$ 
appears in the definition of 
$\SetA_0^k$ (as opposed to $f_i$ for another $i$) is natural because 
the number $1$ is always the generator of the lattice of centers.

\begin{question}
What sequences of sets arise as iterated differences?
What properties do these iterated IFS have? 
\end{question}

\section{Interior points in \texorpdfstring{$\SetA$}{M}}\label{section:interior}

We have already seen that $\SetA$ contains many interior points; in fact, the entire annulus
$1/\sqrt{2}\le |z|\le 1$ is in $\SetA$. In this section we develop the method of {\em traps}
to certify the existence of many interior points in $\SetA$, and examine the closure of the set of interior points. The result is quite surprising: the closure of
the interior is all of $\SetA$ \dots {\em except} for some subset of
the two real whiskers!

This assertion is Theorem~\ref{theorem:interior_dense} below, which is the affirmation of
Bandt's Conjecture (i.e.\/ Conjecture~\ref{conjecture:Bandt}). 
In Section~\ref{section:holes} these techniques are used to
certify the existence of (infinitely many) small holes in $\SetA$ --- i.e.\/ exotic components
of Schottky space.

\subsection{Short hop paths and Traps}\label{subsection:traps_holes}

In this subsection we give a method to certify the existence of
open subsets of $\SetA$. Abstractly, to certify that $z$ is an interior point of $\SetA$
is to give a proof that $z \in \SetA$ that depends on properties of $z$ which are stable
under perturbation. Showing that $z\in \SetA$ is equivalent to showing that $f\Lambda_z$
intersects $g\Lambda_z$, so our strategy is to show that this intersection is inevitable for
some {\em topological reason} (depending on $z$). Proving that sets intersect in topology
is accomplished by homology (or, more crudely, separation or linking properties). But
the homological properties of $\Lambda_z$ depend on its connectivity, which is what we are
trying to establish! So our method is first to consider precisely chosen {\em neighborhoods}
of $\Lambda_z$ (which may be presumed to be connected for some open set of $z$), and then to
consider homological properties of the configuration of the images of these neighborhoods under
$f$ and $g$ which force an intersection.

The key to our method is the existence of {\em short hop paths} and {\em traps}. 

\begin{definition}[Short hop path]\label{definition:short_hop_path}
Let $p,q \in \Lz$, let $\epsilon>0$ and let $D$ be a disk containing
$p$ and $q$. An {\em $(\epsilon,D)$-short hop path} from $p$ to $q$ is a sequence
$e_0,e_1,\cdots,e_m$ in $\partial \Sigma$ with $\pi(e_0,z)=p$ and $\pi(e_m,z)=q$ so that
$d(\pi(e_i,z),\pi(e_{i+1},z))<\epsilon$ and $\pi(e_i,z)\in D$ for all $i$.
\end{definition}
\noindent The existence of Short Hop Paths is guaranteed by the Short Hop Lemma; in particular,
we have:

\begin{proposition}[Short hop paths exist]\label{proposition:short_hop_paths_exist}
Let $u$ and $v$ be right-infinite words with a common prefix $w$ of length $n$,
and suppose that there are points in $f\Lz$ and $g\Lz$ which are distance
at most $\delta$ apart. Let $D$ be a disk containing the $|z|^n\delta/2$-neighborhood 
of $w\Lz$. Then there is a $(|z|^n\delta,D)$-short hop path from $\pi(u,z)$ to $\pi(v,z)$.
\end{proposition}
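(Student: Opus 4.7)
The plan is to reduce the proposition to the proof of the Short Hop Lemma (Lemma~\ref{lemma:short_hop}) together with scaling by the similarity $w(z,\cdot)$. Writing $u = wu'$ and $v = wv'$ with $u', v' \in \partial\Sigma$, Lemma~\ref{lemma:word_composition} gives $\pi(u,z) = w(z,\pi(u',z))$ and $\pi(v,z) = w(z,\pi(v',z))$, so it suffices to produce a sequence of right-infinite words $u' = e_0', \ldots, e_m' = v'$ with $d(\pi(e_i',z), \pi(e_{i+1}',z)) < \delta$ for all $i$, and then prepend $w$ to each.

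The crux is to extract from Lemma~\ref{lemma:short_hop} a genuine discrete short hop path in $\Lz$ joining any two prescribed right-infinite words $U, V \in \partial\Sigma$. For this I would first choose $n_0$ large enough that any two right-infinite words sharing a prefix of length $n_0$ have $\pi$-values within $\delta$ (such $n_0$ exists by Lemma~\ref{lemma:distance_estimate} and the hypothesis $|z|<1$). Then I would apply the single-equivalence-class claim for $\sim_{n_0}$ established inside the proof of Lemma~\ref{lemma:short_hop} to the length-$n_0$ prefixes of $U$ and $V$. Each elementary $\approx_{n_0}$ step in the resulting chain supplies a pair of right-infinite words with $\pi$-values within $\delta$; consecutive pairs share a prefix of length $n_0$, as do $U$ with the first word of the chain and $V$ with the last, so these transitions are also within $\delta$ by the choice of $n_0$. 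Concatenating these right-infinite words yields the required sequence.

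Applying this construction to $u'$ and $v'$ and setting $e_i := we_i'$, Lemma~\ref{lemma:word_composition} together with the fact that $w(z,\cdot)$ is a similarity of factor $z^n$ gives
\[
d(\pi(e_i,z), \pi(e_{i+1},z)) = |z|^n \, d(\pi(e_i',z), \pi(e_{i+1}',z)) < |z|^n \delta,
\]
and each $\pi(e_i,z) \in w\Lz$ lies in the $|z|^n\delta/2$-neighborhood of $w\Lz$, hence in $D$. The endpoints are $\pi(e_0,z)=\pi(u,z)$ and $\pi(e_m,z)=\pi(v,z)$ by construction, so $(e_0,\ldots,e_m)$ is the desired $(|z|^n\delta, D)$-short hop path. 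The main obstacle is the middle step: promoting the topological path-connectedness of the $\delta/2$-neighborhood of $\Lz$ asserted by Lemma~\ref{lemma:short_hop} to a concrete discrete short hop path in the sense of Definition~\ref{definition:short_hop_path}. This upgrade is essentially already present inside the proof of Lemma~\ref{lemma:short_hop} via the chain of $\approx_{n_0}$ relations; once it is in hand, the remaining scaling is routine.
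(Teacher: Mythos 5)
Your proposal is correct and follows essentially the same route as the paper: factor $u=wu'$, $v=wv'$, obtain a $(\delta,D')$-short hop path from $\pi(u',z)$ to $\pi(v',z)$ via Lemma~\ref{lemma:short_hop}, and rescale by the similarity $w(z,\cdot)$. The one place you go beyond the paper's text is in spelling out how the chain of $\approx_{n_0}$ relations inside the proof of Lemma~\ref{lemma:short_hop} upgrades to a discrete short hop path in the sense of Definition~\ref{definition:short_hop_path}; the paper invokes Lemma~\ref{lemma:short_hop} as if it directly produced such a discrete path (its statement only asserts path-connectedness of the $\delta/2$-neighborhood), so your more explicit extraction is a legitimate and useful tightening rather than a different argument.
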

\begin{proof}
Let $u=wu'$ and $v=wv'$, and let $D'$ be any disk containing the
$\delta/2$-neighborhood of $\Lz$. By Lemma~\ref{lemma:short_hop} there 
is a $(\delta,D')$-short hop path from $\pi(u',z)$ to $\pi(v',z)$. Now apply $w$ to this short hop path.
\end{proof}

\noindent We now give the definition of a trap:

\begin{definition}[Trap]\label{definition:trap}
Let $D$ be a closed topological disk containing $\Lz$ in its interior.
We say that a pair of words $u,v\in \Sigma$ are a {\em trap} for $(z,D)$ if
the following are true:
\begin{enumerate}
\item{$u$ starts with $f$ and $v$ starts with $g$;}
\item{there are points $p^\pm$ in $u\Lz - vD$ and $q^\pm$ in $v\Lz - uD$
such that for some paths $\alpha \subseteq uD$ with endpoints $p^\pm$ and
$\beta \subseteq vD$ with endpoints $q^\pm$ the algebraic intersection number of
$\alpha$ and $\beta$ is nonzero; and}
\item{there are points in $f\Lz$ and $g\Lz$ within distance $\epsilon$
of each other, where the $\epsilon/2$ neighborhood of $\Lz$ is contained in $D$.}
\end{enumerate}
\end{definition}

The definition of a trap depends on a choice of paths $\alpha$ and $\beta$ which intersect;
but a homological argument shows that the property does not depend on the choice:

\begin{lemma}[Any paths suffice]\label{lemma:any_paths}
Suppose $u,v$ are a trap for $(z,D)$, and let $p^\pm \in u\Lz -vD$
and $q^\pm \in v\Lz - uD$ be as in Definition~\ref{definition:trap}.
Then {\em any} paths $\alpha \subseteq uD$ with endpoints $p^\pm$ and $\beta \subseteq vD$
with endpoints $q^\pm$ must intersect.
\end{lemma}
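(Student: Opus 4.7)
The plan is to use the fact that $uD$ and $vD$ are closed topological disks (images of $D$ under the similarities $\sigma_z(u)$ and $\sigma_z(v)$), hence simply connected, together with homotopy invariance of the algebraic intersection number of properly-embedded arcs.

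First I would observe that for any two paths $\alpha, \alpha' \subseteq uD$ with common endpoints $p^\pm$, the path $\alpha$ is homotopic to $\alpha'$ rel $\{p^+,p^-\}$ through paths in $uD$, because $uD$ is simply connected. Similarly for $\beta, \beta' \subseteq vD$ with common endpoints $q^\pm$. The key point is that during such a homotopy of $\alpha$, the endpoints $p^\pm$ are fixed; and by hypothesis $p^\pm \in u\Lz - vD$, so in particular $p^\pm \notin vD$, and hence $p^\pm$ avoids $\beta' \subseteq vD$ throughout any homotopy. By the symmetric argument, the endpoints $q^\pm$ of $\beta$ lie in $v\Lz - uD$, so they avoid every $\alpha' \subseteq uD$ during a homotopy of $\beta$.

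Because the algebraic intersection number of two arcs in the plane with disjoint endpoint sets is invariant under homotopies in which no endpoint of either arc crosses the other arc, it follows that for any admissible pair $(\alpha', \beta')$ we have
\[
\alpha' \cdot \beta' \;=\; \alpha \cdot \beta \;\neq\; 0,
\]
where $(\alpha, \beta)$ is the specific pair witnessing the trap condition in Definition~\ref{definition:trap}. Since a pair of disjoint arcs has algebraic intersection number zero, nonvanishing of $\alpha' \cdot \beta'$ forces $\alpha' \cap \beta' \neq \emptyset$, proving the lemma.

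The main (minor) obstacle to watch for is to state precisely the homotopy invariance being used: algebraic intersection number of compact $1$-chains with common endpoints is not homotopy invariant in general, but it is invariant under homotopies that keep each endpoint off the opposite chain, which is exactly what the separation conditions $p^\pm \notin vD \supseteq \beta$ and $q^\pm \notin uD \supseteq \alpha$ guarantee. No estimates or use of the dynamics are needed beyond the fact that $uD$ and $vD$ are topological disks.
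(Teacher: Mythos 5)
Your proof is correct and takes essentially the same approach as the paper: the paper phrases it as well-definedness of the classes $[\alpha] \in H_1(\C - q^\pm, p^\pm)$ and $[\beta] \in H_1(\C - p^\pm, q^\pm)$ and their intersection product, while you phrase the identical idea as homotopy invariance of the algebraic intersection number under homotopies keeping each endpoint off the opposite arc. The crucial observation in both is the same: the homotopy of $\alpha$ stays in $uD$, which misses $q^\pm$, and symmetrically for $\beta$.
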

\begin{proof}
Any two paths $\alpha,\alpha'$ joining $p^\pm$ and contained in $uD$ are freely
homotopic relative to endpoints in the complement of $q^\pm$, and similarly for any two
$\beta,\beta'$ joining $q^\pm$ and contained in $vD$. Thus the classes 
$[\alpha] \in H_1(\C-q^\pm,p^\pm)$ and $[\beta] \in H_1(\C-p^\pm,q^\pm)$ are
well-defined, and therefore so is their intersection product.
\end{proof}

\begin{example}
\label{example:trap}
Figure~\ref{figure:trap} shows a trap in $\Lz$ with $z=-0.43+0.54i$ which is visible
to the naked eye.  We have drawn 
$D_{12}$ for a disk $D$ with $fD, gD \subseteq D$, so it is guaranteed that 
(1) $f\Lz$ and $g\Lz$ are contained inside the blue and orange sets, respectively 
and (2) there are points in $\Lz$ inside every disk drawn.  The computer also runs 
Algorithm~\ref{algorithm:disconnectedness} to verify that $D_{12}$ is connected. 
These facts, and the (visually evident) fact that the highlighted disks satisfy the
linking condition, proves the existence of points $p^{\pm}$ and $q^{\pm}$ 
inside the disks which give a trap.

\begin{figure}[htpb]
\centering
\includegraphics[scale=0.4]{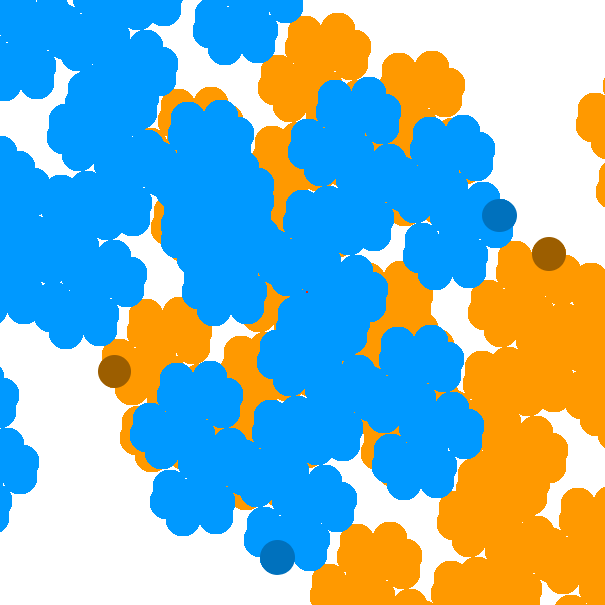}
\caption{An excerpt from $\Lambda$ with $z=-0.43+0.54i$.  
This picture proves the existence of a trap for this parameter, as explained in Example~\ref{example:trap}.}
\label{figure:trap}
\end{figure}
\end{example}

The next Proposition shows that the existence of a trap for $z$ shows
that $z$ is in the interior of $\SetA$.

\begin{proposition}[Traps in $\SetA$]\label{proposition:traps_A}
Let $u,v$ be a trap for $(z,D)$ for some disk $D$. Then $z$ is in the interior of $\SetA$.
\end{proposition}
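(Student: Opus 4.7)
The plan is two stages: (a) show that the trap certifies $z \in \SetA$ by contradiction, and (b) show that the trap condition is stable under small perturbations of $z$, concluding that $z \in \mathrm{int}(\SetA)$.

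For (a), suppose to the contrary that $f\Lz \cap g\Lz = \varnothing$, and set $\delta := d(f\Lz, g\Lz) > 0$, achieved by compactness. The $\epsilon$ guaranteed by trap condition (3) satisfies $\epsilon \ge \delta$, so $N_{\delta/2}(\Lz) \subseteq N_{\epsilon/2}(\Lz) \subseteq D$, and the Short Hop Lemma~\ref{lemma:short_hop} gives that $N_{\delta/2}(\Lz)$ is path connected. Applying the similarity $u$, which scales distances by $|z|^{|u|}$, yields a path-connected set $N_{|z|^{|u|}\delta/2}(u\Lz) = u(N_{\delta/2}(\Lz)) \subseteq uD$ containing both $p^\pm \in u\Lz$. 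Pick a continuous path $\alpha \subseteq uD$ from $p^+$ to $p^-$ inside this neighborhood, and similarly a path $\beta \subseteq vD$ from $q^+$ to $q^-$ inside $N_{|z|^{|v|}\delta/2}(v\Lz)$. Lemma~\ref{lemma:any_paths} then forces $x \in \alpha \cap \beta$, with $d(x, u\Lz) \le |z|^{|u|}\delta/2$ and $d(x, v\Lz) \le |z|^{|v|}\delta/2$. Using $u\Lz \subseteq f\Lz$, $v\Lz \subseteq g\Lz$, and $|u|, |v| \ge 1$ (from trap condition (1)), this yields
\[
\delta \;=\; d(f\Lz, g\Lz) \;\le\; d(u\Lz, v\Lz) \;\le\; \tfrac{|z|^{|u|} + |z|^{|v|}}{2}\,\delta \;\le\; |z|\,\delta,
\]
contradicting $|z| < 1$ and $\delta > 0$.

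For (b), I would invoke continuity of $\pi(\cdot, z')$ in $z'$ (Definition~\ref{definition:wordaction}, Lemma~\ref{lemma:Holder_estimate}). Fix right-infinite words in $\partial\Sigma$ realizing $p^\pm, q^\pm$; their images under $\pi(\cdot, z')$ define continuous families $p^\pm(z'), q^\pm(z')$ lying in $u\Lambda_{z'}, v\Lambda_{z'}$. The open conditions $p^\pm(z') \notin vD$ and $q^\pm(z') \notin uD$ persist for $z'$ near $z$, and by continuously deforming the paths $\alpha, \beta$ so that their endpoints track $p^\pm(z'), q^\pm(z')$, the nonzero algebraic intersection number is preserved as a homotopy invariant. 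By Hausdorff-continuity of $\Lambda_{z'}$ in $z'$, condition (3) also persists (with a slightly shrunk $\epsilon$ if necessary, since $\Lz$ lies in the open interior of $D$). Thus a full neighborhood of $z$ admits the trap $(u,v)$ with disk $D$, and each such parameter lies in $\SetA$ by (a); hence $z \in \mathrm{int}(\SetA)$.

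The main obstacle is stage (a): we cannot a priori build continuous paths inside $u\Lz$ or $v\Lz$ themselves, because $\Lz$ is not yet known to be connected. The workaround is to build paths in thin neighborhoods of $u\Lz$ and $v\Lz$ obtained by applying the similarities $u, v$ to the neighborhood given by the Short Hop Lemma; these are forced to meet by the linking condition. The crucial point is then that the uniform contraction factor $|z| < 1$ converts this linking into a strict improvement on $\delta = d(f\Lz, g\Lz)$, contradicting its minimality and closing the argument.
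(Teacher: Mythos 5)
Your Stage~(a) is exactly the paper's argument: build paths $\alpha \subseteq uD$, $\beta \subseteq vD$ in the $u$- and $v$-images of the short-hop neighborhood, force an intersection via Lemma~\ref{lemma:any_paths}, and derive $\delta \le |z|\,\delta$ from the contraction factor, whence $\delta=0$. The paper's written proof actually stops there (it only establishes $z \in \SetA$); interiority is asserted as a remark elsewhere (``the existence of a trap is an open condition'') and is made quantitative only later in Section~\ref{section:holes} via Lemma~\ref{lemma:ball_of_traps}. Your Stage~(b) sketch of the openness --- tracking $p^\pm(z')$, $q^\pm(z')$ via $\pi(\cdot,z')$, noting that $u(z',D)$ and $v(z',D)$ move continuously, that $p^\pm(z') \notin v(z',D)$, $q^\pm(z') \notin u(z',D)$ are open conditions, and that the algebraic intersection number is locally constant --- is correct and fills in exactly the step the paper leaves implicit at this point. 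So: same core proof, plus a welcome (if informal) treatment of the perturbation argument the paper defers.
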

\begin{proof}
Suppose that $\delta$ is the distance from $f\Lz$ to $g\Lz$. Then 
any two points in $\Lz$ can be joined by a $(\delta,D)$-short hop path.
This is a sequence of points with gaps of size at most $\delta$; such a sequence is
necessarily contained in the $\delta/2$-neighborhood of $\Lz$. 
It follows that $p^+$ can be joined to $p^-$ by
a path $\alpha$ in $uD$, every point on which is within distance $\delta|z|^n/2$ of
some point in $u\Lz$, where $u$ has length $n$.  
Similarly, $q^+$ can be joined to $q^-$ by a path
$\beta$ in $vD$, every point on which is within distance $\delta|z|^m/2$ of
some point in $v\Lz$, where $v$ has length $m$. 
But $\alpha$ and $\beta$ must intersect, by the
defining property of a trap, and Lemma~\ref{lemma:any_paths}. Thus
the distance from $u\Lz$ to $v\Lz$ is at most $\delta|z|^{\min(n,m)}$. But
$\delta|z|^{\min(n,m)}<\delta$ because $n,m\ge 1$.  This is contrary to the definition 
of $\delta$ unless $\delta=0$.
\end{proof}

It is interesting to note that while the existence of a trap for $z$ certifies that
$z$ is in the interior of $\SetA$ and thus that $f\Lz \cap g\Lz \ne \varnothing$, 
it is difficult to use it to algorithmically produce a point of intersection: 
as we decrease $\delta$, the intersecting $\delta$-short hop paths 
need not converge or intersect ``nicely''.

\subsection{Traps are (almost!) dense}

In this subsection we demonstrate the theoretical utility of traps by proving that
traps are dense in $\SetA$ away from the real axis. 
Since traps have nonempty interior, it follows that
the interior of $\SetA$ is dense in $\SetA$, again away from the real axis. This
was conjectured by Bandt in \cite{Bandt}, p.~7 and some partial results were obtained by
Solomyak-Xu \cite{Solomyak_Xu}, who proved the conjecture for points in a neighborhood
of the imaginary axis.

It is interesting that 
the proof depends on a complete analysis of the set of $z$ for which the limit set 
$\Lz$ is convex (Lemma~\ref{lemma:convex_zonohedra}). It turns out that the $z$ with
this property are exactly the union of dyadic ``spikes'' --- points of the form 
$re^{\pi i p/q}$ for coprime integers $p,q$ and $r$ real with $r\ge 2^{-1/q}$. For
$q>1$ these spikes are already in the interior of the solid annulus $r\ge 2^{-1/2}$ which
is entirely contained in $\SetA$; only the real ``whiskers'' protrude from this annulus, and
this is why these are the only points in $\SetA$ which are not in the closure of the interior.

%
%

\begin{definition}[Cell-like, trap-like]\label{definition:cell_trap}
A compact connected subset $X\subseteq \C$ is {\em cell-like} if its complement is connected. 
Let $X$ be cell-like. A complex number $w$ is {\em trap-like} for $X$ if the following hold:
\begin{enumerate}
\item{the union $X \cup (X+w)$ is connected (equivalently, $X$ intersects $X+w$); and}
\item{there are 4 points in the outermost boundary of $X\cup (X+w)$ that alternate between points
in $X - (X+w)$ and points in $(X+w) - X$.}
\end{enumerate}
\end{definition}

\begin{lemma}[Nonconvex cell has trap]\label{lemma:nonconvex_cell_trap}
Let $X$ be cell-like. There there is some trap-like vector $w$ for $X$ if and only if $X$ is nonconvex. 
\end{lemma}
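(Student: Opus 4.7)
The plan is to prove the two implications separately, handling the ``if'' direction (nonconvex implies trap-like $w$ exists) with considerably more care.

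For the forward direction, I would argue the contrapositive: assume $X$ is convex. Then $X+w$ is also convex for any $w \in \C$, and when $X \cap (X+w) \neq \varnothing$ the outer boundary of $X \cup (X+w)$ is a Jordan curve consisting of exactly two connected arcs, namely $\partial X \setminus (X+w)^\circ$ and $\partial(X+w) \setminus X^\circ$. The connectedness of each arc follows from the standard fact that a line, and more generally any open convex set in the plane, intersects the boundary of a convex body in a connected arc. Consequently, as one traverses the outer boundary there are only two transitions between the two ``sides'', so no four alternating points of the required form exist.

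For the reverse direction, I would first use nonconvexity to locate a ``dent'' in $X$. Since $X \subsetneq \operatorname{conv}(X)$, pick any $p \in \operatorname{conv}(X) \setminus X$ and a line $\ell$ through $p$ meeting $X$ on both sides of $p$; letting $a,b \in \ell \cap X$ be the points nearest $p$ from each side gives $a,b \in \partial X$ with the open segment $(a,b)$ disjoint from $X$. I would then take $w := t(b-a)$ for a small $t \in (0,1)$. Two of the alternating boundary points arise immediately: $a+w \in (a,b) \subseteq \C \setminus X$ gives $a+w \in (X+w) \setminus X$, and symmetrically $b-w \in (a,b) \subseteq \C \setminus X$ gives $b \in X \setminus (X+w)$.

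The key step is then to produce two more alternating points by exploiting the local geometry of $\partial X$ at $a$ and $b$. Since $(a,b)$ is disjoint from $X$, the boundary of $X$ approaches each of $a$ and $b$ from the complementary half-plane of the chord, so near $a$ the set $X$ and its translate $X+w$ interlock: a piece of $\partial X$ crosses into the interior of $X+w$ and a piece of $\partial(X+w)$ crosses back out, producing boundary points of type $X \setminus (X+w)$ near $a$ and $(X+w) \setminus X$ near $b+w$ respectively. Together with the two points already found, these give the desired alternation of four points on the outermost boundary.

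The main obstacle is making this local interlocking argument rigorous for \emph{arbitrary} cell-like $X$, since $\partial X$ need not be a tame Jordan curve and could be wild (for example, a Koch-like continuum). My plan to handle this is to approximate $X$ in the Hausdorff topology by a sequence of nonconvex polygons $P_n$ where the analogous chord $(a_n,b_n)$ is a local reflex feature; for polygons the interlocking picture is a direct geometric check at a reflex vertex, and once the trap-like property is established for $P_n$, openness of the trap-like condition in both $X$ and $w$ lets me pass to the limit. A purely topological alternative, avoiding approximation, is to work with the algebraic intersection number of the boundary cycles of $X$ and $X+w$ in $\C \setminus \{a+w, b\}$ and observe that nonconvexity forces this index to jump as $t$ varies, which directly encodes the presence of the required four-point alternation.
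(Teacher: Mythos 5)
Your ``convex implies no trap'' direction is fine and is essentially the paper's argument made a little more explicit, so I will focus on the converse, where you diverge from the paper and where the gap lies.

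You translate by $w=t(b-a)$, a vector \emph{parallel} to a chord $(a,b)$ across an arbitrary gap in $X$, and the points $a+w$ and $b$ do land in the two set differences. But the two further alternating points you need are claimed to come from a ``local interlocking'' of $\partial X$ and $\partial(X+w)$ near $a$ and $b$, and this is where the argument breaks. For a general cell-like $X$ there need be no such interlocking: if $p$ is taken deep inside a narrow straight channel carved into $X$, then $a$ lies on one channel wall, $a-w$ lies in the solid material just behind that wall, so $a$ is in the \emph{interior} of $X+w$, and no point of $X\setminus(X+w)$ appears near $a$ on the outer boundary of the union at all --- the entire nearby stretch of outer boundary belongs to $\partial(X+w)$. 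You flag this as the main obstacle, but the two repairs you suggest do not close it. The trap-like property is not obviously preserved under Hausdorff limits in the $X$-variable (being on the outer boundary of a union is not a Hausdorff-continuous notion, and a limit of points realizing the alternation condition for $P_n$ need not do so for $X$), so the polygon approximation does not go through as stated; and the algebraic-intersection-number alternative is left as a one-sentence sketch with no definition of the relevant cycle for a possibly wild $\partial X$.

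The paper avoids all of this by two choices you did not make. First, the chord lies along a \emph{supporting line} $\ell$ of $X$, so that the two clusters $P,Q$ of $\ell\cap X$ sit on the convex hull boundary and the relevant points are automatically on the outer boundary of the union. Second, $w$ is not parallel to $\ell$: it points from an extreme point $p\in P$ toward the midpoint of the gap \emph{and slightly into the half-plane containing $X$}, so that $X+w$ sits strictly below $\ell$ and $p,q$ are guaranteed to lie outside $X+w$. Foliating by lines perpendicular to $\ell$, the four points $p$, $v=p+w$, $q$, and $r+w$ (with $r$ the point of $X$ of maximal $x$-coordinate) are just the topmost points of $X\cup(X+w)$ on four increasing vertical lines; the alternation and membership in the outer boundary are then immediate, with no local analysis of $\partial X$ required. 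That robustness to an arbitrary cell-like boundary is exactly what your argument is missing.
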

\begin{proof}
If $X$ is convex, then the set of points in the boundary of $X\cup X+w$ in $X-(X+w)$ is connected,
and similarly for those points in $(X+w)-X$, so no $w$ is trap-like.

Conversely, suppose $X$ is nonconvex, and let $\ell$ be a supporting line such that
$\ell \cap X = P \cup Q$ both nonempty (not necessarily connected), 
and separated by an open interval $I$. The existence of such $P$ and $Q$ is guaranteed precisely
by the hypothesis that $X$ is not convex. After composing
with an isometry of the plane, we can assume that $\ell$ is the horizontal axis, 
oriented positively, so that $X$ is on the side of $\ell$ with negative $y$ coordinates.

Let $V$ be a small
open disk in $\C-X$ containing the midpoint of $I$, and choose $v \in V-\ell$ on the side 
of $\ell$ with negative coordinates (i.e.\/ the side containing $X$). 
Let $p$ denote the point of $P$ with biggest $x$ coordinate. Then $w=v-p$ is trap-like for $X$.
See Figure~\ref{figure:nonconvex_trap}.

\begin{figure}[htb]
\begin{center}
\labellist
\small\hair 2pt
 \pinlabel {$\ell$} at -4 45
 \pinlabel {$p$} at 31 48
 \pinlabel {$V$} at 70 51
\endlabellist
\includegraphics[scale=1.5]{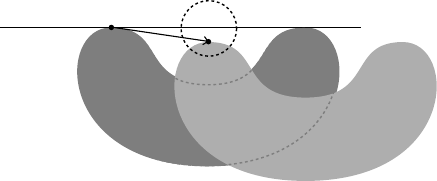}
\caption{A nonconvex compact full set has a trap-like vector.  See the proof of 
Lemma~\ref{lemma:nonconvex_cell_trap}.}
\label{figure:nonconvex_trap}
\end{center}
\end{figure}

This can be seen just by
looking at the foliation of $\C$ by vertical lines $\mu(t)$ with $x$-coordinate $t$, 
and for each line finding the point of $X\cup (X+w)$ with largest $y$-coordinate 
(where this is nonempty). Let $q \in Q$ be arbitrary, let $t$ be the maximum number such that
$\mu(t) \cap X$ is nonempty, and let $r$ be the point on $\mu(t)\cap X$ with largest $y$-coordinate.
Then the four points $p,v,q,r+w$ are the highest points of $X\cup (X+w)$ 
on their respective vertical lines $\mu(t_1),\mu(t_2),\mu(t_3),\mu(t_4)$ for $t_1<t_2<t_3<t_4$,
and alternate between the sets $X$ and $X+w$.
\end{proof}

We would like to apply Lemma~\ref{lemma:nonconvex_cell_trap} to the cell-like set $X_z$ one
obtains from a limit set $\Lz$. Thus, it is important to characterize $z$ for which
the cell-like set $X_z$ obtained from $\Lz$ is convex. 

\begin{lemma}[Convex polygon]\label{lemma:convex_zonohedra}
Let $z$ be in $\SetA$, and let $X_z$ be obtained from $\Lz$ by filling in bounded
complementary components, so that $X_z$ is the smallest cell-like set containing $\Lz$.
Then $X_z$ is convex if and only if $z=re^{\pi i p/q}$ for coprime integers $p,q$ and $r$ real
with $r\ge 2^{-1/q}$, in which case $X_z=\Lz$ is a convex polygon.
\end{lemma}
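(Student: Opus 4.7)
The plan is to pass to the Bousch normalization $f(x)=zx+1$, $g(x)=zx-1$ (affinely conjugate to the paper's convention, so $X_z$-convexity is unchanged), in which the limit set $\Lambda_z=\{\sum_{j\ge 0}\epsilon_j z^j:\epsilon_j\in\{\pm 1\}\}^{\mathrm{cl}}$ is symmetric about $0$. Throughout this sketch $\Lambda_z$ will denote the Bousch limit set; set $\alpha=\arg z$ and $C_z:=\mathrm{conv}(\Lambda_z)$. Since $C_z$ is cell-like and contains $\Lambda_z$, one always has $X_z\subseteq C_z$; convexity of $X_z$ together with $X_z\supseteq\Lambda_z$ gives the reverse inclusion. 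So $X_z$ is convex iff $X_z=C_z$; and since $\partial X_z\subseteq\Lambda_z$ always (the bounded components of $\C\setminus\Lambda_z$ are in $\mathrm{int}(X_z)$), this is further equivalent to $\partial C_z\subseteq\Lambda_z$.

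\emph{Forward direction.} If $z=re^{i\pi p/q}$ in lowest terms with $r\ge 2^{-1/q}$, then $z^q=\pm r^q$ is real with $|z^q|\ge 1/2$. Grouping the defining series by residue of $j$ modulo $q$ yields the Minkowski-sum decomposition
\[
\Lambda_z=\sum_{k=0}^{q-1} z^k\cdot\Lambda_{z^q},
\]
where $\Lambda_{z^q}$ is the one-parameter Bousch limit set for the real parameter $z^q$. Classical Bernoulli convolution theory (as in the proof of Lemma~\ref{lemma:inner_outer}) shows that for real $s$ with $|s|\ge 1/2$, $\Lambda_s=[-1/(1-|s|),1/(1-|s|)]$ is the full real interval. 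Hence $\Lambda_z$ is a Minkowski sum of $q$ real segments in the $q$ distinct directions $1,z,\ldots,z^{q-1}$, so it is a convex zonogon with at most $2q$ sides, and $X_z=\Lambda_z$ is a convex polygon.

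\emph{Converse.} Assume $X_z=C_z$. Applying the self-similarity $\Lambda_z=(1+z\Lambda_z)\cup(-1+z\Lambda_z)$ to the support function $h$ of $C_z$ yields the functional equation $h(\theta)=|z|\,h(\theta-\alpha)+|\cos\theta|$, which iterates (using $|z|<1$) to $h(\theta)=\sum_{k\ge 0}|z|^k|\cos(\theta-k\alpha)|$. The derivative $h'$ has a jump of size $2|z|^k$ at each $\theta=\pi/2+k\alpha\pmod\pi$, and geometrically $\partial C_z$ has a flat edge perpendicular to each such $\theta$ of length equal to the aggregated jump. Since $\partial C_z\subseteq\Lambda_z$, every such edge must lie in $\Lambda_z$. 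At direction $\theta_{k_0}=\pi/2+k_0\alpha$, precisely those $\epsilon_k$ with $k\in S_{k_0}:=\{k:k\alpha\equiv k_0\alpha\pmod\pi\}$ are ``free'' (their cosines vanish) while the rest are uniquely sign-forced, so the edge lies in $\Lambda_z$ exactly when the corresponding Bernoulli sum fills its convex hull (a real segment after pulling back by $z^{k_0}$). When $\alpha=\pi p/q$ in lowest terms, $S_{k_0}=k_0+q\Z_{\ge 0}$ and this reduces to the one-parameter question at the real parameter $z^q$: the edge is filled iff $|z^q|\ge 1/2$, i.e., $r\ge 2^{-1/q}$.

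The main obstacle is the irrational case: when $\alpha\notin\pi\mathbb{Q}$, $S_{k_0}=\{k_0\}$, so the free variation produces only two points of $\Lambda_z$ at the endpoints of an edge of positive length $2|z|^{k_0}$. To place intermediate edge points inside $\Lambda_z$ one would need alternative sign choices yielding a dense subset of the edge, which translates to requiring that $\Gamma_z\cap(\R\cdot z^{k_0})$ contain a segment through the origin in direction $z^{k_0}$, where $\Gamma_z$ is the difference set of Section~\ref{section:differences}. I would rule this out by combining the self-similarity $\Gamma_z=(z\Gamma_z-1)\cup z\Gamma_z\cup(z\Gamma_z+1)$ with the equidistribution of $\{k\alpha\pmod\pi\}$ for irrational $\alpha$: the iterated inclusion $\Gamma_z\supseteq z^n\Gamma_z$ propagates any such segment into segments of $\Gamma_z$ in directions $e^{in\alpha}\cdot z^{k_0}$ dense in $S^1$, while the support-function computation confines all edges of $\partial C_z$ to the countable direction set $\{\pi/2+k\alpha\}$ with total length $4/(1-|z|)$, forcing a geometric contradiction.
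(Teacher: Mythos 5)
Your proof is essentially correct and takes a genuinely different route from the paper's. The forward direction, realizing $\Lambda_z$ as the Minkowski sum $\sum_{k=0}^{q-1} z^k\Lambda_{z^q}$ of $q$ segments, is a streamlined version of the paper's zonohedron argument (which lifts to a linear IFS on a box in $\R^q$ and projects); your version makes the zonogon structure immediate. The converse direction via the support-function functional equation $h(\theta)=|z|h(\theta-\alpha)+|\cos\theta|$ and its iterate $h(\theta)=\sum_k|z|^k|\cos(\theta-k\alpha)|$ is a quantitative analytic argument, whereas the paper runs a combinatorial chain-of-segments argument on $\partial X_z$ pulled back by $f^{-1},g^{-1}$. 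Your approach has the advantage of simultaneously producing the edge directions and edge lengths of $\mathrm{conv}(\Lambda_z)$ and immediately identifying the set of edge points of $\Lambda_z$ as a Bernoulli sum over ``free'' indices.

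However, your final paragraph is mistaken about where the difficulty lies: there is no obstacle in the irrational case, and the entire $\Gamma_z$/equidistribution argument you sketch is both unnecessary and not rigorous as written. Your own earlier sentence already closes the argument. You showed that $x=\sum_j\epsilon_jz^j\in\Lambda_z$ lies on the edge with outer normal $e^{i\theta_{k_0}}$ if and only if $\epsilon_j=\mathrm{sign}\bigl(\cos(j\alpha-\theta_{k_0})\bigr)$ for every $j$ with $\cos(j\alpha-\theta_{k_0})\neq 0$, because any deviation {\em strictly} decreases $\langle x,e^{i\theta_{k_0}}\rangle$ below $h(\theta_{k_0})$. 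Thus the set of edge points of $\Lambda_z$ is {\em exactly} the affine Bernoulli sum over $S_{k_0}$, with no room for ``alternative sign choices'' — this holds for every representation, since $\Lambda_z$ is precisely the set of values of $\{\pm1\}$-power series. When $\alpha\notin\pi\mathbb{Q}$, $S_{k_0}=\{k_0\}$, so that set is just the two endpoints of an edge of positive length $2|z|^{k_0}$; hence the open edge interior misses $\Lambda_z$, so $\partial C_z\not\subseteq\Lambda_z$, so $X_z\neq C_z$ and $X_z$ is nonconvex. The rational case with $r<2^{-1/q}$ fails for the same reason, since the translated copy of $\Lambda_{z^q}$ is then a Cantor set inside an edge of length $2|z|^{k_0}/(1-|z|^q)$. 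You should simply delete the last paragraph.
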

\begin{proof}

We make use of the following two facts: first, that
$X_z$ has rotational symmetry of order 2 about the point $1/2$; and second, that $\Lz$
is the union of $f\Lz$ and $g\Lz$, obtained from $\Lz$ by scaling by $z$ and
translated relative to each other by $1-z$. Suppose $X_z$ is convex, and consider the collection
of straight segments in the boundary of $X_z$. This collection is nonempty; for, if $p$ is an
extremal point for $X_z$ tangent to the supporting line in the direction $(1-z)/z$ then 
$fp$ and $gp$ are extremal points for $X_z$ tangent to the same supporting line in the direction
$(1-z)$, and then the entire segment between these points is in the line. Now, if
$\sigma$ is a straight segment in the boundary in the direction $w$, then if $w \ne 1-z$,
there is a straight segment in the boundary of the form $f^{-1}\sigma$ or $g^{-1}\sigma$
of length $|z|^{-1}\sigma$. It follows that there is a chain of straight segments
$$\sigma_0,\sigma_1,\sigma_2,\cdots,\sigma_{q-1}$$
where each $\sigma_j$ is in a direction $z^j$ relative to $\sigma_0$, and has length
$|z|^j|\sigma_0|$. But then $f(\sigma_{q-1})$ and $g(\sigma_{q-1})$
must be in the $1-z$ or $z-1$ direction, so that their union is either equal to $\sigma_0$ or
the image of $\sigma_0$ under the symmetry of order 2. It follows
that the argument of $z$ is of the form $\pi p/q$ for some integers $p/q$, and furthermore
that $|z|^q \ge 2$. This proves one direction of the claim.

The converse direction --- that limit sets $\Lz$ for $z$ of this kind really are convex
--- can be seen directly. In fact, these limit sets are zonohedra, the shadows of a linear
semigroup acting in high dimensional space. Let $R_q$ be the parallelepiped in $\R^q$ consisting of
vectors $v:=(v_0,\cdots,v_{q-1})$ whose coordinates satisfy $0 \le v_{pj}\le r^j$ with indices
taken mod $q$.  Note this is simply a rectangular box inside the positive orthant 
with one corner at the origin and edges along the coordinate axes.
Let 
$f:\R^q \to \R^q$ be the composition $f:v \mapsto \sigma^p(rv)$ where
$rv$ means multiply the coordinates of $v$ by $r$, and $\sigma$ is the finite order
rotation $\sigma:v \mapsto (v_{q-1},v_0,\cdots,v_{q-2})$. 
So $f$ rotates and scales the box $R_q$ to another box along the coordinate axes.
Similarly, let $g:v \mapsto \sigma^p(rv) + t$
where $t$ is the vector $(t_0,0,0,\cdots,0)$ for which $r^q+t_0=1$. 
The map $g$ acts in the same way as $f$, except it translates the box up along the 
first coordinate by $t_0$.  The height of the box in the first coordinate is $1$, and 
the heights of the acted-upon boxes $fR_q$ and $gR_q$ in the first coordinate are both $r^q$.
Hence, providing
$r^q\ge 1/2$, $fR_q \cup gR_q = R_q$, so 
the parallelepiped $R_q$ is the limit set of the contracting semigroup $\langle f,g\rangle$.
See Figure~\ref{figure:zonohedron}.

\begin{figure}[htb]
\begin{center}
\labellist
\small\hair 2pt
 \pinlabel {$R^q$} at 162 219
 \pinlabel {$gR_q$} at 248 133
 \pinlabel {$fR_q$} at 240 73
\endlabellist
\includegraphics[scale=0.5]{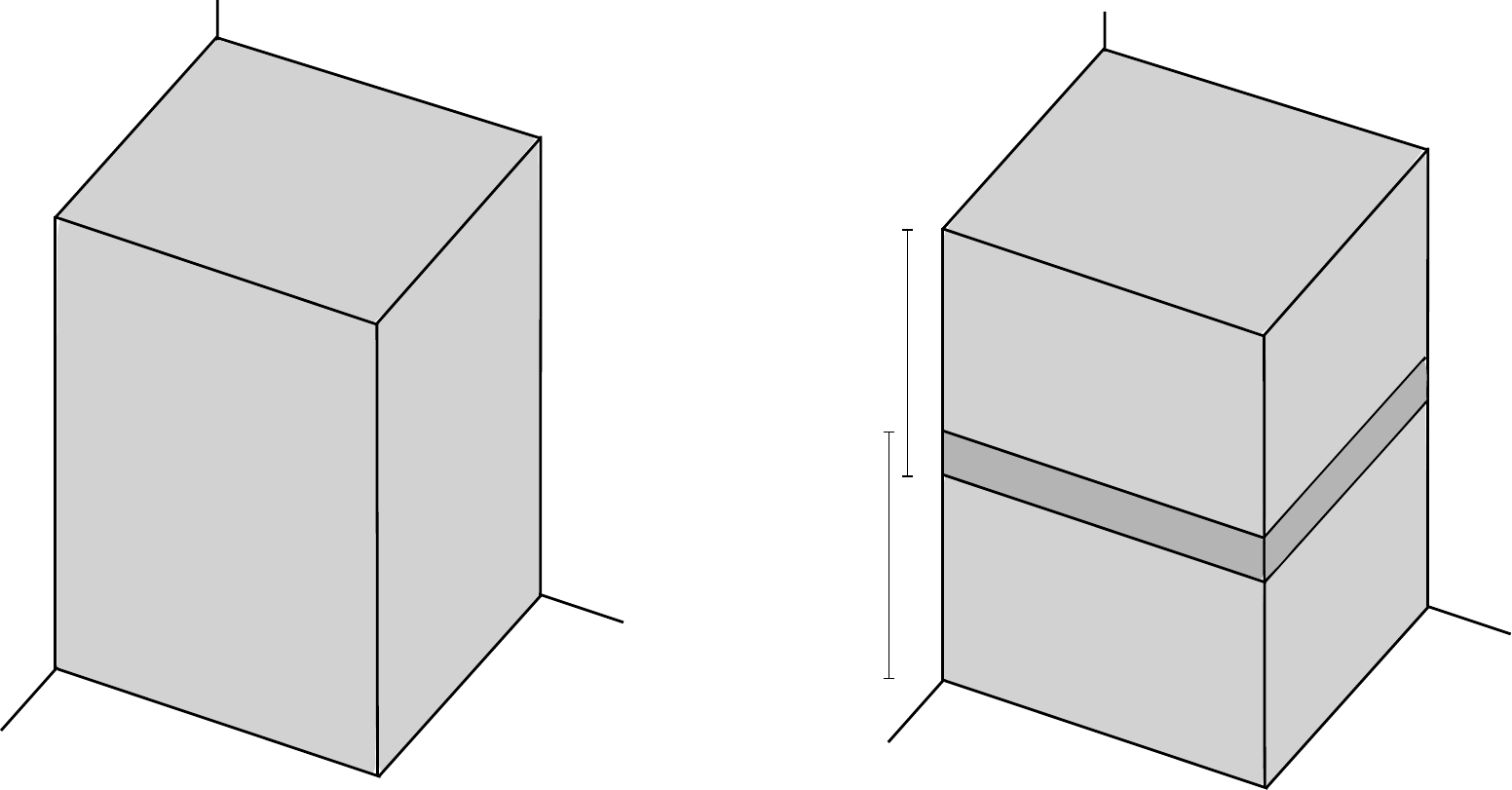}
\end{center}
\caption{The box $R_q$ in the proof of Lemma~\ref{lemma:convex_zonohedra}.  
The projection of $R_q$ to the plane is the limit set $\Lz$, so $\Lz$ is convex 
and, in particular, a zonohedron.}
\label{figure:zonohedron}
\end{figure}

Projecting $R_q$ to the plane so that the vectors $(0,0,\cdots,1,\cdots,0)$ are projected
to the $2q$th roots of unity defines a semiconjugacy from this semigroup to $\Gz$
where $z=re^{\pi i p/q}$, taking $R_q$ to $\Lz$. 
\end{proof}

\begin{example}[Hexagonal limit set]\label{example:hexagon}
Take $z=2^{-1/3}e^{2\pi i/3}\approx 0.396157+0.687364i$
then $\Lambda$ is a hexagon with angles $120^\circ$, and side lengths in the ratio
$1:2^{1/3}:2^{2/3}$. See Figure~\ref{hexagonal_limit_set}.

\begin{figure}[htpb]
\centering
\includegraphics[scale=0.15]{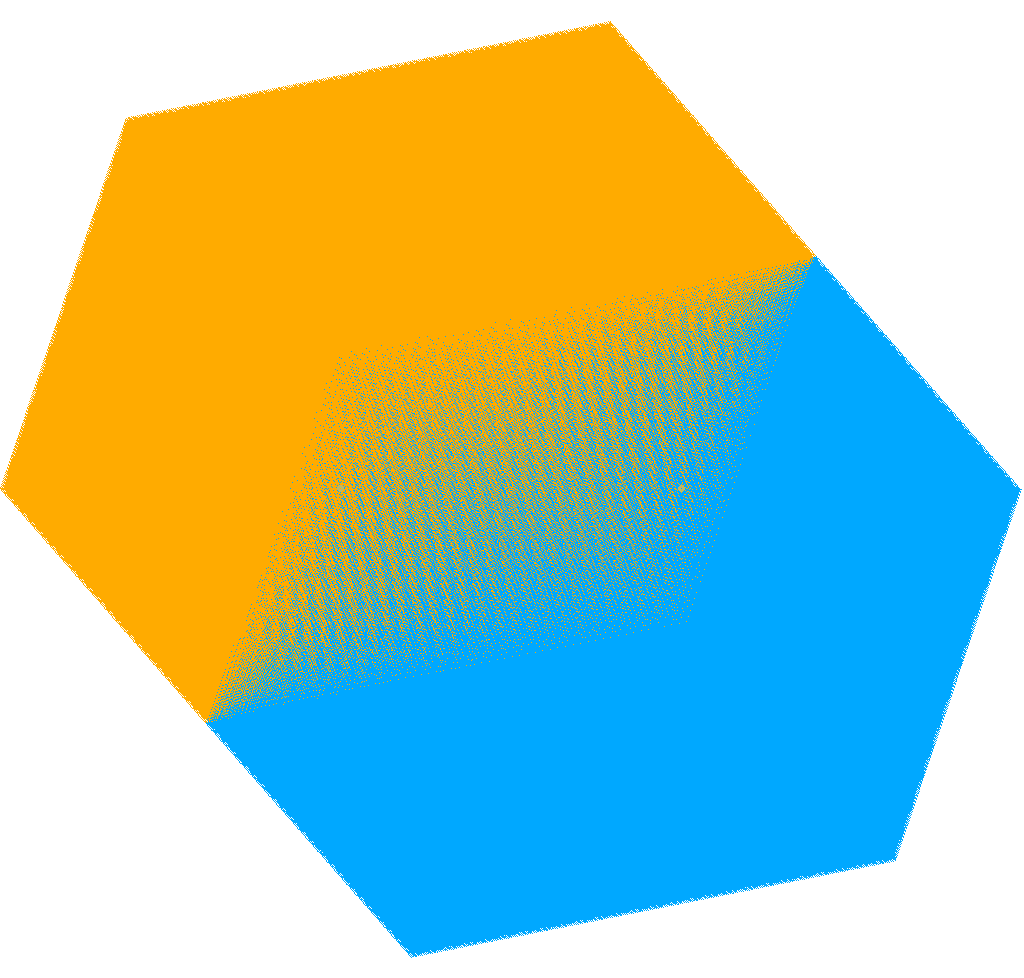}
\caption{A hexagonal limit set for $z=2^{-1/3}e^{2\pi i/3}$.}
\label{hexagonal_limit_set}
\end{figure}

\end{example}


\begin{lemma}[Surjective perturbation]\label{lemma:surjective_perturb}
Let $z_0$ be in $\SetA$, and let $u,v \in \partial \Sigma$ be such that $\pi(u,z_0)=\pi(v,z_0)$.
Then for any $\epsilon>0$ there is $\delta>0$ and integer $M$ so that if $u_m,v_m\in \Sigma$ denote
the prefixes of length $m$ for any $m\ge M$, and $T_m$ denotes the map 
$$T_m:z \mapsto u_m(z,1/2) - v_m(z,1/2)$$
then for any complex $w$ with $|w|<\delta$ there is $z_1$ with $|z_1-z_0|<\epsilon$, and
$T_m(z_1)=w$.
\end{lemma}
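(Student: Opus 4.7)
The plan is to realize $T_m$ as a polynomial approximation to a holomorphic limit function, and then transfer an open-mapping property from the limit to $T_m$ via Rouch\'e's theorem. Set $d_\infty(z) := \pi(u,z) - \pi(v,z)$. Lemma~\ref{lemma:distance_estimate} applied to $u_m$ and $v_m$ gives $|u_m(z,1/2) - \pi(u,z)| \leq |z|^m \cdot C(z)$ with $C$ locally bounded on $\D^*$, and similarly for $v$, so $T_m \to d_\infty$ uniformly on compact subsets of $\D^*$. In particular $d_\infty$ is holomorphic on $\D^*$, and $d_\infty(z_0) = 0$ by hypothesis.

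Next I would verify $d_\infty \not\equiv 0$. The lemma is vacuous if $u = v$, so assume $u \neq v$, and let $k$ be the first index at which they disagree. Writing $u = wu'$ and $v = wv'$ with $|w| = k-1$, Lemma~\ref{lemma:word_composition} gives $d_\infty(z) = z^{k-1}\bigl(\pi(u',z) - \pi(v',z)\bigr)$, where $u'_1 \neq v'_1$. Since $f(0,x) \equiv 0$ and $g(0,x) \equiv 1$, the power series $\pi(u',z) - \pi(v',z)$ extends continuously to $z = 0$ with value $\pm 1$, so $d_\infty$ is a nonzero holomorphic function vanishing at $z_0$.

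The rest is open mapping plus Rouch\'e. Zeros of a nonzero holomorphic function are isolated, so choose $\epsilon' < \epsilon$ with $\overline{B}(z_0,\epsilon') \subset \D^*$ containing $z_0$ as the sole zero of $d_\infty$, and set $\delta_0 := \min_{|z-z_0|=\epsilon'} |d_\infty(z)| > 0$ and $\delta := \delta_0/3$. By uniform convergence there is $M$ with $\|T_m - d_\infty\|_{\overline{B}(z_0,\epsilon')} < \delta$ for all $m \geq M$. Then for any $w$ with $|w| < \delta$ and any $m \geq M$, on the circle $|z-z_0| = \epsilon'$ we have $|d_\infty(z) - w| \geq 2\delta_0/3 > |T_m(z) - d_\infty(z)|$, so Rouch\'e's theorem gives that $T_m - w$ and $d_\infty - w$ have the same number of zeros in $B(z_0, \epsilon')$. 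Writing $d_\infty(z) = (z - z_0)^n h(z)$ with $h(z_0) \neq 0$, this number is exactly $n \geq 1$, producing the required $z_1$.

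I do not foresee any major obstacles: the verification of non-vanishing of $d_\infty$ is explicit via the degeneration at $z = 0$, and the remainder is a routine Rouch\'e comparison. The only mild subtlety is that when $d_\infty$ vanishes to order $n > 1$ at $z_0$ the preimages of small $w$ cluster at $z_0$, but this affects only quantitative refinements, not the existence statement asserted in the lemma.
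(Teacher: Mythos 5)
Your proof is correct and takes essentially the same approach as the paper: pass to the holomorphic uniform limit $T_\infty(z) = \pi(u,z) - \pi(v,z)$, show it is nonconstant, and transfer an open-mapping statement to $T_m$ for large $m$ via uniform convergence (your Rouch\'e argument makes explicit what the paper compresses into ``the conclusion is satisfied for sufficiently big $m$''). Your non-constancy argument---factor out the common prefix $w$ so $d_\infty(z) = z^{|w|}(\pi(u',z)-\pi(v',z))$ with $u'_1 \neq v'_1$ and evaluate at $z=0$---is cleaner than the paper's, which argues that constancy would force $u = f^\infty$, $v = g^\infty$ and then derives a contradiction from $T_\infty(z_0)=0$; one small inaccuracy is that the lemma is actually \emph{false} (not vacuous) when $u = v$, but assuming $u \neq v$ is exactly what is needed and is what all applications in the paper supply.
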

\begin{proof}
The functions $T_m$ converge uniformly to the limit $T_\infty:z \mapsto \pi(u,z)-\pi(v,z)$, 
which is holomorphic in $z$. Moreover, this limit could be constant only if $u=f^\infty$ and
$v=g^\infty$, in which case  $z\mapsto \pi(f^\infty,z)$ is identically $0$ and $z\mapsto \pi(g^\infty,z)$ is identically $1$, so $T_\infty(z)\equiv -1$; however, $T_\infty(z_0)=0$. The limit is therefore nonconstant. 
Thus $T_\infty$ takes the ball of radius
$\epsilon$ about $z_0$ to a set containing the ball of radius $2\delta$ about $0$ for some
positive $\delta$, and  the conclusion of the lemma is satisfied for sufficiently big $m$.
\end{proof}
\begin{corollary}\label{corollary:surjective_perturb}
Suppose $u,v\in\partial \Sigma$ and $z_0 \in \SetA$ such that $\pi(u,z_0) = \pi(v,z_0)$.
Then for any complex number $w$, and any positive $\epsilon$, we can 
find an $m$ and a $z_1$ with $|z_1-z_0|<\epsilon$ so that
$$z_1^{-m}\left( u_m(z_1,1/2) - v_m(z_1,1/2) \right)= w.$$
\end{corollary}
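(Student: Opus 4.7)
The plan is to reduce this to Lemma~\ref{lemma:surjective_perturb} via a Rouch\'e/Hurwitz-type perturbation argument. Rewriting the desired conclusion as $T_m(z_1) = z_1^m w$, we are looking for a zero of the holomorphic function
\[
F_m(z) := T_m(z) - z^m w
\]
inside $B(z_0, \epsilon)$, where $T_m(z) = u_m(z,1/2) - v_m(z,1/2)$ is as in the lemma.

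First I would shrink $\epsilon$ if necessary so that the closed disk $\overline{B(z_0, \epsilon)}$ lies in $\D^*$, and fix $r>0$ with $|z| \le 1-r$ for every $z$ in that closed disk. As in the proof of Lemma~\ref{lemma:surjective_perturb}, the functions $T_m$ converge uniformly on compacta to the holomorphic limit $T_\infty(z) = \pi(u,z) - \pi(v,z)$, and by hypothesis $T_\infty(z_0) = 0$. The correction term $z^m w$ tends to zero uniformly on $\overline{B(z_0,\epsilon)}$ at rate $(1-r)^m |w|$, so the sequence $F_m$ also converges uniformly to $T_\infty$ on this disk.

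Now I would invoke Hurwitz's theorem: since $T_\infty$ is a nonconstant holomorphic function with an isolated zero at $z_0$, and $F_m \to T_\infty$ uniformly on $\overline{B(z_0,\epsilon)}$, for all sufficiently large $m$ the function $F_m$ has a zero $z_1$ arbitrarily close to $z_0$, in particular inside $B(z_0, \epsilon)$. Any such $z_1$ satisfies $T_m(z_1) = z_1^m w$, which is exactly the rearranged conclusion of the corollary.

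The only real obstacle is checking that $T_\infty$ is actually nonconstant so that Hurwitz applies, and this is handled inside the lemma: the only degenerate possibility is $u = f^\infty$, $v = g^\infty$, but in that case $T_\infty \equiv -1$, which is incompatible with the hypothesis $T_\infty(z_0) = 0$. Everything else is a routine combination of uniform convergence of $T_m$ to $T_\infty$, uniform smallness of $z^m w$, and Hurwitz.
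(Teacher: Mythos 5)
Your proof is correct, and it is arguably a more careful route than the one the paper records. The paper's proof of this corollary is a one-liner: it cites Lemma~\ref{lemma:surjective_perturb} --- for large $m$ the map $T_m$ takes a ball about $z_0$ onto a fixed neighborhood of $0$ --- and says ``the claim follows.'' But the target value $z_1^m w$ in the corollary depends on the unknown $z_1$, so surjectivity onto a fixed disk does not by itself produce a solution of $T_m(z_1) = z_1^m w$; one still has to close a fixed-point loop (or equivalently run an argument-principle count). Your version closes it cleanly by applying Hurwitz directly to $F_m(z) = T_m(z) - z^m w$: after shrinking to a closed disk inside $\D^*$, both $T_m \to T_\infty$ and $z^m w \to 0$ uniformly, so $F_m \to T_\infty$ uniformly; since $T_\infty$ is nonconstant with $T_\infty(z_0)=0$, Hurwitz gives a zero of $F_m$ arbitrarily near $z_0$ for all large $m$. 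The only ingredient you borrow from Lemma~\ref{lemma:surjective_perturb} is the nonconstancy of $T_\infty$ (not its surjectivity conclusion), so your argument is essentially self-contained modulo that observation, and it makes the ``claim follows'' step explicit where the paper leaves it implicit.
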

\begin{proof}
For sufficiently large $m$, the map $z\mapsto u_m(z,1/2) - v_m(z,1/2)$ is surjective onto a neighborhood of $0$, and the claim follows.   
\end{proof}

We now complete the proof of Bandt's conjecture:

\begin{theorem}[Interior is almost dense]\label{theorem:interior_dense}
The set of interior points is dense in $\SetA$ away from the real axis; that is
\[
\SetA = \overline{\mathrm{int}(\SetA)}\cup (\SetA\cap\R).
\]
\end{theorem}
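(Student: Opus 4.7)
The plan is to take $z_0 \in \SetA \setminus \R$ and show $z_0 \in \overline{\mathrm{int}(\SetA)}$ (the reverse inclusion is immediate). I split on whether the filled limit set $X_{z_0}$ (the smallest cell-like set containing $\Lambda_{z_0}$, obtained by filling in bounded complementary components) is convex or not.

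If $X_{z_0}$ is convex, then Lemma~\ref{lemma:convex_zonohedra} forces $z_0 = re^{\pi i p/q}$ with $\gcd(p,q)=1$ and $r \geq 2^{-1/q}$. The hypothesis $z_0 \notin \R$ rules out $q=1$, so $q \geq 2$ and $|z_0| \geq 2^{-1/q} \geq 2^{-1/2}$. Since Lemma~\ref{lemma:inner_outer} implies the open set $\{|z| > 2^{-1/2}\}$ is contained in $\mathrm{int}(\SetA)$, and $z_0$ lies in its closure, this case is immediate.

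The substantive case is when $X_{z_0}$ is nonconvex. Here Lemma~\ref{lemma:nonconvex_cell_trap} supplies a trap-like vector $w$ for $X_{z_0}$. Because $z_0 \in \SetA$, Lemma~\ref{lemma:disconnected_Cantor} supplies $u, v \in \partial \Sigma$ with $u_1 = f$, $v_1 = g$, and $\pi(u, z_0) = \pi(v, z_0)$. For each $\epsilon > 0$, I will apply Corollary~\ref{corollary:surjective_perturb} to select a large integer $m$ and a point $z_1$ with $|z_1 - z_0| < \epsilon$ satisfying
\[
z_1^{-m}\bigl(u_m(z_1, 1/2) - v_m(z_1, 1/2)\bigr) = w,
\]
where $u_m, v_m$ are the length-$m$ prefixes of $u, v$. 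Geometrically, this means that at scale $z_1^{-m}$ the two scaled copies $u_m \Lambda_{z_1}$ and $v_m \Lambda_{z_1}$ sit in the same relative position as $\Lambda_{z_1}$ and $\Lambda_{z_1} + w$. Since $\Lambda_z$ varies continuously in $z$ in the Hausdorff metric (via Lemma~\ref{lemma:Holder_estimate}), and since the trap-like property is open in $(X, w)$, the vector $w$ remains trap-like for $X_{z_1}$ once $\epsilon$ is small enough. Rescaling, $(u_m, v_m)$ forms a trap for $(z_1, D)$, where $D$ is a thin closed disk thickening of $X_{z_1}$. Condition (3) of Definition~\ref{definition:trap} is automatic: at $z_0$ the sets $f\Lambda_{z_0}$ and $g\Lambda_{z_0}$ already meet, so at nearby $z_1$ they contain points at arbitrarily small distance. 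Proposition~\ref{proposition:traps_A} then places $z_1$ in $\mathrm{int}(\SetA)$, and since $\epsilon$ was arbitrary, $z_0 \in \overline{\mathrm{int}(\SetA)}$.

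The hard part will be rigorously executing the transfer from an \emph{abstract} trap-like vector $w$ for the filled set $X_{z_0}$ to a \emph{genuine} trap for $(z_1, D)$ in the sense of Definition~\ref{definition:trap}. There are three technicalities to handle: (a) the four alternating points on the outer boundary of $X_{z_0} \cup (X_{z_0}+w)$ lie in $\Lambda_{z_0}$ (the outer boundary of a cell-like set containing $\Lambda_{z_0}$ equals the outer boundary of $\Lambda_{z_0}$), and they must perturb into honest points of $\Lambda_{z_1}$ without destroying the alternation; (b) the thickening $D$ must be chosen thin enough that the scaled disks $u_m D$ and $v_m D$ avoid the perturbed boundary points on the opposite copy; and (c) the algebraic intersection of the short-hop connecting paths must be nonzero, which follows from the alternating pattern combined with the path-independence guaranteed by Lemma~\ref{lemma:any_paths}.
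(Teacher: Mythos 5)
Your proposal follows exactly the paper's proof: the convex/nonconvex dichotomy via Lemma~\ref{lemma:convex_zonohedra}, the annulus estimate from Lemma~\ref{lemma:inner_outer} for the convex case, and in the nonconvex case, Lemma~\ref{lemma:nonconvex_cell_trap} to produce a trap-like vector $w$, Corollary~\ref{corollary:surjective_perturb} to find a nearby $z_1$ realizing $w$ as a scaled displacement $z_1^{-m}(u_m(z_1,1/2)-v_m(z_1,1/2))$, and Proposition~\ref{proposition:traps_A} to conclude interior membership. The technicalities you flag at the end (openness of the trap configuration under perturbation, the choice of thin disk $D = u_m(z_1)^{-1}(\overline{N}_\epsilon(X_{z_1}))$, and the forced intersection of connecting paths) are precisely the ones the paper addresses, so you have the correct plan and have correctly identified where the residual work lies.
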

\begin{proof}
Let $z_0$ be in $\SetA$, and suppose the limit set $\Lambda_{z_0}$ is not convex. Let
$X_{z_0}$ be the region bounded by $\Lambda_{z_0}$, so that $X_{z_0}$ is cell-like. Since $\Lambda_{z_0}$ is not
convex, neither is $X_{z_0}$, and by Lemma~\ref{lemma:nonconvex_cell_trap} there is some $w$
which is trap-like for $X_{z_0}$. Let $p_1,p_2\in X_{z_0}$, and let $q_1,q_2\in X_{z_0}+w$ be the four points from part $(2)$ in Definition \ref{definition:cell_trap}. Since $\partial X_{z_0} \subseteq \Lambda_{z_0}$, the points $p_i,q_i$ 
lie in $\Lambda_{z_0}$. There is an $\epsilon$ so that the closed $\epsilon$-neighborhood of $X_{z_0}$ is connected, 
$p_1,p_2 \in \Lambda_{z_0} - \overline{N}_{\epsilon}(X_{z_0}+w)$, and $q_1,q_2 \in (\Lambda_{z_0}+w) - \overline{N}_{\epsilon}(X_{z_0})$.  Furthermore, 
these conditions are open, so there is a $\delta>0$ such that they hold for $X_z$ for all $z$ with
$|z-z_0|<\delta$.

Now, since $z_0$ is in $\SetA$, there are $u,v\in \partial \Sigma$ starting with $f$ and $g$
respectively with $\pi(u,z_0)=\pi(v,z_0)$.
By Corollary~\ref{corollary:surjective_perturb}, we can find some $u_m,v_m$ prefixes of $u$ and $v$
of length $m$, and $z_1$ with $|z_1-z_0|<\delta$ so that $z_1^{-m}(u_m(z_1,1/2) - v_m(z_1,1 /2)) = w$.
We obtain a trap for $(z_1,D)$ where $D=u_m(z_1)^{-1}(\overline{N}_\epsilon(X_{z_1}))$. This follows from the three conditions above. 

We therefore find interior points of $\SetA$ within
distance $\delta$ of $z_0$. Since $z_0$ was arbitrary, we are done in the case that $\Lambda_{z_0}$
is not convex.

If $\Lambda=\Lz$ is convex and $|z|<2^{-1/2}$ then $z$ is totally
real, by Lemma~\ref{lemma:convex_zonohedra}. If $|z|> 2^{-1/2}$ then we are already
in the interior, by Lemma~\ref{lemma:inner_outer}. This completes the proof.
\end{proof}

\begin{figure}[htpb]
\centering
\includegraphics[scale=0.4]{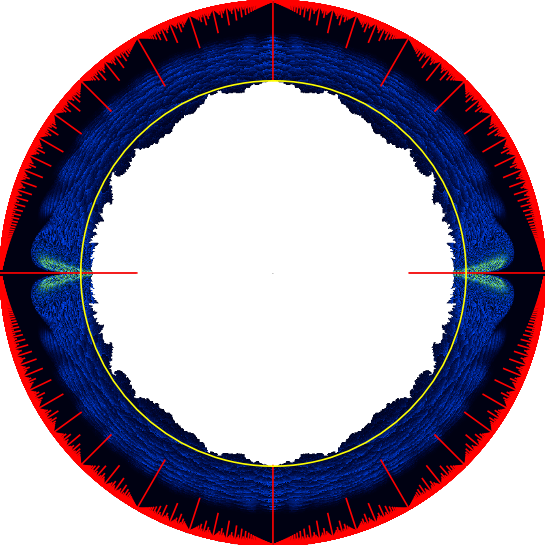}
\caption{The set of $z$ with $\Lz$ convex (in red) overlaid on $\SetA$. The
yellow circle indicates $|z|=2^{-1/2}$.}
\label{convex_on_M}
\end{figure}

Figure~\ref{convex_on_M} shows the set of $z$ with convex $\Lz$ overlaid on $\SetA$.
The picture of $\SetA$ in a neighborhood of the real axis is surprisingly complicated; partial
progress in understanding it was made by Shmerkin-Solomyak \cite{Shmerkin_Solomyak}; 
we describe some of their results in Section~\ref{subsection:whiskers_isolated}, and explain how
the method of  traps can be modified to certify the existence of interior points
in $\overline{\SetA - \R}\cap \R$.

\section{Holes in \texorpdfstring{$\SetA$}{M}}\label{section:holes}

In this section we rigorously certify the existence of holes
in $\SetA$ (i.e.\/ exotic components of Schottky space). 
 Holes in $\SetA$ were first observed experimentally by
Barnsley and Harrington \cite{Barnsley_Harrington}, and the existence of one hole was
rigorously proved by Bandt \cite{Bandt}. However, our technique is quite different from
Bandt's and our proof of the existence of holes is new. Furthermore, we shall show in
Section~\ref{section:renormalization} that our techniques generalize to prove the existence of 
{\em infinitely many} holes in $\SetA$. 

\subsection{An example}

In this section, we give an example of an apparent hole in $\SetA$, an 
intuitive explanation of why the hole is truly an exotic component of 
Schottky space, and the output of our program rigorously certifying 
the hole.  In the next section, we give a careful justification of the 
algorithm.

Figure~\ref{figure:certified_hole_apparent_hole} depicts an apparent collection of holes 
in $\SetA$ centered at $0.459650+0.459654i$.  The diameter of the 
large hole is approximately $0.000002$.

\begin{figure}[htpb]
\centering
\includegraphics[scale=0.4]{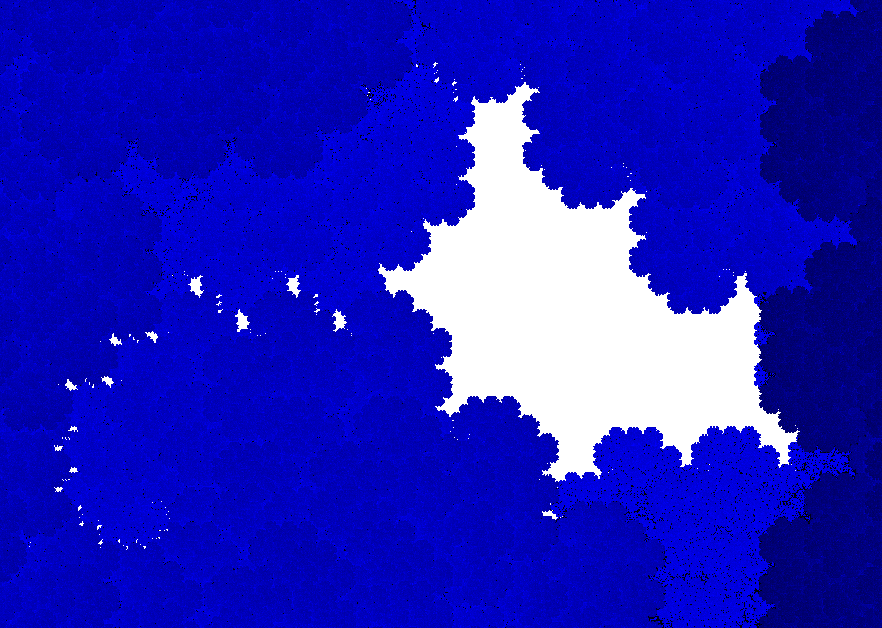}
\caption{Apparent holes in $\SetA$ centered at $0.459650+0.459654i$.}
\label{figure:certified_hole_apparent_hole}
\end{figure}

\begin{figure}[htb]
\begin{center}
\begin{minipage}{0.33\textwidth}
\begin{center}
\includegraphics[scale=0.2]{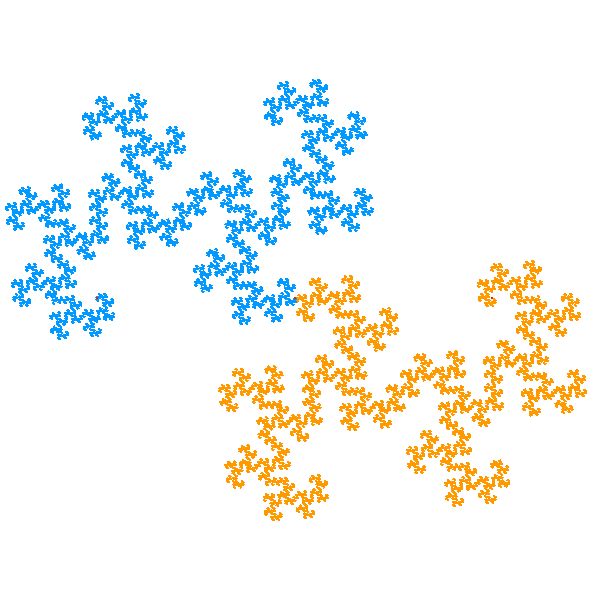}
\end{center}
\end{minipage}
\begin{minipage}{0.65\textwidth}
\includegraphics[scale=0.45]{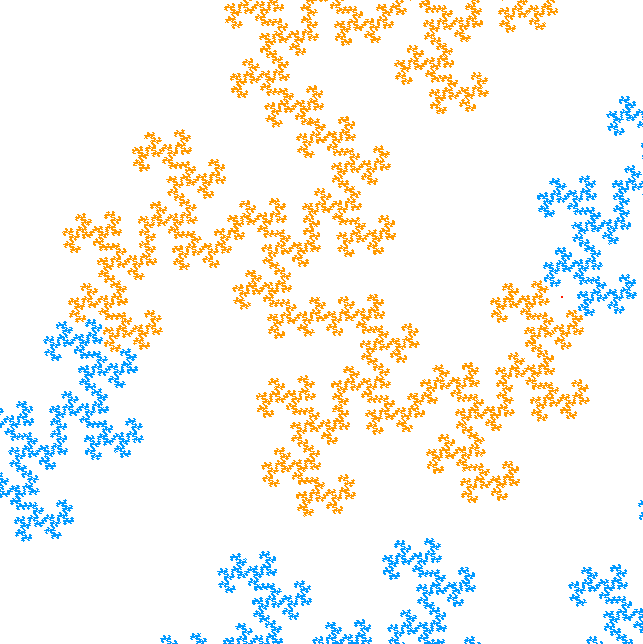}
\end{minipage}
\caption{The limit set for a parameter inside the large hole shown 
in Figure~\ref{figure:certified_hole_apparent_hole}, left, and a zoomed 
view, right.  The two components $f\Lz$ and $g\Lz$ 
(blue and orange, respectively)
cannot be unlinked with a rigid motion without intersecting.}
\label{figure:certified_hole_limit_set}
\end{center}
\end{figure}

The limit set corresponding to a parameter inside the large hole is 
shown in Figure~\ref{figure:certified_hole_limit_set}.  
The sets $f\Lz$ and $g\Lz$ are indeed disjoint, 
but they come very close.  
If one imagines that $f\Lz$ and $g\Lz$ are rigid, connected objects,
then it is clear that one cannot unlink them by a rigid motion
without the two sets intersecting at some intermediate step.  However, movement in 
parameter space does not produce exactly rigid motion of the limit set, so in order 
to prove that this ``hole'' in $\SetA$ is not, in fact, part of the large component 
of Schottky space, we need a more careful analysis.

\begin{figure}[htb]
\begin{center}
\begin{minipage}{0.55\textwidth}
\begin{center}
\includegraphics[scale=0.4]{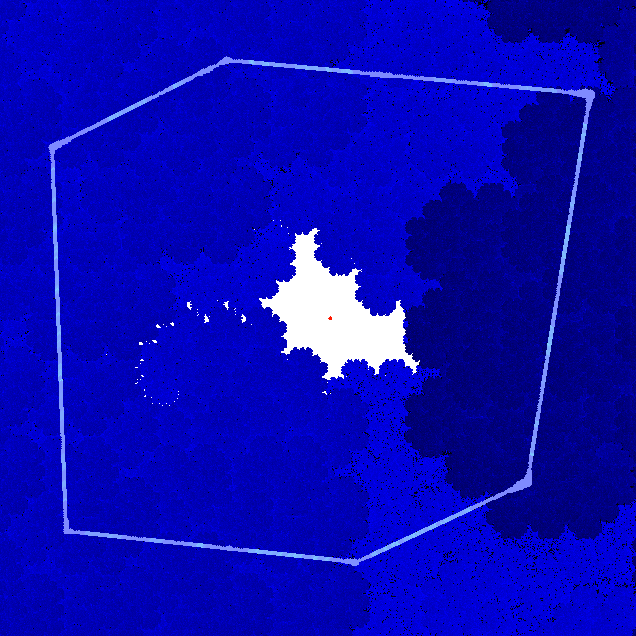}
\end{center}
\end{minipage}
\begin{minipage}{0.33\textwidth}
\begin{center}
\includegraphics[scale=0.2]{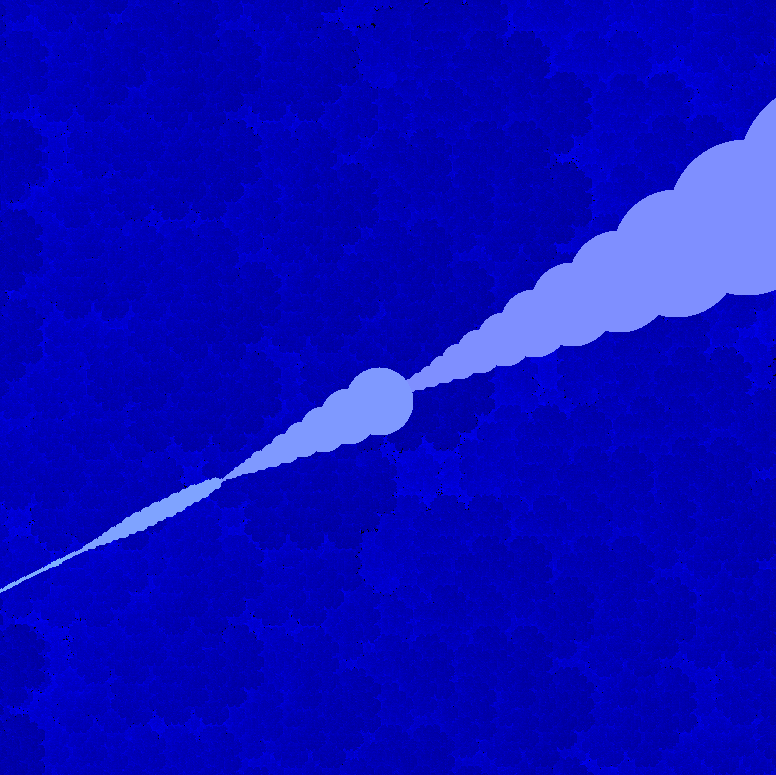}
\end{center}
\end{minipage}
\caption{A loop of traps encircling the holes from 
Figure~\ref{figure:certified_hole_apparent_hole}.  A zoomed view of 
part of the loop shows how the program overlaps rigorous trap balls to 
produce a path inside the interior of $\SetA$.}
\label{figure:certified_hole_trap_loop}
\end{center}
\end{figure}

Recall that the existence of a trap for parameter $z$ is an open condition --- there is 
some $\delta >0$ so that a trap for $z$ persists in $B_\delta(z)$.  
We call this a \emph{ball of traps}.
Our program certifies a putative hole in $\SetA$ by producing overlapping balls 
of traps along a closed path encircling the hole, as shown in 
Figure~\ref{figure:certified_hole_trap_loop}.  This proves that the closed 
path lies completely inside the interior of $\SetA$.  
Some technical remarks are in order.  First, to complete the proof of 
the existence of a hole, we must certify some parameter $z$ on the inside of the 
loop as Schottky.  But since the connectedness of a pixel in 
Figure~\ref{figure:certified_hole_trap_loop} 
is decided using Algorithm~\ref{algorithm:disconnectedness} applied to 
some parameter inside that pixel, a white pixel is guaranteed to 
contain some parameter which is Schottky, so this step is complete.  
Also, we note that the loop of trap balls in 
Figure~\ref{figure:certified_hole_trap_loop} appears to encircle many 
separate holes, but the output of this particular 
run of the program says nothing about whether these holes are actually distinct.  
We would need to run the program separately on loops encircling each of the holes 
we wished to rigorously separate.  In Section~\ref{section:renormalization}, 
we extend our algorithm to prove the existence of infinitely many holes.

\subsection{Numerical trap finding and loop certification}

In this section, we describe our trap-finding algorithm in detail, including 
various numerical estimates.  The details are important, because the output of 
a particular run of this algorithm serves as a rigorous certificate that 
there are multiple connected components of Schottky space, or equivalently, holes 
in $\SetA$.

The program will typically be required to produce 
a sequence of trap balls along a loop.
Thus, we will be interested in finding a large number of traps in a given small 
region of parameter space.  The algorithm takes advantage of this by 
separating the work into two pieces: a more computationally intensive 
piece of one-time work to find \emph{trap-like balls}, and a fast check 
to produce a single ball of traps.  Note that a trap-like ball (of vectors) 
and a ball of traps are not the same thing.

\subsubsection{Finding trap-like balls}

\begin{remark}
This section is full of messy definitions and computations.  These are 
necessary because we are in search of trap-like vectors similar to 
those found in Definition~\ref{definition:cell_trap} but which work for 
\emph{all} $z$ in a given region, so we need to carefully estimate how 
the limit set changes as we change $z$.  As a reward for this tedium, 
we get to compute these trap-like vectors (which is hard) only 
once, but we get to use them over an entire region.
\end{remark}

Suppose that we will be searching for traps in a 
square region $B\subseteq \D^*$ of parameter space centered at $z_0$ and with side length $2d$.
Let $n$, the \emph{hull depth}, be given.
Let $r_z = |z-1|/2(1-|z|)$; this is the minimal radius such that 
a disk of radius $r_z$ centered at $1/2$ is mapped inside itself under both $f$ and $g$.  
Let $D(p)$ denote the disk of radius $p$ centered at $1/2$.  
Typically, we compute $\Sigma_n(z,D(r_z))$, the union of images of $D(r_z)$ 
under all words of length $n$, to study $\Lz$.  However, we need to control $\Sigma_n(z,D(r_z))$ over all 
$z\in B$, so we need to understand how it changes as 
we vary $z$.  For this, we need some constants.  The reader might 
consult Lemma~\ref{lemma:lambda_stays_inside} and 
Figure~\ref{figure:lambda_stays_inside} for motivation before 
working through the technical details.

\begin{enumerate}
\item Let $K$ be an upper bound for $r_z$ in $B$.  We can assume $|z| \le 1/\sqrt{2}$, 
so the value $K = 2.92 > \sup_{|z|=1/\sqrt{2}}\; r_z$ will always work.

\item Let $C$ be such that for any word $u\in\partial\Sigma$ and $z \in B$, 
we have $|\pi(u,z_0) - \pi(u,z)| < C|z_0-z|$.  Since $u$ can be expressed 
as a power series in $z$ with coefficients in $\{0,\pm 1\}$, an upper bound 
for the derivative in terms of $z$ is given by 
$\sum_{i=1}^\infty i|z|^{i-1} = 1/(|z|-1)^2$, so 
a valid value of $C$ is given by $\sup_{z \in B}1/(|z|-1)^2$.
As previously mentioned, we can assume that $B$ lies within the disk of 
radius $1/\sqrt{2}$ by Lemma~\ref{lemma:inner_outer}, so the uniform 
value of $C = 11.67$ will always work.

\item Let $A$ be an upper bound for $|z|/|z_0|$ over $B$.  Because 
$1/2 < |z|,|z_0| < 1/\sqrt{2}$, we have $|z|/|z_0| < \sqrt{2}$.  For the previous 
two constants, a uniform upper bound like this is acceptable.  In this case, though, 
we will be raising $A$ to a large power, so it is critical to make $A$ as close to $1$ as possible.
\end{enumerate}

Next set: 
\[
R_{z_0} = A^nK + 4K + 3|z_0|^{-n}C\sqrt{2}d
\]
and for $z \in B$, define
\[
R_z = \frac{|z_0|^n}{|z|^n} R_{z_0} - |z|^{-n}C|z-z_0|.
\]

\begin{lemma}\label{lemma:lambda_stays_inside}
Suppose that $\Sigma_n(z_0,D(r_{z_0}))$ is connected.  Then for 
any $z \in B$, we have 
\begin{enumerate}
\item $\Sigma_n(z,D(R_z)) \subseteq \Sigma_n(z_0,D(R_{z_0}))$.
\item $\Sigma_n(z,D(R_z))$ contains an $\epsilon$-neighborhood of $\Lz$ for 
some $\epsilon$ such that there are two points 
$p_1 \in f\Lz$, $p_2 \in g\Lz$ such that $|p_1-p_2| < \epsilon$.
\end{enumerate}
\end{lemma}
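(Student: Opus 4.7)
The plan is to deduce both parts by expanding the unions $\Sigma_n(z, D(R_z)) = \bigcup_{u \in \Sigma_n} u(z, D(R_z))$ and $\Sigma_n(z_0, D(R_{z_0}))$ as finite families of closed disks, and to reduce each claim to a triangle-inequality comparison of centers. The central analytic input is the Lipschitz bound $|u(z, 1/2) - u(z_0, 1/2)| \le C|z - z_0|$, which follows from the power-series expansion of Proposition~\ref{proposition:power_series} together with the explicit choice of $C$; the other ingredients are the scaling inequality $|z|^n \le A^n|z_0|^n$ and the uniform bounds $r_z, r_{z_0} \le K$ and $|z - z_0| \le \sqrt{2}\,d$.

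For part (1), each disk $u(z, D(R_z))$ has radius $|z|^n R_z$ and center $u(z, 1/2)$, which lies within $C|z-z_0|$ of $u(z_0, 1/2)$. Hence $u(z, D(R_z)) \subseteq u(z_0, D(R_{z_0}))$ provided
\[
|z|^n R_z + C|z-z_0| \le |z_0|^n R_{z_0},
\]
and this is precisely the \emph{equality} built into the definition of $R_z$. Taking unions over $u \in \Sigma_n$ gives (1).

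For part (2), I would first show that $R_z > r_z$ (immediate from the explicit formula and $r_z \le K$), so that $\Lz \subseteq D(R_z)$ and hence $\Lz \subseteq \Sigma_n(z, D(R_z))$. Moreover, every $p \in \Lz$ may be written as $u(z, y)$ with $u \in \Sigma_n$ and $y \in \Lz \subseteq D(r_z)$, so $p$ lies in the concentric sub-disk $u(z, D(r_z))$ of radius $|z|^n r_z$ inside the disk $u(z, D(R_z))$. Consequently $\Sigma_n(z, D(R_z))$ contains the $\epsilon_0$-neighborhood of $\Lz$ for $\epsilon_0 := |z|^n(R_z - r_z)$. Next, I would produce close points $p_1 \in f\Lz$, $p_2 \in g\Lz$ by exploiting the connectedness of $\Sigma_n(z_0, D(r_{z_0})) = fD_{n-1} \cup gD_{n-1}$: this finite union of closed disks is connected if and only if its incidence graph (vertices = disks, edges = pairwise overlaps) is connected, and that graph must carry at least one edge joining a vertex in $fD_{n-1}$ to one in $gD_{n-1}$. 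This yields $u, v \in \Sigma_n$ with $u$ starting in $f$ and $v$ in $g$, and $|u(z_0, 1/2) - v(z_0, 1/2)| \le 2|z_0|^n r_{z_0}$. Transporting to $z$ via the Lipschitz bound and choosing $p_1 \in u(z, \Lz) \subseteq f\Lz$, $p_2 \in v(z, \Lz) \subseteq g\Lz$ near the respective centers gives
\[
|p_1 - p_2| \le 2|z|^n r_z + 2|z_0|^n r_{z_0} + 2C|z-z_0|.
\]

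The final step is to verify $|p_1 - p_2| < \epsilon_0$. Substituting the explicit formula for $R_{z_0}$ and using $r_\bullet \le K$, $|z - z_0| \le \sqrt{2}\,d$, and $|z|^n \le A^n|z_0|^n$, this reduces to a concrete algebraic inequality in which the $A^n K$, $4K$, and $3|z_0|^{-n} C\sqrt{2}\,d$ summands of $R_{z_0}$ are each responsible for absorbing one of the three error sources (scaling mismatch, disk radii, and Lipschitz drift). The main obstacle is purely bookkeeping --- ensuring that each summand of the somewhat ungainly defining formula for $R_{z_0}$ does its intended job with strict slack --- rather than any conceptual subtlety, and it is precisely this accounting that dictates the shape of the constants.
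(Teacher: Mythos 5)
Your part (1) matches the paper's argument exactly, and the reduction of ``$\Sigma_n(z,D(R_z))$ contains an $\epsilon$-neighborhood of $\Lz$'' to the radius comparison $|z|^n(R_z - r_z)\ge\epsilon$ is also correct. However, your bound for $|p_1-p_2|$ in part (2) is too weak, and the final inequality you sketch does not in fact close with the given constants. After transporting the centers $u(z_0,1/2), v(z_0,1/2)$ to parameter $z$, you \emph{re-sample} points $p_i$ from the disks $u(z,D(r_z))$, which costs you $2|z|^n r_z$; substituting the formula for $R_{z_0}$, the requirement $|p_1-p_2| < |z|^n(R_z-r_z)$ then reduces to $3|z|^n r_z \le (|z_0|A)^n r_z + 2|z_0|^n r_{z_0}$. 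This can fail: for instance when $|z|=A|z_0|$ and $r_z = r_{z_0} = K$ it becomes $A^n \le 1$, but $A\ge 1$ with equality only if $B$ is a point. So the three summands of $R_{z_0}$ do \emph{not} absorb the error sources under your accounting.

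The fix --- and what the paper actually does --- is to track specific limit-set points rather than re-picking them at $z$. Having found $u,v\in\Sigma_n$ starting with $f,g$ whose $r_{z_0}$-disks overlap, choose right-infinite extensions $U,V\in\partial\Sigma$ so that $\pi(U,z_0)\in u(z_0,\Lambda_{z_0})$ and $\pi(V,z_0)\in v(z_0,\Lambda_{z_0})$ lie in those two disks of radius $|z_0|^n r_{z_0}$; this gives $|\pi(U,z_0)-\pi(V,z_0)|<4|z_0|^n r_{z_0}$. Now the constant $C$ is defined to give a Lipschitz bound for \emph{every} $w\in\partial\Sigma$, so $z\mapsto\pi(U,z)$ and $z\mapsto\pi(V,z)$ each move by at most $C|z-z_0|$, and setting $p_1=\pi(U,z)$, $p_2=\pi(V,z)$ gives $|p_1-p_2|<4|z_0|^n r_{z_0}+2C|z-z_0|$. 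This replaces your $2|z|^n r_z$ term with $2|z_0|^n r_{z_0}$, and the resulting inequality $4|z_0|^n r_{z_0}+2C|z-z_0|\le|z|^n(R_z-r_z)$ does follow from $r_\bullet\le K$, $|z-z_0|\le\sqrt2\,d$ and $|z|\le A|z_0|$ by the bookkeeping you describe.
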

Note that Algorithm~\ref{algorithm:disconnectedness} shows that (2) implies 
$\Sigma_n(z,D(R_z))$ is connected.

\begin{figure}[htb]
\begin{center}
\labellist
\small\hair 2pt
\tiny
 \pinlabel {$u(z_0,1/2)$} at 60 44
 \pinlabel {$u(z,1/2)$} at 75 53
\small
 \pinlabel {$R_z$} at 64 73
 \pinlabel {$R_{z_0}$} at 20 72
 \pinlabel {$<4|z_0|^nr_{z_0}$} at 199 47
 \pinlabel {$<C|z-z_0|$} at 260 46
 \pinlabel {$<C|z-z_0|$} at 135 48
 \pinlabel {$\pi(x,z)$} at 144 69
 \pinlabel {$\pi(y,z)$} at 250 67
 \pinlabel {$\pi(x,z_0)$} at 173 33
 \pinlabel {$\pi(y,z_0)$} at 227 33
\endlabellist
\includegraphics[scale=1.1]{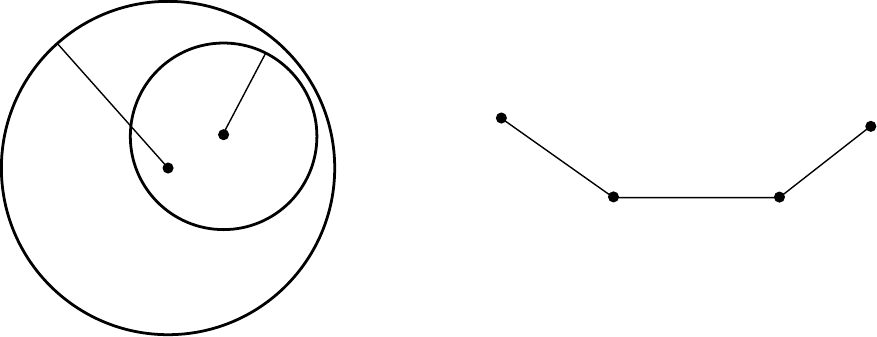}
\end{center}
\caption{The proof of Lemma~\ref{lemma:lambda_stays_inside}
just verifies that when we change the parameter $z_0$ to $z$, 
each disk $u(z,D(R_z))$ lies inside $u(z_0, D(R_{z_0}))$ 
and $\Lz$ still contains points that are close together.  
The figure on the right shows that we can prove (2) by proving that 
for each word $u$, $u(z,D(R_z))$ contains a 
$(4|z_0|^nr_{z_0} + 2C|z-z_0|)$-neighborhood of $u(z,D(r_z))$.
In the figure, $u\in\Sigma_n$ and $x,y \in \partial\Sigma$.}
\label{figure:lambda_stays_inside}
\end{figure}

\begin{proof}
The set $\Sigma_n(z,D(R_z))$ is the union of disks of radius $|z|^nR_z$ centered at 
the images $u(z,1/2)$ over all words $u$ of length $n$, and the set 
$\Sigma_n(z_0,D(R_{z_0}))$ is a similar union of disks of radius $|z_0|^nR_{z_0}$ 
centered at the images $u(z_0,1/2)$.  We prove (1) by showing that 
for each $u \in \Sigma_n$, the disk of radius $|z|^nR_z$ at $u(z,1/2)$ 
lies inside the disk of radius $|z_0|^nR_{z_0}$ at $u(z_0,1/2)$; we just compute 
from the definition of $R_z$:
\[
|z|^nR_z + C|z-z_0| = |z_0|^nR_{z_0}
\]
and by the definition of $C$, we have $|u(z,1/2) - u(z_0,1/2)| < C|z-z_0|$, so 
(1) follows.

To prove (2), first note that since $\Sigma_n(z_0,D(R_{z_0}))$ is connected, by 
Algorithm~\ref{algorithm:disconnectedness}, there are words $u',v'$ starting with $f,g$, 
respectively, such that the disks of radius $|z_0|^nR_{z_0}$ centered at $u'(z_0,1/2)$ 
and $v'(z_0,1/2)$ intersect.  Therefore, since these disks contain points in $\Lambda_{z_0}$, 
there are right-infinite words $u,v$ starting 
with $f,g$, respectively, such that $|\pi(u,z_0) - \pi(v,z_0)| < 4|z_0|^nr_{z_0}$.  
Therefore, 
\[
|\pi(u,z)- \pi(v,z)| < 4|z_0|^nr_{z_0} + 2C|z-z_0|,
\]
since $\pi(u,z)$ and $\pi(v,z)$ can each move by at most $C|z-z_0|$.  So there 
are two points in $\Lz$ which are closer than $\epsilon$, where $\epsilon$ is 
the right hand side of the inequality.  

Now we must show that $\Sigma_n(z,D(R_z))$ contains an $\epsilon$-neighborhood of $\Lz$.  
We know that $\Sigma_n(z,D(r_z))$ contains $\Lz$, so it suffices to show that 
the difference between the radii of the disks in $\Sigma_n(z,D(r_z))$ and the disks in $\Sigma_n(z,D(R_z))$ 
is at least $\epsilon$.  We compute
\begin{align*}
|z|^nR_z - |z|^nr_z &= |z_0|^nR_{z_0} - C|z-z_0| - |z|^nr_z \\
                      &= |z_0|^n\left( A^nK + 4K + 3|z_0|^{-n}C\sqrt{2}d \right) - C|z-z_0| - |z|^nr_z\\
                      &\ge ((|z_0|A)^n-|z|^n) r_z + 4|z_0|^nr_{z_0} + 2C|z-z_0|  \\
                      &\ge 4|z_0|^nr_{z_0} + 2C|z-z_0| \\
                      &= \epsilon
\end{align*}
Where we have used $r_z,r_{z_0} < K$ and $|z-z_0| < \sqrt{2}d$.  Also, because 
$|z_0|A > |z|$, we have $(|z_0|A)^n - |z|^n > 0$.
\end{proof}

Let $T$ be a component of the complement of $\Sigma_n(z_0,D(R_{z_0}))$ inside the 
convex hull of $\Sigma_n(z_0,D(R_{z_0}))$.  Note that the boundary of $T$ contains 
a line segment along a supporting hyperplane for the convex hull (the ``outside'' 
boundary of $T$).  There are two distinguished 
disks in $\Sigma_n(z_0,D(R_{z_0}))$ which lie on either end of this line segment and 
are centered at images of $1/2$ under two words in $\Sigma_n$.  Let 
these disks be centered at $p_1 = u_1(z_0,1/2)$ and $p_2 = u_2(z_0,1/2)$.  
Next, let $q$ be a point in $T$ which is distance $\alpha'$ from 
$\Sigma_n(z_0,D(R_{z_0}))$, and suppose that $\alpha > 0$, where 
\begin{align*}
\alpha &= \alpha' - (|z_0|A)^nK - 4|z_0|^nK - 5C\sqrt{2}d \\
       &= \alpha' - |z_0|^nR_{z_0} - 2C\sqrt{2}d
\end{align*}
\begin{definition}\label{definition:trap_like_balls}
In the above notation, the balls $B_\alpha(p_1-q)$ and $B_\alpha(p_2-q)$ 
are \emph{trap-like balls} for the region $B$.
\end{definition}

\begin{figure}[htb]
\begin{center}
\includegraphics[scale=0.3]{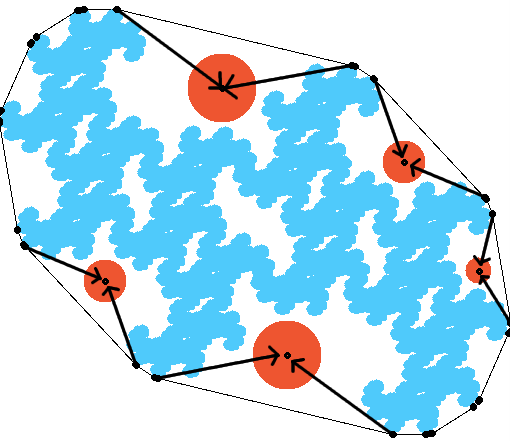}~~
\includegraphics[scale=0.24]{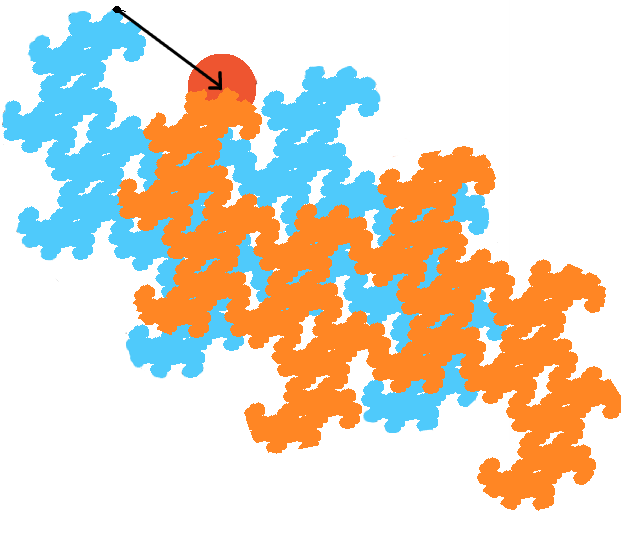}
\end{center}
\caption{
A supporting hyperplane of the convex hull of $\Sigma_n(z_0, D(R_{z_0}))$ 
intersects $\Sigma_n(z_0, D(R_{z_0}))$ is two balls.  Vectors which 
translate these balls \emph{inside} the convex hull but \emph{outside}
$\Sigma_n(z_0, D(R_{z_0}))$ are trap-like (left).
Translating by a trap-like vector moves $\Sigma_n(z_0, D(R_{z_0}))$ 
transverse to itself and produces a trap (right).}
\label{figure:trap_like_balls}
\end{figure}

That is, a trap-like ball for $B$ is a ball of vectors which translate a disk 
at a vertex of the convex hull of $\Sigma_n(z_0,D(R_{z_0}))$ an appreciable 
amount into the region inside the convex hull but outside the set.
See Figure~\ref{figure:trap_like_balls}.

\begin{remark}
One might wonder whether we should expect any trap-like balls to exist at all, 
since it's not immediately clear why $\alpha$ should be positive.  
Recall that $|z_0| < 1/\sqrt{2}$, so for $n$ large enough, 
$\alpha \approx \alpha' - 5C\sqrt{2}d$, and $d$ is probably tiny 
compared to the scale of $\Sigma_n(z_0,D(R_{z_0}))$ (which is approximately the limit set $\Lambda_{z_0}$).
\end{remark}

\begin{lemma}\label{lemma:trap_like_balls_are_symmetric}
If $B_\alpha(v)$ is a trap-like ball for $B$, then $B_\alpha(-v)$ is also a 
trap-like ball.
\end{lemma}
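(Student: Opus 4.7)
The plan is to exploit the order-2 rotational symmetry of $\Sigma_n(z_0, D(R_{z_0}))$ about the point $1/2$, and track how it transforms the ingredients in Definition~\ref{definition:trap_like_balls}. Let $\sigma(x) = 1-x$. A direct computation gives $\sigma f = g \sigma$ and $\sigma g = f \sigma$, so for any word $u \in \Sigma_n$ of length $n$ we have $\sigma \circ u(z_0,\cdot) = \bar u(z_0, \sigma(\cdot))$, where $\bar u$ denotes $u$ with $f$ and $g$ swapped. Since $D(R_{z_0})$ is a round disk centered at $1/2$, it satisfies $\sigma(D(R_{z_0})) = D(R_{z_0})$, and therefore $\sigma(u(z_0, D(R_{z_0}))) = \bar u(z_0, D(R_{z_0}))$. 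Taking the union over all $u \in \Sigma_n$ (noting that $u \mapsto \bar u$ is a bijection on $\Sigma_n$), we conclude that $\sigma\bigl(\Sigma_n(z_0, D(R_{z_0}))\bigr) = \Sigma_n(z_0, D(R_{z_0}))$.

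Next I would observe that this symmetry passes through the entire construction of a trap-like ball. The convex hull of $\Sigma_n(z_0, D(R_{z_0}))$ is $\sigma$-invariant, so the complementary components inside the hull are permuted by $\sigma$. Given the data $(T, p_1, p_2, q, \alpha')$ producing the trap-like vector $v = p_1 - q$ (the argument for $p_2 - q$ is identical), apply $\sigma$ to get a component $T' := \sigma(T)$, together with the supporting line segment $\sigma(\ell)$ bounding it, the two endpoint disks centered at $\sigma(p_1) = 1-p_1$ and $\sigma(p_2) = 1-p_2$ (which are indeed of the form $\bar u_i(z_0, 1/2)$), and the interior point $q' = 1-q$. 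Since $\sigma$ is an isometry, the distance from $q'$ to $\Sigma_n(z_0, D(R_{z_0}))$ is still $\alpha'$, so the same value of $\alpha$ works.

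Applying Definition~\ref{definition:trap_like_balls} to $(T', 1-p_1, 1-p_2, 1-q, \alpha')$ produces the trap-like ball
\[
B_\alpha\bigl((1-p_1) - (1-q)\bigr) = B_\alpha(q - p_1) = B_\alpha(-v),
\]
as required. The whole argument is really just bookkeeping; the one step worth being careful about is checking that $\sigma$ acts on individual cylinder sets $u(z_0, D(R_{z_0}))$ by swapping generators, which is why the union (but not any single cylinder set) is preserved. No estimate is required since $\alpha$ and $\alpha'$ depend only on $|z_0|$, $n$, $K$, $C$, and $d$, all of which are $\sigma$-invariant data.
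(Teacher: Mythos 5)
Your proof is correct and takes exactly the same approach as the paper: exploit the order-2 rotational symmetry $\sigma(x)=1-x$ of $\Sigma_n(z_0,D(R_{z_0}))$ about $1/2$ and observe that it sends the trap-like data $(T,p_1,p_2,q,\alpha')$ to trap-like data whose associated vector is negated. The paper's proof is a two-sentence summary of precisely this; your write-up fills in the verification (that $\sigma f = g\sigma$, $\sigma g = f\sigma$, invariance of $D(R_{z_0})$, and that $\sigma$ is an isometry so $\alpha'$ and hence $\alpha$ are unchanged), which is a useful expansion but not a different argument.
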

\begin{proof}
The set $\Sigma_n(z_0,D(R_{z_0}))$ is rotationally symmetric under a rotation 
of order $2$ about the point $1/2$.  A trap-like ball is taken to a 
trap-like ball under this rotation, and in the above notation, it 
will negate the vectors $p_1-q$ and $p_2-q$.
\end{proof}

\subsubsection{Finding a ball of traps centered at a parameter $z$}

In this section, we show how to use the trap-like balls 
produced in the previous section to verify the existence of a 
ball of traps at $z$.  We fix notation as in the previous section, 
so we have a square region $B$ in parameter space 
with side length $2d$ and centered at $z_0$.  We let $K$ be an upper 
bound for $r_z$ over $B$ and $C$ be such that 
$|u(z,1/2) - u(z_0,1/2)| < C|z-z_0|$ for all $u \in \Sigma_n$ and $z \in B$.

\begin{lemma}\label{lemma:ball_of_traps}
Let $u,v \in \Sigma_m$ be such that $u$ starts with $f$ and $v$ starts 
with $g$ and $z^{-m}(u(z,1/2) - v(z,1/2)) \in B_\alpha(p)$, 
where $B_{\alpha}(p)$ is a trap-like ball for $B$.  Let $Z$ be a lower 
bound for $|z|$ over $B$.  Then there exists a 
trap for every $z' \in B_\epsilon(z)\cap B$, where 
\[
\epsilon = \frac{Z^m}{2C}(\alpha - |z^{-m}(u(z,1/2) - v(z,1/2)) - p|)
\]
\end{lemma}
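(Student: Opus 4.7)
I would construct, for each $z' \in B_\epsilon(z) \cap B$, an explicit trap at $z'$ using the \emph{same} pair of words $(u,v)$ from the hypothesis. A natural choice of the disk in Definition~\ref{definition:trap} is (a suitable inflation of) the convex hull of $\Sigma_n(z_0, D(R_{z_0}))$; by Lemma~\ref{lemma:lambda_stays_inside}(1) it contains $\Lambda_{z'}$ in its interior. Condition (1) of Definition~\ref{definition:trap} is immediate from the hypothesis on $u,v$. Condition (3), the closeness of points in $f\Lambda_{z'}$ and $g\Lambda_{z'}$, follows directly from Lemma~\ref{lemma:lambda_stays_inside}(2) applied at $z'$. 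The real content is the linking condition (2).

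To verify linking, I would reinterpret the trap-like ball $B_\alpha(p)$ as the set of relative displacements for which two copies of $\Sigma_n(z_0, D(R_{z_0}))$ exhibit the linking required by Definition~\ref{definition:trap}. The margin $\alpha = \alpha' - |z_0|^n R_{z_0} - 2C\sqrt{2}\,d$ is engineered precisely so that this linking conclusion persists after replacing either copy by $\Sigma_n(z'', D(R_{z''}))$ for any $z'' \in B$, and after the uniform Lipschitz displacement of length-$n$ center points as $z''$ varies in $B$. Now $u\Sigma_n(z', D(R_{z'}))$ and $v\Sigma_n(z', D(R_{z'}))$ are copies of $\Sigma_n(z', D(R_{z'}))$ scaled by $(z')^m$, centered at $u(z',1/2)$ and $v(z',1/2)$; after rescaling by $(z')^{-m}$ they appear as two copies of $\Sigma_n(z', D(R_{z'}))$ at relative displacement $\phi(z'):=(z')^{-m}(u(z',1/2)-v(z',1/2))$. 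So it suffices to show $\phi(z') \in B_\alpha(p)$.

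By hypothesis $|\phi(z) - p| < \alpha$, so by the triangle inequality it suffices to bound $|\phi(z') - \phi(z)| \le \alpha - |\phi(z) - p|$. Writing $Q(w) = u(w,1/2) - v(w,1/2)$, the defining derivative bound for $C$ (which comes from the power-series expansion of a word and applies to words of any length) gives $|Q(z') - Q(z)| \le 2C|z'-z|$. Dividing by $|z'|^m \ge Z^m$ yields a rescaled variation of at most $(2C/Z^m)|z'-z|$; the residual discrepancy between the factors $(z')^{-m}$ and $z^{-m}$ is absorbed into the $2C\sqrt{2}\,d$ slack already built into $\alpha$. Demanding $(2C/Z^m)|z'-z| < \alpha - |\phi(z)-p|$ then yields exactly $|z'-z| < \epsilon$.

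The principal obstacle is the simultaneous bookkeeping of several error terms---the length-$m$ Lipschitz variation of $Q$, the change of scaling factor from $z^{-m}$ to $(z')^{-m}$, and the inclusion $\Sigma_n(z',D(R_{z'})) \subseteq \Sigma_n(z_0, D(R_{z_0}))$---all of which must fit within the slack $\alpha - |\phi(z)-p|$. The definition of $\alpha$ in Definition~\ref{definition:trap_like_balls} was designed to pre-absorb those errors uniform in $B$, leaving only the $Z^m$-mediated Lipschitz term to appear in the final bound and yielding the clean expression for $\epsilon$.
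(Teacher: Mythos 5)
Your architecture is right: conditions (1) and (3) of Definition~\ref{definition:trap} are routine, and condition (2) correctly reduces to showing a rescaled displacement vector lands in the trap-like ball. The gap is in the Lipschitz estimate for that displacement. You track $\phi(z') = (z')^{-m}(u(z',1/2)-v(z',1/2))$ with a rescaling factor that \emph{depends on $z'$}, split $\phi(z')-\phi(z)$ into a main term of size $(2C/Z^m)|z'-z|$ plus the residual $\bigl|\,(z')^{-m}-z^{-m}\,\bigr|\cdot|Q(z)|$, and then simply assert the residual is ``absorbed into the $2C\sqrt{2}\,d$ slack already built into $\alpha$.'' That assertion is unjustified. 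The $2C\sqrt{2}\,d$ slack in Definition~\ref{definition:trap_like_balls} is precisely the budget consumed in passing from $\Sigma_n(z_0,D(R_{z_0}))$ to $\Sigma_n(z'',D(R_{z''}))$ for $z''\in B$ (it accounts for the $\le C\sqrt{2}\,d$ displacement of length-$n$ centers and of the hyperplanes $H,H'$, which is exactly what makes $B_\alpha(p)$ a valid linking certificate uniformly over $B$); once you have invoked the linking at some point of $B_\alpha(p)$ for a parameter in $B$, that slack is already spent. There is no leftover budget for the rescaling residual. Moreover, the residual is not small: since $|Q(z)| = |z|^m|\phi(z)|$ and $|(z')^{-m}-z^{-m}|\asymp m|z|^{-m-1}|z'-z|$, the residual is $\asymp m\,|\phi(z)|\,|z'-z|/|z|$, which is a fixed positive fraction (independent of $|z'-z|$, and easily comparable to $1$ for small $m$ and $|\phi(z)|$ of order $1$--$10$) of the main term $2C|z'-z|/Z^m$. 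Since $\epsilon$ is tuned so that the main term by itself already saturates $\alpha - |\phi(z)-p|$, the triangle inequality you invoke does not close.

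The paper's proof avoids the residual altogether by exploiting scale-invariance of the linking condition: rather than rescaling by the $z'$-dependent factor $(z')^{-m}$, one rescales once and for all by the constant $z^{-m}$ fixed in the hypothesis. The displacement to control is then $z^{-m}(u(z',1/2)-v(z',1/2))$, whose variation in $z'$ is bounded by $2C|z|^{-m}|z'-z|\le (2C/Z^m)|z'-z|$ with \emph{no} correction term --- this is exactly the source of $Z^m/(2C)$ in the formula for $\epsilon$. (The $z_0\to z$ step is handled not by a Lipschitz budget but by monotonicity: by Lemma~\ref{lemma:lambda_stays_inside}(1) each disk of $\Sigma_n(z,D(R_z))$ is \emph{contained} in the corresponding disk of $\Sigma_n(z_0,D(R_{z_0}))$, so the margin is preserved rather than merely bounded.) To repair your argument, either switch to the fixed rescaling $z^{-m}$ and re-derive the linking of $z^{-m}u\Sigma_n(z',D(R_{z'}))$ and $z^{-m}v\Sigma_n(z',D(R_{z'}))$, or give an explicit quantitative bound on the residual and feed it back into a correspondingly reduced $\epsilon$.
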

\begin{proof}
We will check the 3 hypotheses of Definition~\ref{definition:trap} on the 
words $u$ and $v$ with the topological disk $\Sigma_n(z,D(R_z))$.  
First, $u,v$ start with $f,g$ by construction, so the first condition is verified.
The third condition, that there are points in $f\Lz$ and $g\Lz$ 
within distance $\epsilon$, where the $\epsilon/2$ neighborhood 
of $\Lz$ is contained in $\Sigma_n(z,D(R_z))$, is conclusion (2) 
of Lemma~\ref{lemma:lambda_stays_inside}.

Now we need to verify condition (2) in the definition of a trap.
This is the more difficult verification. After a suitable rescaling,
the problem becomes more tractable.
Consider the unions
$z^{-m} (u\Sigma_n(z,D(R_z)))$ and $z^{-m}(v\Sigma_n(z,D(R_z)))$.  
These sets have a pair of intersecting paths as in condition (2) if and 
only if the original sets $u\Sigma_n(z,D(R_z))$ and $v\Sigma_n(z,D(R_z))$ do.  
Furthermore, the sets $z^{-m} (u\Sigma_n(z,D(R_z)))$ and $z^{-m}(v\Sigma_n(z,D(R_z)))$ 
are exactly the same, up to translation, as 
$\Sigma_n(z,D(R_z))$ and the translated set
\[
\Sigma_n(z,D(R_z)) + z^{-m}(u(z,1/2) - v(z,1/2)).
\]
In other words, we translate the set $\Sigma_n(z,D(R_z))$ off of itself by the vector 
$w = z^{-m}(u(z,1/2) - v(z,1/2))$.  If we can find the interlocking paths 
of condition (2), we are done.  

We start by considering $\Sigma_n(z_0,D(R_{z_0}))$ and then thinking about what 
can happen as we change $z_0$ to $z$.  By hypothesis, $w$ lies in a 
trap-like ball for $B$.  These are four distinguished disks 
associated with $w$, as follows.  The vector $w$ is associated with a 
component of the complement of $\Sigma_n(z_0,D(R_{z_0}))$ inside the 
convex hull of it.  This component has one side which lies along a supporting 
hyperplane $H$ of the convex hull, and $H$ intersects two disks $P_1,P_2$ which 
sit on either side of the component.  By the definition of the 
trap-like balls, the disk $P_1$, which has radius $|z_0|^nR_{z_0}$, is 
translated by $w$ to a disk $Q_1$, which is distance at least $2C\sqrt{2}d+2C\epsilon$ 
away from both $H$ and $\Sigma_n(z_0,D(R_{z_0}))$.  Also note that $P_1,P_2$ 
are this same distance away from the translated set.
Let $H'$ be a hyperplane perpendicular to $w$, and translate 
it so it is supports $\Sigma_n(z_0,D(R_{z_0}))$.
It lies tangent to some 
disk $P_3$.  Now $P_3$ is translated by $w$ to a disk $Q_2$ 
which is distance at least $2C\sqrt{2}d + 2C\epsilon$ away from the slid $H'$, and 
thus that distance away from $\Sigma_n(z_0,D(R_{z_0}))$.  The pairs of 
disks $(P_1,P_2)$ and $(Q_1,Q_2)$ are linked. See 
Figure~\ref{figure:trap_ball_movement}.

\begin{figure}[htb]
\begin{center}
\includegraphics[scale=0.4]{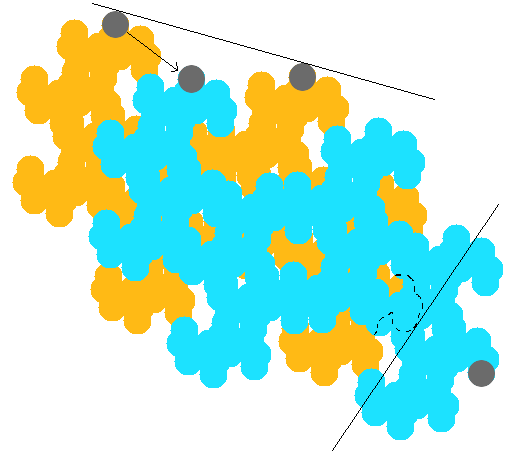}
\begin{picture}(0,0)
\put(-165,185){$P_1$}
\put(-92,163){$P_2$}
\put(-132,159){$Q_2$}
\put(-35,140){$H$}
\put(-15,106){$H'$}
\put(-15,23){$Q_2$}
\end{picture}
\end{center}
\caption{The picture of the proof of Lemma~\ref{lemma:ball_of_traps}.  
If we change the parameter slightly, the marked balls $P_1,P_2,Q_1,Q_2$ 
cannot move much and thus still give a trap.}
\label{figure:trap_ball_movement}
\end{figure}


Now change the parameter from $z_0$ to $z$, and consider $\Sigma_n(z,D(R_z))$.  
Because $|z|^nR_z < |z_0|^nR_{z_0}$, the disks $P_1,P_2,Q_1,Q_2$ can only 
shrink.  And every disk, and the supporting hyperplanes $H,H'$, can move at 
most distance $C|z-z_0| < C\sqrt{2}d$.  Therefore, these four disks are 
still disjoint from the opposing copy of $\Sigma_n(z,D(R_z))$, and each contains 
points in $\Lz$, and by Lemma~\ref{lemma:lambda_stays_inside}, 
$\Sigma_n(z,D(R_z))$ remains connected, so these points can be connected 
by paths with algebraic intersection number $1$.  This verifies condition (2) 
of the trap definition.  

This shows that there exists a trap for parameter $z$, but recall that we 
desire a trap for every $z$ in $B_\epsilon(z)$.  To see that this is true, 
observe that at the point $z$, the disks $P_1,P_2,Q_1,Q_2$ are still distance 
at least $2C\epsilon|z|^{-m}$ away from the opposing copy of $\Sigma_n(z,D(R_z))$.  
So we can change $z$ again by at most $\epsilon$ while retaining this trap.
All of this is contingent on the parameter $z$ remaining in $B$, so 
the final ball of traps produced is $B_\epsilon(z) \cap B$.
\end{proof}

\subsection{Certifying holes}

We now summarize this section.  To certify a hole in $\SetA$ which lies 
completely within some square region $B$, we compute $R_{z_0}$ and 
the set $\Sigma_n(z_0,D(R_{z_0}))$ for a reasonable-sized $n$ (say, $15$).  
Then we compute the convex hull and some trap-like balls.  
Then, at the initial point of a path $\gamma$ encircling the hole, we apply 
Lemma~\ref{lemma:ball_of_traps} to produce a ball $B_1$ of traps.  
Then we go along $\gamma$ to the edge of $B_1$, and find another ball of 
traps $B_2$, and so on.  The balls overlap, so together they produce an 
open set inside set A containing $\gamma$.

We do all computations to double precision, which has a precision 
of at least $15$ decimal digits.  Therefore, as long as no number in the 
computation ever requires more than, say, $10$ digits of precision, this is rigorous.
In practice, this is never an issue.

\begin{question}\label{question:hole_combinatorics}
Is there a combinatorial way to distinguish holes in $\SetA$?
\end{question}

\section{Infinitely many holes in \texorpdfstring{$\SetA$}{M} 
and renormalization}\label{section:renormalization}

In this section we describe a certain family of natural operators on the parameter plane
which account for much of the observed self-similarity in the structure of $\SetA$ and $\SetB$.
Similar ideas and some similar results already appear in the work of Solomyak \cite{Solomyak}, 
although our approach is sufficiently different (and enough of the results we obtain are new)
that it is worth including here.

The first main result we obtain is the existence of {\em infinitely many holes} in $\SetA$,
arranged in certain spirals. The proof of this fact does not technically need 
the theoretical apparatus of renormalization; but the phenomenon is not properly 
explained without it. We defer the explanation until after a description and rigorous proof
of the phenomenon, so that the techniques and definitions we then introduce are 
sufficiently motivated.

\subsection{Infinitely many holes}

Numerical exploration of $\SetA$ quickly reveals many interesting phenomena, of which one of
the most interesting is the appearance of apparent ``spirals'' of holes. One of the
most prominent is centered at the point $\omega \sim 0.371859+0.519411i$. 
See Figure~\ref{figure:hexahole_spiral}. The figure also illustrates part of $\SetB$ (in purple),
and exhibits the  limit as the ``tip'' of a spiral of $\SetB$. Techniques of Solomyak
\cite{Solomyak} certify that $\SetB$ is self-similar at the  limit, and is asymptotically
similar to the limit set $\Lambda_\omega$.

\begin{figure}[htpb]
\centering
\includegraphics[scale=0.4]{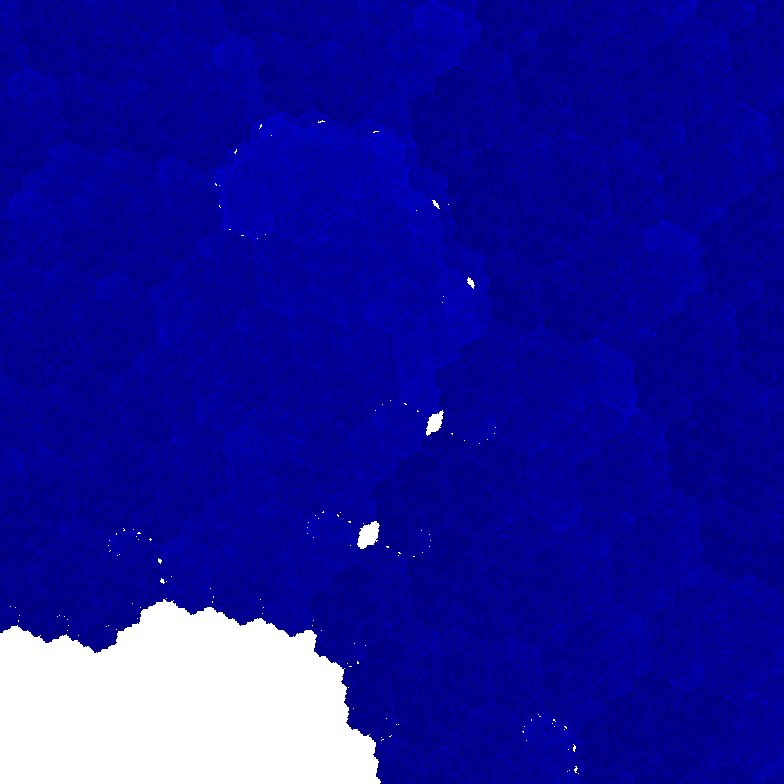}
\caption{Spiral of holes converging to $\omega \sim 0.371859+0.519411i$.}
\label{figure:hexahole_spiral}
\end{figure}

The main theorem
we prove in this section is the following:

\begin{theorem}[Limit of holes]\label{theorem:hole_limit}
Let $\omega \sim 0.371859+0.519411i$ be the root of the
polynomial $1-2z+2z^2-2z^5+2z^8$ with the given approximate value. Then
\begin{enumerate}
\item{$\omega$ is in $\SetA$, $\SetB$ and $\SetC$; in fact, the intersection of $f\Lambda_\omega$
and $g\Lambda_\omega$ is exactly the point $1/2$;}
\item{there are points in the complement of $\SetA$ arbitrarily close to $\omega$; and}
\item{there are infinitely many rings of concentric loops in the interior of $\SetA$ which
nest down to the point $\omega$.}
\end{enumerate}
Thus, $\SetA$ contains infinitely many holes which accumulate at the point $\omega$.
\end{theorem}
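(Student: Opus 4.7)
The plan is to handle (1) algebraically via the polynomial witness at $\omega$, and then to derive (2) and (3) from a renormalization dynamics near $\omega$ which prefigures the general framework of Theorem~\ref{theorem:similarity}. For (1), the defining polynomial rearranges to $\omega-\omega^2+\omega^5-\omega^8 = 1/2$. By Proposition~\ref{proposition:power_series}, the admissible sign pattern $(0,1,-1,0,0,1,0,0,-1,0,0,\ldots)$ is realized as the coefficient sequence of an infinite word $u$ beginning with $f$, yielding $\pi(u,\omega)=1/2$. The symmetry $x\mapsto 1-x$, which swaps $f$ and $g$ and fixes $1/2$, produces a partner word $v$ beginning with $g$ with $\pi(v,\omega)=1/2$. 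Hence $1/2 \in f\Lambda_\omega \cap g\Lambda_\omega$ and $\omega\in\SetB\subseteq\SetA$. The uniqueness statement $f\Lambda_\omega \cap g\Lambda_\omega = \{1/2\}$ is the most delicate piece; I would establish it by a finite combinatorial search using an exact-coincidence variant of Algorithm~\ref{algorithm:disconnectedness} (tracking pairs of finite prefixes whose images collide \emph{exactly} at $z=\omega$) and showing that up to a modest depth the only orbit that survives is the one generated by the polynomial relation. Membership in $\SetC$ then follows: because $f\Lambda_\omega$ and $g\Lambda_\omega$ meet at a single point, the standard argument on IFS with tree-like overlap graph shows that $\Lambda_\omega$ is a dendrite (contains no simple closed curve), and a compact connected dendrite in $\C$ has connected complement.

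For (2) and (3), the word pair $(u,v)$ from part (1) serves as a renormalization substitution. By Lemma~\ref{lemma:surjective_perturb}, the map $T(z) := \pi(u,z)-\pi(v,z)$ is a nonconstant holomorphic function vanishing at $\omega$, and the degree-$8$ structure of the polynomial identifies the natural parameter scale near $\omega$ as $\omega^{8n}$. Concretely, after the rescaling $z = \omega + C\omega^{8n}$, the local obstruction to the coincidence $\pi(u,z)=\pi(v,z)$ is determined in the $n\to\infty$ limit by an affine image $A+B\Gamma_\omega$ of the difference limit set $\Gamma_\omega$ (Lemma~\ref{lemma:ifs_of_diffs}). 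Since $\Gamma_\omega$ is a proper compact subset of $\C$, its complement contains open regions arbitrarily close to $0$; choosing $C_0\notin A+B\Gamma_\omega$ with $|C_0|$ small and evaluating at $\omega+C_0\omega^{8n}$ for all sufficiently large $n$ yields parameters in $\D^*\setminus\SetA$ accumulating at $\omega$, establishing (2).

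For (3), the same identification transports any loop $\eta$ in the complement of the obstruction set (encircling $C_0$) to a loop $\gamma_n := \omega + \omega^{8n}\eta$ in parameter space. Applying the trap construction of Section~\ref{subsection:traps_holes} uniformly along $\eta$ certifies $\gamma_n\subseteq\mathrm{int}(\SetA)$ once $n$ is large. Each $\gamma_n$ encloses the point $\omega+C_0\omega^{8n}\in\D^*\setminus\SetA$, and hence bounds a hole; as $n\to\infty$ the loops nest down to $\omega$ at geometric rate $|\omega|^{8n}$ while rotating by $8\arg(\omega)$ per step, producing the claimed concentric rings and infinitely many distinct holes. The main obstacles are the exact combinatorial uniqueness in (1) and the uniform trap certification at the vanishing scale $|\omega|^{8n}$; the latter is exactly the content of the renormalization estimates encoded later as Theorem~\ref{theorem:similarity}.
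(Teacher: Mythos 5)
Your overall strategy matches the paper's: certify (1) combinatorially via the polynomial witness and the disconnectedness algorithm, derive (2) from a renormalization/rescaling near $\omega$, and certify (3) by a loop of traps that persists under the rescaling. However, there are two genuine problems.

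\textbf{Wrong rescaling exponent.} You take the natural parameter scale near $\omega$ to be $\omega^{8n}$, because the defining polynomial has degree $8$. This is not the correct scale. The relevant words are $u = fgfffggg\,f^\infty$ and $v = gfgggfff\,g^\infty$: the length of the \emph{prefix} is $a = 8$, but the \emph{period} of the tail is $b = 1$ (the tails are just $f^\infty$ and $g^\infty$). The asymptotic self-similarity and the hole spiral are governed by the tail period, not the prefix length; the correct rescaling, as in Lemma~\ref{lemma:sum_computation} and Theorem~\ref{theorem:similarity}, is $z = \omega + C\omega^{bn} = \omega + C\omega^n$, and the spiral rotates by $\arg\omega$ per step, not $8\arg\omega$. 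Using $\omega^{8n}$ would skip eight out of every nine turns of the spiral and, more importantly, would not match the stack dynamics of the disconnectedness algorithm that the renormalization lemma actually controls.

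\textbf{Gap in (2).} You argue: ``Since $\Gamma_\omega$ is a proper compact subset of $\C$, its complement contains open regions arbitrarily close to $0$.'' This is a non sequitur --- a proper compact set can contain $0$ in its interior (a closed disk, for example). What needs to be shown is precisely that $0 \notin \mathrm{int}(A + B\Gamma_\omega)$, i.e.\ that the renormalized limit set does not swallow the origin. Indeed, for $|\omega| \approx 0.639$ the three-map IFS $\Gamma_\omega$ has $3|\omega|^2 > 1$, so it could a priori have nonempty interior. The paper does not try to deduce this abstractly; instead it picks a concrete $C = 0.29946137 - 0.48972405i$ and runs the disconnectedness algorithm symbolically in the regime $z = \omega + C\omega^{m+k}$ (Lemma~\ref{lemma:contents_of_stack}), verifying that the stack reaches the limiting configuration and then empties in finitely many further steps. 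That concrete computation is what actually produces Schottky parameters accumulating at $\omega$; your version would need an independent argument that $0$ lies in the closure of the complement of $A+B\Gamma_\omega$.

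\textbf{Minor point on (1).} Your plan to certify uniqueness of the colliding pair by running an exact-coincidence algorithm ``up to a modest depth'' is not quite enough as stated: what the paper certifies is that the stack becomes \emph{eventually constant} (periodic), which gives a finite proof that the viable lineage is unique for all depths, not just up to the depth examined. You should say this explicitly, since surviving to depth $d$ does not, by itself, preclude new exact coincidences appearing later. Your dendrite argument for $\omega\in\SetC$ is fine. You also correctly flag that the uniform trap certification at the vanishing scale is the serious technical burden, which the paper discharges via Lemma~\ref{lemma:limit_trap} and a finite cover by limit-trap balls encircling $\omega$.
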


We refer informally to the holes accumulating on $\omega$ as {\em hexaholes} 
(because of their approximate shape), and to $\omega$ itself as the {\em hexahole limit}.

The first step in the proof is to give a more combinatorial description of the hexahole
limit $\omega$, and prove the first bullet of Theorem~\ref{theorem:hole_limit}.

\begin{lemma}
The set of $z$ for which $\pi(fgfffgggf^\infty, z) = \pi(gfgggfffg^\infty, z)$ are exactly the roots of
$1-2z+2z^2-2z^5+2z^8$. 
\end{lemma}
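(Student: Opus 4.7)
The plan is to evaluate both sides explicitly as polynomials in $z$. Observe first that $f$ fixes $0$ and $g$ fixes $1$, so $\pi(f^\infty,z)=0$ and $\pi(g^\infty,z)=1$. By Lemma~\ref{lemma:word_composition}, $\pi(uv^\infty,z)=u(z,\pi(v^\infty,z))$ for any finite word $u$, so $\pi(fgfffgggf^\infty,z)$ is obtained by applying $f:x\mapsto zx$ and $g:x\mapsto zx+(1-z)$ to $x=0$ in right-to-left order along the prefix $fgfffggg$. A short and direct calculation yields
\[
\pi(fgfffgggf^\infty,z)\;=\;z-z^2+z^5-z^8.
\]

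For the second expression I would exploit the basic symmetry recorded in Section~\ref{Semigroups of similarities}: the involution $\sigma:x\mapsto 1-x$ satisfies $\sigma f\sigma^{-1}=g$, so swapping every letter of a word via $f\leftrightarrow g$ conjugates the action of the word by $\sigma$ and therefore transforms its $\pi$-image by $\sigma$. Since $gfgggfffg^\infty$ is exactly the $f\leftrightarrow g$ swap of $fgfffgggf^\infty$, this gives
\[
\pi(gfgggfffg^\infty,z)\;=\;1-(z-z^2+z^5-z^8)
\]
with no further computation. (Alternatively, one can evaluate this expression directly by the same right-to-left iteration, starting from $x=1$ and using $g(1)=1$.)

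Setting the two values equal collapses to $2(z-z^2+z^5-z^8)=1$, i.e.\ $1-2z+2z^2-2z^5+2z^8=0$. Because both sides were evaluated as honest polynomials (the $f^\infty$ and $g^\infty$ tails merely supply the fixed points $0$ and $1$ as starting data for the iteration, contributing nothing further), the correspondence is exact: the set of $z$ where the two infinite words agree is precisely the zero set of $1-2z+2z^2-2z^5+2z^8$. There is no substantive obstacle here; the only care required is to keep the left-to-right reading of words straight against the right-to-left order of composition, and to remember that the equation $\pi(w,z)=\pi(\overline{w},z)$ for $w$ and its swap $\overline{w}$ is the same as $\pi(w,z)=1/2$, which is the geometric content of the polynomial.
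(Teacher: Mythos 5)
Your proposal is correct and matches the paper's proof in essence: both amount to observing that since the tails $f^\infty$ and $g^\infty$ contribute only the fixed points $0$ and $1$, the two values $\pi(fgfffgggf^\infty,z)$ and $\pi(gfgggfffg^\infty,z)$ are the explicit polynomials $z-z^2+z^5-z^8$ and $1-z+z^2-z^5+z^8$, whose equality is exactly $1-2z+2z^2-2z^5+2z^8=0$. The paper reads the coefficients off the automaton in Proposition~\ref{proposition:power_series}, while you iterate directly and get the second polynomial for free from the $f\leftrightarrow g$ involution (conjugation by $x\mapsto 1-x$), but this is the same calculation packaged slightly differently.
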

\begin{proof}
By Proposition~\ref{proposition:power_series}, the power series associated to these two infinite
words are actually finite polynomials; equating them gives the identity
$$z-z^2+z^5-z^8 = 1-z+z^2-z^5+z^8$$
so that $1-2z+2z^2-2z^5+2z^8=0$.
\end{proof}

The next step in the proof requires us to certify the existence of points in the complement
of $\SetA$, arbitrarily close to $\omega$. Any given value of $z$ can be numerically certified
as being in the complement of $\SetA$ by Algorithm~\ref{algorithm:disconnectedness}, but we would
like to apply this algorithm uniformly to an infinite collection of $z$ of a particular form.

First recall the form of the algorithm: given $z$ as input, and a cutoff depth, we first
load the number $1-z^{-1}$ into a ``stack'' $V$, and then recursively replace the content
of the stack at each stage with a set of {\em viable children}. More precisely, for each
$\alpha \in V$, there are three children $z^{-1}\alpha$, $z^{-1}(\alpha + z - 1)$, and
$z^{-1}(\alpha - z + 1)$. A child is {\em viable} if its absolute value is less than a
constant $R$ depending only on the initial value $z$ (in fact, we can take $R$ to be fixed
throughout a neighborhood of a given $z$), and at each stage of the algorithm we replace each
number in $V$ with the set of its viable children. The algorithm halts whenever the stack
$V$ is empty (in which case we certify that $z$ is in the complement of $\SetA$) or if we 
exceed the ``run time'' (i.e.\/ the cutoff depth) allocated in advance.

Let's imagine running our algorithm on an ideal machine without imposing a cutoff depth, so
that the algorithm halts if and only if $\Lz$ is disconnected. At each successive
time step $d$, the stack $V$ consists of a finite list of numbers. If it happens that
the content of $V$ is eventually {\em periodic} (and nonempty) as a function of $d$, then
of course the algorithm never halts --- certifying that $z$ is in fact in $\SetA$. Now, the
numbers in $V$ at each finite stage are all Laurent polynomials in $z$ of degree bounded by
$d$, so if $V$ is eventually periodic, then $z$ must be algebraic. 

If we apply the algorithm to the number
$\omega$ defined above, then we indeed can certify that $V$ is eventually periodic, and
in fact becomes {\em constant} after $d=12$.

\begin{example}[Stack contents for $\omega$]
For $\omega \sim 0.371859+0.519411i$ the root of $1-2z+2z^2-2z^5+2z^8=0$ 
we can take $R = 2.257$. Unfortunately, the full stack over all $12$ 
steps is somewhat unwieldy, so we do not list it here.  However, on step 
$9$, the stack contains the number $1$, and this is the only 
stack entry with viable descendants.  The tree diagram of the algorithm 
at this point becomes periodic; we show it in Figure~\ref{figure:periodic_stack}.

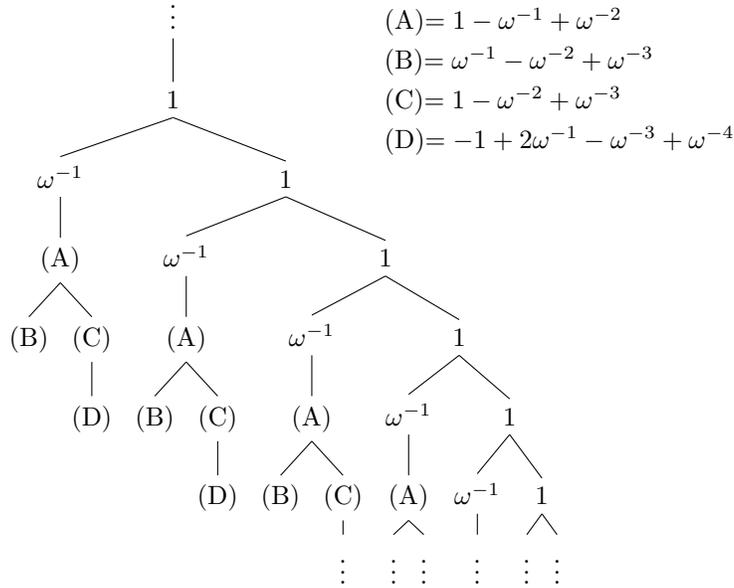
\begin{figure}[htb]
\begin{center}
\Tree [.$\vdots$ 
      [.$1$ [.$\omega^{-1}$ 
              [.(A) (B) [.(C) (D) ] ]
            ] 
            [.$1$ 
              [.$\omega^{-1}$ 
		        [.(A) (B) [.(C) (D) ] ]
		      ] 
		      [.$1$ 
                [.$\omega^{-1}$ 
		          [.(A) (B) [.(C) $\vdots$ ] ]
		        ] 
		        [.$1$ 
                  [.$\omega^{-1}$ 
		            [.(A) $\vdots$ $\vdots$ ]
		          ] 
		          [.$1$ 
                    [.$\omega^{-1}$ 
                       $\vdots$
		            ] 
		            [.$1$ $\vdots$ $\vdots$ ]
		          ] 
		        ] 
		      ]  
		    ] 
      ]]
\begin{picture}(0,0)
\put(-70,0){(A)$ =1-\omega^{-1}+\omega^{-2}$}
\put(-70,-15){(B)$ =\omega^{-1}-\omega^{-2}+\omega^{-3}$ }
\put(-70,-30){(C)$ =1-\omega^{-2}+\omega^{-3}$ }
\put(-70,-45){(D)$ =-1+2\omega^{-1}-\omega^{-3}+\omega^{-4}$ }
\end{picture}
\end{center}
\caption{ 
The periodic stack of the disconnectedness algorithm on the input $\omega$.}
\label{figure:periodic_stack}
\end{figure}

It is clear from the tree diagram that the stack becomes constant at 
step $12$.
\end{example}

By analyzing precisely which children have indefinitely viable descendents, we get a 
precise description of the intersection $f\Lambda_\omega\cap g\Lambda_\omega$. In this case,
we can readily observe that there is a unique pair of infinite words $u,v$ where $u$ starts
with $f$ and $v$ with $g$ so that $\pi(u,\omega) = \pi(v,\omega)$; these words are in fact related under the canonical involution, so that
the intersection consists exactly of the point $1/2$. This proves the first bullet in Theorem~\ref{theorem:hole_limit}.

The second step in the proof of Theorem~\ref{theorem:hole_limit} is to certify the existence
of points in the complement of $\SetA$ arbitrarily close to $\omega$. These points will all be
of the form $\omega + C\omega^\ell$ for sufficiently big $\ell$, and for a fixed constant 
$C = 0.29946137 - 0.48972405i$.

\begin{proposition}[$\omega$ limit of Schottky]\label{proposition:omega_limit}
For $C = 0.29946137 - 0.48972405i$, there is $\ell$ so that point $z=\omega+C\omega^{\ell}$ 
is Schottky for sufficiently large $\ell$.
\end{proposition}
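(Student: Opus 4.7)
The plan is to invoke Algorithm~\ref{algorithm:disconnectedness} at the parameter $z = \omega + C\omega^\ell$ and show, for all sufficiently large $\ell$, that its stack $V$ becomes empty in finitely many steps, certifying that $G_z$ is Schottky. At $\omega$ itself the algorithm does not terminate: by Part~(1), the stack eventually settles into the periodic pattern of Figure~\ref{figure:periodic_stack}, whose persistent entries encode the unique intersection pair $(u,v) = (fgfffgggf^\infty,\, gfgggfffg^\infty)$. The strategy is to show that a carefully directed perturbation of $z$ away from $\omega$ grows exponentially along every persistent branch and eventually ejects each stack entry from the viability disk $\{|\alpha|<R\}$.

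First I would linearize the stack dynamics. Along any fixed sequence of rule choices, the stack entry satisfies $\alpha_{d+1}(z) = z^{-1}\alpha_d(z) + c_d(z)$ with $\alpha_0(z) = 1 - z^{-1}$ and $c_d(z) \in \{0,\, 1-z^{-1},\, z^{-1}-1\}$. Differentiating in $z$ at $z = \omega$ and rescaling by $\omega^d$, one checks that $\beta_d := \omega^d \alpha_d'(\omega)$ satisfies a recursion whose increments decay geometrically like $|\omega|^d$, so $\beta_d$ converges to a finite branch-dependent limit $\beta_\infty$, which is non-zero along persistent branches. A similar but slightly more involved calculation shows $\alpha_d''(\omega)$ grows only polynomially faster than $\omega^{-d}$.

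Next I would expand $\alpha_d(z)$ to first order about $\omega$ and substitute $z - \omega = C\omega^\ell$ at depth $d = \ell + k$, obtaining
\[
\alpha_{\ell+k}(z) \;=\; \alpha_{\ell+k}(\omega) \;+\; C\,\omega^{-k}\,\beta_{\ell+k} \;+\; R_{\ell,k},
\]
where, by the bound on $\alpha_d''$, the remainder $R_{\ell,k}$ tends to zero as $\ell \to \infty$ for each fixed $k$. I would choose an integer $k_0$ so that $|C\beta_\infty|\,|\omega|^{-k_0}$ exceeds $R$ by a definite margin; then for $\ell$ large enough the first-order term dominates and $|\alpha_{\ell+k_0}(z)| > R$ along every persistent branch, killing them all and leaving the stack empty by depth $\ell + k_0 + 1$.

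The main obstacle is arranging that a single constant $C$ works for every persistent branch simultaneously. The persistent orbits at $\omega$ form a finite combinatorial structure (each eventually returns to the fixed point $\alpha = 1$ of the map $\alpha \mapsto \omega^{-1}\alpha + (1 - \omega^{-1})$, as is evident from Figure~\ref{figure:periodic_stack}), so only finitely many asymptotic directions $\beta_\infty$ arise. The particular value $C = 0.29946137 - 0.48972405\,i$ is chosen numerically so that $C\beta_\infty$ is non-zero with sufficient modulus for each of these directions. Finally one checks that the non-persistent branches, which die at bounded depth under the algorithm at $\omega$, continue to die at the same depth for $z$ sufficiently close to $\omega$ by continuity of $\alpha_d(z)$ in $z$.
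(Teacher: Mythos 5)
Your proposal captures the essential mechanism behind the paper's argument: perturbing $z$ by $C\omega^\ell$ makes the rescaled stack entries of Algorithm~\ref{algorithm:disconnectedness} drift away from their values at $\omega$ at geometric rate $|\omega|^{-1}$ per unit depth, eventually pushing them outside the viability disk $\{|\alpha|<R\}$. The paper implements this not by abstract Taylor expansion but by writing down the six stack polynomials $p_{i,n}(z)$ explicitly and evaluating them at $z=\omega+C\omega^{m+k}$; your quantity $\beta_d:=\omega^d\alpha_d'(\omega)$ and its limit play the same role as the paper's remainder term $R_{n,m,k}$, which converges to $\omega^{k}\bigl(2C-4C\omega+10C\omega^4-16C\omega^7\bigr)$. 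So the opening moves of your plan are sound and run parallel to the paper's.

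There is, however, a genuine gap in the finishing step. You assert that once $k_0$ is chosen so that $|C\beta_\infty|\,|\omega|^{-k_0}$ exceeds $R$, every stack entry at depth $\ell+k_0$ is ejected and the stack empties by depth $\ell+k_0+1$, disposing of the remaining non-persistent branches ``by continuity.'' The continuity argument is only valid at depths well below $\ell$, where the perturbation is negligible. But at depths near and past $\ell$ the perturbation is of size $O(1)$, and this is exactly the regime in which the branching of the algorithm's tree can change: children that are discarded at $\omega$ because they sit near the cutoff $|\alpha|=R$ may become viable after an $O(1)$ perturbation, creating branches with no counterpart in the $\omega$-tree. Moreover, the non-persistent entries that spawn in this window inherit drift coefficients of the form $p_u'(\omega)-p_v'(\omega)$ that need not equal $\beta_\infty$ and could have smaller modulus or be partially cancelled against the base value; nothing in your argument rules out such a branch surviving well past depth $\ell+k_0$. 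The paper closes this gap with a finite explicit computation: Lemma~\ref{lemma:contents_of_stack} pins the stack at step $m$ to the six limiting values $p_{i,\infty}^{k}$ (with $k=12$), and the proof then simply runs the algorithm on those six numbers and observes that it terminates after $20$ further steps, a check that implicitly tracks every branch through the transition regime. Your proposal needs an analogue of this finite verification; the derivative-dominance bound on the single persistent branch, together with continuity at bounded depth, does not by itself certify termination for the specific $C$.
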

In order to prove Proposition~\ref{proposition:omega_limit},
we are going to formally run the disconnectedness algorithm on $z$ and 
show that we can understand the contents of the stack as long as $\ell$ is 
large enough.  The stack will essentially be the same as the stack
for $\omega$ for a long time, followed by a uniformly bounded (in $\ell$) number of steps 
which prove disconnectedness.  This discussion is 
elementary, but it requires taking things to infinity 
in a careful order.

We first prove a general lemma which provides the stack contents; 
proving the proposition then reduces to doing a numerical computation for 
the given $C$ value.  
To set up the lemma, we need to do a computation.
Recall that when running the disconnectedness algorithm on 
$\omega$, at every step there is a single stack entry (i.e. ``$1$'')
which has infinitely many descendants.  The true, unsimplified
version of this entry in the stack at step $n$ is 
\[
p_{1,n}(z) = 1 + z^{-n}(-1+2z-2z^2+2z^5-2z^8)
\]
At every time step, there are $5$ other polynomials on the stack, 
which are 
the finitely many children of $p_{1,n-1}(z)$, $p_{1,{n-2}}(z)$, 
and $p_{1,n-3}(z)$ which have not yet died.  These polynomials are:
\begin{align*}
p_{2,n}(z) &= z^{-n}(-1+2z-2z^2+2z^5-2z^8+z^{n-4}-z^{n-3}+2z^{n-1}-z^n) \\
p_{3,n}(z) &= z^{-n}(-1+2z-2z^2+2z^5-2z^8+z^{n-3}-z^{n-2}+z^{n-1})      \\
p_{4,n}(z) &= z^{-n}(-1+2z-2z^2+2z^5-2z^8+z^{n-3}-z^{n-2}+z^n)          \\
p_{5,n}(z) &= z^{-n}(-1+2z-2z^2+2z^5-2z^8+z^{n-2}-z^{n-1}+z^n)          \\
p_{6,n}(z) &= z^{-n}(-1+2z-2z^2+2z^5-2z^8+z^{n-1})                      \\
\end{align*}
It is important to note that obtaining these polynomials 
involves \emph{no} simplification at any stage.  Thus, heuristically, 
for fixed $n$ and $z$ values close to $\omega$, these polynomials 
should give the stack contents.  This is the idea 
Lemma~\ref{lemma:contents_of_stack} explores in detail.

We 
compute the values of the $p_{i,n}(z)$ polynomials at the point 
$z=\omega + C\omega^{m+k}$ 
(it is pedagogically helpful to split $\ell$ in 
Proposition~\ref{proposition:omega_limit} into the two 
variables $\ell = m+k$).
This computation is just an expansion, simplified using 
the polynomial of which $\omega$ is a root. 
For compactness, we denote 
$p_{i,n}(\omega + C\omega^{m+k})$ by $p_{i,n}^{m+k}$. 
All of these polynomials have a large ``remainder'' term, which 
we will denote by 
\[
R_{n,m,k} = \frac{\omega^{m+k}}{(\omega+C\omega^{m+k})^n}(2C-4C\omega + 
10C\omega^4-16C\omega^7 + O(\omega^m))
\]
Now we list the polynomials:
\begin{align*}
p_{1,n}^{m+k} &= 1 + R_{n,m,k}
 \\
p_{2,n}^{m+k} &= -1 + \frac{1}{(\omega+C\omega^{m+k})^2}
                    - \frac{1}{(\omega+C\omega^{m+k})^3}
                    + \frac{1}{(\omega+C\omega^{m+k})^4} +R_{n,m,k} \\
p_{3,n}^{m+k} &= \frac{1}{\omega+C\omega^{m+k}}
                -\frac{1}{(\omega+C\omega^{m+k})^2}
                +\frac{1}{(\omega+C\omega^{m+k})^3} +R_{n,m,k} \\
\end{align*}
\begin{align*}
p_{4,n}^{m+k} &= 1 - \frac{1}{(\omega+C\omega^{m+k})^2} 
                   + \frac{1}{(\omega+C\omega^{m+k})^3} +R_{n,m,k} \\
p_{5,n}^{m+k} &= 1 -\frac{1}{\omega+C\omega^{m+k}} 
                   +\frac{1}{(\omega+C\omega^{m+k})^2} +R_{n,m,k} \\
p_{6,n}^{m+k} &= \frac{1}{\omega+C\omega^{m+k}} + R_{n,m,k} 
\end{align*}

\begin{lemma}\label{lemma:contents_of_stack}
For any $C$, there are constants $k$ and $M$ such that for 
all $m>M$ and $12<n\le m$, the 
contents of the stack of the disconnectedness algorithm at step $n$ 
when run on $\omega + C\omega^{m+k}$ is exactly 
the set of $p_{i,n}^{m+k}$ for $1\le i\le 6$.
\end{lemma}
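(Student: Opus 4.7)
The plan is to formally run the disconnectedness algorithm on $z = \omega + C\omega^{m+k}$ and show that its pruning pattern tracks exactly the one at $\omega$, so that the formal polynomials $p_{i,n}(z)$ describing the six live nodes of the eventually-periodic tree at $\omega$ remain the complete stack contents at $z$ through step $n = m$. Since the algorithm's child rules are formal ($\alpha \mapsto z^{-1}\alpha$, $\alpha \mapsto z^{-1}(\alpha \pm (z-1))$), the only $z$-dependent ingredient is which children survive the viability test $|\cdot|<R$. The radius $R$ may be taken uniform in a neighborhood of $\omega$, so the goal is to prove that viability decisions at $z$ agree with those at $\omega$ for every node appearing in the periodic regime at $\omega$, through step $m$.

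The key input is a uniform margin estimate. Only finitely many polynomial shapes arise as children in the periodic regime at $\omega$ --- six live nodes, each with three formal children --- so there is a number $\delta>0$ bounded below by the distance of every such child's value at $\omega$ to the circle $|w|=R$. A direct computation from the explicit expansions given just before the lemma shows that $|p_{i,n}(z) - p_{i,n}(\omega)| = O(|\omega|^{m+k-n})$, and the same holds for each of the three child polynomials of each $p_{i,n}$. For $12 < n \le m$ this error is bounded by a constant times $|\omega|^{k}$, which becomes smaller than $\delta/2$ once $k$ is chosen large enough. Fix such a $k$, and then choose $M$ large enough to absorb the implicit constants in the lower-order terms of $R_{n,m,k}$; the viability decisions at $z$ then agree with those at $\omega$ throughout the relevant range.

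With this margin in hand, the lemma follows by induction on $n$. The base case $n = 13$ is verified directly from the stabilization of the tree at $\omega$ (the calculation preceding the lemma) together with the perturbation estimate above. For the inductive step, assuming the stack at step $n-1$ is exactly $\{p_{i,n-1}^{m+k}\}_{i=1}^{6}$, each entry has three formal children given explicitly by the algorithm rules, and the six viable survivors at $z$ are precisely $p_{1,n}^{m+k},\ldots,p_{6,n}^{m+k}$, because the same children are viable or non-viable as at $\omega$. The main obstacle I expect is bookkeeping: verifying, by explicit inspection of the children of each $p_{i,n-1}(z)$, that no viable child is overlooked and that duplications are handled correctly, so that exactly the six listed polynomials (rather than a strict subset or superset) appear at step $n$. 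This is where keeping $p_{i,n}$ in its unsimplified form --- with the factor $-1+2z-2z^2+2z^5-2z^8$ tracked explicitly --- is essential, so that the formal recursion matches the $p_{i,n}^{m+k}$ term-by-term.
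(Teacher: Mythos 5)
Your proposal is correct and follows essentially the same route as the paper: bound the uniform deviation $|p_{i,n}^{m+k} - p_{i,n}(\omega)|$ by $O(|\omega|^k)$ for $n \le m$ (the paper does this via $|R_{n,m,k}| \le |R_{m,m,k}|$), then choose $k$ and $M$ large enough that every viability decision at $z = \omega + C\omega^{m+k}$ agrees with the one at $\omega$, so the stack contents persist. Packaging this as an induction on $n$ with an explicit margin $\delta$ is a clean reformulation of the paper's two bullet points (live nodes stay, discarded children stay discarded), not a different argument.
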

\begin{proof}
In order to prove this lemma, first consider running the algorithm on $\omega$: 
the stack beyond step $12$ is constant at 
\[
\{1, -1+\frac{1}{\omega^2} -\frac{1}{\omega^3} + \frac{1}{\omega^4} ,
\frac{1}{\omega} - \frac{1}{\omega^2} + \frac{1}{\omega^3},
1-\frac{1}{\omega^2} + \frac{1}{\omega^3},
1-\frac{1}{\omega}+\frac{1}{\omega^2},
\frac{1}{\omega}\} = 
\{p_{i,n}(\omega)\}_{i=1}^6
\]
Now think of varying the input from 
$\omega$ to $\omega + C\omega^{m+k}$.  In order to prove that 
the contents of the 
stack are as claimed, we need to show two things (1) the 
polynomials $p_{i,n}^{m+k}$ 
stay on the stack for all $n\le m$ and (2) every child 
which was discarded for $\omega$ 
through step $n$ still gets discarded.

First note that for $n\le m$,
\[
\left\vert \frac{\omega^{m+k}}{(\omega+C\omega^{m+k})^n}\right\vert \le 
\left\vert \frac{\omega^{m+k}}{(\omega+C\omega^{m+k})^m}\right\vert,
\]
and furthermore, $\omega^{m+k}/(\omega+C\omega^{m+k})^m$ converges to $\omega^{k}$ 
from below as $m\to\infty$.  Therefore, the absolute 
value $|R_{n,m,k}|$ is uniformly (in $n$) bounded above 
by the ``worst case'' $|R_{m,m,k}|$ where $n=m$:
\[
|R_{n,m,k}| \le |R_{m,m,k}| \le \left| \omega^{k}\left( 2C - 
4C\omega + 10C\omega^4 - 16C\omega^7 + O(\omega^m) \right)\right|
\]
So for example, 
the variation $|p_{6,n}(\omega)-p_{6,n}^{m+k}|$ is 
uniformly (in $n$) bounded 
by the expression:
\[
\left| \frac{1}{\omega} - \frac{1}{\omega+C\omega^{m+k}}\right| + 
\left| \omega^{k}\left( 2C - 4C\omega + 10C\omega^4 - 16C\omega^7 + O(\omega^m) \right)\right|.
\]
There are similar expressions for $|p_{i,n}(\omega)-p_{i,n}^{m+k}|$
for each $i$.
Now if we make $k$ large and bound $m$ from below, we can make all these expressions 
as small as we like, and hence small enough so that the $p_{i,n}^{m+k}$ remain on the 
stack for all $n\le m$.

To prove that these are the \emph{only} things on the stack, we compute expressions 
for the children of $p_{i,n}^{m+k}$ and do exactly the same thing to prove that a 
large enough $k$ and $m$ make the worst-case deviation from the children 
of $p_{i,n}(\omega)$ small for $n\le m$ and 
hence these children will leave the stack exactly as the children of 
$p_{i,n}(\omega)$ do.  This computation is the same, so we omit it.
\end{proof}

We note that for a specific value of $C$ (such at the one given in the proposition), 
it is possible to actually numerically compute $k$.  As an example, we show 
how to compute a value of $k$ which ensures $p_{1,n}^{m+k}$ remains on the stack 
for sufficiently large $m$.  As $m\to \infty$, the difference $|p_{1,n}(\omega) - p_{1,n}^{m+k}|$ 
is bounded above by the limit 
\[
\left| \omega^{k}\left( 2C - 4C\omega + 10C\omega^4 - 16C\omega^7 \right)\right|
\]
Hence if we compute $p_{1,n}(\omega)$ and observe how far away it is from getting 
cut off the stack (remember things get removed if their absolute value is too large), 
we can choose $k$ so that the expression above is small enough that $p_{1,n}^{m+k}$ 
remains on the stack for $m> M$ and $n\le m$, (where $M$ can depend on $k$).
In order to compute a value of $k$ which actually works for Lemma~\ref{lemma:contents_of_stack}, 
it is necessary to 
consider $p_{i,n-1}(\omega)$ over all $i$ and all their children and make sure 
that $k$ is large enough to accept or reject them appropriately.  

\begin{proof}[Proof of Proposition~\ref{proposition:omega_limit}]

Doing the computation above for the specified value 
$C = 0.29946137 - 0.48972405i$  
(this is an exact value)
shows that Lemma~\ref{lemma:contents_of_stack} holds with $k=12$.
Therefore, for all $m$ sufficiently large, the contents of the 
stack at time $n\le m$ will be as claimed in the lemma.  By taking 
$m$ large, we can get the stack contents at step $n=m$ as close as we 
like to the limits, which we denote by $p_{i,\infty}^k$.
\begin{align*}
p_{1,\infty}^k &= 1 + \omega^{k}\left( 2C - 4C\omega + 10C\omega^4 - 16C\omega^7\right) \\ 
\
p_{2,\infty}^k &= -1+\frac{1}{\omega^2} -\frac{1}{\omega^3} + \frac{1}{\omega^4} + 
\omega^{k}\left( 2C - 4C\omega + 10C\omega^4 - 16C\omega^7\right)\\
\
p_{3,\infty}^k &= \frac{1}{\omega} - \frac{1}{\omega^2} + \frac{1}{\omega^3} + 
\omega^{k}\left( 2C - 4C\omega + 10C\omega^4 - 16C\omega^7\right) \\
\
p_{4,\infty}^k &= 1-\frac{1}{\omega^2} + \frac{1}{\omega^3} + \omega^{k}\left( 2C - 4C\omega + 10C\omega^4 - 16C\omega^7\right) \\
\
p_{5,\infty}^k &= 1-\frac{1}{\omega}+\frac{1}{\omega^2} +  \omega^{k}\left( 2C - 4C\omega + 10C\omega^4 - 16C\omega^7\right) \\
\
p_{6,\infty}^k &= \frac{1}{\omega} +  \omega^{k}\left( 2C - 4C\omega + 10C\omega^4 - 16C\omega^7\right) \phantom{\frac{1}{x}} 
\end{align*}

We want to continue running the disconnectedness algorithm at this point.
Recall we require a radius outside which we 
discard children.  By taking $m$ large, we may assume this radius 
is the one for $\omega$, i.e. $2|\omega-1|/2(1-|\omega|) < 2.26$ and 
that the algorithm replaces $\alpha$ with the three children
\[
\omega^{-1}\alpha, \qquad \omega^{-1}(\alpha + \omega -1), \qquad \omega^{-1}(\alpha - \omega +1).
\]
Now start this algorithm with the given (numerical, with $k=12$) 
stack contents $\{p_{i,\infty}^{k}\}_{i=1}^6$; it 
terminates (with an empty stack) in $20$ steps.  Therefore, 
for $k=12$, there is some $M$ such that for all $m>M$, the disconnectedness 
algorithm run on the input $\omega+C\omega^{m+k}$ certifies 
disconnectedness at step $m+20$.  This completes the proof.
\end{proof}

This proves the second bullet in Theorem~\ref{theorem:hole_limit}. Note that by means of this
method we can numerically certify any $C\in \C$ for which
the points $\omega + C\omega^n$ are Schottky for all sufficiently large $n$. However, when
this method of certification fails, we cannot conclude that the corresponding points are
all (eventually) in $\SetA$; a different method is necessary for that.

The last step in the
proof of Theorem~\ref{theorem:hole_limit} is to certify the existence of infinitely many
rings of concentric loops in the interior of $\SetA$ which nest down to the point $\omega$. This
depends on an analysis of how trap vectors transform under certain combinatorial and numerical
operations. We discuss this in the remainder of the section.

Let $R\subseteq \C$ be a small region containing $\omega$.  Recall from 
Section~\ref{section:holes} that we can produce a collection of 
trap-like balls for the region $R$ such that if $z \in R$ and 
there exist $u,v\in \Sigma_m$ starting with $f,g$, respectively, 
such that $z^{-m}(u(z,1/2)-v(z,1/2))$ lies in a trap-like ball, 
then there exists a trap at $z$, and $z$ lies in the interior of $\SetA$.
We will use this to show that for $z$ of the form $\omega + C\omega^n$, we 
can certify the existence of a trap for $z$ for \emph{all} 
sufficiently large $n$.

Given two words $u,v \in \Sigma_m$, not necessarily starting with $f,g$, 
recall that we can write 
\[
u(z,x) = xz^m + p_u(z) \quad \textnormal{and} \quad v(z,x) = xz^m + p_v(z)
\]
for some polynomials $p_u(z)$ and $p_v(z)$ in $z$.  For example, $p_g(z) = -z+1$ 
because $g(z,x) = z(x-1) + 1$.  Define words
\[
U_n = fgfffgggf^nu \quad \textnormal{and} \quad V_n = gfgggfffg^nv
\]

\begin{lemma}\label{lemma:limit_trap}
In the above notation, if the vector 
\[
t = 2C\omega^{-m-8}(1-2\omega+5\omega^4-8\omega^7) + \omega^{-m}(p_u(\omega)-p_v(\omega)+1)
\]
lies in a trap-like ball $B_\alpha(p)$ for the region $R$, then for sufficiently large $n$, 
the words $U_n, V_n$ give a trap for $\omega + C\omega^n$.
Furthermore, if we let 
\[
\epsilon = \frac{|\alpha - (p-t)|}{|2\omega^{-m-8}(1-2\omega+5\omega^4-8\omega^7)|},
\]
then for any compact subset $S$ of $B_\epsilon(C)$, there is an $N$ such that 
for any $C' \in S$ and $n>N$, the words $U_n,V_n$ give a trap for $\omega + C'\omega^n$.
\end{lemma}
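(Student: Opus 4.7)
The strategy is to apply Lemma~\ref{lemma:ball_of_traps} directly: I will compute the rescaled difference $z^{-(m+n+8)}(U_n(z,1/2) - V_n(z,1/2))$ at $z = \omega + C\omega^n$, show that it tends to $t$ as $n \to \infty$, and conclude that for $n$ large this quantity lies in the trap-like ball $B_\alpha(p)$, so that $U_n, V_n$ witness a trap.

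First I would record the polynomials $P(z) = z - z^2 + z^5 - z^8$ and $Q(z) = 1 - z + z^2 - z^5$, giving $fgfffggg(z,w) = z^8 w + P(z)$ and $gfgggfff(z,w) = z^8 w + Q(z)$ (either by reading off Proposition~\ref{proposition:power_series} or by direct substitution). Combining these with the easy identity $g^n(z,y) = z^n y + (1 - z^n)$ and the given formulas $u(z,x) = z^m x + p_u(z)$, $v(z,x) = z^m x + p_v(z)$, a direct calculation yields
\[
U_n(z, 1/2) - V_n(z, 1/2) = z^{n+8}\bigl(p_u(z) - p_v(z) + 1\bigr) + \bigl(-1 + 2z - 2z^2 + 2z^5 - 2z^8\bigr).
\]
The key observation is that the defining polynomial $1 - 2z + 2z^2 - 2z^5 + 2z^8$ of $\omega$ is precisely the negative of the second summand, so the latter vanishes at $z = \omega$, with derivative there equal to $2(1 - 2\omega + 5\omega^4 - 8\omega^7)$.

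Setting $z = \omega + C\omega^n$ and Taylor expanding, the second summand becomes $2C\omega^n(1 - 2\omega + 5\omega^4 - 8\omega^7) + O(\omega^{2n})$. Dividing the whole difference by $z^{m+n+8}$ and using $z^{-(m+n+8)} = \omega^{-(m+n+8)}(1 + O(\omega^n))$, I obtain
\[
z^{-(m+n+8)}\bigl(U_n(z,1/2) - V_n(z,1/2)\bigr) = z^{-m}(p_u(z) - p_v(z) + 1) + 2C\omega^{-m-8}(1 - 2\omega + 5\omega^4 - 8\omega^7) + O(\omega^n),
\]
which converges to $t$ as $n \to \infty$. Since $t$ lies in the open ball $B_\alpha(p)$, the left-hand side lies in $B_\alpha(p)$ for all sufficiently large $n$, and Lemma~\ref{lemma:ball_of_traps} produces a trap for $\omega + C\omega^n$.

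For the ``furthermore'' assertion, replacing $C$ by $C'$ in the formula for $t$ only alters the linear term, changing $t$ by $2(C'-C)\omega^{-m-8}(1 - 2\omega + 5\omega^4 - 8\omega^7)$. With the stated $\epsilon$, any $C' \in B_\epsilon(C)$ gives a perturbed limit $t'$ still in $B_\alpha(p)$, with a positive slack that is uniformly bounded below on any compact $S \subseteq B_\epsilon(C)$. The error term $O(\omega^n)$ above is likewise uniform in $C' \in S$, since all implicit constants depend continuously on $C'$. Hence there is an $N$ such that the rescaled difference lies in $B_\alpha(p)$ for all $n > N$ and all $C' \in S$, and a single application of Lemma~\ref{lemma:ball_of_traps} finishes the argument. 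The main obstacle is really only the recognition that the ``constant term'' in the expansion of $U_n - V_n$ is a scalar multiple of the defining polynomial of $\omega$; once that observation is made, everything else is a continuity argument.
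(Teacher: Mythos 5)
Your proposal is correct and follows essentially the same route as the paper: both decompose $U_n(z,1/2)-V_n(z,1/2)=p_\omega(z)+z^{n+8}(p_u(z)-p_v(z)+1)$ with $p_\omega$ the defining polynomial of $\omega$, divide by $z^{m+n+8}$, set $z=\omega+C\omega^n$, and show the result converges to $t$; you finish by Lemma~\ref{lemma:ball_of_traps}, the paper by verifying that a trap-like vector gives a trap, which is the same mechanism, and the uniformity-on-compacta argument for the furthermore clause is identical. One small imprecision to be aware of: the factor $(\omega+C\omega^n)^{-(m+n+8)}=\omega^{-(m+n+8)}\bigl(1+C\omega^{n-1}\bigr)^{-(m+n+8)}$ has a correction of size $O(n\omega^{n-1})$, not $O(\omega^n)$, because the exponent $m+n+8$ grows with $n$; since $n\omega^{n-1}\to 0$ the limiting value $t$ is unchanged, so this does not affect the conclusion, but the stated error rate is too strong as written (the paper instead factors as $z^{-(m+8)}\cdot z^{-n}$ and uses $\omega^n/z^n\to 1$ to sidestep this).
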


\begin{proof}
The proof is primarily a computation.
By applying the definitions of $f$ and $g$, we compute:
\[
U_n(z,1/2) = z - z^2 + z^5 - z^8 + \frac{1}{2}z^{m+n+8} + z^{n+8} p_u(z)
\]
\[
V_n(z,1/2) = 1 - z + z^2 - z^5 + z^8 - z^{n+8} + \frac{1}{2}z^{m+n+8} + z^{n+8} p_v(z),
\]
so
\[
U_n(z,1/2) - V_n(z,1/2) = p_\omega(z) + z^{n+8}(p_u(z) - p_v(z)+1),
\]
where $p_\omega(z) = -1 + 2 z - 2 z^2 + 2 z^5 - 2 z^8$.  Recall 
from the definition of $\omega$ that $p_\omega(\omega) = 0$.  
Since $U_n$ and $V_n$ have length $m+n+8$, to show that 
this pair gives a trap-like vector for some $z$, we'll be considering 
the expression
\[
z^{-m-n-8}(U_n(z,1/2) - V_n(z,1/2)) = \frac{p_\omega(z)}{z^{m+n+8}} + z^{-m}(p_u(z)-p_v(z)+1).
\]

We now show how to certify that this vector is trap-like for $z$ of 
the form $\omega + C\omega^n$, for sufficiently large $n$.  
We therefore consider
\[
\frac{p_\omega(\omega+C\omega^n)}{(\omega + C\omega^n)^{m+n+8}} 
   + (\omega+C\omega^n)^{-m}(p_u(\omega+C\omega^n)-p_v(\omega+C\omega^n)+1).
\]
Note that the right summand converges to $\omega^{-m}(p_u(\omega)-p_v(\omega)+1)$ as $n\to\infty$. 
We claim the left summand converges as well.  To see this, we expand it out 
using the definition of $p_\omega$:
\begin{align*}
\frac{p_\omega(\omega+C\omega^n)}{(\omega + C\omega^n)^{m+n+8}} & = 
(-1 + 2\omega - 2\omega^2 + 2\omega^5 - 2\omega^8)\frac{1}{(\omega+C\omega^n)^{m+n+8}} \\
& + 2C\omega^{-m-8}(1-2\omega+5\omega^4-8\omega^7)\frac{\omega^n}{(\omega+C\omega^n)^{n}}\\
& +\frac{- 2C^2\omega^{2n} + 2C^5\omega^{5n} - 2C^8\omega^{8n} + 20C^2\omega^{3+2n} - 56C^2\omega^{6+2n}}{(\omega+C\omega^n)^{m+n+8}} \\
& +\frac{20C^3\omega^{2+3n} - 112C^3\omega^{5+3n} + 10C^4\omega^{1+4n}-140C^4\omega^{4+4n}}{(\omega+C\omega^n)^{m+n+8}} \\
& +\frac{-112C^5\omega^{3+5n}-56C^6\omega^{2+6n}-16C^7\omega^{1+7n}}{(\omega+C\omega^n)^{m+n+8}} \\
\end{align*}
The first line is $0$ because, recall, $p_\omega(\omega) = 0$, and it's straightforward 
to see that $\lim_{n\to\infty}\omega^n/(\omega+C\omega^n)^n = 1$, so the last 
three lines converge to $0$ and the second line converges to $2C\omega^{-m-8}(1-2\omega+5\omega^4-8\omega^7)$.

Therefore, if the hypothesis of the lemma holds, then for sufficiently large $n$, 
the words $U_n$ and $V_n$ are trap like for $\omega + C\omega^n$, as claimed.

To get the last statement of the lemma, observe that the vector $t$ varies 
linearly with $C$, so certainly for any $C' \in B_\epsilon(C)$, the hypotheses of 
the lemma are satisfied.  But note that all the expressions above are uniformly 
continuous in $C$ on compact subsets, so given any compact subset, there is a 
uniform bound on the value of $n$ required.
\end{proof}

To complete the proof of Theorem~\ref{theorem:hole_limit}, then, it suffices to 
exhibit a loop of overlapping balls output by Lemma~\ref{lemma:limit_trap} 
encircling $\omega$.  Because there are finitely many balls, there is a uniform 
$N$ such that for $n>N$, there exists a trap for $\omega+C\omega^n$ for every $C$ 
in every ball in this loop.  In other words, the image of this loop under the 
map $x \mapsto \omega(x-\omega) +\omega$ lies in the interior of $\SetA$ for 
all sufficiently large iterates. Figure \ref{figure:limit_trap_loop} 
shows the loop of trap balls which we computed.

\begin{figure}[htpb]
\centering
\includegraphics[scale=0.5]{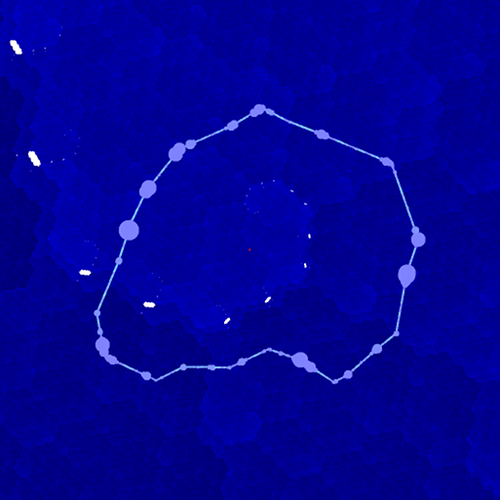}
\caption{A loop of limit trap balls encircling $\omega$.}
\label{figure:limit_trap_loop}
\end{figure}

\begin{remark}
Lemma~\ref{lemma:limit_trap} only states that this loop is \emph{eventually} in the 
interior of $\SetA$ (under a large enough iterate of the map $x \mapsto \omega(x-\omega) + \omega$).  
However, experimentally, this loop lies in the interior for all iterates. 
The primary evidence for this is that a picture of limit traps near $\omega$ 
looks the same as a picture of regular traps.
\end{remark}

\subsection{Renormalization}

\subsubsection{Introduction}

In this section, we place the above example in a more formal 
context and explain the relationship with the work of Solomyak in~\cite{Solomyak}.
We first give a heuristic explanation of some of our definitions.
We would \emph{like} to define a renormalization operator $R: \Sigma \times \Sigma \times \D^* \to \D^*$ 
such that $R(u,v,z)$ is the parameter $w$ such that the limit set for 
$w$ is the same, in some sense, as 
the union $u(z,\Lambda_z) \cup v(z,\Lambda_z)$.  The right definition 
for this operator is elusive.  However, we show below that we can 
understand what the fixed points of renormalization should be, 
and at these fixed points, there is a sensible definition of a 
limiting trap.  For certain renormalization points, 
we give a new interpretation of a result of Solomyak~\cite{Solomyak}.
 
Let $u$ and $v$ be given words of the same length.  These 
will be our prefixes.  Let $s$ and $t$ be two other words 
of the same length.  We are interested in the appearance of 
the set $us^n(z,\Lambda_z) \cup vt^n(z,\Lambda_z)$ 
and renormalization with respect to the words $us^n$ and $vt^n$ 
as $n \to \infty$.
As $n$ gets large, renormalization at $us^n$, $vt^n$ 
should converge to a locally-defined holomorphic 
function which, abusing notation, we'll call 
renormalization at (the now infinite words) $us^\infty$, $vt^\infty$.
Parameters $z$ for which $\pi(us^\infty,z) = \pi(vt^\infty,z)$ 
should be the fixed points of this renormalization.

Therefore, we say that a parameter $z$ is a \emph{renormalization point}
if there are words $u,v,s,t$ as above such that 
$\pi(us^\infty,z) = \pi(vt^\infty,z)$.
We will show that there is a notion of a limit trap at a 
renormalization point and that this can sometimes 
give an asymptotic self-similarity.

\subsubsection{A computation}

This section is essentially concerned with the behavior 
of the limit set $\Lz$ at infinitesimal scales for renormalization 
points.  That $\omega$ is a renormalization point means that 
$f\Lambda_\omega \cap g\Lambda_\omega \ne \varnothing$ 
and in fact there are two eventually 
periodic words $u,v$ so $\pi(u,\omega) = \pi(v,\omega)$.  We want to zoom 
in on this point of intersection.  Recall that for a finite (or infinite) 
word $u \in \Sigma_n$, we can write $u(z,x) = xz^n + p_u(z)$, where 
$p_u$ is a polynomial of degree $n$ (if $u$ is infinite, $p_u(z)$ is the 
power series $\pi(u,z)$).  We take the convention that 
if $u$ has length $0$, then $p_u(z) \equiv 0$.  If $u,v \in \Sigma_n$, then $u(z,\Lz)$ and 
$v(z,\Lz)$ are translates of each other, and the displacement vector is 
$p_u(z) - p_v(z)$.  A more useful quantity turns out to be the displacement 
relative to the sizes of the sets $u(z,\Lz)$ and $v(z,\Lz)$, that is
\[
z^{-n}(p_u(z)-p_v(z)).
\]
We have already encountered this expression several times.
As in the proof of Theorem~\ref{theorem:hole_limit}, we will need to 
compute its value for parameters of the form $\omega + C\omega^n$ for 
long words.  This section contains a rather tedious computation 
which will be necessary for its generalization.

\begin{lemma}\label{lemma:sum_computation}
Let $u,v$ have length $a$; let $s,t$ have length $b$; 
and let $x,y$ have length $c$.  Let $\omega$ be a renormalization 
point for $u,v,s,t$.  Write $P(z) = p_{us^\infty}(z) - p_{vt^\infty}(z)$, 
so $P(\omega) = 0$.  Then as $n \to \infty$, the 
quantity
\[
(\omega + C\omega^{nb})^{-(a+bn+c)}\left( p_{us^nx}(\omega + C\omega^{nb}) - 
                                            p_{vt^ny}(\omega + C\omega^{nb}
                                         \right)
\]
converges to 
\[
\omega^{-a-c}(p_u(\omega) - p_v(\omega)) + \omega^{-c}(p_x(\omega) - p_y(\omega)) + \omega^{-a-c}CP'(\omega)
\]
\end{lemma}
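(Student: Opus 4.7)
The plan is to derive an exact closed-form expression for $p_{us^nx}(z) - p_{vt^ny}(z)$, then divide through by $z^{a+bn+c}$ and analyze the resulting three summands separately, which turn out to correspond one-to-one with the three terms of the claimed limit.

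First, I would use the composition identity $p_{w_1w_2}(z) = p_{w_1}(z) + z^{|w_1|} p_{w_2}(z)$, an immediate consequence of Lemma~\ref{lemma:word_composition}. Applied iteratively to the string $s^n$ and then followed by $x$, this produces a geometric sum
\[
p_{us^nx}(z) = p_u(z) + z^a p_s(z)\,\frac{1-z^{nb}}{1-z^b} + z^{a+nb} p_x(z),
\]
with the formal $n\to\infty$ version being $p_{us^\infty}(z) = p_u(z) + \frac{z^a}{1-z^b} p_s(z)$, and analogous expressions for $vt^ny$ and $vt^\infty$. Subtracting and using the identity $P(z) - (p_u(z)-p_v(z)) = \frac{z^a}{1-z^b}(p_s(z)-p_t(z))$ to eliminate the $p_s - p_t$ contribution yields the clean formula
\[
p_{us^nx}(z) - p_{vt^ny}(z) = P(z)(1-z^{nb}) + z^{nb}(p_u(z)-p_v(z)) + z^{a+nb}(p_x(z)-p_y(z)).
\]

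Dividing by $z^{a+bn+c}$ then splits the quantity of interest into the three pieces
\[
z^{-(a+c)}(z^{-nb}-1)P(z) \;+\; z^{-(a+c)}(p_u(z)-p_v(z)) \;+\; z^{-c}(p_x(z)-p_y(z)).
\]
At $z = \omega + C\omega^{nb}$, the second and third summands tend by continuity to $\omega^{-(a+c)}(p_u(\omega)-p_v(\omega))$ and $\omega^{-c}(p_x(\omega)-p_y(\omega))$, matching two of the three target terms.

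The main obstacle is the first summand, which is of indeterminate form: $P(z)\to 0$ because $P(\omega)=0$, while $z^{-nb}-1$ blows up. The resolution is to balance these rates. Taylor expansion gives $P(\omega+C\omega^{nb}) = CP'(\omega)\omega^{nb} + O(\omega^{2nb})$. For the other factor, write $z^{-nb} = \omega^{-nb}(1 + C\omega^{(n-1)b})^{-nb}$ and note that $(1+C\omega^{(n-1)b})^{-nb}\to 1$, since the logarithm is $-nb \cdot C\omega^{(n-1)b} + O(nb\,\omega^{2(n-1)b})\to 0$. Multiplying through, $z^{-nb}P(z)\to CP'(\omega)$ while $P(z)\to 0$, so $(z^{-nb}-1)P(z)\to CP'(\omega)$, and the first summand tends to $\omega^{-(a+c)}C P'(\omega)$. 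Summing the three limits recovers the claimed formula; everything apart from this last $0\cdot\infty$ analysis is bookkeeping with the composition rule.
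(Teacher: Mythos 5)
Your proof is correct and takes a genuinely cleaner route than the paper's. The paper works directly with the coefficient sequence $(d_i)$ of $P$: after writing $p_{us^nx}=z^{a+bn}p_x+p_{us^n}$ it reduces the problem to the summand $\Omega_n^{-(a+bn+c)}\sum_{i<a+bn}d_i\Omega_n^i$, expands each $\Omega_n^i=(\omega+C\omega^{bn})^i$ by the binomial theorem, splits into constant, $C$-linear, and higher-order groups, and controls the last with an auxiliary majorant $H(z)=\sum|d_i|i^4z^{i-2}$ — all of which requires the bookkeeping quantities $P_a$ and $r$ and the final rearrangement $r\omega^{-c}-\omega^{-c}P_a(\omega)=\omega^{-a-c}(p_u(\omega)-p_v(\omega))$. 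You instead use the composition rule $p_{w_1w_2}=p_{w_1}+z^{|w_1|}p_{w_2}$ to sum the geometric series for $p_{s^n}$, eliminate $p_s-p_t$ using the definition of $P$, and arrive at the exact identity $p_{us^nx}(z)-p_{vt^ny}(z)=(p_u-p_v)z^{nb}+P(z)(1-z^{nb})+z^{a+nb}(p_x-p_y)$, whose three summands after dividing by $z^{a+bn+c}$ correspond one-to-one with the three terms of the claimed limit. This localizes all the analysis into the single $0\cdot\infty$ product $(z^{-nb}-1)P(z)$, which your Taylor expansion of $P$ at $\omega$ together with $(1+C\omega^{nb-1})^{-nb}\to 1$ handles correctly; the paper's multi-term estimates (including the separate argument that the higher-order binomial contribution vanishes) are absorbed into this one observation. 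One small slip: since $z=\omega+C\omega^{nb}=\omega(1+C\omega^{nb-1})$, the inner exponent should read $nb-1$, not $(n-1)b$; for $b>1$ these differ, though both tend to zero exponentially so the limit is unaffected — still worth fixing.
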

\begin{proof}[Proof of Lemma~\ref{lemma:sum_computation}]
First, some notation.  Write $d_i$ for the coefficients of the 
power series $P(z)$, so $P(z) = \sum_{i=1}^\infty d_i z^i$.
Note that $d_i$ is periodic with period $b$ for large enough $i$; 
write $P_a(z)$ to mean the eventually periodic part of $P(z)$, 
shifted by $a$, so 
\[
P_a(z) = \sum_{i=0}^\infty d_{i+a+bn} z^i.
\]
Where $n$ is taken large enough that the coefficients are 
constant in $n$.  
If we take a finite power for $s$ and $t$, the resulting polynomial 
(which has degree $a+bn$)
will agree with $P(z)$ to the term with degree $a+bn-1$, so define 
$r \in \{\pm 2, \pm 1, 0\}$ so that 
\[
p_{us^n}(z)-p_{vt^n}(z) = \sum_{i=0}^{a+bn-1}d_iz^i + rz^{a+bn}.
\]
Observation of the power series $P$ shows the facts (the third following from the 
first two):
\begin{align*}
P_a(z) &= p_{s^\infty}(z) - p_{t^\infty}(z) + r \\
P(\omega) &= 0 = p_u(\omega) - p_v(\omega) + \omega^{a}(p_{s^\infty}(\omega) - p_{t^\infty}(\omega)) \\
r - P_a(\omega) &= \omega^{-a}(p_u(\omega) - p_v(\omega))
\end{align*}

We will soon encounter some rather large expressions, and it will 
be helpful to use some small notation.  We denote the 
expression in the lemma by $E_n$, so 
\[
E_n = (\omega + C\omega^{nb})^{-(a+bn+c)}\left( p_{us^nx}(\omega + C\omega^{nb}) - 
                                            p_{vt^ny}(\omega + C\omega^{nb}\right),
\]
and we denote $\omega+C\omega^{bn}$ by $\Omega_n$.
Recall that $\lim_{n\to\infty}\omega^{bn}/\Omega_n^{bn} =1$.
We expand using the fact that 
$p_{us^nx}(z) = z^{a+bn}p_x(z) + p_{us^n}(z)$:
\begin{align*}
E_n & = \Omega_n^{-(a+bn+c)}\left( \Omega_n^{a+bn}(p_x(\Omega_n)-p_y(\Omega_n)) + 
        p_{us^n}(\Omega_n) - p_{vt^n}(\Omega_n) \right) \\
& = \Omega_n^{-c}(p_x(\Omega_n) - p_y(\Omega_n)) + r\Omega_n^{-c}+ 
\Omega_n^{-(a+bn+c)} \sum_{i=0}^{a+bn-1} d_i \Omega_n^i 
\end{align*}
The first part trivially converges to 
$\omega^{-c}(p_x(\omega)-p_y(\omega)) + r\omega^{-c}$ 
as $n\to\infty$.  We will show that 
\[
\Omega_n^{-(a+bn+c)} \sum_{i=0}^{a+bn-1} d_i \Omega_n^i  \quad \longrightarrow \quad
-\omega^{-c}P_a(\omega) + C\omega^{-(a+c)}P'(\omega).
\]
To do this, we expand the term
$\Omega_n^i = (\omega + C\omega^{bn})^i$ using the binomial theorem:
\begin{align*}
\Omega_n^{-(a+bn+c)} \sum_{i=0}^{a+bn-1} d_i \Omega_n^i &= \\
&\phantom{=\,\,} \Omega_n^{-(a+bn+c)}\sum_{i=0}^{a+bn-1} d_i\omega^i  \numberthis\\
&+ \Omega_n^{-(a+bn+c)}\sum_{i=1}^{a+bn-1} d_i i C\omega^{bn+i-1} \numberthis\\
&+ \Omega_n^{-(a+bn+c)}\sum_{i=2}^{a+bn-1} d_i \sum_{j=0}^{i-2}\binom{i}{j}C^{i-j}\omega^{bn(i-j)+j} \numberthis\\
\end{align*}
We handle these summand-by-summand.  First, we rewrite (1) using the fact that 
$P(\omega) = 0$ so $\sum_{i=0}^{a+bn-1}d_i\omega^i = -\sum_{i=a+bn}^\infty d_i\omega^i$, so 
\begin{align*}
\Omega_n^{-(a+bn+c)}\sum_{i=0}^{a+bn-1} d_i\omega^i 
&= -\Omega_n^{-(a+bn+c)}\sum_{i=a+bn}^\infty d_i\omega^i \\
&= -\frac{\omega^a}{\Omega_n^{a+c}} \frac{\omega^{bn}}{\Omega_n^{bn}} \sum_{i=0}^\infty d_{i+a+bn} \omega^i\\
&\to -\omega^{-c}P_a(\omega)
\end{align*}

Next, summand (2):
\begin{align*}
\Omega_n^{-(a+bn+c)}\sum_{i=1}^{a+bn-1} d_i i C\omega^{bn+i-1} 
&= \Omega_n^{-(a+c)}\frac{\omega^{bn}}{\Omega_n^{bn}}C\sum_{i=1}^{a+bn-1}d_ii\omega^{i-1} \\
&\to \omega^{-(a+c)} C P'(\omega)
\end{align*}
Finally, summand (3).  We will prove that it converges to $0$.
First, we bound the absolute value of the 
innermost sum.  To do this, we pull out terms from the binomial 
coefficient to re-express it as a different binomial coefficient, 
so we can collapse the sum into a power.  In the first line, we use 
the fact that $\binom{i}{j} = i(i-1)(i-j)(i-j-1)\binom{i-2}{j}$, and 
$i-1,i-j,i-j-1\le i$:
\begin{align*}
\left|\sum_{j=0}^{i-2}\binom{i}{j}C^{i-j}\omega^{bn(i-j)+j} \right|
&\le i^4|C|^2|\omega|^{2bn}\sum_{j=0}^{i-2}\binom{i-2}{j}\left|C^{i-2-j}\omega^{bn(i-2-j)+j}\right| \\
&= i^4|C|^2|\omega|^{2bn}(|\omega| + |C\omega^{bn}|)^{i-2}
\end{align*}
So the entire summand (3) is bounded in absolute value by
\begin{align*}
&\hspace{5mm}|\Omega_n|^{-(a+bn+c)}\sum_{i=2}^{a+bn-1} |d_i| i^4|C|^2|\omega|^{2bn}(|\omega| + |C\omega|^{bn})^{i-2} \\
&= |\Omega_n|^{-(a+c)}\frac{|\omega|^{bn}}{|\Omega_n|^{bn}}|\omega|^{bn} |C|^2\sum_{i=2}^{a+bn-1} |d_i|i^4 (|\omega| + |C\omega|^{bn})^{i-2} \\
\end{align*}
Let $H(z) = \sum_{i=2}^\infty |d_i|i^4z^{i-2}$.  Using the root 
test, it is easy to see that $H(z)$ is uniformly convergent for 
$|z|<1$, so $H$ is uniformly convergent in a neighborhood of 
$|\omega|$.  Therefore, as $n \to \infty$, the above expression 
converges to
\begin{align*}
&\to |\omega|^{-(a+c)}\left(\lim_{n\to\infty}\frac{|\omega|^{bn}}{|\Omega_n|^{bn}}\right)\left(\lim_{n\to\infty}|\omega|^{bn}\right)|C|^2H(|\omega|)\\
&= |\omega|^{-(a+c)}(1)(0)|C|^2H(|\omega|) \\
&=0 
\end{align*}
We have now shown that as $n\to\infty$ 
\[
E_n \to \omega^{-c}(p_x(\omega)-p_y(\omega)) + r\omega^{-c}  
-\omega^{-c}P_a(\omega) + C\omega^{-(a+c)}P'(\omega).
\]
Using the observations about $P$ at the beginning of the proof, this 
expression rearranges into the statement of the lemma.
\end{proof}

\subsubsection{Similarity}
Recall from Section~\ref{section:differences} 
that the set of differences between points in $\Lz$ is
$\Gamma_z$, the limit set generated by the 
three contractions
\[
x \mapsto z(x+1)-1 \qquad x \mapsto zx \qquad x \mapsto z(x-1)+1
\]
\begin{theorem}[Renormalizable traps]\label{theorem:similarity}
Suppose that $\omega$ is a renormalization point for 
$u,v,s,t$, where $s,t$ have length $b$. 
 Let $P(z) = p_{us^\infty}(z) - p_{vt^\infty}(z)$.
Let $T_\omega$ denote 
$-\frac{p_u(\omega)-p_v(\omega)}{P'(\omega)} - 
\frac{\omega^a}{P'(\omega)}\Gamma_\omega$, the translated, scaled copy of $\Gamma_\omega$
\begin{enumerate}
\item If $C \in T_\omega$, then for all $\epsilon>0$, there is a 
$C'$ such that $|C-C'|<\epsilon$ and for all sufficiently 
large $n$, there is a trap for $\omega+C'\omega^{bn}$.
\item If there is a unique pair of infinite words  
$U,V \in \partial \Sigma$  such that $p_U(\omega) = p_V(\omega)$ 
(i.e. $U=us^\infty$, $V=vt^\infty$), 
then there is $\delta>0$ such that for all $C \notin T_\omega$ 
with $|C| <\delta$, the limit set for the parameter 
$\omega + C\omega^{bn}$ is disconnected for all sufficiently large $n$.
\end{enumerate}
\end{theorem}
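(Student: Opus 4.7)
Both parts reduce to an analysis of the asymptotic formula provided by Lemma~\ref{lemma:sum_computation}. For finite words $x,y$ of length $c$, let
\[
L(C,x,y) = \omega^{-a-c}(p_u(\omega) - p_v(\omega)) + \omega^{-c}(p_x(\omega) - p_y(\omega)) + \omega^{-a-c} C P'(\omega)
\]
denote the limit of the rescaled displacement $\Omega_n^{-(a+bn+c)}(p_{us^nx} - p_{vt^ny})(\Omega_n)$, where $\Omega_n := \omega + C\omega^{bn}$. Solving the linear equation $L(C,x,y)=V^*$ for $C$ yields
\[
C \;=\; -\frac{p_u(\omega)-p_v(\omega)}{P'(\omega)} \;-\; \frac{\omega^a\bigl(p_x(\omega)-p_y(\omega)\bigr)}{P'(\omega)} \;+\; \frac{\omega^{a+c}V^*}{P'(\omega)}.
\]
If one sets $x = X|_c$, $y = Y|_c$ for infinite $X,Y \in \partial\Sigma$ with $d := \pi(X,\omega)-\pi(Y,\omega) \in \Gamma_\omega$, the last term vanishes while the middle term tends to $-\omega^a d/P'(\omega)$ as $c\to\infty$. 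This limit is exactly the defining form of a point in $T_\omega$.

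For part (1), fix $C \in T_\omega$ and write $C = -(p_u(\omega)-p_v(\omega))/P'(\omega) - \omega^a d/P'(\omega)$ with $d \in \Gamma_\omega$, choosing $X,Y\in\partial\Sigma$ realizing $d$. Produce a trap-like ball $B_\alpha(V^*)$ for a small square $R$ around $\omega$ by the algorithm of Section~\ref{section:holes} (this only requires a pocket in the convex hull of $\Lambda_\omega$, which we assume; for the relevant renormalization points this is visibly the case). Setting $x = X|_c$ and $y = Y|_c$ in the inversion formula above produces $C'$ with $L(C',x,y)=V^*$ and $|C-C'| \to 0$ as $c\to\infty$, so fix $c$ with $|C-C'|<\epsilon$. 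By Lemma~\ref{lemma:sum_computation} applied to the pair $(us^nx, vt^ny)$ at parameter $\omega + C'\omega^{bn}$, the rescaled displacement converges to $V^*$ and hence lies in $B_\alpha(V^*)$ for all large $n$. The trap-certification argument of Lemma~\ref{lemma:ball_of_traps}, exactly as in the proof of Lemma~\ref{lemma:limit_trap}, then produces a trap at $\omega + C'\omega^{bn}$.

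For part (2), set $\phi(C) := -\omega^{-a}[p_u(\omega) - p_v(\omega) + C P'(\omega)]$, so that $C \in T_\omega$ iff $\phi(C) \in \Gamma_\omega$. When $C \notin T_\omega$, $\eta := \mathrm{dist}(\phi(C),\Gamma_\omega) > 0$ since $\Gamma_\omega$ is compact. I must show $\pi(X',\Omega_n) \ne \pi(Y',\Omega_n)$ for every $X',Y' \in \partial\Sigma$ with $X'_1 = f$ and $Y'_1 = g$ and all sufficiently large $n$. Split into cases. If $(X',Y')$ lies outside a fixed neighborhood $N$ of $(U,V) := (us^\infty,vt^\infty)$ in $\partial\Sigma \times \partial\Sigma$, then the uniqueness hypothesis and compactness of $\partial\Sigma \times \partial\Sigma$ give $|\pi(X',\omega) - \pi(Y',\omega)| \ge \varepsilon_N > 0$, and by joint continuity in $(X',Y',z)$ this bound persists for $z$ close enough to $\omega$. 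If on the other hand $(X',Y') \in N$, then for $N$ small the pair must have the form $(us^nX, vt^nY)$ for some $n \ge n_0$ and $X,Y \in \partial\Sigma$; a direct computation modeled on Lemma~\ref{lemma:sum_computation} (with $c = 0$ and $X, Y$ in place of $x,y$) gives
\[
\Omega_n^{-(a+bn)}\bigl(\pi(us^nX,\Omega_n)-\pi(vt^nY,\Omega_n)\bigr) \;\longrightarrow\; (\pi(X,\omega)-\pi(Y,\omega)) - \phi(C),
\]
whose modulus is at least $\eta$ uniformly in $X,Y$, since $\pi(X,\omega)-\pi(Y,\omega) \in \Gamma_\omega$. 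Consequently the unscaled displacement is bounded below by $(\eta/2)|\Omega_n|^{a+bn} > 0$ for large $n$, and intersection is impossible.

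The main obstacle lies in the uniformity required in part (2): the convergence in Lemma~\ref{lemma:sum_computation} as stated is pointwise in the suffix, but we need a bound uniform in $X,Y \in \partial\Sigma$ and uniform in the depth $n$ at which $(X',Y')$ first diverges from $(U,V)$. Extracting such a uniform bound means revisiting the estimate on the third summand in the proof of that lemma and using that the auxiliary series $H(z)$ converges absolutely and uniformly on a neighborhood of $\omega$. The two relevant scales --- the parameter perturbation $\omega^{bn}$ and the depth at which a candidate pair diverges from $(U,V)$ --- must be balanced against each other through the normalization factor $\Omega_n^{-(a+bn)}$, and this balance is what dictates the threshold $\delta>0$ appearing in the statement.
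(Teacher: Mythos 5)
Your part~(1) argument is essentially the paper's. You run the same inversion formula on the limit vector from Lemma~\ref{lemma:sum_computation}, pick a trap-like target $V^*$, and note that truncating infinite words realizing $d\in\Gamma_\omega$ produces $C'\to C$ as $c\to\infty$; the paper phrases the same idea by letting $(x,y)$ range over all of $\Sigma_c$ and observing the resulting $C'$-values Hausdorff-converge to $T_\omega$. The implicit assumption that a trap-like vector (equivalently, a non-convex pocket of $\Lambda_\omega$) exists is present in the paper's proof as well, so your explicit flag of this is not a defect relative to the paper.

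For part~(2) you depart from the paper: instead of following the disconnectedness algorithm's stack, you try to bound $|\pi(X',\Omega_n)-\pi(Y',\Omega_n)|$ directly over all pairs with $X'_1=f$, $Y'_1=g$, splitting into a ``far'' case (compactness plus uniqueness) and a ``near'' case. The near case has a genuine gap that goes beyond the uniformity issue you flag. You write a pair in the neighborhood $N$ as $(us^mX, vt^mY)$ and invoke Lemma~\ref{lemma:sum_computation} with the same letter $n$ for the repetition count and the perturbation exponent. But for a fixed $n$, you must handle \emph{all} $m\geq n_0$ simultaneously, while the lemma only computes the limit $\Omega_n^{-(a+bn)}(p_{us^nX}-p_{vt^nY})(\Omega_n)$ along the diagonal $m=n$. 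The regimes $m<n$ (divergence occurs above the perturbation scale, so $\Delta_m(\Omega_n)\approx\Delta_m(\omega)\sim\omega^{bm}$ dominates the $\omega^{bn}$ perturbation) and $m>n$ (the pair continues periodic past depth $a+bn$ before diverging, so the perturbation can be comparable to $\Delta_m(\omega)$) require their own estimates, and the difficult boundary is precisely where these scales cross. The paper sidesteps this by tracking the algorithm's stack: at step $a+bn$ the stack for $\omega+C\omega^{bn}$ is, for $n$ large, a small perturbation of the (eventually stable) $\omega$-stack; the unique long-lived entry is tracked by Lemma~\ref{lemma:sum_computation} and the finitely many transient entries are disposed of by continuity, after which $c$ more steps push everything off the stack because $C\notin T_\omega$ makes $\omega^{-c}\cdot(\cdot)$ diverge. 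That organization is what supplies the uniform-in-$m$ control you are missing. To repair your argument you would either need a two-parameter analogue of Lemma~\ref{lemma:sum_computation} that separates $m$ from $n$ and is uniform over $X,Y$, or you should simply adopt the stack-based bookkeeping, which encodes exactly this decomposition.
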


\begin{remark}
A version of part (2) of Theorem~\ref{theorem:similarity}
still holds if there are finitely many such infinite $U,V$, 
as long as they are eventually 
periodic.  In this case, we need to replace $T_\omega$ with a 
union of multiple scaled, translated copies of 
$\Gamma_\omega$.
\end{remark}

\begin{remark}
We can think of Theorem~\ref{theorem:similarity} as the verification of a kind of
``Renormalized Bandt's Conjecture''. It says that at a renormalizable point $\omega$, there
is an increasing union of open subsets of renormalizable traps, limiting to the
asymptotically scaled copy of $\SetA$ centered at $\omega$.
It implies (but is stronger than) one of the main consequences of
Theorem~2.3 from Solomyak \cite{Solomyak}, that suitable
neighborhoods of zero in $T_\omega$ converge in the sense of Hausdorff distance 
to suitably scaled neighborhoods of $\omega$ in $\SetA$.

In contrast to Solomyak, our argument is more closely expressed in the language of
algorithms, since one of our aims was always to use this theorem to obtain numerical
certificates of the existence of hole spirals. This is stated carefully in
Lemma~\ref{lemma:limit_traps_general}.
\end{remark}

\begin{lemma}\label{lemma:limit_traps_general}
Let $u,v$ have length $a$; let $s,t$ have length $b$, and 
let $x,y$ have length $c$. Let $\omega$ be a 
renormalization point for $u,v,s,t$.  Write 
$P(z) = p_{us^\infty}(z) - p_{vt^\infty}(z)$.  Suppose that $C$ is such that 
the vector
\[
\omega^{-a-c}(p_u(\omega)-p_v(\omega)) +
 \omega^{-c}(p_x(\omega)-p_y(\omega)) + \omega^{-a-c}CP'(\omega)
\]
is trap-like for $\omega$.  Then the words $us^nx$, $v,t^ny$ 
give a trap for $\omega + C\omega^{bn}$ for all sufficiently 
large $n$.
\end{lemma}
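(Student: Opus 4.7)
The plan is to combine the asymptotic computation in Lemma~\ref{lemma:sum_computation} with the trap certification machinery of Lemma~\ref{lemma:ball_of_traps}, which together have done essentially all of the real work. Since $us^nx$ and $vt^ny$ have length $a+bn+c$, and since any word $w$ of length $\ell$ satisfies $w(z,1/2) = z^\ell/2 + p_w(z)$, the displacement vector that Lemma~\ref{lemma:ball_of_traps} asks us to test, namely
\[
z^{-(a+bn+c)}\bigl(us^nx(z,1/2) - vt^ny(z,1/2)\bigr),
\]
collapses to $z^{-(a+bn+c)}(p_{us^nx}(z) - p_{vt^ny}(z))$, which is precisely the quantity $E_n$ analyzed in Lemma~\ref{lemma:sum_computation}.

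First I would fix a small square region $R \subseteq \D^*$ containing $\omega$ in its interior, choose the hull depth large enough that the trap-like ball construction of Section~\ref{section:holes} produces a trap-like ball $B_\alpha(p)$ for $R$ whose interior contains the vector hypothesized in the lemma, and note that since $C\omega^{bn} \to 0$, for all sufficiently large $n$ we have $\omega + C\omega^{bn} \in R$. Next, by Lemma~\ref{lemma:sum_computation} applied with the parameter $z = \omega + C\omega^{bn}$, the displacement vector $E_n$ converges as $n \to \infty$ to exactly the vector named in the hypothesis, which lies in the open set $\operatorname{int} B_\alpha(p)$. Hence there is an $N$ so that for all $n \ge N$ the vector $E_n$ itself lies in $B_\alpha(p)$.

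At this point Lemma~\ref{lemma:ball_of_traps} applies directly with $m = a+bn+c$ and with the pair of words $(us^nx, vt^ny)$ (which start with $f$ and $g$ provided $u$ and $v$ do, as is implicit in the setup of renormalization points), yielding a trap for $\omega + C\omega^{bn}$. One need only confirm that the quantitative margin $\alpha - |E_n - p|$ furnished by Lemma~\ref{lemma:ball_of_traps} is positive for large $n$, but this is an immediate consequence of the convergence $E_n \to $ \emph{(interior point of $B_\alpha(p)$)}.

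The only substantive obstacle I foresee is bookkeeping: ensuring that the single trap-like ball $B_\alpha(p)$ computed from $\omega$ on the fixed region $R$ really does certify traps at the moving parameters $\omega + C\omega^{bn}$ as $n$ grows. This is precisely the stability content built into Definition~\ref{definition:trap_like_balls} — trap-like balls were designed to be valid uniformly across their defining region, not just at its center — so the argument is entirely modular on top of the two cited lemmas, with no new analytic estimates required.
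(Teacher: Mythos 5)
Your proposal is correct and takes essentially the same approach as the paper: apply Lemma~\ref{lemma:sum_computation} to see that the trap-determining displacement vector $z^{-(a+bn+c)}\bigl(p_{us^nx}(z)-p_{vt^ny}(z)\bigr)$ at $z=\omega+C\omega^{bn}$ converges to the hypothesized vector, then use openness of the trap-like condition to conclude. The paper's own proof is just a two-sentence appeal to ``essentially immediate from Lemma~\ref{lemma:sum_computation},'' so your explicit routing through Lemma~\ref{lemma:ball_of_traps} merely fills in the certification mechanism the paper leaves implicit.
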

\begin{proof}
This is essentially immediate from Lemma~\ref{lemma:sum_computation}, 
which says that the vector which determines whether 
$us^nx$, $vt^ny$ give a trap converges to the above expression 
as $n$ gets large.  Hence, if the above is trap like, 
we get a trap for $\omega + C\omega ^{bn}$ for all $n$ large enough.
\end{proof}

If Lemma~\ref{lemma:limit_traps_general} holds for 
some point $\omega$ and $C$, we say that $C$ admits a 
\emph{limit trap} for $\omega$.

\begin{proof}[Proof of Theorem~\ref{theorem:similarity}]
We first prove part (1).  Let us be given $C \in T_\omega$.
By Lemma~\ref{lemma:limit_traps_general}, if the vector:
\[
\omega^{-a-c}(p_u(\omega)-p_v(\omega)) +
 \omega^{-c}(p_x(\omega)-p_y(\omega)) + \omega^{-a-c}KP'(\omega)
\]
is trap-like for $\omega$, then $K$ admits a limit trap.
Let $T$ be a trap-like vector.  Then we can solve for the 
associated value $C'$ which admits a limit trap:
\[
C' = \omega^{a+c}\frac{T}{P'(\omega)} - 
\frac{p_{u}(\omega)-p_{v}(\omega)}{P'(\omega)} -  
\frac{\omega^{a}}{P'(\omega)}(p_x(\omega)-p_y(\omega))
\]
As $c$ grows and $x$ and $y$ vary over all words of length $c$, 
the first summand goes to zero, and the 
second two together converge (in the Hausdorff topology, say, but 
quite regularly) to $T_\omega$.  
Hence if $C \in T_\omega$, then for any $\epsilon>0$, 
there are words $x,y \in \Sigma_c$ so that 
$C'$ admitting a limit trap as above has $|C-C'|<\epsilon$.
This completes the proof of part (1).

Now we prove part (2).  
When we run Algorithm~\ref{algorithm:disconnectedness} on $\omega$, 
the stack 
entries at stage $a+bn$ are exactly the scaled differences 
$\omega^{-a-bn}(p_x(\omega)-p_y(\omega))$ 
between centers of words $x,y$ of length $a+bn$ (when these 
differences are small enough to remain on the stack).
If there is a unique pair of 
words $U,V$ such that $p_U(\omega) = p_V(\omega)$, then
there is a single 
stack entry 
with infinitely viable children, and it is
$\omega^{-a-bn}(p_{us^n}(\omega)-p_{vt^n}(\omega))$. 
Rewriting this as in the proof of Lemma~\ref{lemma:sum_computation}, 
we see that by making $n$ large, this expression is as close as 
we'd like to $\omega^{-a}(p_u(\omega) - p_v(\omega))$.

When we vary $\omega$ to $\omega + C\omega^{a+bn}$, 
and make $n$ large, then by Lemma~\ref{lemma:sum_computation}, 
we can make this stack entry
as close as we like to 
\[
\omega^{-a}(p_u(\omega) - p_v(\omega)) + \omega^{-a}CP'(\omega)
\]
Therefore, there is a $\delta>0$ as in the statement of the 
theorem such that if $|C|<\delta$, then when we run the disconnectedness 
algorithm on the input $\omega + C\omega^{a+bn}$, the stack at 
step $a+bn$ has the entry (as close as we want to) 
$\omega^{-a}(p_u(\omega) - p_v(\omega)) + 
\omega^{-a}CP'(\omega)$, and every other stack entry has 
children which are eliminated in finite time.  The value for $\delta$ 
can be found by checking how far the limiting entry 
$\omega^{-a}(p_u(\omega) - p_v(\omega))$ 
is from the cutoff; then make $\delta$ 
small enough so that adding the term $\omega^{-a}CP'(\omega)$ 
does not push anything off of or onto the stack.

Now, compute all possible 
children after $c$ more steps; by Lemma~\ref{lemma:sum_computation}, 
we get 
\begin{align*}
X_{x,y} = \omega^{-a-c}(p_u(\omega)-p_v(\omega)) +
 \omega^{-c}(p_x(\omega)-p_y(\omega)) + \omega^{-a-c}CP'(\omega),
\end{align*}
where $x,y$ vary over all words of length $c$.
We rearrange:
\[
\omega^{a}\frac{X_{x,y}}{P'(\omega)} = 
\omega^{-c}\left( C - \left( -\frac{p_u(\omega)-p_v(\omega)}{P'(\omega)} 
-\frac{\omega^a}{P'(\omega)}(p_x(\omega)-p_y(\omega)\right)\right)
\]
However, the fact that $C$ is not in $T_\omega$ 
means that as we increase $c$, the minimum value of 
quantity on the right above goes to infinity.  
Thus, $\min_{x,y}X_{x,y} \to\infty$.  Hence, at 
some finite $c$, every one of these children has left the stack.

Recall the stack entries above are limits of the real 
stack entries we see for step $a+bn+c$, but by choosing 
$n$ large enough, we can make the computation valid (because $c$ is 
some finite number, so there are finitely many quantities to 
bring close to their limits).
Hence for $n$ large enough,
the disconnectedness algorithm certifies that 
the limit set for $\omega+C\omega^{bn}$ is disconnected.
\end{proof}

Figure~\ref{figure:C_values_for_hexahole} shows an example of 
$T_\omega$ near $0$ for the renormalization point 
in Theorem~\ref{theorem:hole_limit}.  See also 
the pictures in \cite{Solomyak}.

\begin{figure}[htb]
\begin{center}
\includegraphics[scale=0.27]{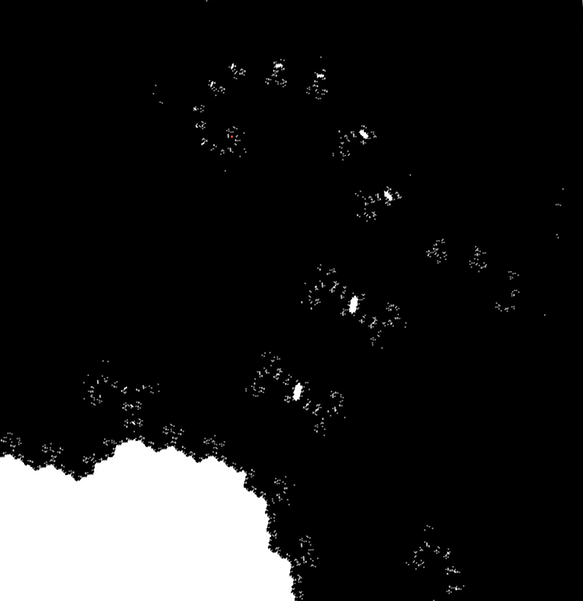}
~~\includegraphics[scale=0.276]{hexahole_spiral}
\end{center}
\caption{A portion of the limit set $T_\omega$ near $0$ (left) for 
$\omega \approx 0.371859+0.519411i$ and 
set $\SetA$ near $\omega$ (on right).} 
\label{figure:C_values_for_hexahole}
\end{figure}

We end this section by proposing two (related) conjectures:

\begin{conjecture}\label{conjecture:algebraic_points}
The algebraic points in $\partial \SetA$ are dense in $\partial \SetA$.
\end{conjecture}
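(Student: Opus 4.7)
The plan is to approximate any boundary point $z_0 \in \partial\SetA$ by algebraic points lying on the boundary itself, obtained as roots of explicit polynomial equations produced from the combinatorics of the witness that $z_0 \in \SetA$. Fix infinite words $u,v \in \partial \Sigma$ with $u$ starting with $f$, $v$ starting with $g$, and $\pi(u,z_0) = \pi(v,z_0)$. Let $u_n,v_n \in \Sigma_n$ be their length-$n$ prefixes, and form the eventually periodic extensions $\tilde u_n = u_n f^\infty$ and $\tilde v_n = v_n g^\infty$. Since $\pi(f^\infty,z) \equiv 0$ and $\pi(g^\infty,z) \equiv 1$, Proposition~\ref{proposition:power_series} gives
\[
P_n(z) \;:=\; \pi(\tilde u_n, z) - \pi(\tilde v_n, z) \;=\; u_n(z,0) - v_n(z,1),
\]
a polynomial in $z$ of degree $n$ with coefficients in $\{-2,-1,0,1,2\}$. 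Any root $z^*$ of $P_n$ satisfies $\pi(\tilde u_n, z^*) \in f\Lambda_{z^*}\cap g\Lambda_{z^*}$, so $z^*$ is algebraic and lies in $\SetA$.

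First I would verify accumulation. The H\"older estimate of Lemma~\ref{lemma:Holder_estimate}, applied with a parameter depending continuously on $z$, shows that $P_n \to P$ uniformly on compact subsets of $\D^*$, where $P(z) := \pi(u,z) - \pi(v,z)$ is holomorphic and satisfies $P(z_0) = 0$. Unless $u = f^\infty$ and $v = g^\infty$ (a degenerate case handled separately by taking a different witness), $P$ is nonconstant, so Hurwitz's theorem produces roots $z_n^*$ of $P_n$ with $z_n^* \to z_0$.

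The central obstacle is upgrading ``$z_n^* \in \SetA$'' to ``$z_n^* \in \partial \SetA$''. My strategy is to exploit the extremal choice of extensions by $f^\infty,g^\infty$: these prevent the pair $(\tilde u_n, \tilde v_n)$ from being the ``first'' to link at $z_n^*$, in the sense that perturbing $z^*$ off the algebraic curve $\{P_n = 0\}$ in most directions will open a small Schottky gap between $f\Lambda_z$ and $g\Lambda_z$. To make this rigorous, I would show, following the philosophy of Section~\ref{section:numerical_schottky} applied in reverse, that if $z_0 \in \partial \SetA$ is a \emph{minimal} boundary point—i.e., the intersection $f\Lambda_{z_0}\cap g\Lambda_{z_0}$ is realized by a unique pair of infinite words—then the same is true for $z_n^*$ with the truncated pair $(\tilde u_n, \tilde v_n)$ playing the role of the unique witness. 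Minimality of $z_n^*$ would then imply $z_n^* \in \partial \SetA$, since the intersection would be breakable by perturbation (concretely, by perturbing in a direction orthogonal to the complex gradient of $P_n$, which sends $z_n^*$ into the Schottky region).

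The hard part is showing that minimal boundary points are themselves dense in $\partial \SetA$. For this I would use Theorem~\ref{theorem:similarity}: at a renormalization point $\omega$ with a unique witness pair, part (2) of that theorem asserts that the structure of $\SetA$ near $\omega$ is controlled, asymptotically, by the affine copy $A + B\Gamma_\omega$, and renormalization points are themselves abundant on $\partial\SetA$ since they include all the Galois-conjugate accumulation points of the polynomial root sets. This reduces the density statement to an analogous density property inside the three-generator limit set $\Gamma_\omega$, which one can iterate via the difference operation of Section~\ref{section:differences}. The remaining, genuinely hard, case is that of non-renormalizable and non-minimal boundary points—where infinitely many witness pairs collide simultaneously—and I expect this to require new ideas, perhaps a transversality argument showing that such degenerate boundary points form a nowhere-dense subset of $\partial\SetA$.
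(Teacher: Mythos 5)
You are attempting to prove Conjecture~\ref{conjecture:algebraic_points}, which the paper leaves \emph{open}: the authors state it as a conjecture at the end of Section~\ref{section:renormalization} and only remark that they ``believe that fixed points of renormalization are the key,'' without offering an argument. So there is no proof in the paper to compare against; your proposal must stand on its own, and it does not.

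The core difficulty is that producing algebraic points \emph{in} $\SetA$ near $z_0$ is essentially free --- by Proposition~\ref{proposition:SetA_power_series}, $\SetA$ is the closure of roots of polynomials with coefficients in $\lbrace -1,0,1\rbrace$, so such roots are dense throughout $\SetA$, interior included, and your construction of $z_n^*$ as roots of $P_n$ adds nothing beyond this. The entire content of the conjecture is forcing the approximating algebraic points onto $\partial\SetA$, and this is exactly where your argument has no substance. The ``minimality implies boundary'' step is stated as something you \emph{would} show but no mechanism is given: uniqueness of the witness pair $(\tilde u_n,\tilde v_n)$ only controls that particular pair, and knowing $P_n\ne 0$ at a perturbed parameter says nothing about whether a \emph{different} pair of infinite words collides there. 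Since Theorem~\ref{theorem:interior_dense} guarantees that interior points of $\SetA$ are dense away from $\R$, you cannot assume a nearby parameter is Schottky without an actual argument. The phrase ``perturbing in a direction orthogonal to the complex gradient of $P_n$'' is also vacuous: $P_n$ is a polynomial of one complex variable, its zero set is a finite set of points, and \emph{every} nonzero perturbation leaves it immediately, so no distinguished orthogonal direction exists. Finally, you explicitly defer ``the remaining, genuinely hard, case'' of non-minimal, non-renormalizable boundary points to unspecified ``new ideas,'' which concedes that the proposal is not a proof. As expected for an open conjecture, the approach is a plausible heuristic sketch, not a completed argument.
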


\begin{conjecture}\label{conjecture:limits_of_holes}
Every point in $\partial \SetA$ not on the real axis is a 
limit of a sequence of holes with diameters going to zero.
\end{conjecture}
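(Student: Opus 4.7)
The plan is to combine Theorem~\ref{theorem:similarity} with a density argument for renormalization points inside $\partial \SetA \setminus \R$; I expect the density step to be the real obstruction.

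First, note that being a limit of holes with diameters going to zero is a closed condition on $z \in \partial \SetA$. Indeed, if $z_n \to z$ and each $z_n$ admits holes $H_{n,k}$ with $\mathrm{diam}(H_{n,k}) \to 0$ and $\mathrm{dist}(H_{n,k}, z_n) \to 0$ as $k \to \infty$, then a diagonal choice of indices $k_n$ produces holes $H_{n,k_n}$ with $\mathrm{diam} \to 0$ and $\mathrm{dist}(\cdot, z) \to 0$. So it suffices to verify the conclusion on a set that is dense in $\partial \SetA \setminus \R$.

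Next I would promote the argument of Theorem~\ref{theorem:hole_limit} into a general principle: holes accumulate at every renormalization point $\omega \in \partial \SetA \setminus \R$ for which $f\Lambda_\omega \cap g\Lambda_\omega$ is a single point and the associated set $T_\omega = A + B\Gamma_\omega$ of Theorem~\ref{theorem:similarity} has at least one bounded complementary component near $0$. Under these hypotheses, part~(1) of Theorem~\ref{theorem:similarity} places $\omega + C\omega^{bn}$ in $\mathrm{int}(\SetA)$ for $C$ sufficiently near any point of $T_\omega$, while part~(2) places $\omega + C\omega^{bn}$ in $\D^* \setminus \SetA$ for small $C$ outside $T_\omega$. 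A bounded complementary component of $T_\omega$ near $0$ therefore transfers, at each scale $|\omega|^{bn}$, to a bounded component of $\D^* \setminus \SetA$ at distance $O(|\omega|^{bn})$ from $\omega$ and of diameter $O(|\omega|^{bn})$; letting $n \to \infty$ gives the required accumulation of holes. The topological input (a bounded complementary component of $T_\omega$) is a concrete instance for the hexahole case, and for generic $\omega$ off $\R$ it should follow from the non-convex proper compact nature of $\Gamma_\omega$ together with a local analysis of the three-generator IFS $x \mapsto z(x \pm 1) \mp 1,\ x \mapsto zx$; I would expect this to reduce to a finite case check rather than a serious new idea.

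The main obstacle is density of such renormalization points in $\partial \SetA \setminus \R$. An eventually-periodic coincidence $\pi(us^\infty, \omega) = \pi(vt^\infty, \omega)$ is exactly a renormalization datum, and such coincidences cut out algebraic subvarieties of $\D^*$, so the density question is essentially Conjecture~\ref{conjecture:algebraic_points}. I would attempt it by iterating Lemma~\ref{lemma:surjective_perturb}: starting from $z_0 \in \partial \SetA \setminus \R$ and a coincidence $\pi(u, z_0) = \pi(v, z_0)$ given by $z_0 \in \SetA$, perturb $z_0$ by amounts $\varepsilon_k \to 0$ so that at each stage a longer prefix of $u, v$ together with a chosen periodic continuation realizes an exact coincidence at the perturbed parameter. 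The surjective perturbation lemma lets one hit the exact coincidence; the remaining difficulty is that the perturbed parameter must stay on $\partial \SetA$ rather than drifting into $\mathrm{int}(\SetA)$. Enforcing this transversality to the boundary would seem to require an implicit-function or Schauder-type argument tracking the boundary along the perturbation, and this is where I expect the proof to require tools beyond those developed in the present paper.
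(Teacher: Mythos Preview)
The statement you are addressing is a \emph{conjecture}, not a theorem: the paper offers no proof, only heuristic support and a discussion of obstructions. So there is no paper proof to compare against; rather, your proposal should be read as a sketch of a possible attack, and evaluated against what the paper itself says about the difficulty.

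Your strategy is in fact very close to the paper's own heuristic. The paper explicitly suggests that renormalization fixed points are the key to both Conjecture~\ref{conjecture:algebraic_points} and Conjecture~\ref{conjecture:limits_of_holes}, for precisely the reasons you give: at such a point $\omega$, $\SetA$ is asymptotically similar to a scaled copy of $\Gamma_\omega$, and a bounded complementary region in $\Gamma_\omega$ near the relevant point transfers via Theorem~\ref{theorem:similarity} to an infinite spiral of holes in $\SetA$ limiting on $\omega$. You also correctly identify that the density step is essentially Conjecture~\ref{conjecture:algebraic_points}, which the paper leaves open.

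There is, however, a concrete gap in your middle step that the paper itself flags. You write that for generic $\omega$ off $\R$, the existence of a bounded complementary component of $T_\omega$ near $0$ ``should follow from the non-convex proper compact nature of $\Gamma_\omega$'' and ``reduce to a finite case check.'' The paper gives an explicit candidate counterexample: at $\omega = i/\sqrt{2}$, which \emph{is} a renormalization point on $\partial\SetA\setminus\R$, the limit set $\Gamma_\omega$ of the three-generator IFS is full and in fact convex. Thus $T_\omega$ has no bounded complementary component, and your mechanism for producing holes via Theorem~\ref{theorem:similarity} breaks down entirely at this point. The paper notes that small holes can nonetheless be found numerically nearby, but no renormalization-based explanation is available. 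So even granting density of suitable renormalization points, your reduction to ``$\Gamma_\omega$ is non-convex'' is not automatic and fails in at least one known case; this is a genuine obstruction, not a finite case check.
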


We believe that fixed points of renormalization are the key to both conjectures;
such fixed points are on the one hand algebraic, and on the other hand points where
$\SetA$ is asymptotically self-similar, and asymptotically similar to the limit
set of a 3-generator IFS. It is very easy for a connected limit set of
a 3-generator IFS to fail to be simply-connected: irregularities in the frontiers of
the translates overlap each other in complicated ways, cutting off tiny holes. Once there
is one tiny hole, there will be infinitely many, accumulating densely in the boundary of the
limit set; thus one expects the corresponding point in $\SetA$ to be a limit of tiny holes.

The experimental evidence for Conjecture~\ref{conjecture:limits_of_holes} is ambiguous.
On the one hand, a computer-aided search (using {\tt schottky}) will only reveal the holes at
any scale that are big enough to see, so one must develop heuristics to identify promising
regions for exploration. On the other hand, failure to find holes near some given frontier
point does not rule out the possibility that they might exist, but be very elusive.

In a private communication, Boris Solomyak suggested that there might be no tiny holes
accumulating at the point $i/\sqrt{2}$ in $\partial \SetA$; this is an especially good candidate
counterexample to Conjecture~\ref{conjecture:limits_of_holes}, 
since although it is algebraic --- and in fact a fixed point
of renormalization --- the limit set of the corresponding 3-generator IFS {\em is} full,
and in fact {\em convex}. Thus one could not hope to prove the existence of a renormalization
sequence of holes, certified by loops of limit traps, limiting to $i/\sqrt{2}$. On the other
hand, very small holes {\em can} be found by hand, as close to $i/\sqrt{2}$ as the resolution
allows --- the (numerically certified) hole at $0.02269108+0.70320806i$ is a good example.

\section{Whiskers}\label{section:whiskers}

In this section we discuss the subtle problem of the structure of $\SetA$ and $\SetB$ near the
real axis.

\subsection{Whiskers are isolated}\label{subsection:whiskers_isolated}

In light of Theorem~\ref{theorem:interior_dense} it might be surprising that the structure of
$\SetA$ and $\SetB$ near the real axis can be very complicated. 
In fact, as was already observed by Barnsley-Harrington \cite{Barnsley_Harrington},
there is an open neighborhood of the points $\pm 1/2$ in $\pm [1/2,1/\sqrt{2}]$ in which $\SetA$
is totally real. We give an elementary proof of this fact, using
the description of the limit set $\Lz$ as the values of certain power
series in $z$, as described in Section~\ref{subsection:coefficients}. Getting a better estimate
depends on analyzing a real 2-dimension IFS introduced by Shmerkin-Solomyak 
\cite{Shmerkin_Solomyak} which we discuss and study in Section~\ref{subsection:2d_IFS}.

\begin{lemma}[Whiskers isolated]\label{lemma:whiskers_isolated}
There is some $\alpha>1/2$ so that the intersection of $\SetA$ with some open subset of $\C$ is
equal to the interval $[1/2,\alpha)$.
\end{lemma}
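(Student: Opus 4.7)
The plan is to combine a direct analysis of $\SetA$ on the real axis near $1/2$ with the Barnsley--Harrington fact (Theorem~\ref{theorem:Barnsley_Harrington_whiskers}) that $\SetA$ is contained in $\R$ in some complex neighborhood of $1/2$.

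First I would pin down $\SetA\cap\R$ near $1/2$. For real $z\in[1/2,1)$, the interval $[0,1]$ is invariant under both $f$ and $g$ and satisfies $f([0,1])\cup g([0,1])=[0,z]\cup[1-z,1]=[0,1]$ (using $1-z\le z$); hence $\Lz=[0,1]$ is connected, so $z\in\SetA$. For real $z\in(0,1/2)$, $|z|<1/2$, so $\Gz$ is Schottky by Lemma~\ref{lemma:inner_outer} and $z\notin\SetA$. Thus $\SetA\cap(0,1)=[1/2,1)$. Given a Barnsley--Harrington neighborhood $V\ni 1/2$ with $\SetA\cap V\subseteq\R$, taking $U=B_r(1/2)\subseteq V$ for some $r<1/2$ yields
\[
\SetA\cap U \;=\; \R\cap U\cap\SetA \;=\; (1/2-r,\,1/2+r)\cap[1/2,1) \;=\; [1/2,\,1/2+r),
\]
proving the lemma with $\alpha=1/2+r$.

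The real work is in the Barnsley--Harrington step, for which an elementary proof is promised using the power series description of Section~\ref{subsection:coefficients}: $z\in\SetA$ is equivalent to the vanishing at $z$ of some $F_{u,v}(z):=\pi(u,z)-\pi(v,z)$ with $u,v\in\partial\Sigma$ starting with $f$ and $g$ respectively. Each $F_{u,v}$ has real coefficients with constant term $-1$ and higher coefficients in $\{-2,-1,0,1,2\}$. Since $\Lambda_{1/2}=[0,1]$ and $f\Lambda_{1/2}\cap g\Lambda_{1/2}=\{1/2\}$ is realized only by $(u,v)=(fg^\infty,gf^\infty)$, the unique $F_{u,v}$ vanishing at $1/2$ is $F(z)=2z-1$; every other pair gives $F_{u,v}(1/2)\in[-1,0)$.

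Pairs $(u,v)$ with $|F_{u,v}(1/2)|$ bounded away from $0$ cannot have roots near $1/2$, by the uniform Lipschitz estimate $|F'_{u,v}(z)|\le 2/(1-|z|)^2$. The main obstacle is pairs with $|F_{u,v}(1/2)|$ very small: such $(u,v)$ necessarily share a long common prefix with $(fg^\infty,gf^\infty)$, so $u$ begins with $fg^k$ and $v$ with $gf^k$ for large $k$, and a direct computation gives
\[
F_{u,v}(z)\;=\;(2z-1)+z^{k+1}G_{u,v}(z),
\]
with $G_{u,v}$ uniformly bounded. A Rouch\'e comparison with $2z-1$ on a small circle around $1/2$ then shows $F_{u,v}$ has exactly one root inside that circle; since $F_{u,v}$ has real coefficients, this root must equal its own complex conjugate and therefore lie in $\R$. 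Piecing together an absolute $\rho>0$, independent of $(u,v)$, for which no $F_{u,v}$ has a non-real root in $B_\rho(1/2)$ produces the required Barnsley--Harrington neighborhood $V$.
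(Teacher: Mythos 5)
Your proof is correct, but it takes a genuinely different analytic route from the paper's. The paper also reduces to the power series description and to the fact that the only "witnesses" to $f\Lz\cap g\Lz\neq\varnothing$ near $z=1/2$ are pairs $(u,v)=(fg^{k}\cdots,\,gf^{\ell}\cdots)$ with $k,\ell$ large; but instead of packaging everything into the difference function $F_{u,v}$ and counting its roots by Rouch\'e, it works directly with the individual coordinate functions $\pi(e,\cdot)$. Concretely, the paper observes that for real $z=1/2+\epsilon$ and $e$ near the overlap, $d\pi(e,z)/dz$ is bounded above $0.1$ when $e$ begins with $f$ and is strictly negative when $e$ begins with $g$, so the perturbation $z\mapsto z+i\delta$ pushes $f\Lz$ strictly into the upper half plane and $g\Lz$ strictly into the lower half plane near the overlap region, forcing disjointness. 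Your Rouch\'e argument is a nice alternative: it trades the paper's geometric separation picture for an algebraic root-count, and the observation that a \emph{single} zero of a real-coefficient power series in a conjugation-symmetric disk must be real is a clean way to land back on the real axis. Two small points to tighten: first, the exponent $k$ appearing in $F_{u,v}(z)=(2z-1)+z^{k+1}G_{u,v}(z)$ is really $\min(k_1,k_2)$ where $fg^{k_1}$ is the maximal such prefix of $u$ and $gf^{k_2}$ of $v$ (they need not be equal), with $\lvert G_{u,v}(1/2)\rvert\ge 1/2$ in every case, so $\lvert F_{u,v}(1/2)\rvert\ge (1/2)^{k+2}$; second, the "bounded away from $0$" threshold and the Rouch\'e radius produce two different radii, and the uniform $\rho$ you want at the end must be taken as the minimum of the two; this works, but deserves to be said explicitly since the two radii scale differently in the cutoff $k_0$.
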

\begin{proof}
Recall that for $e \in \partial \Sigma$ the image $\pi(e,z) \in \Lz$ is the value of the power
series $\pi(e,z):= a_0 + a_1z + a_2z^2 + \cdots$ where the coefficients $a_i$ are determined recursively
from the infinite word $e$ by the method in Proposition~\ref{proposition:power_series}. The
key point is that the nonzero coefficients alternate between $1$ and $-1$, starting with $1$.

Let $z=1/2+\epsilon$ be real, for some small
positive $\epsilon$. The limit set $\Lz$ is exactly equal to the unit interval, and
$f\Lz = [0,z]$, $g\Lz = [1-z,1]$ so that the intersection is exactly the
interval $[1/2-\epsilon,1/2+\epsilon]$. The words $e$ with $\pi(e,z)$ in the overlap all
start with $fg^n$ or $gf^n$ for some big $n$ (depending only on $n$) so that the power
series are of the form $z-z^{n+1}+\cdots$ or $1-z^{n+1}+z^{m} -\cdots$ depending whether $e$ starts
with $f$ or $g$, and in the second case $m>n+1$ (we include the possibility that $m=\infty$). 
In the first case, $d\pi(e,z)/dz = 1 - (n+1)z^{n+1} + \cdots > 0.1$, while in
the second case $d\pi(e,z)/dz = -(n+1)z^n + mz^{m-1} - \cdots < 0$ for big $n$ and any fixed
$z<1$. Since the derivative is holomorphic in $z$, this means that if we perturb $z$ to
$z+i\delta$ for some small positive $\delta$, the imaginary part of $\pi(e,z)$ becomes {\em positive}
for $e$ beginning with $f$, and {\em negative} for $e$ beginning with $g$ (at least for
$\pi(e,z)$ close to the interval $[1/2-\epsilon,1/2+\epsilon]$), so that the two
sets $f\Lz$ and $g\Lz$ are disjoint, and we are in the complement of $\SetA$.
\end{proof}

\subsection{A 2-dimensional IFS}\label{subsection:2d_IFS}

We will push this argument further by analyzing the pairs 
$(\pi(u,z),d\pi(u,z)/dz)$ and $(\pi(v,z),d\pi(v,z)/dz)$ for
left-infinite words $u,v \in \partial \Sigma$ starting with $f$ and $g$ respectively, and showing that
for all real $z$ in the interval $[0.5,0.6684755]$ the pairs are disjoint. 

Shmerkin-Solomyak \cite{Shmerkin_Solomyak} introduce a 2-dimensional {\em real} IFS acting
on $\R^2$ whose limit set is precisely the pairs $(\pi(u,z),d\pi(u',z)/dz)$ for $u\in \partial \Sigma$. Explicitly, 
for real $z \in (-1,1)$, define
$$f^{(1)}: (x,y) \to (zx, x+zy), \quad g^{(1)}:(x,y) \to (z(x-1)+1,x-1+zy)$$
and let $\tdLz$ denote the limit set of the IFS 
generated by $f^{(1)}$ and $g^{(1)}$ (the notation is supposed to suggest the action of our
familiar $f$ and $g$ on 1-jets). 
Analogous to our standard notation, we will write $u(z,x)$ for the 
action of the word $u\in\Sigma$ on $x \in \R^2$ for a 
parameter $z \in \R$.  Also, we write 
$\pi(u,z) = \lim_{n\to\infty}u_n(z,x)$, where the limit does not 
depend on $x$.

\begin{lemma}\label{lemma:2d_ifs}
Let $z \in \R$ and suppose $f^{(1)}(z,\tdLz)$ and $
g^{(1)}(z,\tdLz)$ are disjoint. Then 
$\SetA$ is totally real in an open neighborhood of $z$. 
\end{lemma}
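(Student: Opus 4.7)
The plan is to prove that, under the disjointness hypothesis, $\SetA$ cannot meet an open complex neighborhood of $z$ off the real axis. The mechanism is that the 2-dimensional real IFS $\tdLz$ exactly encodes the pair (value, $\partial_z$-derivative) along the real axis, and complex perturbation into the imaginary direction promotes this derivative information to the imaginary part of $\pi(u,\cdot) - \pi(v,\cdot)$.

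First I would verify the basic dictionary: for real $z$,
\[ \tdLz = \bigl\{(\pi(u,z), \partial_z \pi(u,z)) : u \in \partial \Sigma\bigr\}, \]
with $f^{(1)}\tdLz$ corresponding to $u$ starting with $f$ and $g^{(1)}\tdLz$ to $u$ starting with $g$. This is a direct check: applying $f^{(1)}$ to $(\pi(u,z), \pi'(u,z))$ gives $(z\pi(u,z), \pi(u,z) + z\pi'(u,z)) = (\pi(fu,z), \partial_z \pi(fu,z))$, and similarly for $g^{(1)}$. By hypothesis the two compact sets $f^{(1)}\tdLz$ and $g^{(1)}\tdLz$ are separated by some $\delta_0 > 0$, and by continuity of the real IFS in its parameter this separation persists (with constant $\delta_0/2$) on an open real interval $I \ni z$. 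Since the coefficients of $\pi(u,\cdot)$ are real (in fact in $\{-1,0,1\}$), both $\pi(u,x)$ and $\pi'(u,x)$ are real for $x \in I$, and so for every $x \in I$ and every pair $u,v \in \partial\Sigma$ with $u$ starting with $f$ and $v$ starting with $g$,
\[ (\pi(u,x) - \pi(v,x))^2 + (\pi'(u,x) - \pi'(v,x))^2 \ge \delta_0^2/4. \]

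Next I would fix a complex neighborhood $U \subseteq \{|z| \le 1-\eta\}$ of $I$. The bound $|a_j| \le 1$ on the power-series coefficients of $\pi(u,z) = \sum a_j z^j$ yields the uniform estimate $|\pi''(u,z_1)| \le 2/\eta^3$ for all $u \in \partial \Sigma$ and $z_1 \in U$. For $z_1 = x_1 + iy_1 \in U$ with $x_1 \in I$, Taylor expansion in the imaginary direction gives
\[ \pi(u,z_1) - \pi(v,z_1) = [\pi(u,x_1) - \pi(v,x_1)] + iy_1[\pi'(u,x_1) - \pi'(v,x_1)] + y_1^2 R_{u,v}(z_1), \]
with $|R_{u,v}(z_1)| \le 2/\eta^3$ uniform in $u,v$. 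Because the two coefficients of the leading terms are real, the squared modulus of their sum is $(\pi(u,x_1)-\pi(v,x_1))^2 + y_1^2(\pi'(u,x_1)-\pi'(v,x_1))^2 \ge \min(1,y_1^2)\delta_0^2/4$. Thus for $|y_1| \le 1$ the leading part has modulus at least $|y_1|\delta_0/2$, while the remainder contributes at most $(2/\eta^3)y_1^2$. Choosing $|y_1|$ small enough that $(2/\eta^3)|y_1| < \delta_0/4$, the triangle inequality gives $\pi(u,z_1) \ne \pi(v,z_1)$ uniformly over all admissible $u,v$. Hence $f\Lambda_{z_1} \cap g\Lambda_{z_1} = \varnothing$, so by Lemma~\ref{lemma:disconnected_Cantor}, $z_1 \notin \SetA$. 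Restricting $U$ to $\{|\operatorname{Im}(z_1)| < \delta_0\eta^3/8\}$ gives the desired open neighborhood in which $\SetA$ is totally real.

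The main obstacle is simply arranging that every bound in play is uniform in $u$ and $v$: the separation $\delta_0$ is a uniform lower bound on the pairwise distance between points of $f^{(1)}\tdLz$ and $g^{(1)}\tdLz$, and the second-derivative bound $2/\eta^3$ is uniform over all infinite words. Both uniformities are automatic from the rigid form of the power series (coefficients in $\{-1,0,1\}$) on any compact subset of $\D^*$, so the argument is really just bookkeeping of elementary estimates rather than anything substantive. The one conceptual point that drives the proof is the observation that moving $z$ perpendicular to the real axis converts $\R^2$-separation of first-jet data into honest $\C$-separation of $\pi$-values.
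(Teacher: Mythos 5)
Your argument is correct and takes the same approach the paper indicates. The paper's proof is a one-line cross-reference to the proof of Lemma~\ref{lemma:whiskers_isolated}, which establishes the whisker case by noting that the derivative $d\pi(e,z)/dz$ has a definite sign depending on the initial letter of $e$, so a small imaginary perturbation of $z$ splits $f\Lz$ and $g\Lz$ into the upper and lower half-planes. In the general setting of the $2$-dimensional IFS you cannot use a sign dichotomy, so your quantitative version is exactly what's needed: the disjointness of $f^{(1)}\tdLz$ and $g^{(1)}\tdLz$ gives a uniform lower bound on $(\pi(u,x)-\pi(v,x))^2 + (\pi'(u,x)-\pi'(v,x))^2$, the first-order Taylor term in $iy$ then has modulus at least $|y|\delta_0/2$, and the second-order remainder is $O(y^2)$ with a constant controlled by $|z|\le 1-\eta$ and the $\{-1,0,1\}$ coefficient bound. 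The bookkeeping (choice of $L^1$ vs. $L^2$ metric, the exact constant in $\delta_0^2/4$) is slightly loose but harmless. This is a correct and cleaner filling-in of the argument the paper only sketches.
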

\begin{proof}
This is the same argument as that in used in the proof of  
Lemma~\ref{lemma:whiskers_isolated}.
\end{proof}

Since this condition is open, it can be certified numerically.
Thus, if we define $\Omega_2$ to be the subset of $z \in (-1,1)$ for which $\tdLz$ is
connected, then $\overline{\SetA - \R} \cap \R \subseteq \Omega_2$. One can characterize
$\Omega_2$ as the set of real numbers $z$ of absolute value at most $1$ for which there is
some power series $\zeta(z):= 1 + \sum_{n=1}^\infty a_nz^n$ where each $a_n \in \lbrace -1,0,1\rbrace$
for which $\zeta(z)=\zeta'(z)=0$.
We discuss later the question of whether there are points in $\Omega_2$ 
which do not lie in the closure of the interior of $\SetA$.

Analogous to $\Omega_2$, one can study the subset $\Xi_2 \subseteq (-1,1)$ consisting of $z$ for which 
$\tdLz$ contains the point $(1/2,0)$, and then $\overline{\SetB - \R}\cap \R \subseteq
\Xi_2$.

Shmerkin-Solomyak \cite{Shmerkin_Solomyak} define $\alpha$ to be the smallest positive
real number in $\Omega_2$, and $\tilde{\alpha}$ to be the smallest real number such that
$[\tilde{\alpha},1) \subseteq \Omega_2$.
Experimentally they obtained estimates 
$$\alpha \sim 0.6684755, \quad \tilde{\alpha} \sim 0.67$$ 
We improved the estimate of $\alpha$ to
$$\alpha \sim 0.6684755322100605954110550451436814$$ 
Getting a rigorous estimate of $\tilde{\alpha}$ is much harder, but experimentally
we obtain $\tilde{\alpha} \sim 0.6693556$.

To obtain these estimates, we used an algorithm which is perfectly analogous to 
Algorithm~\ref{algorithm:disconnectedness}, and is proved in essentially the same way.
To describe this algorithm, we use the following shorthand:
$$A:=\begin{pmatrix} z^{-1} & 0 \\ -z^{-2} & z^{-1} \end{pmatrix}, \quad \quad Z:=\begin{pmatrix} 1-z \\ -1 \end{pmatrix}$$
Furthermore, for a $2\times 1$ column vector $X$ we say $X$ is {\em small} if
$|X_1|<1$ and $|X_2| < \sup_{k\ge 1} 2k|z|^{k-1}$, where $k$ is an integer. Note that for $z$ real with 
$|z|<1$, this latter inequality reduces to the analysis of a small fixed number of cases
for $k$. In the regime in which we are interested, $z$ will be quite close to $0.66$, so the 
relevant cases are $k=2$ and $k=3$, and in practice the inequality reduces to
$|X_2|<2.681165$.

\begin{algorithm}[htpb]
\caption{No Multiple Roots$(z,\text{depth})$}\label{algorithm:2d_IFS}
\begin{algorithmic}
\State $V\gets \lbrace AZ \rbrace$
\State $d\gets 0$

\While{$V\ne \emptyset$ or $d<\text{depth}$}
	\State $W \gets \emptyset$
	\ForAll{$X \in V$}
		\If{$A(X-Z)$ is small}
			$W \gets W \cup A(X-Z)$
		\EndIf
		\If{$AX$ is small}
			$W \gets W \cup AX$
		\EndIf
		\If{$A(X+Z)$ is small}
			$W \gets W \cup A(X+Z)$
		\EndIf
	\EndFor
	\State	$V \gets W$
	\State $d \gets d+1$

\EndWhile

\If{$V=\emptyset$} 
\State \Return true
\Else
\State \Return false
\EndIf
\end{algorithmic}
\end{algorithm}

The justification for this algorithm is essentially the same as that of
Algorithm~\ref{algorithm:disconnectedness}.  
To ask whether $L_z$ is connected is equivalent to asking 
whether $f^{(1)}(z,L_z)\cap g^{(1)}(z,L_z) = \varnothing$, which is 
equivalent to asking whether the set of differences contains $0$.
Just as in Section~\ref{section:differences}, the set of 
differences between points in $L_z$ is a limit set itself.  
We denote the set of differences by $L'_z$, and we note it is 
the limit set of the IFS generated by the 
three maps 
\[
F_{-1}:X \mapsto BX - Z, \qquad F_0:X \mapsto BX, \qquad F_1:X \mapsto BX + Z,
\]
where 
\[
B:= \begin{pmatrix} z & 0 \\ 1 & z \end{pmatrix}.
\]
Note $B = A^{-1}$.
We obtain these maps by looking at how pairs of maps $(f,f)$,$(f,g)$,
$(g,f)$, 
and $(g,g)$ act on differences of points; there are only 
three distinct maps.  Since $F_1L'_z$ consists of differences between 
points in $L_z$ whose corresponding infinite words begin with $g^{(1)}$ and 
$f^{(1)}$, respectively, to check whether $L_z$ is connected 
it suffices to check whether $0 \in F_1L'_z$.

To determine whether $0 \in F_1L'_z$, we start with a box $R$ centered 
at $(0,0)$, which is sent inside itself under the 
three maps $F_{-1},F_0,F_1$.  We want to consider $F_1L'_z$, 
so first we apply $F_1$.  Next, we subdivide $F_1R$ 
into its three subboxes, which are $F_1F_{-1}R$, $F_1F_0R$, and 
$F_1F_{-1}R$, and discard 
those which cannot contain $0$.  We then subdivide again, and so on.
Suppose that $X$ is the center (image of $(0,0)$) of an image of $R$ 
under a 
word of length $n$.  Since the centers of $F_{-1,0,1}L'_z$ are 
at $-Z,0,Z$, respectively, the centers of the children of $X$
will be at the points 
$X-B^nZ$, $X$, $X+B^nZ$.  For simplicity, it makes sense to rescale the
problem at every step by $A = B^{-1}$.  Hence, we initialize the 
algorithm with the rescaled $Z$, i.e. $AZ$.  Then we add 
$-Z, 0, +Z$, and rescale by $A$ again, and so on.  Any 
child which lies too far from the origin can be discarded, which is 
exactly what the smallness condition guarantees.  
The precise constants in the smallness condition follow 
from an analysis of how the rectangle $|X_1|\le a$, $|X_2|\le b$ behaves 
under the maps $F_{-1,0,1}$.  It is easy to see that the 
infinite strip $|X_1|\le 1$ is sent inside itself, so $L'_z$ 
lies inside this strip.  Then if we consider the 
images of the four points $(-1,-b)$, $(1,-b)$, $(1,b)$, $(-1,b)$, 
we find that the image with the largest second coordinate 
is $(1,b)$ under the word $F_{-1}^kF_1^\infty$, and this image has
second coordinate $2k|z|^{k-1}$.  Therefore, if we find the $k$ 
maximizing that expression and set $b = 2k|z|^{k-1}$, then the limit 
set must lie in the rectangle $[-1,1]\times [-b,b]$.

If we run Algorithm~\ref{algorithm:2d_IFS} on our numerical value for $\alpha$, the output is
quite interesting. For the correct theoretical value of $\alpha$, 
the set $V$ of children viable to each depth will never be empty, and the same must be true
for our numerical approximation (of course, this is how we find the approximation in the first
place). But what is not obvious from the definition (although it is intuitively plausible)
is the experimental fact that the size of $|V|$ is uniformly bounded independently of the depth $d$, 
and there is apparently a {\em unique} lineage viable to infinite depth. If we denote
the children $A(X-Z)$, $AX$, $A(X+Z)$ of the vector $X$ by $L,M,R$ respectively, then the
(numerically) unique viable descendent of the initial vector $AZ$ to 194 generations is of
the form 
$$L^3 \prod_i (R^i M) \text{ for } i = 1\, 2\, 2\, 3\, 3\, 2\, 7\, 5\, 6\, 6\, 2\, 5\, 1\, 8\, 1\, 6\, 3\, 3\, 5\, 4\, 3\, 2\, 8\, 3\, 9\, 2\, 2\, 1\, 5\, 4\, 8\, 2\, 4\, 3\, 3\, 6\, 2\, 3\, 1\, 5$$
i.e.\/ the first few terms are $L L L R M R R M R R M R R R M \cdots$. One can think of the
values of $i$ as analogs of the terms in the continued fraction expansion of a number. In fact,
the analogy is quite good: if any viable sequence for an initial vector $AZ:=AZ(t)$ is 
eventually periodic, we obtain an identity of the
form $p_1(A)Z = p_2(A)Z$ for distinct 
polynomials $p_1,p_2$ with coefficients in $\lbrace -1,0,1\rbrace$, and therefore deduce that
$t^{-1}$ is a root of $p_1-p_2$ and is therefore algebraic.  The 
branching of the algorithm is shown in Figure~\ref{figure:2d_disconnected_branching}.


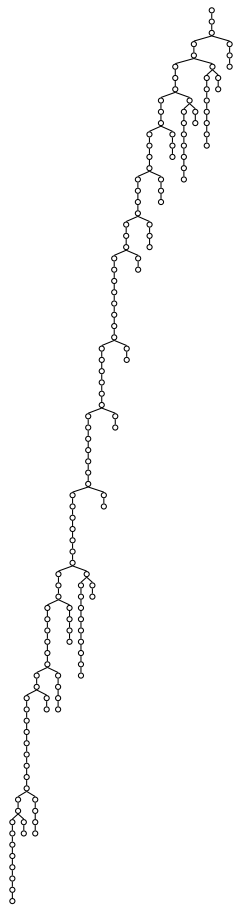
\begin{figure}
\begin{center}
\begin{tikzpicture}
\fontsize{1.5}{1.5}\selectfont
\tikzset{every tree node/.style={draw,circle},level distance=1.5mm}
\Tree [.{} [.{} [.{} [.{} [.{} [.{} [.{} [.{} [.{} [.{} [.{} [.{} [.{} [.{} [.{} [.{} [.{} [.{} [.{} [.{} [.{} [.{} [.{} [.{} [.{} [.{} [.{} [.{} [.{} [.{} [.{} [.{} [.{} [.{} [.{} [.{} [.{} [.{} [.{} [.{} [.{} [.{} [.{} [.{} [.{} [.{} [.{} [.{} [.{} [.{} [.{} [.{} [.{} [.{} [.{} [.{} [.{} [.{} [.{} [.{} [.{} [.{} [.{} [.{} [.{} [.{} [.{} [.{} [.{} [.{} [.{} [.{} [.{} [.{} [.{} [.{} [.{} [.{} [.{} {} ] ] ] ] ] ] ] [.{} {} ] ] ] [.{} [.{} [.{} {} ] ] ] ] ] ] ] ] ] ] ] ] [.{} {} ] ] ] [.{} [.{} [.{} {} ] ] ] ] ] ] ] ] ] [.{} [.{} [.{} {} ] ] ] ] ] ] [.{} [.{} [.{} [.{} [.{} [.{} [.{} [.{} [.{} {} ] ] ] ] ] ] ] ] [.{} {} ] ] ] ] ] ] ] ] ] [.{} {} ] ] ] ] ] ] ] ] [.{} {} ] ] ] ] ] ] ] [.{} {} ] ] ] ] ] ] ] ] ] [.{} {} ] ] ] ] [.{} [.{} {} ] ] ] ] ] ] [.{} [.{} {} ] ] ] ] ] ] [.{} [.{} {} ] ] ] ] ] [.{} [.{} [.{} [.{} [.{} [.{} [.{} {} ] ] ] ] ] ] [.{} {} ] ] ] ] ] [.{} [.{} [.{} [.{} [.{} [.{} [.{} {} ] ] ] ] ] ] [.{} {} ] ] ] ] [.{} [.{} {} ] ] ] ] ]
\end{tikzpicture}
\end{center}
\caption{The branching of 
Algorithm~\ref{algorithm:2d_IFS} on the (numerical) input 
$\alpha$.  The long vertical chains are all $R$, so reading down the left edge 
produces strings of $R$'s of lengths $1$,$2$,$2$,$3$,$3$,$2$,$7$, etc, agreeing 
with the ``continued fraction'' expansion of $\alpha$.}
\label{figure:2d_disconnected_branching}
\end{figure}

In view of our experimental evidence, it seems reasonable to make the following conjecture:

\begin{conjecture}[Unique lineage]\label{conjecture:unique_lineage}
For $\alpha$ as above, there is a unique child at every stage with viable descendents to
all future depths. Furthermore, this viable lineage consists of the initial segments in the sequence
$L^3 \prod_i (R^i M)$ for some sequence $i = 1\,2\,2\, \cdots$ as above, where the terms are
uniformly bounded.
\end{conjecture}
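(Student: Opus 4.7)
The plan is to reinterpret the algorithmic statement dynamically. A descendant of $AZ$ viable to depth $d$ in Algorithm~\ref{algorithm:2d_IFS} corresponds to an ordered pair of words $u,v \in \Sigma_d$ (starting with $f$ and $g$ respectively) such that the rescaled difference $z^{-d}$ times the vector recording the position and $z$-derivative of $u(z,1/2)-v(z,1/2)$ is ``small.'' A lineage viable to all depths therefore corresponds to a pair of right-infinite words $(u,v) \in \partial\Sigma \times \partial\Sigma$, $u$ starting with $f$ and $v$ with $g$, whose images under the $1$-jet map agree at $z=\alpha$: that is, $\pi(u,\alpha) = \pi(v,\alpha)$ \emph{and} $\partial_z\pi(u,\alpha) = \partial_z\pi(v,\alpha)$. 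The conjecture thus splits into (a) there is at most one such pair, and (b) the resulting pair has the stated combinatorial form.

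First I would establish existence of at least one such pair. By definition of $\alpha$ as the infimum of the real parameters for which $\tdLz$ is connected, there exist sequences $z_k \searrow \alpha$ and pairs $(u_k,v_k)$ of unbounded length satisfying the rescaled-smallness condition; passing to a diagonal subsequential limit in the compact space $\partial\Sigma \times \partial\Sigma$ and using the H\"older estimate of Lemma~\ref{lemma:Holder_estimate} (applied also to the derivative in $z$) yields an infinite matching pair at $\alpha$ itself. The main obstacle is \emph{uniqueness}. My approach would exploit the minimality of $\alpha$: at $\alpha$ a matching pair represents a first tangency as $z$ decreases through the connectedness locus. Using the sign-alternation structure of the coefficients from Proposition~\ref{proposition:power_series}, one can compute the second-order behavior of $\pi(u,z)-\pi(v,z)$ as a function of $z$ near $\alpha$ and show that two genuinely distinct tangencies at the same parameter would force either an earlier connected parameter (contradicting minimality of $\alpha$) or a persistent nontrivial intersection of $f^{(1)}\tdLz \cap g^{(1)}\tdLz$ slightly to one side, violating the fact that $\alpha$ lies in $\overline{\SetA-\R}\cap\R$ rather than in the interior of the real connectedness locus.

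For the bounded-itineraries clause I would look for a renormalization operator in the spirit of Section~\ref{section:renormalization}, but adapted to the $1$-jet IFS. The block decomposition $L^3 \prod_i (R^i M)$ suggests a return map on a one-dimensional cross-section of the space of viable vectors: each application of $M$ resets the system, and the power $R^{i}$ records an ``itinerary'' before the reset, much as the entries of a continued fraction record excursions of a Gauss map. Uniform boundedness of the $i_k$ would then translate to the orbit of $\alpha$ under this return map avoiding a cuspidal neighborhood, which in turn would follow if $\alpha$ is itself periodic under renormalization, making it algebraic. The hard part will be two-fold: constructing the renormalization operator rigorously in this real setting where we must track derivatives alongside positions, and then proving that $\alpha$ is a fixed or periodic point --- at present the numerical evidence only shows the sequence of $i_k$ is not obviously periodic, so it is entirely possible that $\alpha$ is transcendental, in which case uniform boundedness would have to be extracted from a hyperbolicity estimate on the renormalization rather than from periodicity. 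Absent such a hyperbolicity estimate, this final clause of the conjecture seems genuinely out of reach of the present techniques.
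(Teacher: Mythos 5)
This statement is explicitly labeled a \emph{conjecture} in the paper, supported only by numerical evidence from running Algorithm~\ref{algorithm:2d_IFS} to depth 194; the authors give no proof, so there is no proof in the paper to compare against. Your proposal is therefore properly read as a strategy rather than a verification, and you are appropriately candid about its status. A few remarks on the substance.

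Your translation of the algorithmic condition is right in spirit: a lineage in the stack of Algorithm~\ref{algorithm:2d_IFS} viable to all depths corresponds (via the difference IFS) to a pair of infinite words $u,v$ with $u_1=f$, $v_1=g$ whose $1$-jets match at $z=\alpha$, i.e.\ $\pi(u,\alpha)=\pi(v,\alpha)$ and $\partial_z\pi(u,\alpha)=\partial_z\pi(v,\alpha)$. One caveat: the algorithm's stack entries are difference \emph{vectors}, so a unique viable stack entry at every depth is a priori weaker than a unique word pair --- distinct pairs could collapse to the same rescaled difference at every stage. Any uniqueness proof has to choose its target carefully here.

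The genuine gap is your uniqueness step. Your claim that ``two genuinely distinct tangencies at $\alpha$ would force an earlier connected parameter or a persistent intersection slightly to one side'' does not follow from the minimality of $\alpha$. Minimality constrains \emph{when} the first contact happens as $z$ increases, not its multiplicity: nothing in the definition of $\alpha$ as the smallest element of $\Omega_2$ rules out two (or more) simultaneous first tangencies, and you have offered no mechanism by which a second tangency at $\alpha$ would produce a contact at a smaller parameter. To make this work you would need a quantitative monotonicity or transversality statement for each individual jet-difference function $z\mapsto(\pi(u,z)-\pi(v,z),\partial_z\pi(u,z)-\partial_z\pi(v,z))$ near $\alpha$, and then a global argument showing the zero sets of these functions cannot collide at the extremal parameter; this is not sketched. (Your invocation of Lemma~\ref{lemma:Holder_estimate} also needs care: that lemma is stated for the complex IFS and for the position map $\pi(\cdot,z)$, not for the real $1$-jet map; the analogous continuity statement for the $1$-jet IFS is plausible but would have to be established, since the Jacobian $B(z)$ is not a uniform contraction of $\R^2$.)

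You correctly identify the bounded-itineraries clause as the hardest part and out of reach without a renormalization or hyperbolicity theory for the $1$-jet IFS. The paper does not attempt any of this; the sequence of exponents $1,2,2,3,3,2,7,\dots$ exhibits no evident periodicity, so $\alpha$ may well be transcendental, and the ``continued fraction'' analogy the paper draws is suggestive of a Gauss-map-like return dynamics whose uniform ellipticity would need to be proven, not assumed. Your assessment that this requires new ideas is accurate.
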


In a similar vein, we define $\beta$ to be the smallest positive real number in $\Xi_2$,
and $\tilde{\beta}$ to be the smallest real number such that $[\tilde{\beta},1) \subseteq \Xi_2$.
Using similar methods we obtain the following estimates
$$\beta \sim 0.67133041244176126776, \quad \tilde{\beta} \sim 0.728781$$
(the same caveat about $\tilde{\beta}$ applies). 
It is easy to modify Algorithm~\ref{algorithm:2d_IFS} to determine, for a given real $z$, 
when there are infinite words $u,v$ so that $\pi(u,z)=\pi(v,z)=1/2$ and 
$d\pi(u,z)/dz = d\pi(v,z)/dz=0$;
we need only consider children $AX-Z$ and $AX+Z$ for each $X$ in the stack $V$, and otherwise
the algorithm runs in exactly the same way.

Figure~\ref{numerical_alpha} gives numerical plots of the subset of the intervals
$[\beta,\tilde{\beta}] \cap \Xi_2$ and $[\alpha,\tilde{\alpha}] \cap \Omega_2$.

\begin{figure}[htpb]
\centering
\includegraphics[scale=0.4]{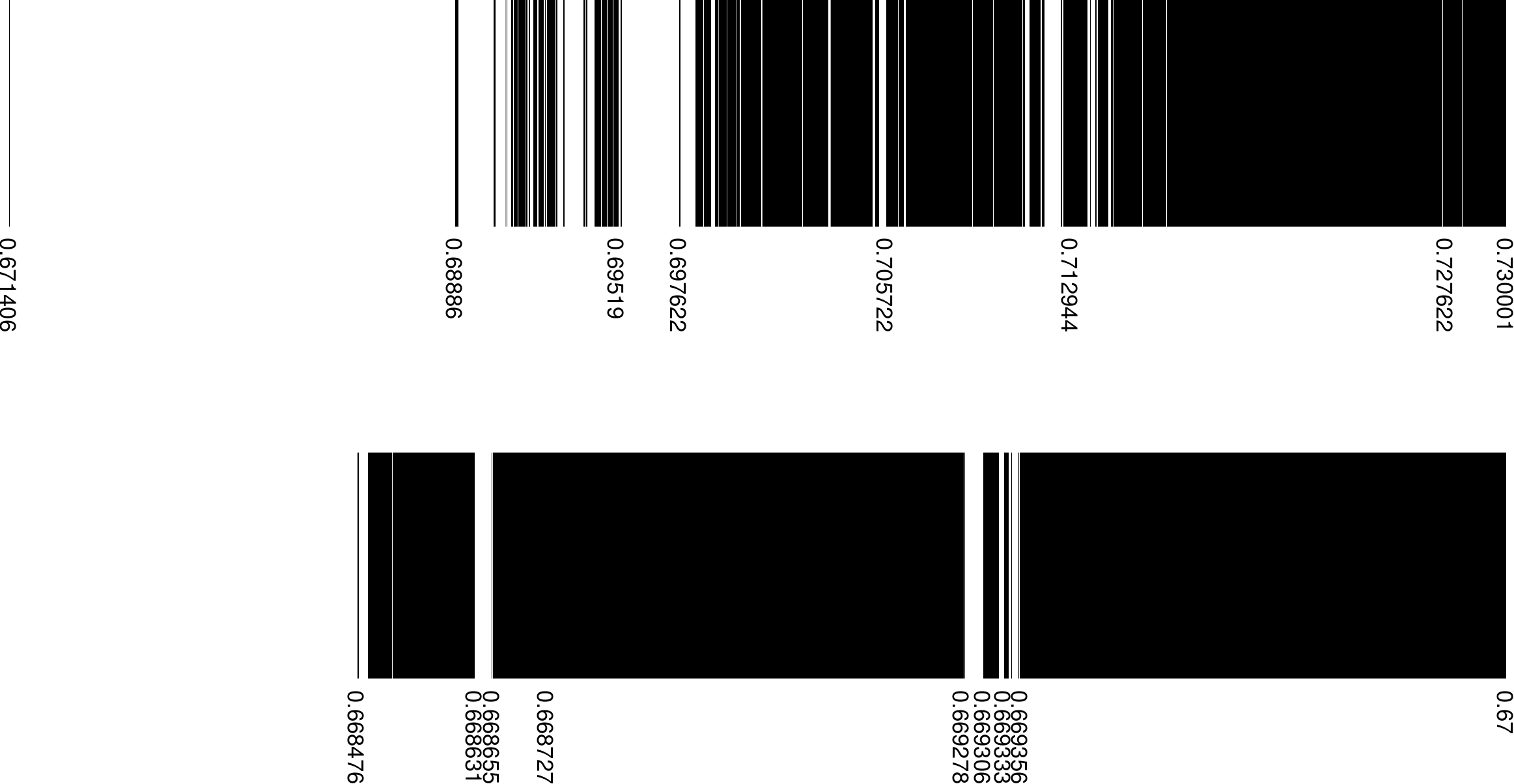}
\caption{Numerical plots of $\Xi_2$ (top) and $\Omega_2$ (bottom).}
\label{numerical_alpha}
\end{figure}

This figure strongly suggests that $\Xi_2 \cap [\beta,\tilde{\beta}]$ 
is totally disconnected, while $\Omega_2 \cap [\alpha,\tilde{\alpha}]$ 
appears to contain many solid intervals. In fact,
our method of  traps can be easily adapted to this more complicated
IFS, and in Section~\ref{subsection:intervals_omega_2} we give a method to
certify interior points in $\Omega_2$. 

\subsection{Intervals in $\Omega_2$}\label{subsection:intervals_omega_2}

Recall from Section~\ref{subsection:2d_IFS} that $\Omega_2$ is the set of positive
real numbers $z<1$ for which the IFS $\tdLz \subseteq \R^2$ generated by the
affine linear maps 
$$f^{(1)}: (x,y) \mapsto(zx,x+zy), \quad g^{(1)}:(x,y) \mapsto (z(x-1)+1,x-1+zy)$$ 
is connected. By abuse of notation, we denote these generators by $f$ and $g$ throughout this
section. Note that both generators have constant Jacobian
$$B(z):= \begin{pmatrix} z & 0 \\ 1 & z \end{pmatrix}$$
Throughout this section we restrict attention to real $z$ in the interval $[0.668,0.67]$.
The analog of Lemma~\ref{lemma:short_hop} is the following:

\begin{lemma}[Affine Short Hop Lemma]\label{lemma:affine_short_hop}
With $z \in [0.668,0.67]$, suppose that $f\tdLz$ and $g\tdLz$ 
contain points at 
distance $\delta$ apart in the $L^1$ metric on $\R^2$.
Then for any word $u$ of length at least $6$, the $0.9006\cdot\delta/2$ neighborhood of $u(z,\tdLz)$
in the $L^1$ metric is path connected.
\end{lemma}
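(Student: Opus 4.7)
The plan is to mimic Lemma~\ref{lemma:short_hop}, working in the $L^1$ metric on $\R^2$ and carefully tracking how the common affine linear part of $f^{(1)}$ and $g^{(1)}$ distorts short hops when composed. First I would re-run the $\approx_i$ induction of Lemma~\ref{lemma:short_hop} essentially verbatim for $\tdLz$: the base case $\approx_1$ is witnessed by the hypothesized pair of points in $f\tdLz$ and $g\tdLz$ at $L^1$-distance $\delta$, and the inductive step goes through unchanged, since nothing there uses complex structure. Joining consecutive hop-points in $\tdLz$ by $L^1$-straight segments then produces paths between arbitrary pairs of points of $\tdLz$ that lie in the $L^1$ $\delta/2$-neighborhood.

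Next I would exploit the fact that $f^{(1)}$ and $g^{(1)}$ share the same Jacobian
\[
B(z) = \begin{pmatrix} z & 0 \\ 1 & z \end{pmatrix},
\]
so that any word $u$ of length $n$ acts on $\R^2$ as $x \mapsto B(z)^n x + c_u$ for some translation $c_u$. Pushing a $\delta$-short-hop path in $\tdLz$ forward by $u$ yields a path in $u(z,\tdLz)$ whose consecutive $L^1$-hops are bounded by $\|B(z)^n\|_{L^1 \to L^1}\,\delta$. A direct induction gives
\[
B(z)^n = \begin{pmatrix} z^n & 0 \\ n z^{n-1} & z^n \end{pmatrix},
\]
whose $L^1$-operator norm, i.e. the maximum absolute column sum, equals $|z|^{n-1}(|z| + n)$.

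The remaining step, which is the only place the specific numerical constants enter, is the estimate
\[
\sup_{z \in [0.668,\,0.67],\; n \ge 6} |z|^{n-1}(|z| + n) \;\le\; 0.9006.
\]
For fixed $z \le 0.67$ the ratio of consecutive values, $z(1 + 1/(z+n))$, drops below $1$ as soon as $n > z^2/(1-z) \approx 1.36$, so $n \mapsto z^{n-1}(z+n)$ is monotonically decreasing for $n \ge 2$; and for fixed $n \ge 1$ the quantity is monotonically increasing in $z > 0$. Hence the supremum is attained at $(z,n) = (0.67, 6)$, where it evaluates to $0.67^5 \cdot 6.67 \approx 0.9006$. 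Combining this with the preceding paragraph, the pushed-forward short-hop paths in $u(z,\tdLz)$ have hops of $L^1$-length at most $0.9006\,\delta$, so thickening by $0.9006\,\delta/2$ yields a path-connected $L^1$-neighborhood of $u(z,\tdLz)$, as required.

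The main obstacle is essentially bookkeeping: verifying that the $\approx_i$ induction transfers cleanly to $(\R^2, L^1)$ with no hidden reliance on complex structure, and pinning down the closed form for $B(z)^n$ and its $L^1$-norm. Everything past that is a one-variable optimization over a very short interval, and the constant $0.9006$ is essentially forced by the threshold at which $\|B(z)^n\|_{L^1}$ first becomes a genuine contraction (namely $n = 6$ at the upper end $z = 0.67$).
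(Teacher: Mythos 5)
Your computation of $\|B(z)^n\|_{L^1} = |z|^n + n|z|^{n-1}$ (the maximum column sum of $B(z)^n$) and the verification that the supremum over $z\in[0.668,0.67]$, $n\ge 6$ is attained at $(z,n)=(0.67,6)$ with value $\approx 0.9005 < 0.9006$ are both correct and match the paper. But the claim that ``the inductive step goes through unchanged, since nothing there uses complex structure'' is where the proof breaks. The inductive step of Lemma~\ref{lemma:short_hop} uses that the generators are \emph{non-expanding}: from witnesses $w,w'$ with $d(\pi(w,z),\pi(w',z))<\delta$ one gets witnesses $aw,aw'$ for $av\approx_{i+1}av'$ precisely because $d(\pi(aw,z),\pi(aw',z)) = |z|\,d(\pi(w,z),\pi(w',z)) < \delta$. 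Here $\|B(z)\|_{L^1} = 1+|z| > 1$, so this implication fails: one application of a generator can \emph{increase} the $L^1$-gap. Concretely, the hops produced by the Short Hop recursion in $\tdLz$ are of the form $B(z)^k(\pi(W,z)-\pi(W',z))$ for $k = 0,1,2,\dots$, where $W,W'$ are the base-case witnesses, and $\|B(z)^k\|_{L^1}$ exceeds $1$ for $1\le k\le 5$ (it reaches about $1.79$ at $k=2$, $z=0.67$). So the hops in $\tdLz$ can substantially exceed $\delta$, and your intermediate assertion that the $\delta/2$-neighborhood of $\tdLz$ itself is path connected is unjustified — this is exactly why the lemma is stated only for $u(z,\tdLz)$ with $|u|\ge 6$.

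The conclusion and even your constant are still right, but not via ``$\|B(z)^{|u|}\|\cdot\delta$ applied to a $\delta$-short-hop path in $\tdLz$.'' Pushed forward by $u$, a hop becomes $B(z)^{|u|+k}(\pi(W,z)-\pi(W',z))$ — a \emph{single} power $|u|+k\ge 6$, not a product $\|B(z)^{|u|}\|\cdot\|B(z)^k\|$ — and since $n\mapsto\|B(z)^n\|_{L^1}$ is decreasing for $n\ge 2$, this hop has $L^1$-norm at most $\|B(z)^{|u|}\|\,\delta \le 0.9006\,\delta$. The cleanest way to make this rigorous is to rerun the $\approx_i$ induction relative to a sliding prefix: show for all $i$ and all $u$ with $|u|\ge 6$ that any two words of length $i$ can be chained with witnesses $e_j,e_j'$ satisfying $\|\pi(ue_j,z)-\pi(ue_j',z)\|_1<0.9006\delta$. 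In the inductive step one replaces the prefix $u$ by $ua$ with $a\in\{f,g\}$, which still has length $\ge 6$, so the threshold never deteriorates; this is the precise content of the paper's remark that one must ``take into account the fact that $B(z)$ does not uniformly contract the $L^1$ metric.''
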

\begin{proof}
The proof is identical to that of Lemma~\ref{lemma:short_hop}, except that one must take into
account the fact that $B(z)$ does not uniformly contract the $L^1$ metric. However, for
$z$ in the interval in question, $B(z)^n$ multiplies the $L^1$ metric by at most
$0.67^n + n\times 0.67^{n-1}$ which is $<0.9006$ for $n\ge 6$.
\end{proof}

The analog of Proposition~\ref{proposition:traps_A} is the following:

\begin{proposition}[Affine  traps]\label{proposition:affine_shrinking_trap}
Suppose for some $z \in \Omega_2$ that there are words $u,v$ beginning with $f$ and $g$
of length at least $6$ so that $u(z,\tdLz)$ and $v(z,\tdLz)$ cross transversely. Then $z$ is an
interior point in $\Omega_2$.
\end{proposition}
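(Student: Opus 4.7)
The plan is to follow the proof of Proposition~\ref{proposition:traps_A} mutatis mutandis, using the Affine Short Hop Lemma (Lemma~\ref{lemma:affine_short_hop}) in place of the Short Hop Lemma and the $L^1$ metric on $\R^2$ in place of the Euclidean metric on $\C$. First I would recast ``cross transversely'' as a two-dimensional affine analog of Definition~\ref{definition:trap}: fix a sufficiently large compact $D\subseteq\R^2$ with $\tdLz\subseteq D$ and $f^{(1)}(z,D),\,g^{(1)}(z,D)\subseteq D$, and assume the existence of points $p^\pm\in u(z,\tdLz)-v(z,D)$ and $q^\pm\in v(z,\tdLz)-u(z,D)$ together with paths $\alpha\subseteq u(z,D)$ joining $p^\pm$ and $\beta\subseteq v(z,D)$ joining $q^\pm$ whose algebraic intersection number is nonzero. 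The homological argument of Lemma~\ref{lemma:any_paths} is metric-free and shows that this intersection number is independent of $\alpha$ and $\beta$; and all the constraints defining the trap are open in $z$, so it suffices to prove that every $z'$ in a sufficiently small neighborhood of $z$ lies in $\Omega_2$.

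Fix such a $z'$ and suppose for contradiction that $\tdL_{z'}$ is disconnected. The obvious two-dimensional analog of Lemma~\ref{lemma:disconnected_Cantor} then gives $\delta':=d_{L^1}(f^{(1)}(z',\tdL_{z'}),\,g^{(1)}(z',\tdL_{z'}))>0$. By the Affine Short Hop Lemma any two points of $\tdL_{z'}$ can be joined by a path inside the $\delta'/2$ $L^1$-neighborhood of $\tdL_{z'}$; pushing such paths forward by $u(z',\cdot)$ and $v(z',\cdot)$, whose linear parts $B(z')^{|u|}$ and $B(z')^{|v|}$ each have $L^1$ operator norm at most $0.9006$ by the hypothesis $|u|,|v|\ge 6$, yields paths $\alpha\subseteq u(z',D)$ joining $p^\pm$ and lying in the $0.9006\cdot\delta'/2$ $L^1$-neighborhood of $u(z',\tdL_{z'})$, and $\beta\subseteq v(z',D)$ joining $q^\pm$ and lying in the $0.9006\cdot\delta'/2$ $L^1$-neighborhood of $v(z',\tdL_{z'})$. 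The trap forces $\alpha\cap\beta\ne\varnothing$, whence
\[
d_{L^1}(u(z',\tdL_{z'}),\,v(z',\tdL_{z'}))\le 0.9006\,\delta'.
\]
Because $u$ starts with $f$ and $v$ with $g$ we have $u(z',\tdL_{z'})\subseteq f^{(1)}(z',\tdL_{z'})$ and $v(z',\tdL_{z'})\subseteq g^{(1)}(z',\tdL_{z'})$, so $\delta'\le 0.9006\,\delta'<\delta'$, a contradiction.

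The main obstacle, and the reason the length-$6$ hypothesis is needed, is that $B(z)$ is not a contraction of the $L^1$ metric for $z$ near $2/3$; the step ``$\delta|z|^{\min(n,m)}<\delta$'' from the proof of Proposition~\ref{proposition:traps_A} therefore has no direct affine analog until $n,m$ are large enough to force the $L^1$ operator norm of $B(z)^{\min(n,m)}$ strictly below $1$. Lemma~\ref{lemma:affine_short_hop} packages exactly this quantitative estimate, and $n,m\ge 6$ is the precise threshold at which its proof yields the contraction constant $0.9006$.
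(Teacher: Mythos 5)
Your proof is correct and follows essentially the same route as the paper: openness of the transversality condition under perturbation of $z$, the Affine Short Hop Lemma (Lemma~\ref{lemma:affine_short_hop}) to produce path-connected $0.9006\cdot\delta'/2$-neighborhoods of $u(z',\tdL_{z'})$ and $v(z',\tdL_{z'})$, the trap/homological linking argument to force these paths to intersect, and the inclusions $u(z',\tdL_{z'})\subseteq f^{(1)}(z',\tdL_{z'})$, $v(z',\tdL_{z'})\subseteq g^{(1)}(z',\tdL_{z'})$ to obtain the contradiction $\delta'\le 0.9006\,\delta'$. Your write-up is in fact slightly more explicit than the paper's (which glosses the trap step with the phrase ``by transversality'' and leaves the final inclusion implicit), but it is the same argument.
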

\begin{proof}
Since $u(z,\tdLz)$ and $v(z,\tdLz)$ cross transversely, the same is true for their $\epsilon$-neighborhoods,
for some sufficiently small fixed $\epsilon$. Thus the same is true for the
$\epsilon/2$-neighborhoods of $u(z',\tdL_{z'})$ and $v(z',\tdL_{z'})$ whenever $|z-z'|$ is small
enough, depending on $z$ and $\epsilon$. Thus, we choose such a $z'$, and
suppose $\delta$ is the $L^1$ distance from $f(z',\tdL_{z'})$ to $g(z',\tdL_{z'})$, where
$\delta \ll \epsilon$. Then the $0.9\delta/2$
neighborhoods of $u(z',\tdL_{z'})$ and $v(z',\tdL_{z'})$ are path connected, so by transversality, 
there is some point within $L^1$ distance $0.9\delta/2$ from both $u(z',\tdL_{z'})$ and $v(z',\tdL_{z'})$,
and consequently the $L^1$ distance from $u(z',\tdL_{z'})$ to $v(z',\tdL_{z'})$ is at most $0.9\delta$.
But then $\delta \le 0.9\delta$ so that $\delta=0$ and $z' \in \Omega_2$, as claimed.
\end{proof}

\begin{figure}[htpb]
\centering
\includegraphics[scale=0.45]{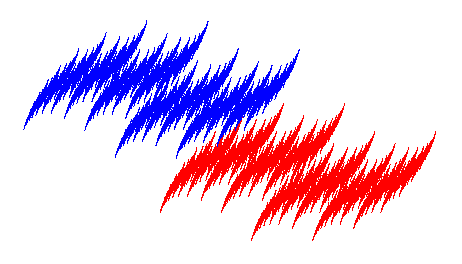}~
\includegraphics[scale=0.25]{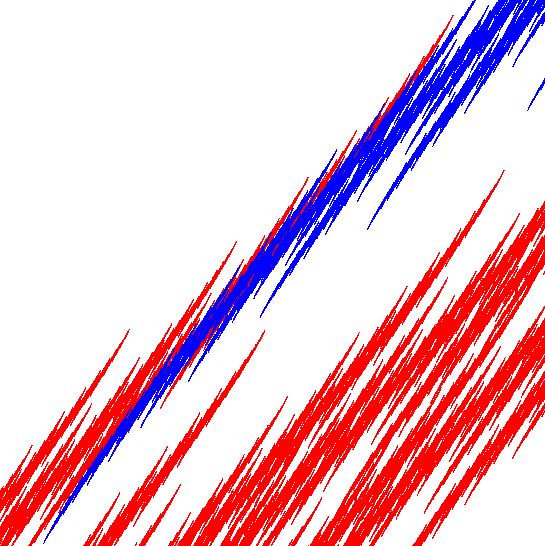}
\caption{The limit set for $z=0.669027$. 
The visually evident trap on the right certifies that this point 
lies in the interior of $\Omega_2$.}
\label{669}
\end{figure}

Such affine traps may be found and certified numerically; for example $z=0.669027$ satisfies
the proposition for the words 
\[
fgggfgfffffgfffffff \qquad \textnormal{and} \qquad gfffggggfgggggggfgg,
\]
and we deduce that $0.669027$ is an interior point in $\Omega_2$; see Figure~\ref{669}.
One might hope to prove an analog of Bandt's Conjecture (i.e.\/ Theorem~\ref{theorem:interior_dense})
for the set $\Omega_2$; that is, that the interior is dense in $\Omega_2$.  However
our proof of Theorem~\ref{theorem:interior_dense} uses in several ways the fact that 
points in the limit set are holomorphic functions of the parameter, which 
of course can no longer be true for the real parameter $z$.  Nevertheless, such a
proof does not seem beyond reach, and we comfortably conjecture:

\begin{conjecture}\label{conjecture:omega_2_interior}
Affine traps are dense in $\Omega_2$, and hence the 
interior of $\Omega_2$ is dense in $\Omega_2$.
\end{conjecture}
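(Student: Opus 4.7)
My plan is to follow the strategy of Theorem~\ref{theorem:interior_dense} as closely as possible, but with several nontrivial modifications to handle the real (rather than holomorphic) dependence on the parameter. First I would set up the affine analog of cell-like neighborhoods: given $z_0 \in \Omega_2$, let $X_{z_0}$ denote the smallest compact set with connected complement containing $\tdL_{z_0} \subseteq \R^2$. Then I would prove an affine version of Lemma~\ref{lemma:nonconvex_cell_trap}: if $X_{z_0}$ is not ``too simple'' (e.g.\/ convex, or more generally, if its boundary has a locally transverse folding structure), then there is a translation vector $w \in \R^2$ such that the configurations in the definition of Proposition~\ref{proposition:affine_shrinking_trap} are met by $X_{z_0}$ and $X_{z_0}+w$. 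The convex case should be handled by an analog of Lemma~\ref{lemma:convex_zonohedra}; for real $z$ in the interval of interest, I would expect $\tdL_z$ to be convex only on a small (discrete) exceptional set, if at all.

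Next I would need an affine analog of Corollary~\ref{corollary:surjective_perturb}: given $z_0 \in \Omega_2$ and a target vector $w \in \R^2$ near $0$, find $z$ close to $z_0$ and prefixes $u_m, v_m$ (of a common length, starting with $f$ and $g$) so that
\[
B(z)^{-m}\bigl( u_m(z,p) - v_m(z,p) \bigr) = w.
\]
This is the main obstacle, and is exactly where the authors flag the difficulty: we have only one real parameter $z$, but a two-dimensional target. The key idea is to treat the combinatorial freedom in choosing prefixes as a second ``parameter''. The set of rescaled differences $B(z_0)^{-m}(u_m(z_0,p)-v_m(z_0,p))$ over all $u_m,v_m$ approximates, at large $m$, the 2D limit set of the affine difference IFS (the real analog of $\Gamma_z$ from Section~\ref{section:differences}); varying $z$ in a real interval then perturbs these values in a transverse real-analytic direction. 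Making this precise would require showing that the achievable vectors are dense near $0 \in \R^2$, using a renormalization-style argument exploiting the self-similarity of the difference IFS as $m \to \infty$.

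Once such a surjective perturbation lemma is in hand, the rest runs in parallel with the proof of Theorem~\ref{theorem:interior_dense}: choose a pair of infinite words $u,v$ (beginning in $f$ and $g$) witnessing $z_0\in\Omega_2$, apply the first step to produce a trap-like translation vector $w$ for $X_{z_0}$, use the surjective perturbation to realize $w$ by some $B(z_1)^{-m}(u_m(z_1,p)-v_m(z_1,p))$ with $|z_1-z_0|$ arbitrarily small, and invoke Proposition~\ref{proposition:affine_shrinking_trap} to conclude $z_1 \in \mathrm{int}(\Omega_2)$. The hardest part by far is the surjective perturbation: a priori the achievable vectors might lie in a codimension-one locus in $\R^2$, in which case the argument would yield density of interior only along a restricted subset of $\Omega_2$. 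Overcoming this would be the crux of any honest proof.
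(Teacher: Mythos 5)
Note first that the statement you were asked to prove is labeled a \emph{Conjecture} in the paper, not a theorem: the authors explicitly leave it open. Immediately before stating it they write that their proof of Theorem~\ref{theorem:interior_dense} ``uses in several ways the fact that points in the limit set are holomorphic functions of the parameter, which of course can no longer be true for the real parameter $z$,'' and then ``comfortably conjecture'' the density of affine traps in $\Omega_2$. So there is no proof in the paper to compare against, and any purported proof should be judged on its own completeness.

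Your proposal correctly identifies the obstruction that the authors themselves flag: the perturbation argument for $\SetA$ rests on Corollary~\ref{corollary:surjective_perturb}, which uses the open mapping theorem for a nonconstant holomorphic $T_\infty:\D^*\to\C$; in the affine setting the parameter $z$ is one real dimension while the target (the space of translation vectors for $\tdLz$) is two real dimensions, so no analog of the surjective perturbation lemma can come for free. You propose to recover the missing dimension by exploiting the combinatorial freedom in choosing prefixes $u_m,v_m$, using the self-similar structure of the difference IFS to show the achievable rescaled difference vectors become dense near $0\in\R^2$. This is a sensible plan, but it is only a plan: you never establish that the pair (real parameter variation, prefix choice) actually spans an open neighborhood rather than accumulating on a lower-dimensional locus, and you yourself concede that ``a priori the achievable vectors might lie in a codimension-one locus.'' That is precisely the gap. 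Similarly, the affine analogs of Lemma~\ref{lemma:nonconvex_cell_trap} and Lemma~\ref{lemma:convex_zonohedra} are asserted as things you ``would prove'' or ``would expect''; nothing is supplied. In short, the proposal is a reasonable research outline that locates exactly the difficulty the authors locate, but it does not constitute a proof, and the conjecture remains open.
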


Recall that $\overline{\SetA - \R} \cap \R \subseteq \Omega_2$.  
It is not known whether there are any points in $\Omega_2$ which 
do not lie in the closure of the interior of $\SetA$.  
However, the following lemma relates this to 
Conjecture~\ref{conjecture:omega_2_interior}.  
For clarity, we write $u(z) = \pi(u,z)$.

\begin{lemma}\label{lemma:2dIFS_is_limit_of_nonreal}
For every $u \in \partial \Sigma$ and $b \in \D^* \cap \R$, we have that 
\[
\lim_{x+iy \to b}\frac{1}{y}\textnormal{Im}(u(x+iy)) \to  u'(b).
\]
The rate of this convergence does not depend on $u$.  Consequently, 
the limit set $\Lambda_{x+iy}$ scaled vertically by $1/y$ converges 
in the Hausdorff topology to $L_b$.
\end{lemma}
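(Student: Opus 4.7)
The plan is to exploit the fact that $u(z) = \pi(u,z)$ is holomorphic on $\D^*$, with a power series whose coefficients lie in $\{-1,0,1\}$ by Proposition~\ref{proposition:power_series}. Since $b$ is real and the coefficients are real, both $u(b)$ and $u'(b)$ are real; in particular $\mathrm{Im}\,u(b) = 0$.

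First I would integrate along the vertical segment from $x$ to $x+iy$ to get
\[
u(x+iy) - u(x) = \int_0^y i\,u'(x+it)\,dt.
\]
Taking imaginary parts and using that $u(x)\in\R$ yields the identity
\[
\frac{1}{y}\,\mathrm{Im}\,u(x+iy) = \frac{1}{y}\int_0^y \mathrm{Re}\,u'(x+it)\,dt,
\]
which already makes it evident that the right-hand side converges to $\mathrm{Re}\,u'(b)=u'(b)$ as $x+iy\to b$. To upgrade this to a rate that is uniform in $u$, I would use the trivial estimates $|u'(z)|\le(1-|z|)^{-2}$ and $|u''(z)|\le 2(1-|z|)^{-3}$, valid because $|a_n|\le 1$ for every coefficient $a_n$. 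These give a Lipschitz bound for $u'$ on any closed disk $\{|z|\le r\}$ with $r<1$, with a constant $C_r$ depending only on $r$ (not on $u$). Hence, for $x+iy$ and $b$ in such a disk,
\[
\left|\frac{1}{y}\mathrm{Im}\,u(x+iy) - u'(b)\right| \le \frac{1}{y}\int_0^y |u'(x+it)-u'(b)|\,dt \le C_r(|x-b|+y).
\]

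For the Hausdorff convergence of the rescaled limit sets, recall that $L_b = \{(u(b),u'(b)) : u\in\partial\Sigma\}$ since $L_b$ is the IFS limit set of 1-jets. Scaling $\Lambda_{x+iy}\subseteq\C\cong\R^2$ vertically by $1/y$ produces the set $\{(\mathrm{Re}\,u(x+iy),\,\mathrm{Im}\,u(x+iy)/y) : u\in\partial\Sigma\}$. A second-order Taylor estimate of the same flavor gives $|\mathrm{Re}\,u(x+iy) - u(b)| \le C'_r(|x-b|+y^2)$ uniformly in $u$, so the first coordinate also converges to $u(b)$ with a rate independent of $u$. Combined with the uniform rate for the second coordinate and the compactness of $\partial\Sigma$, this implies that every accumulation point of the rescaled sets lies in $L_b$ and every point of $L_b$ is approached; that is, Hausdorff convergence.

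The crux of the argument (not really an obstacle) is extracting a Lipschitz constant for $u'$ that is uniform in $u\in\partial\Sigma$, but this is automatic from the uniform bound on the coefficients of the associated power series.
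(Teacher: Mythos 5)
Your proof is correct, and it reaches the uniform rate by a genuinely different technical route than the paper's. The paper's proof expands $u(x+iy)$ by the binomial theorem, separates out the odd powers of $iy$ to isolate the imaginary part, and argues termwise that after dividing by $y$ the series converges to $\sum_k a_k k b^{k-1}=u'(b)$, with uniformity in $u$ coming from domination by the geometric series $\sum |x+iy|^k$. You instead write $\tfrac{1}{y}\mathrm{Im}\,u(x+iy)$ as the average $\tfrac{1}{y}\int_0^y \mathrm{Re}\,u'(x+it)\,dt$ via the fundamental theorem of calculus (using that $u(x)\in\R$ for real $x$), and then extract a Lipschitz constant for $u'$ that is uniform over $\partial\Sigma$ from the coefficient bound $|a_k|\le 1$ via the estimate $|u''(z)|\le 2(1-|z|)^{-3}$. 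Both proofs hinge on the same underlying fact --- uniform coefficient bounds give uniform derivative bounds over all of $\partial\Sigma$ --- but your argument bypasses the double-sum bookkeeping and yields the explicit rate $C_r(|x-b|+y)$ directly, which is a bit cleaner than the paper's appeal to uniform convergence of the dominating series (indeed, the paper's stated majorant $\sum|a_k||x+iy|^k$ controls $u$ itself rather than the differentiated series, a small imprecision that your integral form sidesteps entirely). Your treatment of the first coordinate via a second-order estimate $|\mathrm{Re}\,u(x+iy)-u(b)|\le C_r'(|x-b|+y^2)$ is also more explicit than the paper's, which just asserts that the lemma follows once the coordinate functions converge. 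The only cosmetic omission is the implicit reduction to $y>0$, which is harmless since $u(\bar z)=\overline{u(z)}$.
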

\begin{proof}
It is an easy calculus exercise to show the lemma if $x+iy$ approaches $a$ vertically, 
i.e. if $x$ is fixed at $b$.  However, we desire convergence in general, so we 
will need to look at power series.  Write $u(z)$ as the power series 
$u(z) = \sum_{k=0}^\infty a_k z^k$.  Then
\[
u(x+iy) = \sum_{k=0}^\infty a_k (x+iy)^k = \sum_{k=0}^\infty a_k \sum_{j=0}^k \binom{k}{j} (iy)^jx^{k-j}
\]
The terms which contribute to the imaginary part of this sum 
are exactly those for which $j$ is odd.  Hence
\[
\textnormal{Im}(u(x+iy)) = \sum_{k=0}^\infty a_k \sum_{\ell=1}^{2\ell+1 = k} \binom{k}{2\ell+1} (-1)^{\ell-1} y^{2\ell+1}x^{k-(2\ell+1)}
\]
The top limit on the inner sum indicates that we should run the 
inner sum until $2\ell+1$ is larger than $k$.  
Also note we are recording the imaginary part of $u(x+iy)$, 
so the $\pm i$ terms have disappeared.
Therefore,
\[
\frac{1}{y} \textnormal{Im}(u(x+iy)) = \sum_{k=0}^\infty a_k \left(kx^{k-1} + \sum_{\ell=2}^{2\ell+1 = k} \binom{k}{2\ell-1} (-1)^{\ell-1} y^{2\ell}x^{k-(2\ell+1)}\right)
\]
The entire sum is controlled in absolute value by 
$\sum_{k=0}^\infty|a_k||x+iy|^k \le \sum_{k=0}^\infty|x+iy|^k$, which is uniformly convergent for 
$x+iy \in \D^*$.  Therefore, as $x+iy\to b$, the entire sum converges, at a rate controlled 
independently of $u$, to $\sum_{k=0}^\infty a_k k b^{k-1} = u'(b)$.  The 
point in $L_b$ associated with $u$ has coordinates $(u(b), u'(b))$ in $\R^2$, and the 
point in the vertically scaled copy of $\Lambda_{x+iy}$ has coordinates 
$(\textnormal{Re}(u(x+iy)), \frac{1}{y}\textnormal{Im}(u(x+iy)))$, and the lemma follows.
\end{proof}

If affine traps are dense in $\Omega_2$, then 
near any point in $\Omega_2$, by 
Lemma~\ref{lemma:2dIFS_is_limit_of_nonreal} there are nonreal 
parameters which have a trap and therefore 
lie in the interior of $\SetA$.  So every point in $\Omega_2$ 
would be in the closure of the interior of $\SetA$; 
i.e. Conjecture~\ref{conjecture:omega_2_interior} 
implies $\overline{\SetA - \R} \cap \R = \Omega_2$.

\section{Holes in \texorpdfstring{$\SetB$}{M0}}
\label{section:holes_in_m0}

$\SetB$ is path connected \cite{Bousch1}, but Bousch's proof is somewhat indirect.
His strategy is to show that 
every point can be joined by a path to some parameter with absolute 
value close to $1$.  Since $\SetB$ contains an annulus around the unit circle, this 
gives path connectedness. He does not directly address what the paths in $\SetB$ actually
look like, or when a (polygonal) path near $\SetB$ can be approximated by a path
contained in $\SetB$.

In this section, we show how to certify the existence of a point in $\SetB$ 
in a neighborhood of a given point and how to certify a 
path in $\SetB$ in a neighborhood of a given polygonal path. If we can certify paths,
we can certify loops, and thus exotic holes in $\SetB$.  As with $\SetA$, 
by a \emph{hole} in $\SetB$, we mean a connected component of 
the complement which is distinct from the connected component of the complement 
which contains $0$.

Just as $\SetA$ closely resembles the limit set $\Gamma_z$ at many points,
$\SetB$ closely resembles $\Lz$. Thus the methods in this 
section are closely related to the methods we developed in Section~\ref{subsection:Lz_paths}
to construct paths in $\Lz$ (e.g.\/ Proposition~\ref{proposition:path_in_lambda}).

\subsection{Complex analysis}
%
%
%

In this section, we prove a lemma in complex analysis, 
but we first motivate it.  Suppose we have a holomorphic function 
$h(z)$, and we find that $h(z_0)$ is quite close 
to a value we desire $c$.  We would like to conclude that there is a $z_1$ 
near $z_0$ so that $h(z_1) = c$. If the derivative of $h$ is bounded away from $0$,
and does not vary much near $z_0$, then $h$ can be well approximated by a linear function,
and $z_1$ can be found.

Thus to certify the existence of such a $z_1$, and to prove the validity of the
certificate, is not technically difficult.
However, Lemma~\ref{lemma:nearby_root} is organized carefully to 
be of use to us later, and it can be confusing to read.  
One should understand the lemma as saying ``if there are 
four constants $r,C,C',\delta$ which satisfy the hypotheses, then the 
conclusion holds''.  Do not worry about where the constants come from at this stage.
This lemma is very similar to Lemma~3.1 in \cite{Solomyak}.

\begin{lemma}\label{lemma:nearby_root}
Let $h$ be a holomorphic function and $z_0,c\in \C$ 
with $|h(z_0)-c|< \epsilon$.  
Suppose there are $r,C,C' > 0$ and $0 < \delta < 1$ such that 
$C' \le |h'(z)| \le C$ for all $z$ 
with $|z-z_0| < r$, and 
\[
r \ge \frac{\epsilon}{\delta} \frac{1+\frac{\delta^2}{1-\delta}}{C' - C\frac{\delta}{1-\delta}} 
    = \frac{\epsilon(1-\delta+\delta^2)}{\delta((1-\delta)C' - \delta C)}
\]
Then there exists a unique $z_1 \in \C$ with 
$|z_0-z_1| \le \epsilon \frac{(1-\delta+\delta^2)}{(1-\delta)C' - \delta C}$
such that $h(z_1) = c$.
\end{lemma}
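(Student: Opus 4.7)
The plan is to locate $z_1$ by a quantitative inverse function theorem argument, realized via a Newton-type iteration whose fixed points are precisely the roots of $h(z) = c$. The parameter $\delta$ will play the role of a contraction rate on a carefully chosen subdisk of $B_r(z_0)$, and the conclusion's bound on $|z_0 - z_1|$ will come from summing a geometric series of Newton corrections.

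Concretely, I would work with the modified Newton map $T(z) = z - (h(z) - c)/h'(z_0)$, which is holomorphic on $B_r(z_0)$ and whose fixed points are the zeros of $h - c$. The first correction satisfies $|T(z_0) - z_0| = |h(z_0) - c|/|h'(z_0)| \le \epsilon/C'$. To obtain contraction, differentiate: $T'(z) = 1 - h'(z)/h'(z_0) = (h'(z_0) - h'(z))/h'(z_0)$. Since $|h'| \le C$ on $B_r(z_0)$, Cauchy's integral formula bounds $|h''(z)| \le C/(r - |z-z_0|)$ on the open subdisk, which in turn controls $|h'(z) - h'(z_0)|$ by a linear function of $|z - z_0|$. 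Restricted to a subdisk $B_\rho(z_0)$ with $\rho$ a definite fraction of $r$, this produces the bound $|T'(z)| \le C\rho / ((r-\rho)C')$. Choosing $\rho$ so that this is at most $\delta$ translates exactly into a lower bound on $r$ comparable to the one in the hypothesis. Banach's fixed-point theorem then yields a unique $z_1 \in B_\rho(z_0)$ with $h(z_1) = c$, and telescoping $|z_1 - z_0| \le \sum_{n\ge 0} |T^{n+1}(z_0) - T^n(z_0)| \le |T(z_0) - z_0|/(1-\delta) \le \epsilon/((1-\delta)C')$.

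The main obstacle is matching the precise constants $(1-\delta+\delta^2)$ in the numerator and $(1-\delta)C' - \delta C$ in the denominator of the final bound, which are sharper than the naïve estimate above. The form $(1-\delta)C' - \delta C$ suggests that the correct estimate on $|T'|$ must genuinely use both the upper bound $C$ on $|h'|$ (to control the deviation via $h''$) and the lower bound $C'$ (in the denominator), while the factor $1-\delta+\delta^2 = (1-\delta)\cdot 1 + \delta^2$ hints at separating the first Newton step (controlled sharply by $\epsilon/C'$) from the tail $\sum_{n\ge 1}$ of later corrections (controlled by the worse constant $1/((1-\delta)C' - \delta C)$). Implementing this split-estimate carefully — keeping the first step honest while bounding subsequent corrections through the Cauchy-derived Lipschitz constant on $T$ — should yield the conclusion exactly as stated, and the hypothesis on $r$ becomes precisely the statement that the claimed disk $|z - z_0| \le \delta r$ containing $z_1$ fits inside $B_r(z_0)$.
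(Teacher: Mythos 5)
Your approach — a quantitative inverse function theorem via a modified Newton map $T(z) = z - (h(z)-c)/h'(z_0)$ and a Banach fixed-point argument — is genuinely different from the paper's proof, which works instead via Rouch\'e's theorem: the paper writes $h = h_a + R_1$ with $h_a$ the affine approximation, shows the image of the circle $|z-z_0| = d$ under $h_a$ winds once around $c$ at radius $|h'(z_0)|d > \epsilon$, and then uses a Taylor/Cauchy estimate $|R_1| \le M_r d^2/(r^2 - rd)$ with $M_r \le \epsilon + Cr$ to verify the Rouch\'e inequality on that circle with $d = \delta r$. Both routes are quantitative inverse-function arguments, and Rouch\'e delivers existence and uniqueness in one stroke, whereas your route requires a separate self-mapping check but is perhaps more constructive.

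There is, however, a concrete misstep in your choice of parameters that would not give you the constants in the lemma, and you seem only half-aware of it. You propose to pick $\rho$ so that $\sup_{B_\rho}|T'| \le \delta$ and then telescope with ratio $\delta$. With the Cauchy-derived bound $|T'(z)| \le C\rho/((r-\rho)C')$, the inequality $C\rho/((r-\rho)C') \le \delta$ caps $\rho$ at $\delta r C'/(C+\delta C')$, and combined with the self-mapping requirement $\rho \ge \epsilon/((1-\delta)C')$ this forces $r \ge \epsilon(C+\delta C')/(\delta(1-\delta){C'}^2)$ --- which, already in the case $C = C'$ and small $\delta$, is a \emph{strictly stronger} hypothesis than the $r \ge \epsilon(1-\delta+\delta^2)/(\delta((1-\delta)C' - \delta C))$ you are given. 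So you cannot in general arrange a contraction rate equal to $\delta$. The fix is to stop treating $\delta$ as the contraction rate and instead take $\rho = \delta r$ outright; then $\kappa := \sup_{B_{\delta r}}|T'| \le \delta C/((1-\delta)C')$, which is $< 1$ exactly because the lemma's hypothesis implicitly requires $(1-\delta)C' - \delta C > 0$. Self-mapping then needs $\epsilon/C' \le (1-\kappa)\delta r$, which unwinds to $r \ge \epsilon(1-\delta)/(\delta((1-\delta)C' - \delta C))$ --- weaker than the stated hypothesis since $1-\delta \le 1-\delta+\delta^2$ --- and telescoping gives $|z_1 - z_0| \le \epsilon(1-\delta)/((1-\delta)C' - \delta C)$, which is at most the claimed bound because $C \ge (1-\delta)^2 C'$. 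With this correction your argument closes cleanly (indeed with slightly sharper constants), and uniqueness on the claimed ball follows from Banach uniqueness on $B_{\delta r}(z_0)$ together with the observation that the hypothesis on $r$ forces the claimed ball to sit inside $B_{\delta r}(z_0)$.
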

\begin{proof}
First, it suffices to prove the theorem with $c=0$ by translation, so we will 
make that assumption.  

Write $h_a(z)$ for the affine part of the power series for $h$, centered 
at $z_0$, i.e. $h_a(z) = h(z_0) + h'(z_0)(z-z_0)$.  Under $h_a$, the circle 
$z_0 + de^{i\theta}$ of radius $d$ is mapped to the circle 
$h(z_0) + |h'(z_0)|de^{i\theta}$.  Therefore, if $d|h'(z_0)| > \epsilon$, 
the image circle will enclose $0$, and hence $0 \in h_a(B_d(z_0))$, or equivalently
$h_a$ will have a zero within $B_d(z_0)$, the ball of radius $d$ centered at $z_0$. 

Now consider $h$; it might not be affine, and we record the remainder term as $R_1$:
\[
h(z) = h_a(z) + R_1(z) = h(z_0) + h'(z_0)(z-z_0) + R_1(z).
\]
Suppose that there were a radius $d$ such that for all $0 \le \theta \le 2\pi$, 
we had $|h'(z_0)|d - |R_1(z_0+de^{i\theta})| \ge \epsilon$.  
In other words, the error in the affine 
approximation is smaller than the radius of the affine image circle minus $\epsilon$.  
Then the image of the circle $z_0 + de^{i\theta}$ under $h$ would have to 
contain $B_\epsilon(h(z_0))$, and hence $h$ would have a zero in $B_d(z_0)$.
Additionally, this follows immediately from Rouche's theorem, which also 
gives the claimed uniqueness.

To prove the lemma, then, it suffices to find a $d$ such that 
$|h'(z_0)|d - \epsilon \ge |R_1(z_0+de^{i\theta})|$ for all $0 \le \theta \le 2\pi$.  
From Taylor's theorem and Cauchy's derivative estimates, there is an inequality
\[
|R_1(z_0 + de^{i\theta})| \le \frac{M_r d^2}{r^2 - rd} \le \frac{(\epsilon + Cr)d^2}{r^2 - rd},
\]
where $M_r = \max_{\theta}|h(z_0 + re^{i\theta})|$, and the estimate is valid whenever $d < r$, and 
we can also estimate $M_r \le \epsilon + Cr$.

Set $d = \delta r$.  Rearranging the inequality in the hypothesis of the lemma, we have
\[
C'\delta r - \epsilon \ge \frac{(\epsilon +Cr)\delta^2r^2}{r^2 - \delta r^2}.
\]
Since $|h'(z_0)| \ge C'$, and plugging in $d = \delta r$, we have
\[
|h'(z_0)|d - \epsilon \ge \frac{(\epsilon +Cr)d^2}{r^2 - rd} \ge \frac{M_r d^2}{r^2 - rd} 
\]
Therefore, $d=\delta r$ satisfies the necessary inequality, so 
there is $z_1 \in B_{\delta r}(z_0)$ with $h(z_1) = c$.
Since making $r$ smaller maintains the validity of the bounds $C,C'$ for 
$|h'(z)|$, we may shrink $r$ until the inequality in the lemma is an equality, 
so the claimed bound on $|z_0-z_1|$ holds.
\end{proof}

\begin{remark}
The hypotheses of Lemma~\ref{lemma:nearby_root} may seem somewhat technical, 
but in fact they are not difficult to check in practice.  
We set $r$ to be quite small but still large compared to $\epsilon$, 
and we get bounds on the derivative. Then $\delta$ can be found by trial and 
error or any minimum-finding algorithm.  In fact, Mathematica 
produces an explicit formula for the $\delta$ which minimizes 
the expression on the right of the inequality for $r$; this formula is rather large and 
unedifying, so we omit it.

One feature we will make use of is that Lemma~\ref{lemma:nearby_root} can be 
checked for large collections of elements in $\partial \Sigma$ at the same time, since two 
words with a large common prefix will satisfy the same $C,C'$ bounds with 
similar values of $\epsilon$.
\end{remark}

\begin{remark}[Derivative bounds]
Lemma~\ref{lemma:nearby_root} requires good derivative bounds on $h'(z)$ a given ball $B_{z_0}(r)$.
A naive way to approach this is to get a universal upper bound $K$ on the second 
derivative and then state that $|h'(z)| < |h'(z_0)| + Kr$ on $B_{z_0}(r)$.  This is 
typically a bad estimate because $r$ can be large compared to the potential change 
in $h'(z)$.
Here is a better way.  Since $|h'(z)|$ is holomorphic, 
its maximum will lie on the boundary of $B_{z_0}(r)$.  Cover the boundary 
circle of $B_{z_0}(r)$ with many (say, $100$) small balls, use the naive approach 
on these small balls, and take the maximum.  Because the radius on which we 
apply the naive approach is now quite small, our error will be much less.
\end{remark}

\subsection{Paths in $\SetB$}

In this section, we explain how to find paths in $\SetB$.  
These paths will be rather short, but by piecing them together, 
we can produce loops and thus certify holes in $\SetB$.

We now give some initial observations about paths in $\SetB$ to 
clarify the construction to follow.
To each point $z$ in $\SetB$, there is a set of distinguished 
words in $\partial \Sigma$; namely, the words $x$ such that 
$\pi(x,z) = 1/2$.  Therefore, if we have a path $\gamma:[0,1] \to \D^*$
such that the image of $\gamma$ lies in $\SetB$, there is 
a combinatorial map $\lambda:[0,1] \to \partial \Sigma$ such that 
$\pi(\lambda(t), \gamma(t)) = 1/2$.  Of course, $\lambda$ is not 
uniquely defined, as there may be 
more than one word mapping to $1/2$ for a given parameter.

In order to build paths in $\SetB$, we essentially go in the other direction, 
Given two words $a,b \in \partial \Sigma$, we first build a nice combinatorial 
path interpolating between $a$ and $b$.  Then, provided we are close enough to 
$\SetB$, we show how apply Lemma~\ref{lemma:nearby_root} 
to produce a path of parameters which drags this combinatorial path along $1/2$.

In this lemma, we recall the notation $p_w(z) = \pi(w,z)$, the power series 
associated with $w\in\partial \Sigma$.

\begin{lemma}\label{lemma:lambda_image_in_B}
Suppose there are $\epsilon,r,C,C'>0$, $0< \delta < 1$, and $z_0 \in \C$
such that 
\begin{enumerate}
\setlength{\itemsep}{2mm}
\item $|z_0|+r  < 1$
\item $\displaystyle
r \ge \frac{\epsilon(1-\delta+\delta^2)}{\delta((1-\delta)C' - \delta C)}.$
\item For all $v \in u\partial\Sigma$ we have $|p_v(z_0)-1/2| < \epsilon$.
\item For all $v \in u\partial\Sigma$  and $z \in B_v(z_0)$ we have $C' < |p_v'(z)| < C$.
\end{enumerate}
Then for all $v \in u\partial \Sigma$, there is a unique 
$Z(v) \in B_{\delta r}(z_0)$ such that $p_v(Z(v)) = 1/2$.  
Consequently, there is a map $Z:u\partial \Sigma \to \SetB \cap B_{\delta r}(z_0)$ 
such that $p_v(Z(v)) = 1/2$.  Furthermore, $Z$ is uniformly continuous 
and the image $Z(u\partial \Sigma)$ is path connected.
\end{lemma}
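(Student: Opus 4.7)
The lemma requires three things: (A) existence, uniqueness, and membership in $\SetB$ of $Z(v)$ for each $v \in u\partial\Sigma$; (B) uniform continuity of $Z$; and (C) path connectedness of the image.

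For (A), I would apply Lemma~\ref{lemma:nearby_root} directly to $h := p_v$ with $c := 1/2$, separately for each $v \in u\partial\Sigma$. Hypothesis (3) supplies $|h(z_0) - c| < \epsilon$; hypothesis (4) supplies $C' \le |h'(z)| \le C$ on $B_r(z_0)$; hypothesis (2) is exactly the inequality required by Lemma~\ref{lemma:nearby_root}. This gives a unique $Z(v) \in B_{\delta r}(z_0)$ with $p_v(Z(v)) = 1/2$. To verify $Z(v) \in \SetB \cap \D^*$, note $|Z(v)| \le |z_0| + \delta r < 1$ by (1); and by Proposition~\ref{proposition:power_series}, the constant term of $p_v$ is either $0$ or $1$, so $p_v(0) \ne 1/2$ forces $Z(v) \ne 0$. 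Finally, $1/2 = p_v(Z(v)) = \pi(v, Z(v)) \in \Lambda_{Z(v)}$ puts $Z(v) \in \SetB$.

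For (B), I would combine the Cauchy-type bound $|p_{v_1}(z) - p_{v_2}(z)| \le M(|z_0|+r)^n$ uniformly for $z \in B_r(z_0)$, valid whenever $v_1,v_2 \in u\partial\Sigma$ share a common prefix of length $n$ (this is immediate from the power-series description in Proposition~\ref{proposition:power_series} and the geometric series bound, since all coefficients live in $\{-1,0,1\}$), with a reapplication of Lemma~\ref{lemma:nearby_root}. Evaluating the bound at $z = Z(v_1)$ gives $|p_{v_2}(Z(v_1)) - 1/2| \le M(|z_0|+r)^n$. Applying Lemma~\ref{lemma:nearby_root} to $p_{v_2}$ centered at $Z(v_1)$ with this small $\epsilon'$, and using that the same $C, C'$ derivative bounds persist on a slightly shrunken ball (which hypothesis (4) guarantees), yields $|Z(v_2) - Z(v_1)| = O((|z_0|+r)^n)$. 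Since $|z_0| + r < 1$ by (1), this is Hölder continuity with respect to the Cantor metric on $u\partial\Sigma$, hence uniform continuity of $Z$.

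For (C), the main obstacle, I would adapt the dyadic interpolation strategy of Proposition~\ref{proposition:path_in_lambda}. Given $v, w \in u\partial\Sigma$, construct $W : \mathrm{Dy} \cap [0,1] \to u\partial\Sigma$ with $W_0 = v$, $W_1 = w$, such that dyadic values close in $[0,1]$ map to words sharing long common prefixes. By (B), $Z \circ W$ is then uniformly continuous on $\mathrm{Dy}$ and extends to a continuous $\bar\gamma : [0,1] \to \C$; because $Z(u\partial\Sigma)$ is compact (continuous image of a compact) and hence closed, $\bar\gamma([0,1]) \subseteq \overline{Z(\mathrm{Dy})} \subseteq Z(u\partial\Sigma)$, producing the desired path.

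The hard part is defining $W$ with the prefix-growth property, since any naive dyadic midpoint between $u^* f \alpha$ and $u^* g \beta$ cannot simultaneously extend the prefix on both sides. The fix mirrors the $\Phi_{(u,v)}$ construction of Section~\ref{subsection:Lz_paths}: choose midpoint words asymmetrically so that consecutive pairs in the dyadic subdivision share prefixes of length $|u| + n - O(1)$ at level $n$, which, combined with the geometric decay rate $(|z_0|+r)^n$ from (B), still gives a summable total variation and hence uniform continuity on $\mathrm{Dy}$. This technical bookkeeping is the heart of the proof, but it follows the same template as the analogous Cantor-to-path construction already carried out for $\Lz$.
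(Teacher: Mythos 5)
Your treatment of existence, uniqueness, membership in $\SetB$, and uniform continuity is essentially the paper's argument (the paper centers the second application of Lemma~\ref{lemma:nearby_root} at $Z(w^\infty)$ for the common prefix $w$ rather than at $Z(v_1)$, but the mechanism and the resulting H\"older-type estimate are the same). The path-connectedness step (C), however, has a genuine gap that your proposal acknowledges but does not resolve.

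You correctly identify the obstruction: if $W:\mathrm{Dy}\to u\partial\Sigma$ is single-valued, a midpoint $W_{1/2}$ can extend the common prefix with $W_0$ or with $W_1$, but not both, so the telescoping estimate from (B) does not improve as the dyadic level increases and the resulting ``total variation'' is not summable. You then write that ``the fix mirrors the $\Phi_{(u,v)}$ construction'' by ``choosing midpoint words asymmetrically.'' But in the $\Lz$-path construction, the midpoint is a \emph{pair} $(wu,wv)$ of distinct infinite words that are nevertheless glued to a single point of $\Lz$ by the global hypothesis $\pi(u,z)=\pi(v,z)$. No such fixed global hypothesis is available here, because the parameter $z$ is itself varying as $Z(v)$ moves. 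The missing ingredient is a way to produce, at each dyadic midpoint, a pair of infinite words $(s,t)$ extending the prefixes in both directions \emph{and} satisfying $Z(s)=Z(t)$, so that $Z\circ W$ is well-defined on pairs.

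The paper supplies exactly this via the $f\leftrightarrow g$ symmetry. If $w$ is a finite word and $\bar w$ denotes $w$ with $f$ and $g$ swapped, then the rotation of $\Lz$ about $1/2$ gives $\bar w(z,1/2)=1-w(z,1-1/2)$, so $w(z,1/2)=1/2$ if and only if $\bar w(z,1/2)=1/2$. Consequently $p_{w^\infty}(z)=1/2$ if and only if $p_{w_*^\infty}(z)=1/2$ for \emph{any} word $w_*^\infty$ obtained by replacing arbitrary blocks of $w$ by $\bar w$; by the uniqueness in Lemma~\ref{lemma:nearby_root} this forces $Z(w^\infty)=Z(w_*^\infty)$. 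The paper then sets the dyadic midpoint to the pair $\Phi_{(v^\infty,\bar v^\infty)}(\cdot,\cdot)=(vv^\infty,v\bar v^\infty)$ (or its transpose), where $v$ is the current common prefix: both entries start with $v$, one extends the prefix with the left endpoint, the other with the right, and $Z(vv^\infty)=Z(v\bar v^\infty)$ because $v\bar v^\infty$ is a swap-variation of $v^\infty$. Without this observation the ``asymmetric'' interpolation you sketch does not define a single-valued $Z\circ W$, and the proof of (C) does not go through.
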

\begin{proof}
That the map $Z$ exists and is well-defined (single-valued) 
follows immediately from Lemma~\ref{lemma:nearby_root}, 
so the content of the lemma is the uniform continuity and path connectedness.
We first address the former.  This is with respect to the Cantor metric, 
so it suffices to show that if two words $w_1,w_2 \in u\partial \Sigma$ have a 
sufficiently long common prefix, then their images under $Z$ are close (independent 
of what the prefix is).  

Let $K$ be equal to $|z_0|+r$. 
We claim that there exists a constant $I$ such that if $w_1,w_2 \in u\partial \Sigma$ have a 
common prefix $w$ of length at least $I$, then
\[
|Z(w_1) - Z(w_2)| < \frac{2 K^{|w|}}{|1-K|}
                     \frac{(1-\delta+\delta^2)}{((1-\delta)C' - \delta C)}.
\]
We now prove this claim.  We remark that $u$ is a prefix of $w$, since 
$w_1,w_2$ already have the common prefix $u$.
By Lemma~\ref{lemma:diameter_bound}, for a given $z$, the limit set 
$\Lz$ is contained in a ball of radius $|1-z|/2(1-|z|) < 1/(1-|z|)$ centered 
at $1/2$, so if 
$u$ is a word of length $n$, then $u(z,\Lz)$ is contained in a ball of 
size $|z|^n/(1-|z|)$ centered at $u(z,1/2)$.  
In our situation, then, the limit set $w(Z(w^\infty), \Lambda_{Z(w^\infty)})$ is contained inside a 
ball of radius $\frac{K^{|w|}}{|1-K|}$.  Therefore, we have
\[
|p_{w_1}(Z(w^\infty)) - 1/2|, |p_{w_2}(Z(w^\infty))-1/2| < \frac{K^{|w|}}{|1-K|}.
\]
We are going to apply Lemma~\ref{lemma:nearby_root} to $w_1$ and $w_2$ to get 
nearby roots, but there is a slight subtlety.  We have derivative bounds on 
all words in $u\partial \Sigma$ and $z \in B_{r}(z_0)$, but to apply 
Lemma~\ref{lemma:nearby_root}, we need derivative bounds in a ball centered at 
$Z(w^\infty)$.  We can achieve these bounds in the following way.
Since $Z(w^\infty) \in B_{\delta r}(z_0)$, the derivative bounds $C'$ and $C$ 
must be valid over $B_{(1-\delta)r}(Z(w^\infty))$.
So if $|w| > I$ for $I$ sufficiently long enough, then
\[
(1-\delta)r \ge \frac{K^{|w|}}{|1-K|}\frac{(1-\delta+\delta^2)}{\delta((1-\delta)C' - \delta C)},
\]
so we can apply Lemma~\ref{lemma:nearby_root} to the words $w_1,w_2$ 
at the point $Z(w^\infty)$ with radius $(1-\delta)r$ and 
$\epsilon = \frac{K^{|w|}}{|1-K|}$; this gives nearby $z_1,z_2$ so $\pi(w_1,z_1) = 1/2$ 
and $\pi(w_2,z_2) = 1/2$.  But $Z$ is uniquely defined, so $Z(w_1) = z_1$ and 
$Z(w_2) = z_2$, and hence
\[
|Z(w_1) - Z(w^\infty)|, \, |Z(w_2)-Z(w^\infty)| < \frac{K^{|w|}}{|1-K|}\frac{(1-\delta+\delta^2)}{((1-\delta)C' - \delta C)}.
\]
The claim that $Z$ is uniformly continuous follows from the triangle inequality, and therefore the image 
of $Z$ is compact.  It remains to show that 
the image $Z(u\partial \Sigma)$ is path connected.

Analogous to the set $W$ we constructed to build paths through $\Lz$ in Section~\ref{section:topology_of_lambda}, 
given any two words $a,b \in u\partial \Sigma$,
we will construct a combinatorial path through $u\partial \Sigma$ interpolating 
between them, and then show that applying $Z$ to this path gives a 
continuous path in $\SetB$.
Given a finite word $w$, denote by $\bar{w}$ the word obtained 
from $w$ by swapping $f$ and $g$.  
Note that if $w$ is finite and there is a parameter $z$ such 
that $w(z,1/2)=1/2$, then $\bar{w}(z,1/2) = 1/2$, so $p_{w^\infty}(z) = 1/2$ 
and $p_{\bar{w}^\infty}(z) = 1/2$.  Additionally, for any infinite word 
$w^\infty_*$ obtained by taking an infinite power of $w$ and swapping 
arbitrary copies of $w$ for $\bar{w}$, we have $p_{w_*^\infty}(z) = 1/2$.  Therefore, 
$Z(w^\infty) = Z(w_*^\infty)$.

Now let $H$ be a set of pairs of elements of $u\partial \Sigma$ 
indexed by the dyadic rationals and constructed inductively as follows.  
First set $H_0 = (a,a)$ and $H_1 = (b,b)$.  Next, given 
$H_{k2^{-i}}$ and $H_{(k+1)2^{-i}}$, let $v$ be the maximal common 
prefix of $H_{k2^{-i},2}$ and $H_{(k+1)2^{-i},1}$, and let 
\[
H_{k2^{-i} + 2^{-(i+1)} } = \Phi_{(v^\infty,\bar{v}^\infty)}(H_{k2^{-i},2}, H_{(k+1)2^{-i},1})
\]
That is, $H_{k2^{-i} + 2^{-(i+1)} }$ is either $(vv^\infty, v\bar{v}^\infty)$ 
or $(v\bar{v}^\infty, vv^\infty)$ depending on the first letters of 
$H_{k2^{-i},2}$ and $H_{(k+1)2^{-i},1}$ after the initial prefix.
By the observation above, the map $Z$ is well-defined on the pairs in 
$H$ because each pair consists of two words of the form $w_*^\infty$ for 
the same $w$.

By induction, if $k2^{-i} \le r_1 \le r_2 \le (k+1)2^{-i}$, then 
$H_{r_1}$ and $H_{r_2}$ have a common prefix of length at least $|u| + i$.  
Here we say $H_{r_1}$ and $H_{r_2}$ have a common prefix of length $n$ 
if at least one of the four possible pairings of a word in $H_{r_1}$ and 
$H_{r_2}$ has a common prefix of length $n$.
Since $Z$ is uniformly continuous, this means that $Z(H_r)$ is continuous 
as a function of the dyadic rational $r$, so $Z(H_r)$ extends continuously 
to $r \in [0,1]$, and $Z(u\partial \Sigma)$ is compact, so the image $Z(H_r)$ is contained in 
$Z(u\partial \Sigma)$ and is a path beginning at $Z(a)$ and ending at $Z(b)$, and the lemma 
is proved.
\end{proof}

\subsection{Holes in $\SetB$}
By a \emph{hole} in $\SetB$, we mean a connected component of the complement 
which is distinct from the ``obvious'' large connected component containing 
the point $0$.
Lemma~\ref{lemma:lambda_image_in_B} shows how to find a map $Z$ 
which takes a set of words $u\partial \Sigma$ into $\SetB$ in a nice way.  
In order to find a hole in $\SetB$, we will find words 
$u_0, \ldots, u_{n-1} \in \partial \Sigma$ satisfying Lemma~\ref{lemma:lambda_image_in_B}, 
thus giving maps $Z_i:u_i\partial \Sigma \to \SetB$.  The images $Z(u_i\partial \Sigma)$ 
are path connected, and we will show, for all $i$ with $i$ taken modulo $n$, 
that we have $Z(u_i\partial \Sigma) \cap Z(u_{i+1}\partial \Sigma) \ne \varnothing$.
Thus, there is a path passing through each image in turn.  Furthermore, 
we'll show that the images encircle a point which is not in $\SetB$.  This 
will complete the proof of the existence of a hole in $\SetB$.

Lemma~\ref{lemma:lambda_image_in_B} does not say what the images 
$Z(u_i\partial \Sigma)$ will look like; it only gives balls which are guaranteed 
to contain them.  To get a more precise picture, we do the following:
enumerate all words $\Sigma_m$ of some length $m$, and apply 
Lemma~\ref{lemma:lambda_image_in_B} to $Z(u_ix\partial \Sigma)$ for every 
$x \in \Sigma_m$.  If all these computations succeed, we obtain 
$2^m$ balls, and we know that (1) there is a point in $Z(u_i\partial \Sigma) \subseteq \SetB$ 
inside each ball and (2) these points are connected by paths inside $Z(u_i\partial \Sigma)$.

Therefore, if we can use this technique to exhibit, for each $i$, that the sets 
$Z(u_i\partial \Sigma)$ and $Z(u_{i+1}\partial \Sigma)$ lie transverse to each other, in the 
sense of traps, then they intersect.

\begin{theorem}[Holes in $\SetB$]
There is a hole in $\SetB$.
\end{theorem}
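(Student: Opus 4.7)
The plan is to exhibit numerically a candidate hole, then certify it using Lemma~\ref{lemma:lambda_image_in_B} together with a short-hop/trap-style transversality argument, and finally use Algorithm~\ref{algorithm:disconnectedness} (adapted to distance-from-$1/2$) to witness a point outside $\SetB$ in the enclosed region. Concretely, I would first run the program \texttt{schottky} on a putative hole region of $\SetB$ to identify a cyclic sequence $u_0,u_1,\ldots,u_{n-1}\in\Sigma$ of finite prefixes with the property that for each $i$ the power series $p_{u_i^\infty}(z)$ has a root $z_i^*$ near $1/2$ at which the hypotheses of Lemma~\ref{lemma:lambda_image_in_B} hold, and such that the approximate arcs $Z_i(u_i\partial\Sigma)$ together trace out a small loop in parameter space.

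Second, I would rigorously verify the four numerical hypotheses of Lemma~\ref{lemma:lambda_image_in_B} for each $u_i$. This amounts to choosing $\epsilon, r, C, C', \delta$ and checking (i) that the ball $B_r(z_0^{(i)})\subseteq \D^*$, (ii) the inequality relating $r$ to $\epsilon,\delta,C,C'$, (iii) the $\epsilon$-closeness of $p_v(z_0^{(i)})$ to $1/2$ uniformly over $v\in u_i\partial\Sigma$ (which follows from Lemma~\ref{lemma:diameter_bound} once $|p_{u_i}(z_0^{(i)})-1/2|$ is small enough, since the remaining tail of every $v$ contributes at most $|z|^{|u_i|}/(1-|z|)$), and (iv) uniform derivative bounds on $p_v'$, which can be obtained by bounding $|p_{u_i}'|$ and controlling the tail via the geometric series $\sum_k k|z|^{k-1}$. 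Each successful verification produces a genuine map $Z_i: u_i\partial\Sigma \to \SetB\cap B_{\delta r}(z_0^{(i)})$ whose image is a path-connected compact subset of $\SetB$.

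Third, and this is the main obstacle, I must certify that consecutive arcs $Z_i(u_i\partial\Sigma)$ and $Z_{i+1}(u_{i+1}\partial\Sigma)$ intersect in $\SetB$. My plan here is to refine the picture: enumerate all $x\in\Sigma_m$ for some moderate depth $m$ and apply Lemma~\ref{lemma:lambda_image_in_B} to each extended prefix $u_i x$, producing a collection of $2^m$ small balls each guaranteed to contain a point of $Z_i(u_i\partial\Sigma)$, with the connectivity of the image encoded combinatorially in the tree $\Sigma_m$. Using these two finite ball-collections for $u_i$ and $u_{i+1}$, I will check a transversality/linking condition in the spirit of Definition~\ref{definition:trap} and Lemma~\ref{lemma:any_paths}: exhibit four balls, two in each family, alternating around a common region, such that any path in $Z_i(u_i\partial\Sigma)$ joining the first pair must cross any path in $Z_{i+1}(u_{i+1}\partial\Sigma)$ joining the second pair. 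Since both path systems are rigorously constrained to lie in their ball neighborhoods, algebraic intersection forces a common point in $\SetB$.

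Finally, concatenating the arcs produces a loop $\gamma\subseteq \SetB$. To upgrade this to a hole, I choose a point $z^\star$ in the bounded complementary component of $\gamma$ and certify $z^\star\notin \SetB$ by running a $D_n$-descent analogous to Algorithm~\ref{algorithm:disconnectedness}: the nested sets $D_n(z^\star)$ of Lemma~\ref{lemma:intersection_of_disks} shrink to $\Lambda_{z^\star}$, so if at some finite depth $n$ the point $1/2$ fails to lie in $D_n(z^\star)$, then $1/2\notin \Lambda_{z^\star}$ and $z^\star\notin \SetB$. Since this exclusion is an open condition, a single well-chosen pixel inside $\gamma$ suffices. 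The computer-assisted portion of the argument is where all the delicacy sits; the topological deduction that an encircled non-$\SetB$ point forces a hole is immediate once the loop is in place.
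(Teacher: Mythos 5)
Your proposal is correct and follows essentially the same route as the paper: build arcs $Z_i(u_i\partial\Sigma)\subseteq\SetB$ via Lemma~\ref{lemma:lambda_image_in_B}, refine each arc by applying the lemma to $u_ix$ over all $x\in\Sigma_m$ to get $2^m$ certified balls, link consecutive arcs via a trap-style transversality (alternating-ball) condition to force intersection, and then exhibit a rigorously-certified non-$\SetB$ parameter enclosed by the resulting loop. The only cosmetic difference is that the paper folds the ``$D_n$-descent'' certification of $z^\star\notin\SetB$ into the remark that non-membership in $\SetB$ is an open, algorithmically checkable condition (one visible white pixel suffices), whereas you spell out the descent on $D_n(z^\star)$ explicitly.
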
\label{theorem:hole_in_SetB}
\begin{proof}
After the discussion above, this proof reduces to showing the 
pictures shown in Figure~\ref{figure:hole_in_B} and asserting 
that they were produced using the method above.  Note that 
this produces a \emph{loop} in $\SetB$, and checking if a parameter is 
\emph{not} in $\SetB$ is rigorous, so it suffices to exhibit 
a single pixel in the middle of the putative hole which is not in $\SetB$.  Many such pixels are 
easily visible.
\end{proof}

\begin{figure}[htb]
\includegraphics[scale=0.375]{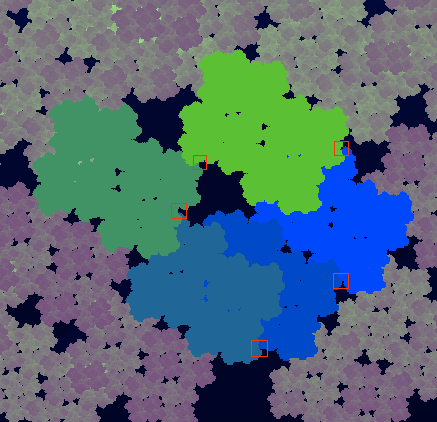}
~~\includegraphics[scale=0.295]{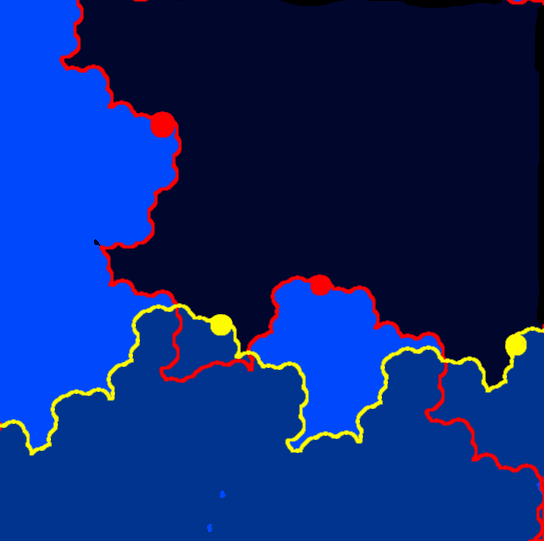}

\includegraphics[scale=0.3]{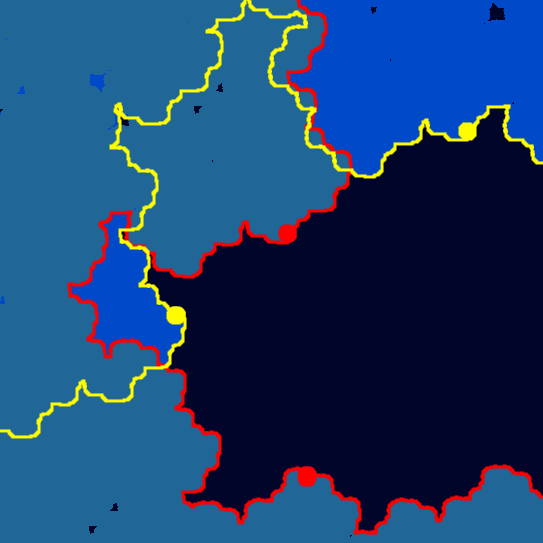}
~~\includegraphics[scale=0.3]{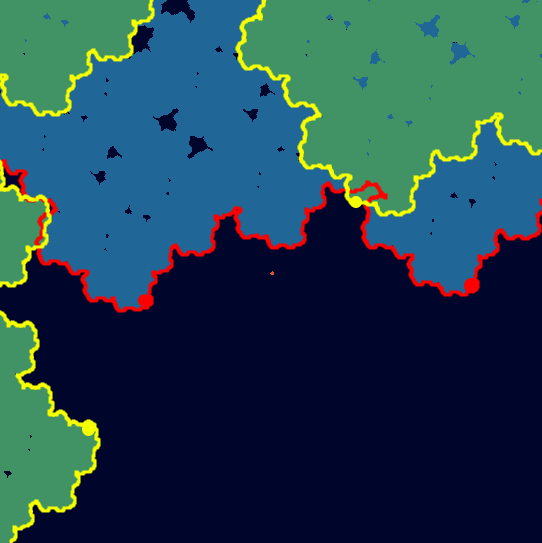}

\includegraphics[scale=0.3]{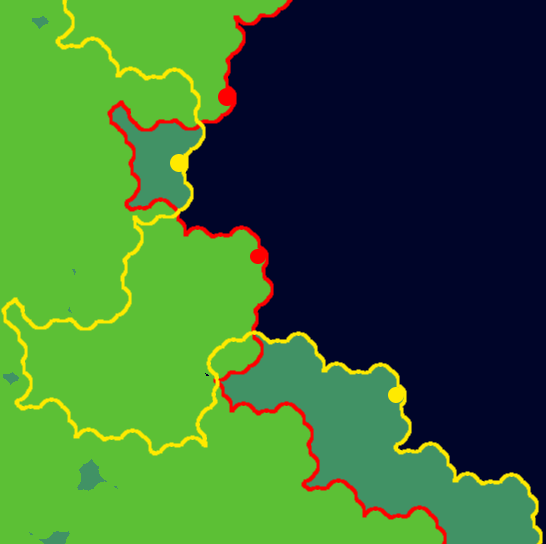}
~~\includegraphics[scale=0.3]{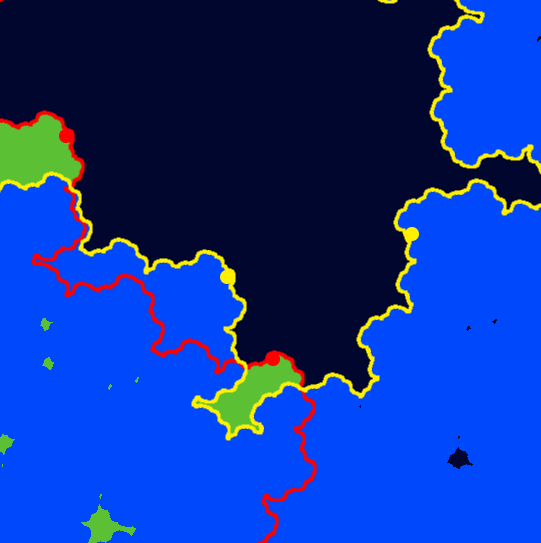}
\caption{The upper left picture shows the 
images of $Z(u_i\partial \Sigma)$ for $0\le i \le 4$, and 
the red boxes indicate the zoomed regions shown in the following 
pictures.  Each picture is made up of many small disks guaranteed to 
contain points in $\SetB$.  Four linked disks are highlighted in each 
picture to show that the various images of $Z$ must intersect, and 
each image is path connected, so there is a loop in $\SetB$.}
\label{figure:hole_in_B}
\end{figure}

\end{document}